\pretocmd{\section}{\addtocontents{toc}{\protect\addvspace{6\p@}\bfseries}}{}{}
\pretocmd{\subsection}{\addtocontents{toc}{\protect\normalfont}}{}{}
\DeclareMathAlphabet{\mathdutchcal}{U}{dutchcal}{m}{n}
\SetMathAlphabet{\mathdutchcal}{bold}{U}{dutchcal}{b}{n}
\DeclareMathAlphabet{\mathdutchbcal}{U}{dutchcal}{b}{n}
\DeclareMathOperator{\id}{id}
\DeclareMathOperator{\Spec}{Spec}
\DeclareMathOperator{\im}{im}
\DeclareMathOperator{\coker}{coker}
\DeclareMathOperator{\carac}{char}
\DeclareMathOperator{\CH}{CH}
\DeclareMathOperator{\Hh}{H}
\DeclareMathOperator{\Rep}{Rep}
\DeclareMathOperator{\eff}{eff}
\newcommand{\Kt}{\underline{\K}}
\newcommand{\K}{K}
\newcommand{\Speck}{k}
\newcommand{\length}{\ell}
\DeclareMathOperator{\Sm}{\mathbf{Sm}}
\DeclareMathOperator{\V}{\mathcal{V}}
\DeclareMathOperator{\fdim}{bdim}
\DeclareMathOperator{\fdeg}{bdeg}
\newcommand{\Oc}{\mathcal{O}}
\newcommand{\Pc}{\mathcal{P}}
\newcommand{\Lc}{\mathcal{L}}
\newcommand{\Lch}{\mathscr{V}}
\newcommand{\Rc}{\mathcal{A}}
\newcommand{\Fc}{\mathcal{F}}
\newcommand{\Bc}{\mathcal{B}}
\newcommand{\Mcc}{\mathcal{M}}
\newcommand{\Zz}{\mathbb{Z}}
\newcommand{\Pp}{\mathbb{P}}
\newcommand{\Laz}{\mathbb{L}}
\newcommand{\Nn}{\mathbb{N}}
\newcommand{\Gm}{\mathbb{G}_m}
\newcommand{\Fd}{\mathbb{F}_2}
\newcommand{\X}{\mathcal{X}}
\newcommand{\Tan}{T}
\newcommand{\bb}{b_1,\ldots}
\newcommand{\bba}{\ba_1,\ldots}
\newcommand{\ba}{a}
\newcommand{\bx}{X_1,\ldots}
\newcommand{\lc}{\llbracket}
\newcommand{\rc}{\rrbracket}
\newcommand{\mud}{{\mu_2}}
\newcommand{\mun}{\mu_n}
\newcommand{\indic}{\omega}
\newcommand{\J}{\Gamma}
\newcommand{\jj}{g}
\newcommand{\g}{\gamma}
\newcommand{\rr}{\rho}
\newcommand{\ninv}{\psi}
\newcommand{\fund}[1]{{#1}_f}
\newcommand{\Inv}{\mathscr{O}}
\newcommand{\inv}{\Inv(\mud)}
\newcommand{\invh}{\widetilde{\Inv}(\mud)}
\newcommand{\fgl}[1]{\langle {#1} \rangle}
\newcommand{\Vi}{I}
\newcounter{enumi_resume}
\newtheorem*{theorem*}{Theorem}
\newtheorem{theorem}{Theorem}[section]
\newtheorem*{proposition*}{Proposition}
\newtheorem{proposition}[theorem]{Proposition}
\newtheorem{lemma}[theorem]{Lemma}
\newtheorem{corollary}[theorem]{Corollary}
\theoremstyle{definition}
\newtheorem{remark}[theorem]{Remark}
\newtheorem{example}[theorem]{Example}
\newtheorem{definition}[theorem]{Definition}
\newtheoremstyle{par}
  {}
  {}
  {}
  {}
  {}
  {.}
  { }
  {}
\theoremstyle{par}
\newtheorem{para}[theorem]{}
\numberwithin{equation}{theorem}
\newcommand{\rref}[1]{(\ref{#1})}
\newcommand{\dref}[2]{(\ref{#1}.\ref{#2})}
\begin{document}
\begin{abstract}
We consider the cobordism ring of involutions of a field of characteristic not two, whose elements are formal differences of classes of smooth projective varieties equipped with an involution, and relations arise from equivariant $K$-theory characteristic numbers. We investigate in detail the structure of this ring. Concrete applications are provided concerning involutions of varieties, relating the geometry of the ambient variety to that of the fixed locus, in terms of Chern numbers. In particular, we prove an algebraic analog of Boardman's five halves theorem in topology, of which we provide several generalisations and variations.
\end{abstract}

\author{Olivier Haution}

\title{On the algebraic cobordism ring of involutions}

\email{olivier.haution@gmail.com}
\address{Mathematisches Institut, Ludwig-Maximilians-Universit\"at M\"unchen, Theresienstr.\ 39, D-80333 M\"unchen, Germany}
\thanks{This work was supported by the DFG grant HA 7702/5-1 and Heisenberg fellowship HA 7702/4-1.}
\subjclass[2010]{14L30; 19L41; 19L47}
\keywords{involutions, fixed points, Chern numbers, algebraic cobordism, equivariant cohomology theories}
\date{\today}

\maketitle

\numberwithin{theorem}{section}
\setcounter{tocdepth}{2}
\tableofcontents

\section*{Introduction}

A smooth closed manifold is nonbounding (in the unoriented sense) precisely when at least one of its Stiefel-Whitney numbers (with values modulo two) does not vanish. Conner and Floyd \cite[(27.1)]{CF-book-1st} observed in 1964 that the fixed loci of involutions of nonbounding manifolds cannot have arbitrary low dimension (compared to the dimension of the ambient manifold), and wondered how to explicitly compute that lower bound. In 1966, Boardman did so in its remarkable ``five halves theorem'':
\begin{theorem}[{\cite{Boardman-BAMS}}]
Let $X$ be a smooth closed $n$-manifold equipped with a smooth involution. If $X$ is nonbounding, then at least one component of the fixed locus has dimension greater than or equal to $2n/5$.
\end{theorem}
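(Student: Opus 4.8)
The plan is to work in the unoriented bordism ring $\mathfrak{N}_\ast$, which is a polynomial $\Fd$-algebra, and to use the classical criterion that the closed manifold $X^n$ is nonbounding precisely when some Stiefel--Whitney number $\langle w_{i_1}(X)\cdots w_{i_r}(X),[X]\rangle \in \Fd$ is nonzero. It therefore suffices to prove the contrapositive: if every connected component of the fixed locus has dimension strictly less than $2n/5$, then all Stiefel--Whitney numbers of $X$ vanish, whence $[X]=0$ in $\mathfrak{N}_n$ and $X$ bounds.

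First I would encode the geometry of the involution through its fixed-point data. Writing $F=\bigsqcup_j F_j$ for the fixed locus and $\nu_j$ for the normal bundle of $F_j$ in $X$ (on which $T$ acts by $-1$), the pair $(F,\nu)$ determines the equivariant bordism class of $(X,T)$ by the theorem of Conner and Floyd. The essential tool is the resulting localisation, or residue, formula: excising an equivariant tubular neighbourhood of $F$ leaves a free involution, which contributes trivially to the non-equivariant class, so that every Stiefel--Whitney number of $X$ is expressed as a sum over $j$ of characteristic numbers of $F_j$. Concretely, introducing by the splitting principle the Stiefel--Whitney roots $x_1,\dots,x_{k_j}$ of $\nu_j$ (with $k_j=n-\dim F_j$), each contribution is the evaluation on $[F_j]$ of a polynomial in the classes $w(TF_j)$ and the $x_i$ divided by the total normal class $\prod_i x_i = w_{k_j}(\nu_j)$; the reciprocals appear because passing from the disc bundle of $\nu_j$ to $X$ involves the $\mathbb{RP}^\infty$-bundle structure carried by the involution.

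The heart of the argument, and the step I expect to be the main obstacle, is then a purely combinatorial degree estimate on these residues. Each contribution lives on $F_j$, so only terms of cohomological degree $\dim F_j$ survive the evaluation, whereas the numerator must make up the full degree $n$; the negative powers of the normal roots supplied by $1/\prod_i x_i$ can account for the difference, but only to an extent governed by the fibre dimension $k_j$ and by the symmetric-function identities satisfied by the $x_i$. Carrying out this bookkeeping carefully should show that a nonzero residue forces $n \le \tfrac{5}{2}\dim F_j$ for some $j$, that is $\dim F_j \ge 2n/5$; the constant $5/2$ is exactly the balance point of this count, and extracting it is the delicate part of Boardman's analysis. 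Finally, I would confirm that the bound is sharp by exhibiting explicit involutions on nonbounding manifolds (suitable Dold manifolds, say) whose top fixed component attains dimension $2n/5$; these examples also serve to calibrate the degree count and to verify that no slack has been lost.
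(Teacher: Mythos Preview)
This theorem is quoted in the paper's introduction as topological motivation; the paper does not prove it, but develops and proves an algebraic analog (Corollary~\rref{cor:Boardman}). The relevant comparison is therefore with the method used for that analog, which the paper explicitly presents as paralleling Boardman's own argument.

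Your outline diverges substantially from that method. The localisation step you begin with is indeed the Conner--Floyd framework, but neither Boardman's proof (as summarised in the introduction) nor the paper's algebraic argument proceeds via a direct degree count on residues. The approach is structural: one constructs an explicit family of involutions $\X_n$ (Definition~\rref{def:X_n}) whose classes $x_n$ generate the involution cobordism ring polynomially after stabilisation, and whose fixed loci have dimension $\lfloor n/2\rfloor$. The bound $5/2$ then emerges from a purely combinatorial comparison of two gradings on $\Zz[X_1,X_2,\ldots]$, the ordinary degree $\deg$ and the ``fixed-locus degree'' $\fdeg$ with $\fdeg X_n=\lfloor n/2\rfloor$, via Proposition~\rref{prop:deg_fdeg} and Corollary~\rref{cor:deg_fdeg_2} with $s=2$; see the proofs of \rref{th:bdeg} and \rref{th:ideal_power}.

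Your proposal has a genuine gap precisely at the step you flag as ``the delicate part'': the claim that the degree bookkeeping on the residues forces $n\le\tfrac{5}{2}\dim F_j$. You give no mechanism by which the constant $5/2$ should appear from inverting $\prod_i x_i$ and counting degrees, and historically Conner--Floyd already had the residue formula yet could only conclude that \emph{some} lower bound exists. The $5/2$ is not a balance point visible from a single residue; it comes from the \emph{multiplicative} structure of the whole ring of involution classes and the specific shape of its generators. Without supplying that structural input (or a genuine substitute, such as the Kosniowski--Stong symmetric-function identities), the sketch stops exactly where the theorem begins.
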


Boardman's proof \cite{Boardman-revisited} does not provide a clear geometric reason for the existence of this lower bound, but instead relies on a fine understanding of the multiplicative structure of the unoriented cobordism ring of involutions. Roughly speaking, Boardman constructs an explicit infinite family of manifolds with involutions, which generates that ring as a polynomial algebra over $\Fd$, after a certain stabilisation procedure. This reduces the proof to the verification of the validity of the theorem for the elements of this family, which is immediate.\\

The starting point of the current paper consists in transcribing Conner--Floyd's and Boardman's ideas to the algebro-geometric world. An important difference is that we insist on working ``integrally'', while unoriented cobordism is intrinsically $2$-torsion: more precisely formal multiplication by $2$ is nontrivial (in a universal way) in this paper, while it is trivial in the unoriented setting. This yields new results in a range of fixed loci dimensions not captured by unoriented cobordism. In fact involutions on smooth projective varieties which are nonbounding (in the sense that they possess a nonzero Chern number) may very well have low-dimensional fixed loci, so the picture might seem at first quite different from that in algebraic topology. A moment's reflection however reveals that these differences should dissipate once we consider unitary (instead of unoriented) cobordism in topology. We are not aware of analogs of the results of the current paper in that topological setting, although we believe that a completely parallel theory could be developed.

Voevodsky first introduced algebraic cobordism in \cite{Voe-A1} using homotopical techniques. Later Levine--Morel provided a more geometric construction \cite{LM-Al-07}, which is essentially limited to base fields of characteristic zero. In this paper, we will use a third approach pioneered by Merkurjev \cite{Mer-Ori}, which is more elementary, and works in arbitrary characteristic. The basic idea is to \emph{define} the cobordism class of a smooth projective variety in the Lazard ring by its collection of Chern numbers, computed using Chow's theory of cycle classes modulo rational equivalence. 

By an involution, we will mean for short a smooth projective variety over a fixed base field of characteristic not two, equipped with an action of the algebraic group $\mud$ (which is canonically isomorphic to $\Zz/2$). It might initially seem natural to define the cobordism ring of involutions  using equivariant analogues of the Chern numbers, with values in the equivariant Chow ring of the point. This yields the wrong theory though, which for instance does not distinguish the different possible involutions of a given finite set. This problem arises essentially because the approximations of the classifying space of $\mud$ are not cellular varieties. A simple solution consists in using $K$-theory instead of Chow's theory. The equivariant cobordism ring thus defined does contain the Burnside ring of the group $\Zz/2$, and in fact coincides in characteristic zero with the ring obtained using Levine--Morel's algebraic cobordism theory instead of $K$-theory. These two facts provide a conceptual justification to our definition of the equivariant cobordism ring (while a more concrete justification is provided by the applications obtained in \S\ref{sect:applications}). These points are discussed in detail in \S\ref{section:cyclic}, where more generally cyclic group actions are considered (results of Bix and tom Dieck \cite{Bix-tomDieck} in topology suggest that this $K$-theoretic approach should fail for all noncyclic groups). 

All elements are ``geometric'' in nonequivariant cobordism, in the sense that the cobordism ring of the point is generated by classes of smooth projective varieties. This is not true anymore in the equivariant setting. In this paper we study the ``geometric'' subring $\inv$ inside the ``cohomological'' cobordism ring $\Hh_{\mud}(\Speck)$ obtained using Borel's construction. The elements of $\inv$ are the classes of virtual involutions, that is, formal differences of (smooth projective $k$-varieties equipped with) involutions.\\

The structure of $\inv$ is described in \S\ref{sect:structure} by means of a morphism $\nu \colon \inv \to \Mcc$ mapping the class of an involution to the cobordism class of the normal bundle to its fixed locus (here $\Mcc$ is a polynomial ring over the Lazard ring in variables indexed by natural numbers, and should be thought of as the cobordism ring of $BGL$). Of course $\nu$ vanishes on the classes of involutions without fixed points, but more interestingly such involutions generate the kernel of $\nu$. We also describe the image of $\nu$, providing conditions which permit to decide whether a given vector bundle is cobordant to the normal bundle to the fixed locus of some involution. These results are expressed in the fundamental exact sequence of \rref{th:es}, which can be compared to Conner--Floyd's sequence in topology \cite[(28.1)]{CF-book-1st}.

The definition of the topological analog of the morphism $\nu$ is quite straight-forward (using the equivariant collaring theorem \cite[(21.2)]{CF-book-1st}), and is basic in Conner--Floyd's theory. By contrast, the construction of the morphism $\nu$ proved to be a serious problem for us. This is probably inherent to our elementary approach to cobordism, but that particular problem seems unlikely to disappear if one uses Levine--Morel's theory instead, because there are more cobordism relations than just naive cobordisms as in topology (certain degenerations must be allowed \cite{LP-Alg-09}). Our solution consists in developing a substantial part of the theory before even constructing the morphism $\nu$. In particular, the multiplicative structures of $\inv$ and $\Mcc$ are used in an essential way, and so is the analog \rref{th:J} of the injectivity of Boardman's $J$-homomorphism, a result appearing only at the very end of Boardman's paper \cite[Corollary 17]{Boardman-revisited}.

As a first step, we introduce in \S\ref{section:gamma} the characteristic class $\g$ (of a vector bundle). Three equivalent constructions are provided: the first uses the formal group law, the second arises from a certain stabilisation procedure applied to the associated projective bundle, and the third involves $\Gm$-equivariant considerations. The interplay between the different natures of these approaches can be exploited fruitfully: for instance, the multiplicative property of $\g$ is clear from the first definition, but not at all from the other two. From the class $\g$ we derive the class $\jj$, the analog of Boardman $J$-homomorphism \cite{Boardman-BAMS}. Boardman's approach is closer to our second construction, which explains that the multiplicativity of Boardman $J$-homomorphism was a delicate point to establish (according to Boardman \cite{Boardman-BAMS}: ``There ought to be a direct geometric proof that $J'$ is a ring homomorphism'').

It is interesting to note that the exact same characteristic class $\g$ appears in the formulation of Quillen's formula, expressing the class of a projective bundle in the cohomology of its base, and thus bearing no apparent relation with involutions. This formula was first stated by Quillen for complex cobordism \cite[Theorem~1]{Quillen-FGL}, then by Panin--Smirnov \cite{Panin-Smirnov} for oriented cohomology theories, and Vishik gave a complete proof for algebraic cobordism in characteristic zero \cite[\S5.7]{Vi-Sym}. As a byproduct, we derive a new proof of that formula in \rref{prop:Quillen}, substantially simpler than Vishik's, and working for oriented cohomology theories in arbitrary characteristic.

The second step consists in expressing the cobordism class of an involution in terms of the normal bundle to its fixed locus. This is achieved by giving explicit formulas in \S\ref{sect:fixed}, which are proved using deformation to the normal bundle.\\

We obtain in \rref{th:integrality} a purely algebraic description of $\inv$ as an algebra over the Lazard ring. This object is thus in principle completely determined, which is extremely valuable from an abstract point of view: for instance we deduce that this algebra does not depend on the base field \rref{cor:indep_field}, a fact which was not at all apparent from the definition. However it appears to be quite difficult to exploit this description for concrete applications, and it seems desirable to obtain alternative descriptions. In fact, certain crucial aspects of the multiplicative structure of $\inv$ only become apparent by ignoring its structure of algebra over the Lazard ring (the analog of this observation was a key insight of Boardman).\\

In \S\ref{section:explicit}, we construct an explicit family of virtual involutions $\X_n$ indexed by nonzero positive integers $n$. This is achieved by endowing the varieties representing the usual polynomial generators of the cobordism ring with certain natural involutions, which are chosen so as to minimise the dimension of the fixed locus. This is reminiscent of Boardman's family of manifolds with involution \cite[Proof of Lemma 16]{Boardman-revisited}, but more care is required, since we must compute characteristic numbers with integral values, while Boardman only needed to consider their parities. 

The study of the ring $\inv$ can be reduced to that of its quotient $\invh$ modulo the ideal generated by involutions without fixed points; in other words $\invh$ is the image of the morphism $\nu$ above. Denoting by $x_n \in \invh$ the class of $\X_n$, we prove in \rref{prop:bound} that
\begin{equation}
\label{eq:bound}
\tag{$*$}
\Zz[x_1,\ldots] \subset \invh \subset \Zz[t,x_1,\ldots]/(x_1t-2).
\end{equation}
These inclusions are strict and do not bound $\invh$ very tightly. We may however view \eqref{eq:bound} as a stable description of $\invh$, asserting that this ring becomes polynomial in the variables $x_n$ for $n\geq 2$ after to a certain stabilisation procedure.

The second inclusion of \eqref{eq:bound} permits to write the class in $\invh$ of any involution as a sum of elements $t^iA_i(x_1,\ldots)$ for $i=0,\ldots,m$, where each $A_i$ is a polynomial with integral coefficients. Each monomial appearing in one of the $A_i$ provides a lower bound for the dimension of the fixed locus, which depends on the variables that the monomial involves. This yields unstable complements to \eqref{eq:bound}. In particular, the integer $m$ can be controlled in terms of the dimensions of the ambient variety and fixed locus. Worthy of note is the following (see \rref{th:small_dim})
\begin{theorem}
In $\invh$, the class of any involution whose fixed locus dimension does not exceed one third of the dimension of the ambient variety is a polynomial in the variables $x_n$.
\end{theorem}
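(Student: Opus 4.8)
The plan is to show that, once the class of an involution $X$ is expanded in the normal form afforded by the second inclusion of \eqref{eq:bound}, the hypothesis on the dimensions forces every term involving $t$ to vanish. Concretely, I would fix an involution $X$, set $d=\dim X$ and $f=\dim X^{\mud}$, and use \rref{prop:bound} to place its class inside $R:=\Zz[t,x_1,\ldots]/(x_1t-2)$. Eliminating every occurrence of $x_1$ from the terms containing $t$ by means of $x_1t=2$, I obtain a canonical normal form $[X]=P(x_1,x_2,\ldots)+\sum_{i\geq 1}t^iQ_i(x_2,x_3,\ldots)$, the reduced monomials $x_1^{a_1}x_2^{a_2}\cdots$ and $t^ix_2^{a_2}x_3^{a_3}\cdots$ (with $i\geq 1$) forming a $\Zz$-basis of $R$. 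The goal then becomes to prove that $Q_i=0$ for all $i\geq 1$.

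The argument rests on equipping $R$ with two gradings, for both of which the inclusion $\invh\subset R$ of \eqref{eq:bound} is homogeneous. The first is the dimension grading, in which $x_n$ has degree $n$ and $t$ degree $-1$ (so that $x_1t-2$ is homogeneous of degree $0$); here $[X]$ is homogeneous of degree $d$. The second is the codimension grading $\lambda$, in which $x_n$ has degree $n-f_n$ (where $f_n:=\dim\X_n^{\mud}$) and $t$ degree $-1$; this is again compatible with $x_1t=2$ because $f_1=0$, and it is the grading induced on $\invh$ by the decomposition of $\Mcc$ according to the rank of the vector bundle, that is, the codimension of the fixed locus. Since $\nu([X])$ is the sum of the classes of the normal bundles to the components of $X^{\mud}$, all of codimension at least $d-f$, the class $[X]$ has no component of $\lambda$-degree smaller than $d-f$. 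As the reduced monomials form a $\lambda$-homogeneous basis of $R$, this forces every monomial $m$ actually occurring in the normal form to satisfy $\lambda(m)\geq d-f$; combined with dimension-homogeneity $\deg m=d$, this reads $\varphi(m)\leq f$, where $\varphi(m):=\deg(m)-\lambda(m)=\sum_n a_nf_n$ records the dimension of the largest fixed component attached to $m$.

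It then remains to feed in the numerical input, to be established through the explicit constructions of \S\ref{section:explicit}, that $f_n\geq n/3$ for every $n\geq 2$ (the value $f_1=0<1/3$ being the sole exception, which is exactly why $x_1$ was removed above). Granting this, let $m=t^i\prod_{n\geq 2}x_n^{a_n}$ with $i\geq 1$ occur in some $Q_i$. Dimension-homogeneity gives $\sum_n a_nn=d+i$, whence
\[
\varphi(m)=\sum_{n\geq 2}a_nf_n\ \geq\ \tfrac13\sum_{n\geq 2}a_nn\ =\ \tfrac13(d+i)\ \geq\ \tfrac{d+1}{3}\ >\ \tfrac d3\ \geq\ f,
\]
contradicting $\varphi(m)\leq f$. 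Hence no such monomial occurs, every $Q_i$ vanishes, and $[X]=P\in\Zz[x_1,\ldots]$, as desired.

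Two points constitute the main obstacles. The first, conceptual one is that the elimination of $x_1$ from the $t$-terms is precisely what makes the estimate go through: $x_1$ is the unique variable violating $3f_n\geq n$, and it is the relation $x_1t=2$ alone that forbids it from sitting next to $t$ in the normal form; the whole proof hinges on this coincidence between the defining relation of $R$ and the exceptional dimension count. The second, genuinely delicate point is the verification that the chosen $\X_n$ satisfy $f_n\geq n/3$ for the hard generators $n=2^k-1$, where no projective space is available and the integral (rather than merely modulo-two) Chern number computations of \S\ref{section:explicit} are indispensable; the sharpness of the constant $1/3$ hinges on a generator attaining equality there.
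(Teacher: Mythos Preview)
Your strategy coincides with the paper's (\rref{th:poly} and \rref{th:small_dim}): expand $[X]$ as $A_0(x_1,\ldots) + \sum_{i\geq 1} t^i A_i(x_2,\ldots)$, bound a ``fixed-locus degree'' of each $A_i$ by $f$, then use the elementary inequality $f_n\geq n/3$ for $n\geq 2$ to kill the $t$-terms. Your $\varphi$ is exactly the paper's $\fdeg$ of \rref{p:fdeg}, since $f_n = \dim(\X_n)^{\mud} = \lfloor n/2\rfloor$ by \dref{prop:Xn}{prop:Xn:dim_fixed}.

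The gap is in your justification of $\varphi(m)\leq f$. You assert that $\lambda$ is ``the grading induced on $\invh$ by the decomposition of $\Mcc$ according to the rank of the vector bundle'' and then read the bound off the codimension of $X^{\mud}$. But the $x_n\in\Mcc$ are \emph{not} rank-homogeneous: $(\X_n)^{\mud}$ has components of more than one dimension for $n\geq 2$ (e.g.\ $x_2 = \lc\Pp^1\rc v + a_1 v + v^2$ sits in ranks $1$ and $2$), so the inclusion $\invh\subset R$ does not intertwine $\lambda$ with rank. Hence the passage from ``$\nu([X])$ has all rank-components $\geq d-f$'' to ``every monomial in the $R$-expansion has $\lambda\geq d-f$'' is unjustified: the lowest-rank contributions of different monomials might cancel. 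What is actually required is the equality $\fdim(P(x_1,\ldots)) = \fdeg P$ of \dref{prop:Mcc_pol}{prop:Mcc_pol:Phi}, whose proof replaces each $x_n$ by a genuinely base-homogeneous leading term $x_n'$ and invokes the change-of-generators lemma \rref{lemm:change_of_gen} to rule out such cancellation. With that in hand, \rref{p:dim_phi} gives $\max_m\varphi(m) = \fdim(\lc X\rc_{\mud}) \leq f$ and your final chain of inequalities goes through verbatim. (A minor point: once $f_n=\lfloor n/2\rfloor$, the bound $f_n\geq n/3$ is trivial; the work in \S\ref{section:explicit} is not that inequality but constructing $\X_n$ that are simultaneously polynomial generators and have fixed loci of dimension $\lfloor n/2\rfloor$, a difficulty arising for every odd $n\geq 3$ rather than only $n=2^k-1$.)
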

Thus we obtain a simple description of the ring $\invh$ in a certain range of dimensions, where in particular involutions extend (from the point of view of cobordism) to $\Gm$-actions \rref{cor:Gm}. Note that this range was completely absent from the picture in unoriented cobordism, where all classes in this range are trivial by an equivariant version of the five halves theorem \cite[\S5]{KS}.

Many concrete applications can be derived simply by observing the polynomials $A_i$. We list some of them in \S\ref{sect:applications}, including an algebraic version \rref{cor:Boardman} of Boardman's five-halves theorem, which we generalise in \rref{th:ideal_power}. Roughly speaking, for each $s \in \Nn$, the ratio $5/2$ may be improved to $2+s^{-1}$ if certain Chern numbers are not divisible by too large a power of two. We also obtain results of a slightly different nature, involving Chern numbers of the fixed locus. This type of results are again not visible by unoriented cobordism. Here is a sample (see \rref{cor:n-5d}):
\begin{proposition}
Let $n$ be the dimension of the ambient variety, and $d$ the dimension of the fixed locus. If the fixed locus has a Chern number not divisible by $2^{q+1}$, then $n \leq q + 5d/2$.
\end{proposition}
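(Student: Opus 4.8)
The plan is to read the estimate off the structural description of $\invh$, exactly along the lines announced for the polynomials $A_i$. First I would invoke the second inclusion of \eqref{eq:bound} to put the class of the given involution into its normal form $\sum_{i=0}^{m} t^{i} A_{i}(x_1,\ldots)$, where each $A_i$ is an integral polynomial in the variables $x_n$ and $m$ is finite (it is bounded in terms of $n$ and $d$), the monomials $x_1 t$ having been eliminated via the relation $x_1 t = 2$. The Chern numbers of the fixed locus are the coordinates of the image of this class under the composite of $\nu$ with the projection $\Mcc \to \Laz$ that forgets the normal bundle; so the problem becomes to bound from below the $2$-adic valuation of these coordinates in terms of the bidegree $(n,d)$.

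Two independent mechanisms drive the bound. The first is $2$-adic: I would show that each factor $t$ contributes a factor $2$ to every Chern number of the fixed locus. This rests on the relation $x_1 t = 2$ together with the explicit computation of \S\ref{section:explicit}, namely that the fixed locus of $\X_1$ carries a Chern number of odd value, so that morally $t$ is the quotient of $2$ by an odd class and a monomial $t^i A_i$ can only contribute multiples of $2^i$. Consequently a Chern number of the fixed locus that is not divisible by $2^{q+1}$ must receive a nonzero contribution from some monomial carrying at most $q$ factors of $t$, that is with $i \le q$. The second mechanism is bigraded bookkeeping: I would equip $\invh$ with the bigrading by (dimension of the ambient variety, dimension of the fixed locus) and consider the weight $W$ sending a class of bidegree $(a,b)$ to $a - \tfrac{5}{2}b$. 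Reading the bidegrees off the explicit varieties of \S\ref{section:explicit}, the generators satisfy $W(x_n) \le 0$—this is the source of the five halves theorem \rref{cor:Boardman}, the ratio $5/2$ being the extremal slope realised by the corresponding generators—while one computes $W(t) = 1$ (forced by $x_1 t = 2$, since $W(t) = -W(x_1)$).

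Combining the two mechanisms then finishes the argument formally. Treating each homogeneous piece separately, a monomial $t^{i}\prod_{j} x_{n_j}$ contributing to a Chern number in bidegree $(n,d)$ satisfies $n - \tfrac{5}{2}d = W = i\,W(t) + \sum_j W(x_{n_j}) \le i$, since every $W(x_{n_j}) \le 0$. Applying this to a monomial with $i \le q$ that witnesses a contribution of $2$-adic valuation at most $q$ to the Chern number under consideration yields $n - \tfrac{5}{2}d \le i \le q$, that is $n \le q + 5d/2$.

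The hard part will be the rigorous justification of the first mechanism. Individual Chern numbers are not multiplicative, so one cannot simply sum valuations over the factors of a monomial, and the normal form produced by \eqref{eq:bound} contains terms $t^i A_i$ that are integral only in combination, not separately. Making the contribution-by-contribution valuation estimate precise will require the exact $2$-adic and dimensional data of the fixed loci of the $\X_n$ from \S\ref{section:explicit}, and most plausibly a passage to $\Laz \otimes \Zz_{(2)}$ in which the relation $x_1 t = 2$ can be exploited through a suitable multiplicative invariant. Once this valuation bound is secured, the dimension inequality above is purely combinatorial.
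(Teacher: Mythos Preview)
Your high-level split into a $2$-adic mechanism and a weight mechanism matches the paper's proof, but both mechanisms are misidentified, and the errors are complementary. The weight claim $W(x_n)\le 0$ fails precisely for $n\in\{1,3\}$: with bidegree $(n,\lfloor n/2\rfloor)$ one computes $W(x_1)=1$ and $W(x_3)=1/2$. (This already breaks your argument on the example $X=\X_3$: the class is the single monomial $x_3$ with $i=0$, yet $W=1/2>0$.) Dually, the morphism $\varphi\colon\Mcc[v^{-1}]\to\Laz$ computing the class of the fixed locus sends $v\mapsto 1$, hence $t=v^{-1}\mapsto 1$: factors of $t$ contribute nothing $2$-adically. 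What \emph{does} contribute a factor of $2$ is each appearance of $x_1$ or $x_3$, since $\varphi(x_1)=\lc (\X_1)^{\mud}\rc=2$ and $\varphi(x_3)=\lc (\X_3)^{\mud}\rc=3\lc\Pp^1\rc=-6b_1$ both lie in $2\Zz[\bb]$. These are exactly the two generators violating your weight inequality.

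The paper's argument (this is \rref{cor:fixed} with $s=2$) runs as follows. By \rref{th:fixed}, the element $\lc X^{\mud}\rc$ is a $\Zz$-combination of values $P(\varphi(x_1),\varphi(x_2),\ldots)$ with $P$ homogeneous, $\deg P\ge n$, and $\fdeg P\le d$. If some Chern number of $X^{\mud}$ is not in $2^{q+1}\Zz$, then one such $P$ must contain a monomial with at most $q$ factors from $\{X_1,X_3\}$ (otherwise every term would lie in $2^{q+1}\Zz[\bb]$). Applying \rref{cor:deg_fdeg_2} with $s=2$---which is precisely the weight estimate once the exceptional variables $X_1,X_3$ are separated out and counted by $p$---gives $n\le\deg P\le q+\tfrac{5}{2}\fdeg P\le q+\tfrac{5}{2}d$. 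So your anticipated ``hard part'' dissolves once the correct carriers of the $2$-divisibility are named: no passage to $\Laz\otimes\Zz_{(2)}$ or multiplicative invariants is needed, just the two one-line computations $\varphi(x_1)=2$ and $\varphi(x_3)=-6b_1$.
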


The variety $\X_1$ is the projective line $\Pp^1$, equipped with an involution fixing two points. Its class $x_1 \in \invh$ generates the subring of classes of virtual involutions fixing only finitely many points (a comparatively easy fact, see \rref{prop:trivial_normal}):
\begin{proposition}
Assume that the number $q$ of fixed points (over an algebraic closure) of an involution is finite. Let $n$ be the dimension of the ambient variety. Then $q=a 2^n$ for some $a \in \Nn$, and the class of the involution in $\invh$ is $ax_1^n$.
\end{proposition}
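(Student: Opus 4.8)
The plan is to do everything through the morphism $\nu$. Since $\invh = \im(\nu) \subseteq \Mcc$, the class in $\invh$ of the given involution $X$ is by definition the element $\nu([X]) \in \Mcc$; it therefore suffices to prove the divisibility $2^n \mid q$ and then, setting $a = q/2^n$, the identity $\nu([X]) = \nu(a\,x_1^n)$ in $\Mcc$.

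First I would analyse the fixed locus. As $\carac\Speck \neq 2$ the group $\mud$ is linearly reductive, so the fixed locus $F$ is smooth; being finite, it is a disjoint union of reduced points, of which there are $q$ over an algebraic closure. At each of them the point is isolated, so the $+1$-eigenspace of the tangent space vanishes and the normal bundle $N$ is a rank-$n$ bundle on which $\mud$ acts through its nontrivial character. The formulas of \S\ref{sect:fixed} express $\nu([X])$ as the class in $\Mcc$ of the pair $(F,N)$; since $\dim F = 0$ the higher Chern data of $N$ carry no information and this class is additive over the points of $F$, each contributing the same element $\zeta \in \Mcc$ depending only on $n$ and on the sign $-1$. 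Hence $\nu([X]) = q\,\zeta$. The same discussion applied to $\X_1^n$, whose product involution has exactly $2^n$ fixed points, each with normal bundle of rank $n$ and nontrivial character, gives $\nu(x_1^n) = 2^n\,\zeta$ (recall $x_1^n$ is the class of $\X_1^n$). Granting $2^n \mid q$ and putting $a = q/2^n$, substitution then yields $\nu([X]) = q\,\zeta = a\,(2^n\zeta) = a\,\nu(x_1^n) = \nu(a\,x_1^n)$, so that the class of the involution in $\invh$ is $a\,x_1^n$.

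It remains to establish the divisibility $2^n \mid q$, which I would extract from the integrality of the $K$-theoretic characteristic numbers underlying the whole construction. By \rref{cor:indep_field} we may base change to an algebraic closure, so that $F$ consists of $q$ reduced rational points. Apply the localization theorem in $\mud$-equivariant $K$-theory to $\Oc_X$: the equivariant Euler characteristic $\chi^{\mud}(X,\Oc_X)$ lies in $\Rep(\mud) = \Zz[L]/(L^2-1)$, with $L$ the nontrivial character, while its image in $\Rep(\mud)[(1-L)^{-1}]$ equals $\sum_{p \in F} \lambda_{-1}(N_p^\vee)^{-1}$. Since $L^{-1} = L$, each summand is $(1-L)^{-n}$, so this image is $q\,(1-L)^{-n}$. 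Now inverting $1-L$ annihilates $1+L$ and identifies $\Rep(\mud)[(1-L)^{-1}]$ with $\Zz[1/2]$ via $L \mapsto -1$, $1-L \mapsto 2$; under it $\chi^{\mud}(X,\Oc_X)$ maps into the subring $\Zz$, whereas $q\,(1-L)^{-n}$ maps to $q/2^n$. Comparing the two forces $q/2^n \in \Zz$, that is $2^n \mid q$; and $a = q/2^n \in \Nn$ because $q \geq 0$.

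The divisibility step is the main obstacle. Although the localization computation is short, it requires care on two points: that the characteristic numbers of an involution may legitimately be computed after base change to an algebraic closure, and that each isolated fixed point contributes exactly $(1-L)^{n}$, with $L$ the nontrivial character, in the denominator. Alternatively, the same divisibility should be deducible internally from the description of the image of $\nu$ furnished by the fundamental exact sequence \rref{th:es}, once the normalisation of $\nu$ on $0$-dimensional fixed loci is made explicit; I would adopt whichever route is shorter in the final write-up.
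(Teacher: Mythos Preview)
Your argument is correct, and the first half --- computing $\nu(\lc X\rc_{\mud}) = qv^n$ and $x_1^n = (2v)^n = 2^nv^n$ in $\Mcc$, so that the class in $\invh$ equals $ax_1^n$ once $a=q/2^n$ is known to be an integer --- is exactly what the paper does. The divisibility step, however, is handled quite differently. The paper does not invoke the Lefschetz--Riemann--Roch localisation formula in genuine equivariant $K$-theory; instead it uses the purely algebraic Lemma~\ref{lemm:Rc_2}, which asserts $\Laz \cap t^m\Rc = 2^m\Laz$ inside $\Rc$. Since $\lc X\rc_{\mud} = qt^{-n}$ lies in $\Rc$ by \rref{prop:in_R}, one reads off $q \in t^n\Rc \cap \Laz = 2^n\Laz$. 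That lemma is proved from the fact that $\lc\Pp^1\rc$ is a nonzerodivisor in $\Laz/2$, so the divisibility is extracted entirely from the ring structure of $\Rc$ built up in \S\ref{sect:injectivity}.

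Your route is shorter and more direct for this particular statement, and has the virtue of making the geometric source of the divisibility transparent; the paper's route is self-contained within its framework and demonstrates that the constraint $2^n\mid q$ is already encoded in the inclusion $\invh\subset\Rc$, without recourse to an external fixed-point formula. Two small remarks: your appeal to \rref{cor:indep_field} to justify base change is not quite apt (that result concerns the abstract $\Laz$-algebra $\inv$, not the behaviour of a specific class under extension of $k$), but in fact no base change is needed --- the localisation formula gives $\chi^{\mud}(X,\Oc_X) = \chi(F,\Oc_F)\cdot(1-L)^{-n}$ with $\chi(F,\Oc_F)=\deg[F]=q$ already over $k$. And your alternative suggestion of deducing the divisibility from \rref{th:es} would be logically out of order in the paper, since \rref{prop:trivial_normal} is proved before the fundamental exact sequence is even stated.
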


A similar (but more complicated) description of the classes of virtual involutions with one-dimensional fixed locus is obtained in \S\ref{sect:fixed_curves}. Conversely, we determine whether the cobordism class of a vector bundle over a one-dimensional base arises as the normal bundle to the fixed locus of some virtual involution, in terms of basic invariants (degree and rank of the bundle, genus and number of isolated points of the base). The case of higher dimensional bases is in principle covered by the integrality theorem \rref{th:integrality}, but as mentioned above, this does not readily yield conditions expressible in a simple way.\\

Some of our results (until \S\ref{sect:injectivity}) are actually valid when $\carac k=2$, if instead of involutions one considers $\mud$-actions (that is, idempotent global derivations). Starting from \S\ref{sect:structure}, the theory breaks down, apparently because the supply of smooth varieties with $\mud$-action is simply insufficient. This problem might perhaps be solved by considering lci varieties instead of smooth ones when defining the cobordism ring, but we did not pursue that idea here.

Finally, let us mention that the current paper improves certain results of our previous paper \cite{inv}, and provides more conceptual and systematic proofs. The new elements here are the introduction of the equivariant cobordism ring, and a stronger emphasis on structural considerations. In particular, the analog of Boardman's theorem seemed to us completely out of reach of the methods of \cite{inv}. We will nonetheless initially rely on the setting and notation of that paper concerning oriented cohomology theories, so as to limit the length of the current paper.\\

{\bf Acknowledgements.} I warmly thank the referees for the time they devoted to an extremely thorough reading of this paper, and for pointing certain hard-to-spot inaccuracies. Their suggestions and feedback  certainly led to improvements in the paper.

\section{Fundamental polynomials of vector bundles}
\numberwithin{theorem}{subsection}

\subsection{Oriented cohomology theories}
We fix a base field $k$, and also denote by $\Speck$ its spectrum. We denote by $\Sm_k$ the category of smooth quasi-projective $k$-schemes, and by $\Tan_X$ the tangent bundle to $X \in \Sm_k$. We will use the notation and definitions of \cite[\S2.1--2.3,\S3.1]{inv} concerning oriented cohomology theories (a notion introduced in \cite{LM-Al-07}), and recall some of those below. The letter $\Hh$ will always denote an oriented cohomology theory. Thus for each $X \in \Sm_k$, we have a graded ring $\Hh(X)$, together with pullbacks along arbitrary morphisms in $\Sm_k$, and pushforwards along projective morphisms in $\Sm_k$. Rings and algebras will be assumed to be unital, associative and commutative. Unless otherwise specified, a grading will mean a $\Zz$-grading. 

\begin{para}
\label{p:K}
The Chow ring $\CH$ is an oriented cohomology theory. Another example is the functor $\K$ defined using $K$-theory, as follows. For $X \in \Sm_k$, we set $\K(X) = K_0'(X)[u,u^{-1}] = K_0(X)[u,u^{-1}]$, where $u$ has degree $-1$. Pullbacks are induced by pullbacks in $K_0$. If $X,Y \in \Sm_k$ are connected, and $f\colon Y \to X$ is a projective morphism, then $f_*^{\K}(xu^i) = u^{i+\dim Y-\dim X} f_*^{K_0'}(x)$ for all $x\in K_0'(Y)$. We refer to \cite[Example 1.1.5]{LM-Al-07} (where $u=\beta$) and \cite[\S7]{Qui-72} for more details.
\end{para}

\begin{para}
\label{p:lc_rc}
Each smooth projective $k$-scheme $X$ has a class $\lc X \rc \in \Hh(\Speck)$, defined as $p_*(1)$, where $p\colon X \to \Speck$ is the structural morphism. The subgroup generated by such classes is denoted by $\fund{\Hh}$. It is a graded subring of $\Hh(\Speck)$: if $X$ has pure dimension $n$, then $\lc X \rc$ is homogeneous of degree $-n$. In particular $\fund{\Hh}^i=0$ for $i >0$. If $u\in \Hh(X)$, we write $\lc u \rc =p_*(u) \in \Hh(\Speck)$. 
\end{para}

\begin{example}
When $X$ is a smooth projective $k$-scheme, the Euler characteristic of a coherent $\Oc_X$-module $\Fc$ is defined as
\begin{equation}
\label{eq:def_Euler_char}
\chi(X,\Fc) = \sum_{i=0}^{\dim X} (-1)^i \dim_k H^i(X,\Fc),
\end{equation}
where $H^i$ refers to the sheaf cohomology in the Zariski topology. Then, when $X$ has pure dimension $n$ and $\Hh=\K$, we have $\lc X \rc = \chi(X,\Oc_X)u^n$.
\end{example}

\begin{para}
\label{p:projective_space}
Let $X \in \Sm_k$. For any $n\in \Nn$, it follows from the transversality axiom \cite[(2.1.3.v)]{inv} and \cite[(2.1.9)]{inv} that $c_1(\Oc(1)) \in \Hh(\Pp^n \times X)$ is the class of a linearly embedded $\Pp^{n-1} \times X$ (we write $\Pp^{-1}=\varnothing$). In particular $c_1(\Oc(1))^{n+1} = 0$, and by the projective bundle axiom \cite[(2.1.3.vi)]{inv}, mapping $x$ to $c_1(\Oc(1))$ induces a ring isomorphism
\[
\Hh(X)[x]/x^{n+1} \simeq \Hh(\Pp^n \times X).
\]
\end{para}

\begin{para}
\label{def:loc}
We say that $\Hh$ satisfies the \emph{localisation axiom} if for any closed immersion $i\colon Y \to X$ in $\Sm_k$, with open complement $u\colon U \to X$, the following sequence is exact:
\[
\Hh(Y) \xrightarrow{i_*} \Hh(X) \xrightarrow{u^*} \Hh(U) \to 0.
\]
\end{para}

\begin{para}
The theories $\CH$ and $\K$ satisfy the localisation axiom \cite[Proposition~1.8]{Ful-In-98}, \cite[\S7, Proposition~3.2]{Qui-72}.
\end{para}

\begin{lemma}
\label{lemm:zero-locus}
Let $X \in \Sm_k$ and $E \to X$ a rank $r$ vector bundle.
\begin{enumerate}[(i)]
\item \label{lemm:zero-locus:1} If $s \colon X \to E$ is the zero-section, then $c_r(E) = s^* \circ s_*(1) \in \Hh(X)$.

\item \label{lemm:zero-locus:2} If $Z \in \Sm_k$ is the zero-locus of a regular section of $E$ (see \cite[B.3.4]{Ful-In-98}), then $[Z] = c_r(E) \in \Hh(X)$.
\end{enumerate}
\end{lemma}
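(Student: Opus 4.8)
The plan is to prove (i) first and deduce (ii) from it by a transverse base change. Throughout, write $\pi\colon E \to X$ for the projection, and let $P = \Pp(E \oplus \Oc)$ be the projective completion with projection $q\colon P \to X$, open immersion $j\colon E \to P$, and $\bar s = j \circ s\colon X \to P$. Set $\xi = c_1(\Oc_P(1))$. Two elementary observations will drive the argument: the tautological subbundle $\Oc(-1)$ is trivial on the affine part $E$ and along the zero section, so $j^*\xi = 0$ and $\bar s^*\xi = 0$.

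For (i), I would compute $s^* s_*(1)$ inside $\Hh(P)$. Since $\pi^*$ is an isomorphism (homotopy invariance) and $s^*\pi^* = \id$, it suffices to understand $s_*(1) \in \Hh(E)$. The square with top map $s$, left map $\id$, bottom map $\bar s$ and right map $j$ is transverse Cartesian (the zero section misses the divisor at infinity), so the transversality axiom gives $s_*(1) = j^*\bar s_*(1)$. Expanding $\bar s_*(1) = \sum_{a=0}^r q^*\beta_a \cdot \xi^a$ in the basis furnished by the projective bundle formula and using $j^* q^* = \pi^*$ together with $j^*\xi = 0$, I get $s_*(1) = \pi^*\beta_0$, hence $s^* s_*(1) = s^*\pi^*\beta_0 = \beta_0$. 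It then remains to identify the coefficient $\beta_0$ with $c_r(E)$.

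The crux is therefore to compute $\bar s_*(1)$, and the main obstacle is to do so without circularly appealing to (ii): geometrically $\bar s(X)$ is the zero scheme of the tautological section of the rank-$r$ quotient bundle, but invoking this is exactly what (ii) provides. I would bypass this purely algebraically. Put $\Pi(\xi) = \sum_{i=0}^r q^* c_i(E)\,\xi^{r-i} \in \Hh(P)$, so that the defining relation of the projective bundle formula reads $\xi \cdot \Pi(\xi) = 0$ (using $c_{r+1}(E \oplus \Oc) = c_{r+1}(E) = 0$). On one hand $\xi \cdot \bar s_*(1) = \bar s_*(\bar s^*\xi) = 0$ by the projection formula and $\bar s^*\xi = 0$. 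On the other hand, reducing $\xi^{r+1}$ by the defining relation shows that the annihilator of $\xi$ in the free module $\Hh(P) = \bigoplus_{a=0}^r \Hh(X)\xi^a$ is exactly the rank-one free $\Hh(X)$-submodule generated by $\Pi(\xi)$. Hence $\bar s_*(1) = q^*(\lambda)\cdot \Pi(\xi)$ for a unique $\lambda \in \Hh(X)$, and applying $q_*$ — using $q_*(\xi^r) = 1$, $q_*(\xi^a) = 0$ for $a < r$, and $q_* \bar s_* = \id_*$ — forces $\lambda = q_*(\bar s_*(1)) = 1$. Reading off the $\xi^0$-coefficient of $\Pi(\xi)$ gives $\beta_0 = c_r(E)$, completing (i).

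Finally, for (ii), let $\sigma\colon X \to E$ be the regular section with zero locus $z\colon Z \to X$. The square expressing $Z = \sigma^{-1}(s(X))$, with both remaining sides equal to $z$, is transverse Cartesian, so the transversality axiom gives $\sigma^* s_*(1) = z_* z^*(1) = [Z]$. As $\sigma$ and $s$ are both sections of $\pi$, homotopy invariance forces $\sigma^* = s^* = (\pi^*)^{-1}$ on $\Hh(E)$, whence $[Z] = s^* s_*(1) = c_r(E)$ by (i). I expect the only genuinely delicate step to be the annihilator computation isolating $\bar s_*(1) = \Pi(\xi)$; everything else is a direct application of the projective bundle, transversality, and homotopy invariance axioms.
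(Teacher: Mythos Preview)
Your deduction of (ii) from (i) is correct and matches the paper. The argument for (i), however, rests on two facts that are specific to the additive formal group law and fail for a general oriented cohomology theory $\Hh$. First, the ``defining relation'' $\xi\cdot\Pi(\xi)=0$ with $\Pi(\xi)=\sum_i q^*c_i(E)\,\xi^{r-i}$ does not hold: applying the Whitney formula to the tautological sequence on $P$ yields $\sum_i q^*c_i(E\oplus 1)(-\zeta)^{r+1-i}=0$ for $\zeta=c_1(\Oc_P(-1))$, and since $\xi=\fgl{-1}(\zeta)\neq -\zeta$ when the law is not additive, this does not translate into your relation in $\xi$. Concretely, for $E=L$ a line bundle and $\Hh=\K$ one finds $\xi^2 + q^*c_1(L)\,\xi\neq 0$ whenever $c_1(L)^2\neq 0$. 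Second, the vanishing $q_*(\xi^a)=0$ for $a<r$ fails: already for $E$ trivial of rank $r$ and $a=0$ one has $q_*(1)=\lc\Pp^r\rc$, which is nonzero in $\Laz$. So even after replacing $\Pi(\xi)$ by the correct generator of the annihilator of $\xi$, the step forcing $\lambda=1$ via $q_*$ is unjustified.

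The paper's argument for (i) sidesteps all of this: the case $r=1$ is the definition of $c_1$, and the splitting principle together with the transversality axiom reduces the general case to an iterated application of the line-bundle case (factor the zero-section through a full flag of subbundles). No explicit description of the relation in $\Hh(P)$ or of $q_*(\xi^a)$ is required.
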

\begin{proof}
The first statement is true by definition when $r=1$ (see \cite[(2.1.9)]{inv}), and follows from the splitting principle \cite[(2.1.16)]{inv} and the transversality axiom \cite[(2.1.3.v)]{inv} in general. Let now $t \colon X \to E$ be a regular section whose zero-locus is $Z$. By homotopy invariance we have $t^*=s^*$, and the second statement follows from the first and the transversality axiom \cite[(2.1.3.vii)]{inv}.
\end{proof}

\subsection{Conner--Floyd Chern classes}
\begin{para}
A \emph{partition} $\alpha$ is a sequence of integers $(\alpha_1,\ldots,\alpha_m)$ with $m\in \Nn$ such that $\alpha_1 \geq \alpha_2 \geq \cdots \geq \alpha_m>0$ (the case $m=0$ corresponds to the partition denoted $\varnothing$). The \emph{length} of the partition $\alpha$ is the integer $\length(\alpha) = m \in \Nn$, and the \emph{weight} of $\alpha$ is the integer $|\alpha| = \alpha_1 +\cdots+ \alpha_m \in \Nn$. When $\alpha=(\alpha_1,\ldots,\alpha_m)$ and $\alpha'=(\alpha'_1,\ldots,\alpha'_{m'})$ are partitions, their union $\alpha \cup \alpha'$ is the partition obtained by reordering the elements $\alpha_1,\ldots,\alpha_m,\alpha'_1,\ldots,\alpha'_{m'}$.
\end{para}

\begin{para}
When $R$ is a ring, we denote by $R[\bb]$ the polynomial ring over $R$ in the variables $b_i$ for $i \in \Nn\smallsetminus\{0\}$. We will also write $b_0 =1$. To a partition $\alpha = (\alpha_1,\ldots,\alpha_m)$ corresponds the monomial $b_\alpha = b_{\alpha_1} \cdots b_{\alpha_m} \in \Zz[\bb]$. Note that $b_{\alpha \cup \alpha'} = b_\alpha b_{\alpha'}$. A similar notation will be used with other sets of variables (such as $a_i$, $X_i$, or $x_i$).
\end{para}

\begin{para}
\label{p:P}
(See \cite[(3.1.2), (3.1.4)]{inv}.) For each vector bundle $E$ over $X \in \Sm_k$, there is a class $P^{\Hh}(E) \in \Hh(X)[\bb]$ compatible with pullbacks, and such that:
\begin{enumerate}[(i)]
\item \label{p:P_add} If $0 \to E' \to E \to E'' \to 0$ is an exact sequence of vector bundles, we have $P^{\Hh}(E) = P^{\Hh}(E') P^{\Hh}(E'')$.

\item \label{p:P_line} If $L \to X$ is a line bundle, then $P^{\Hh}(L) = \displaystyle{\sum_{i\in \Nn}}  c_1(L)^i b_i$ (recall that $b_0=1$).
\end{enumerate}
If $\alpha$ is a partition, the \emph{Conner--Floyd Chern class} $c_\alpha(E)\in \Hh(X)$ is defined as the $b_\alpha$-coefficient of $P^{\Hh}(E)$.

These definitions extend to virtual vector bundles, yielding for each $E \in K_0(X)$ classes $P^{\Hh}(E) \in \Hh(X)[\bb]$ and $c_\alpha(E) \in \Hh(X)$.
\end{para}

\begin{lemma}
\label{lemm:length_rank}
Let $E$ be a vector bundle of rank $r$ over $X\in \Sm_k$. Then $c_\alpha(E) =0\in \Hh(X)$ whenever $\length(\alpha) >r$.
\end{lemma}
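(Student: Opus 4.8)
The plan is to reduce to the case of a sum of line bundles via the splitting principle, and then to read off the constraint on the length directly from the defining product formula. First I would invoke the splitting principle \cite[(2.1.16)]{inv}: there is a morphism $f \colon X' \to X$ in $\Sm_k$ such that $f^*\colon \Hh(X) \to \Hh(X')$ is injective and $f^*E \simeq L_1 \oplus \cdots \oplus L_r$ splits as a direct sum of $r$ line bundles. Since $P^{\Hh}$ is compatible with pullbacks, we have $f^*(c_\alpha(E)) = c_\alpha(f^*E)$ for every partition $\alpha$; by injectivity of $f^*$ it therefore suffices to prove the vanishing after replacing $E$ by $f^*E$. Thus I may assume from the outset that $E = L_1 \oplus \cdots \oplus L_r$.

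Next, using the multiplicativity \rref{p:P_add} applied to this splitting together with the line bundle formula \rref{p:P_line}, I would compute
\[
P^{\Hh}(E) = \prod_{j=1}^r P^{\Hh}(L_j) = \prod_{j=1}^r \Big( \sum_{i \in \Nn} c_1(L_j)^i b_i \Big),
\]
recalling that $b_0 = 1$. The heart of the argument is then purely combinatorial: expanding the product, a typical monomial is
\[
\Big( \prod_{j=1}^r c_1(L_j)^{i_j} \Big)\, b_{i_1} b_{i_2} \cdots b_{i_r},
\]
indexed by an $r$-tuple $(i_1,\ldots,i_r) \in \Nn^r$. After discarding the factors with $i_j = 0$ (which contribute $b_0 = 1$), the resulting monomial in the variables $b_1, b_2, \ldots$ is $b_\beta$ for a partition $\beta$ with $\length(\beta) = \#\{ j : i_j > 0\} \leq r$. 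Hence every monomial $b_\beta$ occurring in $P^{\Hh}(E)$ with nonzero coefficient satisfies $\length(\beta) \leq r$.

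Finally, since $c_\alpha(E)$ is by definition the $b_\alpha$-coefficient of $P^{\Hh}(E)$, any partition $\alpha$ with $\length(\alpha) > r$ must have $c_\alpha(E) = 0$, as claimed. This also covers the edge case $r = 0$, where $P^{\Hh}(E) = 1$ and every nonempty partition has length at least one. The argument is essentially bookkeeping, so I do not anticipate a genuine obstacle; the only point requiring care is that the splitting principle supplies an \emph{injective} pullback, which is precisely what transfers the vanishing back from $X'$ to $X$.
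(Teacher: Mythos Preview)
Your approach is essentially the paper's: both use the splitting principle and the multiplicativity of $P^{\Hh}$ to reduce to line bundles, then read off the result from the formula \dref{p:P}{p:P_line}. One small inaccuracy: the splitting principle as cited gives a filtration of $f^*E$ with line bundle subquotients $L_1,\ldots,L_r$, not in general a direct sum decomposition; however, repeated application of \dref{p:P}{p:P_add} to the short exact sequences of the filtration still yields $P^{\Hh}(f^*E) = \prod_{j=1}^r P^{\Hh}(L_j)$, so your product computation and the rest of the argument go through unchanged.
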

\begin{proof}
The conclusion of the lemma means that $P^{\Hh}(E)$ has vanishing $b_\alpha$-coefficient when $\alpha$ has length $>r$. It follows from \dref{p:P}{p:P_add} that whenever $0 \to E_1 \to E_2 \to E_3\to 0$ is an exact sequence of constant rank vector bundles, the lemma holds for $E=E_2$ if it holds for both $E=E_1$ and $E=E_3$. By the splitting principle (see \cite[(2.1.16)]{inv}), we may thus assume that $r=1$, in which case the statement follows from \dref{p:P}{p:P_line}.
\end{proof}

\begin{para}
\label{p:twist}
(See \cite[\S3.1]{inv}.) The oriented cohomology theory $\Hh$ gives rise to an oriented cohomology $\underline{\Hh}$ as follows. For $X \in \Sm_k$, we set $\underline{\Hh}(X)=\Hh(X)[\bb]$. For a morphism $f\colon Y \to X$ in $\Sm_k$ we set $f^*_{\underline{\Hh}} = f^*_{\Hh} \otimes \id$, and if $f$ is projective with virtual tangent bundle $\Tan_f \in K_0(Y)$, for any $a\in \underline{\Hh}(Y)$ we set
\[
f_*^{\underline{\Hh}}(a)= (f_*^{\Hh}\otimes \id)(P^{\Hh}(-\Tan_f)a) \in \underline{\Hh}(X).
\]
In particular, we have (here $\alpha$ runs over the partitions)
\begin{equation}
\label{eq:pushforward_1}
f_*^{\underline{\Hh}}(1) = \sum_\alpha f_*^{\Hh}(c_\alpha(-\Tan_f)) b_\alpha.
\end{equation}
\end{para}

\begin{para}
If $\Hh$ satisfies the localisation axiom \rref{def:loc}, then so does $\underline{\Hh}$. Indeed if $i\colon Y \to X$ is a closed immersion in $\Sm_k$ with normal bundle $N$, then the induced morphism $i_*^{\Hh} \otimes \id \colon \underline{\Hh}(Y) \to \underline{\Hh}(X)$ has the same image as $i_*^{\underline{\Hh}}$, since for any $y\in \underline{\Hh}(Y)$ we have
\[
i_*^{\underline{\Hh}}(y) = (i_*^{\Hh}\otimes \id)(y P^{\Hh}(N))  \quad \text{and} \quad (i_*^{\Hh}\otimes \id)(y) = i_*^{\underline{\Hh}}(y P^{\Hh}(-N)).
\]
\end{para}

\begin{para}
\label{p:Chern_fund}
Let $X$ be a smooth projective $k$-scheme. For any vector bundle $E\to X$ and partition $\alpha$, we have $\lc c_\alpha(E) \rc \in \fund{\Hh}$ by \cite[Corollary 9.10]{Mer-Ori} (see \rref{p:lc_rc}).
\end{para}

\subsection{The Lazard ring and Chern numbers}
\begin{definition}
\label{def:graded_power_series}
When $R$ is graded (i.e.\ $\Zz$-graded) ring, we will denote by $R[[t]]$ the \emph{graded power series algebra}, defined as the limit of the graded $R$-algebras $R[t]/t^m$ for $m\in \Nn$, where $t$ has degree $1$. Thus $R[[t]]$ is additively generated by those power series $\sum_{j \in \Nn} r_j t^{i-j}$ for $i\in \Zz$, where each $r_j \in R$ is homogeneous of degree $j$ (it is a subset of the usual power series ring). Iterating this construction, we will write $R[[t_1,\ldots,t_p]]$ instead of $R[[t_1]]\cdots[[t_p]]$.
\end{definition}

\begin{para}
The formal group law of the theory $\Hh$ will be denoted by $(x,y) \mapsto x+_{\Hh} y$. When $n \in \Zz$, we will denote by $\fgl{n}(x) \in \fund{\Hh}[[x]]$ the formal multiplication by $n$ (see \cite[\S2.3]{inv} where the notation $[n]_{\Hh}(x)$ is used instead). It is homogeneous of degree $1$, so that there exist unique elements $u_i \in \fund{\Hh}^{1-i}$ for $i \in \Zz$ vanishing for $i \leq 0$, and such that $u_1 =n$ and
\begin{equation}
\label{eq:fgl}
\fgl{n}(x) = \sum_{i \in \Zz} u_i x^i = nx + u_2x^2 +\cdots \in \fund{\Hh}[[x]].
\end{equation}
\end{para}

\begin{para}
\label{p:K_fund}
We will denote by $\Laz$ the Lazard ring. It is endowed with a (commutative, one-dimensional) formal group law, which is universal among such laws. There is a natural morphism of graded rings $\Laz \to \fund{\Hh}$ (see \cite[\S2.3]{inv}). When $\Hh=\Kt$ or $\Hh=\underline{\CH}$, this morphism is bijective \cite[Proposition 6.2 (2), Theorem 8.2]{Mer-Ori}, and we will identify $\fund{\Hh}$ with $\Laz$. Note that $\Laz$ is torsion-free (being contained in $\underline{\CH}(\Speck) = \Zz[\bb]$).
\end{para}

\begin{para}
\label{p:Chern_number}
Let $X$ be a smooth projective $k$-scheme. The \emph{Chern number} corresponding to a partition $\alpha$ is the integer $c_\alpha(X) = \deg c^{\CH}_\alpha(-\Tan_X)$.
\end{para}

\begin{para}
\label{p:class_CHt}
When $\Hh = \underline{\CH}$, it follows from \rref{eq:pushforward_1} that, when $X$ is a smooth projective $k$-scheme, we have
\[
\lc X \rc = \sum_{\alpha} c_\alpha(X) b_{\alpha} \in \Laz \subset \Zz[\bb].
\]
\end{para}

\begin{para}
\label{p:c_alpha}
For any partition $\alpha$, taking the $b_\alpha$-coefficient yields a group morphism $c_\alpha \colon \Laz \subset \Zz[\bb] \to \Zz$ such that $c_\alpha(X) = c_\alpha(\lc X \rc)$ for any smooth projective $k$-scheme $X$. Observe that for any $x,y \in \Laz$,
\begin{equation}
\label{eq:c_alpha_prod}
c_{\alpha}(xy) = \sum_{\alpha = \alpha' \cup \alpha''} c_{\alpha'}(x) c_{\alpha''}(y).
\end{equation}
\end{para}

\begin{para}
\label{p:omega}
For $m \in \Nn\smallsetminus\{0\}$, let us define the following integer:
\[
\indic_m =
\begin{cases}
1 &\text{ if $m+1$ is not a prime power,}\\
p &\text{ if $m+1$ is a power of the prime $p$.}
\end{cases}
\]
An elementary computation shows that
\begin{equation}
\label{eq:omega_gcd}
\indic_m = \gcd_{1 \leq i\leq \lfloor (m+1)/2 \rfloor} \binom{m+1}{i}.
\end{equation}
\end{para}

\begin{para}
\label{p:gen_L_exist}
The ring $\Laz$ is polynomial on generators $y_i \in \Laz^{-i}$ for $i \in \Nn\smallsetminus\{0\}$ satisfying $c_{(i)}(y_i) = \indic_i$ (see \cite[II, \S7]{Adams-stable}).
\end{para}

\begin{para}
\label{p:decomposable}
An element of the ring $\Laz$ (resp.\ $\Laz/2$) is called \emph{decomposable} if it belongs to the square of the ideal generated by the homogeneous elements of nonzero degrees, and \emph{indecomposable} otherwise. 
\end{para}

\begin{para}
\label{p:gen_L}
For $i\in \Nn\smallsetminus\{0\}$ the function $c_{(i)} \colon \Laz \to \Zz$ vanishes on decomposable elements by \eqref{eq:c_alpha_prod}, and on $\Laz^{-j}$ for $j \neq i$. Therefore by \rref{p:gen_L_exist} a family $\ell_i \in \Laz^0 + \cdots + \Laz^{-i}$ for $i \in \Nn\smallsetminus\{0\}$ constitutes a set of polynomial generators of the ring $\Laz$ if and only if $c_{(i)}(\ell_i) = \pm \indic_i$ for each $i \in \Nn\smallsetminus\{0\}$. The elements $\ell_i$ reduce to polynomial generators of the $\Fd$-algebra $\Laz/2$ if and only if $c_{(i)}(\ell_i)$ is odd when $i+1$ is not a power of two, and $c_{(i)}(\ell_i) =2 \mod 4$ when $i +1$ is a power of two.
\end{para}

\subsection{Classes of vector bundles}

\label{sect:M}
\begin{definition}
Let $E$ be a vector bundle over a smooth projective $k$-scheme $S$. We may decompose $S$ into a disjoint union $S_0 \sqcup \cdots \sqcup S_n$ in such a way that $E|_{S_r}$ has constant rank $r$, for $r=0,\dots,n$. We set
\begin{equation}
\label{eq:def_Mcc}
\lc E \to S \rc = \sum_{r=0}^nv^r \sum_{\alpha} \ba_\alpha\lc c_\alpha(E|_{S_r}) \rc\in \fund{\Hh}[v][\bba],
\end{equation}
where $\alpha$ runs over the partitions. The set of such classes $\lc E \to S\rc$ forms an abelian monoid $\Mcc^{\eff}$, and we denote by $\Mcc$ the associated group.
\end{definition}

\begin{para}
In $\Mcc$, the class of a line bundle $L$ over a smooth projective $k$-scheme $S$ is
\begin{equation}
\label{eq:class_P}
\lc L \to S \rc = v \sum_{i \in \Nn} \lc c_1(L)^i \rc \ba_i.
\end{equation}
\end{para}

\begin{para}
\label{p:Mcc_ring}
The subgroup $\Mcc \subset \fund{\Hh}[v][\bba]$ is a $\fund{\Hh}$-subalgebra. Indeed first observe that $\lc S \to S \rc = \lc S \rc$. Moreover, for $i=1,2$, let $E_i$ be a vector bundle over a smooth projective $k$-scheme $S_i$, and denote by $p_i \colon S_1 \times S_2 \to S_i$ the projection. Since $P^{\Hh}(p_1^*E_1 + p_2^*E_2) = p_1^*P^{\Hh}(E_1) \cdot p_2^*P^{\Hh}(E_2)$, it follows from the projection formula \cite[(2.1.3.iii)]{inv} that $\lc E_1 \to S_1 \rc \cdot \lc E_2 \to S_2 \rc = \lc p_1^*E_1 \oplus p_2^*E_2 \to S_1 \times S_2 \rc$. 
\end{para}

\begin{para}
\label{p:Mcc_base}
The $\fund{\Hh}$-algebra morphism $\fund{\Hh}[v][\bba]\to \fund{\Hh}$ given by $v \mapsto 1$ and $\ba_i\mapsto 0$ for $i>0$ induces a morphism $\varphi \colon \Mcc \to \fund{\Hh}$ which sends $\lc E \to S \rc$ to $\lc S \rc$. This extends to a morphism of $\fund{\Hh}$-algebras $\varphi \colon \Mcc[v^{-1}] \to \fund{\Hh}$.
\end{para}

\begin{para}
\label{p:p_i}
For $i\in \Nn$, let us write $p_i = \lc \Oc(1) \to \Pp^i\rc \in \Mcc$. It follows from \rref{eq:class_P} that (recall that $\ba_0=1$),
\begin{equation}
\label{eq:pi_i}
p_i =\sum_{j=0}^i  v\ba_j\lc \Pp^{i-j} \rc.
\end{equation}
\end{para}

\begin{proposition}
\label{prop:M_pol}
Each of the sets $\{v\ba_i, i \in \Nn\}$ and $\{p_i, i\in \Nn\}$ generates $\Mcc$ as a polynomial algebra over $\fund{\Hh}$.
\end{proposition}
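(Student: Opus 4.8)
The plan is to identify $\Mcc$ with the $\fund{\Hh}$-subalgebra generated by the family $\{v\ba_i\}$, to show that this subalgebra is polynomial, and then to transfer both statements to the family $\{p_i\}$ by means of the triangular relation \eqref{eq:pi_i}. Throughout, recall that $\ba_0=1$, so that $v\ba_0=v$, and that $\fund{\Hh}[v][\bba]$ is free over $\fund{\Hh}$ on its monomials.

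First I would prove the inclusion $\Mcc\subseteq\fund{\Hh}[v\ba_0,v\ba_1,\ldots]$. As $\Mcc$ is generated as a group by the classes $\lc E\to S\rc$, and $\fund{\Hh}$ lies in the subalgebra on the right, it suffices to treat one such class. In its expansion \eqref{eq:def_Mcc}, the summand of $v$-degree $r$ involves only partitions $\alpha$ with $\length(\alpha)\le r$, since $c_\alpha(E|_{S_r})=0$ when $\length(\alpha)>r$ by \rref{lemm:length_rank}. The point is then that such a monomial factors as $v^r\ba_\alpha=(v\ba_0)^{r-\length(\alpha)}\prod_k(v\ba_{\alpha_k})$, with coefficient $\lc c_\alpha(E|_{S_r})\rc\in\fund{\Hh}$; this matching of the $v$-degree $r$ with the length bound $\length(\alpha)\le r$, which allows one to pad by copies of $v=v\ba_0$, is the one place where geometric input genuinely enters.

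Next I would exploit \eqref{eq:pi_i}. Using $\lc\Pp^0\rc=1$ it reads $p_i=v\ba_i+\sum_{j<i}\lc\Pp^{i-j}\rc\,v\ba_j$, so the transition matrix from $(v\ba_i)_i$ to $(p_i)_i$ is lower triangular with unit diagonal and entries in $\fund{\Hh}$. Inverting it expresses each $v\ba_i$ as an $\fund{\Hh}$-linear combination of $p_0,\ldots,p_i$; hence the two families generate the same $\fund{\Hh}$-subalgebra, and one is algebraically independent over $\fund{\Hh}$ if and only if the other is. Since each $p_i=\lc\Oc(1)\to\Pp^i\rc$ manifestly lies in $\Mcc$, we obtain $\fund{\Hh}[p_0,p_1,\ldots]\subseteq\Mcc$, and combining this with the inclusion of the previous paragraph yields the equalities $\Mcc=\fund{\Hh}[v\ba_0,v\ba_1,\ldots]=\fund{\Hh}[p_0,p_1,\ldots]$.

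The remaining step, which I expect to be the only one demanding an actual computation rather than formal bookkeeping, is algebraic independence; I would verify it for $\{v\ba_i\}$. The substitution $T_i\mapsto v\ba_i$ sends a monomial $\prod_i T_i^{e_i}$ to $v^{\sum_i e_i}\prod_{i\ge1}\ba_i^{e_i}$, and these images are pairwise distinct, since from such an image one reads off the exponents $e_i$ for $i\ge1$ (as the $\ba_i$-exponents) together with the total $\sum_i e_i$ (as the $v$-exponent), hence also $e_0$. As $\fund{\Hh}[v][\bba]$ is free over $\fund{\Hh}$ on its monomials, the substitution is injective, proving the $v\ba_i$ algebraically independent over $\fund{\Hh}$; by the triangular relation the $p_i$ are as well. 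This shows that $\Mcc$ is a polynomial $\fund{\Hh}$-algebra on each of the two families.
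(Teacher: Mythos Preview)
Your proof is correct and follows essentially the same route as the paper: both use \rref{lemm:length_rank} to show that $\Mcc$ is contained in the $\fund{\Hh}$-subalgebra generated by the $v\ba_i$, use the triangular relation \eqref{eq:pi_i} to pass between the two generating families, and obtain the reverse inclusion from $p_i\in\Mcc$. The only difference is that you spell out the monomial-injectivity argument for algebraic independence of the $v\ba_i$, which the paper simply asserts.
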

\begin{proof}
Let $R$ be the $\fund{\Hh}$-subalgebra of $\fund{\Hh}[v][\bba]$ generated by $v\ba_i$ for $i \in \Nn$ (recall that $\ba_0=1$). The $\fund{\Hh}$-algebra $R$ is polynomial in the variables $v\ba_i$. Moreover \eqref{eq:pi_i} implies by induction that the elements $p_i \in \Mcc$ for $i\in \Nn$ constitute another set of polynomial generators of that algebra. In particular $R \subset \Mcc$.

Let now $E$ be a vector bundle of rank $r$ over a smooth projective $k$-scheme $S$. By \rref{lemm:length_rank} the element $\lc E \to S \rc$ is an $\fund{\Hh}$-linear combination of monomials $v^r \ba_\alpha$ with $\length(\alpha) \leq r$. Such monomials are products of $v\ba_i$ for $i \in \Nn$ (including $v\ba_0 =v$). This proves that $\lc E \to S \rc \in R$, and $\Mcc \subset R$.
\end{proof}

\begin{para}
\label{p:grading}
Unless otherwise stated, the grading on the ring $\Mcc$ will be the following. Using the grading on $\fund{\Hh}$, and letting $\ba_i$ have degree $-i$ and $v$ have degree $1$, we obtain a grading on the ring $\fund{\Hh}[v][\bba]$. If $E$ is a rank $r$ vector bundle over a smooth projective $k$-scheme $S$ of pure dimension $d$, then $\lc E \to S\rc$ is homogeneous of degree $-d-r$. It follows that $\Mcc$ is a graded $\fund{\Hh}$-subalgebra of $\fund{\Hh}[v][\bba]$. Since $v$ is homogeneous, this grading extends to $\Mcc[v^{-1}]$.
\end{para}

\begin{para}
\label{p:base-grading}
It will be useful to consider another grading of $\Mcc$, that will be called the \emph{base-grading}. We proceed as in \rref{p:grading}, but let $v$ have degree $0$ instead of $1$. The element $\lc E \to S\rc$ is now homogeneous of degree $-d$.
\end{para}

\begin{para}
\label{p:dim}
Let $R$ be a graded ring. When $x \in R$, we will denote by $\dim(x) \in \Zz\cup \{-\infty\}$ the supremum of those $d\in \Zz$ such that $x$ has a nonzero component of degree $-d$. We have $\dim(x+y) \leq \max\{\dim(x),\dim(y)\}$. If $R$ is an integral domain, then $\dim(xy) = \dim(x) + \dim(y)$. When $R=\Mcc$ or $\Mcc[v^{-1}]$ with the base-grading \rref{p:base-grading}, we will use the notation $\fdim(x)$ instead of $\dim(x)$, reserving the notation $\dim(x)$ for the quantity computed with the usual grading \rref{p:grading}.
\end{para}

\begin{para}
\label{p:dim_bdim_dim}
If $S$ is a smooth projective $k$-scheme and $E \to S$ a vector bundle, we claim that
\[
\dim S \geq \fdim(\lc E \to S\rc) \geq \dim(\lc S \rc).
\]
The second inequality holds because the morphism $\varphi \colon \Mcc \to \fund{\Hh}$ of \rref{p:Mcc_base} is graded (for the base-grading \rref{p:base-grading} of $\Mcc$). To prove the first inequality, we may assume that $S$ has pure dimension $d$. Then $\lc E \to S\rc \in \Mcc$ is homogeneous of degree $-d$ (with respect to the base-grading, see \rref{p:base-grading}), and the first inequality follows.
\end{para}

\begin{definition}
The $\fund{\Hh}$-submodule of $\Mcc$ generated by the classes of line bundles $\lc L \to S \rc$ coincides with $\Mcc \cap v\fund{\Hh}[\bba]$, and will be denoted by $\Pc$. 
\end{definition}

\begin{proposition}
\label{prop:P_basis}
Each of the sets $\{v\ba_i, i \in \Nn\}$ and $\{p_i, i\in \Nn\}$ freely generates the $\fund{\Hh}$-module $\Pc$.
\end{proposition}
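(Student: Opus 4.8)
The plan is to deduce everything from \rref{prop:M_pol}, which asserts that $\Mcc$ is a \emph{polynomial} $\fund{\Hh}$-algebra on the family $\{v\ba_i : i \in \Nn\}$, by isolating inside this polynomial algebra the part that is homogeneous of degree one in the variable $v$. Indeed $\Pc = \Mcc \cap v\fund{\Hh}[\bba]$, and $v\fund{\Hh}[\bba]$ is nothing but the $\fund{\Hh}$-span of the monomials of $\fund{\Hh}[v][\bba]$ of degree exactly one in $v$.

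First I would record the free basis furnished by \rref{prop:M_pol}: the monomials $\prod_{i\in\Nn}(v\ba_i)^{\beta_i}$, indexed by finitely supported multi-indices $\beta$, form a free $\fund{\Hh}$-basis of $\Mcc$. Viewed inside $\fund{\Hh}[v][\bba]$, such a monomial equals $v^{|\beta|}\prod_{i\geq1}\ba_i^{\beta_i}$ with $|\beta| = \sum_i \beta_i$. One checks that distinct $\beta$ produce distinct monomials of the ambient polynomial ring: the $\ba_i$-exponents recover $\beta_i$ for $i\geq 1$, the $v$-exponent recovers $|\beta|$, and then $\beta_0$ is determined (here one uses $v\ba_0 = v$). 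Hence these basis elements remain distinct members of the standard $\fund{\Hh}$-basis of $\fund{\Hh}[v][\bba]$, with no cancellation and no coincidence. This bookkeeping point — that the intersection with $v\fund{\Hh}[\bba]$ may be computed monomial by monomial, with no interference between different $\beta$ — is the only place requiring any care, and it rests squarely on the polynomiality of \rref{prop:M_pol}; I do not expect a genuine obstacle beyond it.

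Consequently, an element $x = \sum_\beta c_\beta \prod_i(v\ba_i)^{\beta_i}$ of $\Mcc$ lies in $\Pc$ if and only if $c_\beta = 0$ whenever $|\beta| \neq 1$. Since $|\beta| = 1$ holds precisely for the standard basis vectors $\beta = e_i$, for which $\prod_i(v\ba_i)^{\beta_i} = v\ba_i$, this yields $\Pc = \bigoplus_{i\in\Nn}\fund{\Hh}\cdot v\ba_i$; in particular $\{v\ba_i : i \in \Nn\}$ is a free $\fund{\Hh}$-basis of $\Pc$. Incidentally, this also reconfirms the identification of $\Pc$ with the submodule generated by the line bundle classes, each $\lc L \to S\rc$ being of degree one in $v$ by \eqref{eq:class_P}.

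Finally, for the family $\{p_i\}$ I would invoke \eqref{eq:pi_i}, which gives $p_i = \sum_{j=0}^i \lc\Pp^{i-j}\rc\, v\ba_j$. As $\lc\Pp^0\rc = 1$, this expresses the $p_i$ in terms of the basis $\{v\ba_j\}$ through a lower-triangular transition matrix with $1$'s on the diagonal and entries in $\fund{\Hh}$. Such a matrix is invertible over $\fund{\Hh}$, so $\{p_i : i \in \Nn\}$ is again a free $\fund{\Hh}$-basis of $\Pc$ — the same unitriangular change of basis already exploited in the proof of \rref{prop:M_pol}. Once $\{v\ba_i\}$ has been identified as a basis, this last step is purely formal.
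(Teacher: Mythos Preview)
Your proof is correct and takes essentially the same approach as the paper, which simply says ``This follows from \rref{prop:M_pol}.'' You have merely unpacked in detail what that one-line reference means: isolating the $v$-degree-one part of the polynomial algebra and then using the unitriangular change of basis already present in the proof of \rref{prop:M_pol}.
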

\begin{proof}
This follows from \rref{prop:M_pol}.
\end{proof}

\section{Equivariant theory}
\numberwithin{theorem}{subsection}
\label{section:cyclic}

In this section $\Hh$ is an oriented cohomology theory, and $G$ a subgroup of $\Gm$, as algebraic groups over $k$. Note that $G=\Gm$ or $G=\mun$ for some $n \in \Nn\smallsetminus\{0\}$. Observe that any character of an algebraic group $A$ over $k$ gives rise to such a subgroup, by taking the image of the associated morphism $A \to \Gm$ (see e.g.\ \cite[(1.73), (5.39)]{Milne-AG}). We refer e.g.\ to \cite[I]{SGA3-1} for the notions related to group actions on schemes. Actions on $k$-schemes will always be over $k$.

\subsection{Equivariant cohomology}

\begin{para}
We will denote by $\sigma$ the one-dimensional $G$-representation induced by the embedding $G \subset \Gm$, and think of it as a $G$-equivariant line bundle over $\Speck$. For $m\in \Nn$, we write $m\sigma = \sigma^{\oplus m}$, and $m\sigma -0$ for the complement of the zero-section. When $X$ is a $k$-scheme with trivial $G$-action and $E$ is a vector bundle over $X$, we will denote by $E\sigma$ the $G$-equivariant vector bundle $E \otimes \sigma$ over $X$. 
\end{para}

\begin{lemma}
\label{lemm:X_m}
Let $X \in \Sm_k$ with a $G$-action, and $m\in \Nn$. Let $X_m' = X \times (m\sigma - 0)$. Then
\begin{enumerate}[(i)]
\item \label{lemm:X_m:1}
the categorical quotient $X_m = X_m'/G$ exists in $\Sm_k$,
\item \label{lemm:X_m:2}
the morphism $X_m' \to X_m$ is faithfully flat,
\item the morphism $G \times X_m' \to X'_m\times_{X_m} X'_m$ given by $(g,x) \mapsto (g\cdot x,x)$ is an isomorphism.
\end{enumerate}
\end{lemma}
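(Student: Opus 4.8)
The three assertions together amount to the single statement that $X_m' \to X_m := X_m'/G$ is a faithfully flat $G$-torsor onto a smooth quasi-projective quotient. Indeed, granting (i), item (iii) expresses that the orbit equivalence relation on $X_m'$ is represented by $X_m' \times_{X_m} X_m'$, which combined with the faithful flatness (ii) is exactly the definition of an fppf $G$-torsor; conversely, a $G$-torsor is automatically a categorical quotient. So the plan is to construct $X_m$ as a smooth quasi-projective scheme together with a faithfully flat torsor projection, and then to read off (ii), (iii), and the categorical quotient property. The first observation is that $G$ acts freely on $m\sigma - 0 = \mathbb{A}^m \setminus 0$: the stabiliser of a nonzero vector under scalar multiplication is trivial. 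Hence $G$ acts freely on $X_m' = X \times (m\sigma - 0)$, freeness being inherited from the second factor.

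To build the quotient I would argue locally on $m\sigma - 0$. Cover it by the $G$-invariant affine opens $U_i = \{v_i \neq 0\}$, and note the $G$-equivariant isomorphism $U_i \cong \Gm \times \mathbb{A}^{m-1}$, $v \mapsto (v_i, (v_j/v_i)_{j \neq i})$, under which $G$ acts by multiplication on the $\Gm$-factor and trivially on $\mathbb{A}^{m-1}$. This reduces the construction of $X_m'/G$ over the chart $U_i$ to the local model $(X \times \Gm)/G$ (times $\mathbb{A}^{m-1}$), the diagonal action having $G$ act by multiplication on $\Gm$.

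The heart of the proof is this local model. When $G = \Gm$, the map $(x,s) \mapsto (s^{-1}\cdot x, s)$ is a $\Gm$-equivariant automorphism of $X \times \Gm$ carrying the diagonal action to the action trivial on the first factor; hence $(X \times \Gm)/\Gm \cong X$ and the projection is the trivial (in particular faithfully flat) $\Gm$-torsor, with smooth quotient. When $G = \mun$, no such global untwisting is available, and instead I would use the $\mun$-torsor $\Gm \to \Gm$, $s \mapsto s^n$, whose base is again smooth: the diagonal quotient $(X \times \Gm)/\mun$ is the bundle associated to this torsor with fibre $X$, which is fppf-locally isomorphic to $X \times \Gm$ over the base $\Gm = \Gm/\mun$, hence smooth (as $X$ and the base are), with faithfully flat torsor projection. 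Constructing this quotient as an honest scheme requires the effectivity of fppf descent of the quasi-projective $\mun$-scheme $X$ along the torsor, for which I would invoke the existence of quotients of quasi-projective schemes by free actions of finite diagonalisable group schemes (orbits lying in affine opens), together with an equivariant linearisation of $X$.

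Finally, these local quotients glue, over the overlaps $U_i \cap U_j$, to a scheme $X_m \in \Sm_k$ equipped with a faithfully flat projection from $X_m'$; smoothness and quasi-projectivity are checked on the charts. Assertion (ii) is then immediate, and (iii) --- being local on the base $X_m$ --- follows from the torsor structure on each chart, where it holds by construction. The categorical quotient property (i) follows because an fppf $G$-torsor represents the fppf-sheaf quotient, so every $G$-invariant morphism out of $X_m'$ factors uniquely through $X_m$. I expect the main obstacle to be the $\mun$-case: producing the quotient as a genuine (quasi-projective, smooth) scheme and verifying the torsor property when $\mun$ may be non-smooth and the torsor inseparable (i.e.\ $\carac k \mid n$), which is where the descent and linearisation inputs are essential, and where the quasi-projectivity of the glued quotient must be controlled.
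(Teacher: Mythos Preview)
Your chart-by-chart strategy is reasonable and the local analysis is sound, but there is a real gap: quasi-projectivity is \emph{not} a Zariski-local property, so the claim that ``smoothness and quasi-projectivity are checked on the charts'' is false as stated. A scheme glued from quasi-projective opens need not be quasi-projective, yet the lemma asks for $X_m\in\Sm_k$. You do flag this at the very end (``the quasi-projectivity of the glued quotient must be controlled''), but you never supply the missing argument. What is actually needed is a $G$-equivariant ample line bundle on $X$: it yields a relatively ample line bundle on the associated $X$-bundle over the base, hence quasi-projectivity of the total space. For $G=\Gm$ such a linearisation is exactly Sumihiro's theorem (applicable since $X$ is smooth, hence normal, and $G$ is connected); you mention linearisation only in the $\mun$ discussion, but it is in the $\Gm$ case that it does the essential work.

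The paper proceeds differently and avoids gluing altogether. For $G=\mun$ it applies \cite[V, Th\'eor\`eme~4.1 and Remarque~5.1]{SGA3-1} directly to the whole $X'_m$: since $\mun$ is finite and acts freely, and every finite subset of the quasi-projective $X'_m$ lies in an affine open, the quotient exists in one stroke as a quasi-projective scheme satisfying (ii) and (iii). Your local detour through $(X\times\Gm)/\mun$ is thus an unnecessary layer invoking the same input. For $G=\Gm$ the paper first treats $X=\Speck$ explicitly (where $X_m=\Pp^{m-1}$ and $X'_m$ is the complement of the zero-section in $\Oc_{\Pp^{m-1}}(1)$), and then handles general $X$ via \cite[Theorem~1.6]{Sumihiro} together with \cite[Proposition~7.1]{GIT}. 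Your untwisting $(X\times\Gm)/\Gm\cong X$ is essentially the fibrewise content of this reduction; what the paper's citations supply is precisely the global quasi-projectivity that your sketch leaves open. Finally, note that the paper deduces smoothness of $X_m$ a posteriori from (ii) via faithfully flat descent, rather than checking it on charts.
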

\begin{proof}
First observe that in \eqref{lemm:X_m:1} it suffices to show that $X_m$ exists as a quasi-projective $k$-scheme. Indeed, since $X'_m$ is smooth over $k$, it then follows from \eqref{lemm:X_m:2} that $X_m$ is smooth over $k$ by \cite[(17.7.7)]{ega-4-4}.

Consider now the case when $G=\mun$ with $n \in \Nn\smallsetminus\{0\}$, so that $G$ is finite. Since $X$ is quasi-projective over $k$, every finite subset of $X \times (m\sigma-0)$ is contained in an open subscheme, and the statements are proved in \cite[V, Th\'eor\`eme~4.1 and Remarque~5.1]{SGA3-1} (note that $G$ acts freely on  $m\sigma -0$, hence also on $X_m'$).

Finally consider the case $G=\Gm$. The statements certainly hold in the special case $X=\Speck$, since $X_m=\Pp^{m-1}$ (where $\Pp^{-1}=\varnothing$) and $X_m'$ is the complement of the zero-section in $\Oc_{\Pp^{m-1}}(1)$. As $G$ is connected, the case of an arbitrary $X \in \Sm_k$ is deduced using \cite[Theorem~1.6]{Sumihiro} and \cite[Proposition~7.1]{GIT}.
\end{proof}

\begin{definition}
\label{def:H_G}
For any $m \in \Nn$, we view $m\sigma$ as the $G$-equivariant subbundle (over $\Speck$) of $(m+1)\sigma$ given by the vanishing of the last coordinate. When $X \in \Sm_k$, we write $X_m = (X \times (m\sigma-0))/G \in \Sm_k$ (see \rref{lemm:X_m}). Using the pullbacks along the induced morphisms $X_m \to X_{m+1}$, we consider the limit in the category of graded $\Hh(\Speck)$-algebras
\[
\Hh_G(X) = \lim_m \Hh(X_m) = \lim_m \Hh((X \times (m\sigma-0))/G).
\]
Its homogeneous component $\Hh_G^i(X)$ of degree $i \in \Zz$ is the limit of the groups $\Hh^i(X_m)$.
\end{definition}

\begin{para}
The theory $\Hh_G$ is endowed with pushforwards along projective $G$-equivariant morphisms and pullbacks along $G$-equivariant morphisms.
\end{para}

\begin{para}
\label{p:Chern_colim}
Assume that $G$ acts on $X \in \Sm_k$, and let $E \to X$ be a $G$-equivariant vector bundle. For any $m\in \Nn$, the scheme $E_m = (E \times (m\sigma-0))/G$ is naturally a vector bundle over $X_m = (X \times (m\sigma -0))/G$, and for any $n\leq m$, we have $E_m \times_{X_m} X_n=E_n$. For any $i \in \Nn$, we set
\[
c_i(E) = \lim_m c_i(E_m) \in \Hh_G(X).
\]
\end{para}

\begin{lemma}
\label{lemm:power_series}
Let $X \in \Sm_k$ with a $G$-action, and let $L_1,\dots,L_p$ be $G$-equivariant line bundles over $X$. Then there exists a unique morphism of graded $\Hh_G(X)$-algebras
\[
\lambda \colon \Hh_G(X)[[t_1,\dots,t_p]] \to \Hh_G(X)
\]
mapping $t_i$ to $c_1(L_i)$ for $i=1,\dots,p$ (recall our convention \rref{def:graded_power_series} on power series).
\end{lemma}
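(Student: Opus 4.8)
The plan is to work level by level in the inverse limit $\Hh_G(X) = \lim_m \Hh(X_m)$ of \rref{def:H_G}, the crucial point being that the first Chern class of a line bundle becomes nilpotent on each finite approximation $X_m$. Concretely, fix $m$ and write $c_i = c_1((L_i)_m) \in \Hh(X_m)$, so that $c_1(L_i) = \lim_m c_i$ by \rref{p:Chern_colim}. I claim each $c_i$ is nilpotent. Indeed $X_m$ is smooth quasi-projective by \rref{lemm:X_m}, so $(L_i)_m = A \otimes B^{-1}$ for very ample line bundles $A,B$; choosing a closed embedding of $X_m$ in a projective space via $A$ (resp.\ $B$) and pulling back the relation $c_1(\Oc(1))^{N+1}=0$ of \rref{p:projective_space} shows that $c_1(A)$ and $c_1(B)$ are nilpotent, whence $c_i = c_1(A) +_{\Hh} \fgl{-1}(c_1(B))$ is nilpotent, being a power series without constant term in two nilpotent elements. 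Consequently there is an integer $N_m$ with $c_i^{N_m}=0$ for all $i$, so that the ideal $(c_1,\dots,c_p)^{pN_m}\subset\Hh(X_m)$ vanishes.

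To construct $\lambda$, recall from \rref{def:graded_power_series} that a homogeneous element of $\Hh_G(X)[[t_1,\dots,t_p]]$ is a (possibly infinite) sum $f = \sum_\beta r_\beta t^\beta$ over multi-indices $\beta \in \Nn^p$, with each $r_\beta \in \Hh_G(X)$ homogeneous; such infinite sums genuinely occur since $\Hh_G(X)$ is unbounded in negative degrees. I define the image of $\lambda(f)$ in $\Hh(X_m)$ to be $\sum_\beta (r_\beta)_m\, c_1^{\beta_1}\cdots c_p^{\beta_p}$, where $(r_\beta)_m$ is the image of $r_\beta$. By the nilpotency above, every term with some $\beta_i \geq N_m$ vanishes, so this is a finite sum and a well-defined element of $\Hh(X_m)$. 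These elements are compatible with the transition maps $\Hh(X_{m+1}) \to \Hh(X_m)$, which are ring homomorphisms carrying $c_1((L_i)_{m+1})$ to $c_1((L_i)_m)$, and therefore assemble to an element $\lambda(f) \in \Hh_G(X)$. Extending additively over homogeneous components and checking multiplicativity and $\Hh_G(X)$-linearity —all of which may be verified after projecting to each $\Hh(X_m)$, where every sum is finite— shows that $\lambda$ is a morphism of graded $\Hh_G(X)$-algebras sending $t_i$ to $c_1(L_i)$; it is graded because $c_1$ and $t_i$ both have degree one.

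For uniqueness, let $\lambda'$ be another such morphism and fix a homogeneous $f = \sum_\beta r_\beta t^\beta$. For each $N$ write $f = f_{<N} + g_N$, where $f_{<N}$ is the finite partial sum over $|\beta| < N$ and $g_N \in (t_1,\dots,t_p)^N$. Any $\Hh_G(X)$-algebra morphism carries $g_N$ into the ideal $(c_1(L_1),\dots,c_1(L_p))^N$, whose image in $\Hh(X_m)$ vanishes as soon as $N \geq pN_m$. Hence in $\Hh(X_m)$ both $\lambda(f)$ and $\lambda'(f)$ coincide with the common value $\lambda(f_{<N}) = \lambda'(f_{<N})$, a finite expression determined by the images of the $t_i$ together with $\Hh_G(X)$-linearity. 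As this holds for every $m$, and $\Hh_G(X) = \lim_m \Hh(X_m)$ injects into $\prod_m \Hh(X_m)$, we conclude $\lambda = \lambda'$. The main obstacle is precisely this reconciliation of the topological (power-series) structure of the source with the inverse-limit structure of the target: the nilpotency of $c_1$ at each finite level is what collapses the infinite sums to finite ones, and it simultaneously underpins both existence and uniqueness.
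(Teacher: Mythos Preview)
Your proof is correct and follows essentially the same approach as the paper: both arguments work level by level in the inverse system $\Hh_G(X)=\lim_m\Hh(X_m)$, use the nilpotence of first Chern classes on each $X_m$ to reduce the evaluation of a power series to a finite sum, and then observe that these finite evaluations are compatible and determine $\lambda$ uniquely. The only difference is cosmetic: the paper cites \cite[(2.1.13)]{inv} for the nilpotence of $c_1$, whereas you spell this out via very ample line bundles and \rref{p:projective_space}, and you give a slightly more explicit uniqueness argument.
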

\begin{proof}
Let $I$ be the ideal of $\Hh_G(X)$ generated by $c_1(L_1),\dots,c_1(L_p)$. For a fixed $m \in \Nn$, the image of $I^n$ in $\Hh(X_m)$ vanishes for $n$ large enough by \cite[(2.1.13)]{inv}. Thus for any $f \in \Hh_G(X)[[t_1,\dots,t_p]]$, having image $f_m \in \Hh(X_m)[[t_1,\dots,t_p]]$, we may consider the element $y_m = f_m(c_1(L_1),\dots,c_1(L_p)) \in \Hh(X_m)$. The image of $\lambda(f) \in \Hh_G(X)$ must coincide with $y_m$, proving the unicity of $\lambda$. Conversely, the elements $y_m$ for $m\in \Nn$ define an element $\lambda(f) \in \Hh_G(X)$, and the induced map $\lambda$ has the required properties. 
\end{proof}

\begin{para}
\label{p:fgl_equ}
The assignment $E \mapsto E_m$ in \rref{p:Chern_colim} is compatible with tensor products. Thus, if $L,M$ are $G$-equivariant line bundles over $X \in \Sm_k$, we have, taking \rref{lemm:power_series} into account,
\[
c_1(L\otimes M) = c_1(L) +_{\Hh} c_1(M) \in \Hh_G(X).
\]
\end{para}

\begin{para}
\label{p:zero-locus-eq}
Let $E$ be a $G$-equivariant vector bundle over $X \in \Sm_k$. If $Z \in \Sm_k$ is the zero-locus of a $G$-equivariant regular section of $E$, then $[Z] = c_r(E) \in \Hh_G(X)$. This follows from \dref{lemm:zero-locus}{lemm:zero-locus:2} by taking the limit.
\end{para}

\begin{para}
\label{p:trivial_action}
Assume that $G$ acts trivially on $X\in \Sm_k$. Then $\Hh_G(X)$ is endowed with a structure of graded $\Hh(X)$-algebra, via the pullbacks along the morphisms 
\[
X_m = (X \times (m\sigma -0))/G = X \times ((m\sigma -0)/G) \to X.
\]
\end{para}

\begin{para}
\label{p:t}
Set $t=c_1(\sigma) \in \Hh_G(\Speck)$. In view of \rref{lemm:power_series} and \rref{p:trivial_action}, for any $X\in \Sm_k$ with trivial $G$-action, there exists an induced morphism of graded $\Hh(X)$-algebras (see \rref{def:graded_power_series})
\begin{equation}
\label{eq:Ht}
\Hh(X)[[t]] \to \Hh_G(X).
\end{equation}
\end{para}

\begin{para}
\label{p:trivial_group}
Assume that $G=1$ and that $\Hh$ satisfies the localisation axiom \rref{def:loc}. It then follows from the homotopy invariance and transversality axioms \cite[(2.1.3)]{inv} that the morphisms $\Hh(X) \to \Hh(X \times (m\sigma -0))$ are bijective for all $m$, hence $\Hh(X) = \Hh_G(X)$ (under the algebra structure of \rref{p:trivial_action}).
\end{para}

\begin{para}
\label{p:change_of_groups}
Let $G' \subset G$ be a subgroup. The pullbacks along $(X \times (m\sigma-0))/G' \to (X \times (m\sigma-0))/G$ induce a morphism of graded $\Hh(\Speck)$-algebras $\Hh_G(X) \to \Hh_{G'}(X)$. These morphisms are compatible with pullbacks along morphisms $Y \to X$ in $\Sm_k$, as well as with pushforwards along projective morphisms in $\Sm_k$ (this follows from the transversality axiom \cite[(2.1.3.v)]{inv}). Taking $G'=1$ and $X=\Speck$, in view of \rref{p:trivial_group}, if $\Hh$ satisfies the localisation axiom \rref{def:loc} we obtain a morphism of graded $\Hh(\Speck)$-algebras $\varepsilon \colon \Hh_G(\Speck) \to \Hh(\Speck)$. In the notation of \rref{p:t}, we have $\varepsilon(t) = c_1(1)=0$.
\end{para}

\begin{lemma}
\label{lemm:Gm-trivial}
Let $G=\Gm$. If $G$ acts trivially on $X\in \Sm_k$, then the morphism \eqref{eq:Ht} is bijective.
\end{lemma}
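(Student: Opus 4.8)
The plan is to compute $\Hh_G(X)$ explicitly by unwinding the Borel construction, and to match the result with the graded power series ring. Since $G=\Gm$ acts trivially on $X$, the scheme $X_m = (X \times (m\sigma-0))/G$ of \rref{def:H_G} splits as $X \times ((m\sigma-0)/G) = X \times \Pp^{m-1}$, the last identification being the case $X=\Speck$ recorded in the proof of \rref{lemm:X_m}. Therefore $\Hh_G(X) = \lim_m \Hh(X \times \Pp^{m-1})$, and the projective bundle formula \rref{p:projective_space} yields isomorphisms of graded $\Hh(X)$-algebras $\Hh(X \times \Pp^{m-1}) \simeq \Hh(X)[y]/y^m$, where $y = c_1(\Oc(1))$ has degree $1$, compatibly with the transition maps (pullback along $\Pp^{m-1} \hookrightarrow \Pp^m$ fixes $y$).

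Next I would identify the image of $t = c_1(\sigma)$ under these isomorphisms. By \rref{p:Chern_colim} its restriction to $X_m$ is $c_1(\sigma_m)$, where $\sigma_m$ is a line bundle over $\Pp^{m-1}$; a direct inspection of the associated bundle shows $\sigma_m \simeq \Oc(-1)$. Using \rref{p:fgl_equ} applied to $\Oc(-1) = \Oc(1)^\vee$, the restriction of $t$ is then $\fgl{-1}(y)$. By \eqref{eq:fgl} we have $\fgl{-1}(y) = -y + u_2 y^2 + \cdots$, so that in $\Hh(X)[y]/y^m$ the element $t$ equals $y$ times a unit; consequently $t$ is again an $\Hh(X)$-algebra generator, satisfies $t^m = 0$, and $1, t, \ldots, t^{m-1}$ form an $\Hh(X)$-basis. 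Hence $\Hh(X \times \Pp^{m-1}) \simeq \Hh(X)[t]/t^m$ as graded $\Hh(X)$-algebras, still compatibly with the transition maps, which fix $t = c_1(\sigma)$.

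Passing to the limit over $m$, I would conclude $\Hh_G(X) \simeq \lim_m \Hh(X)[t]/t^m$, which by \rref{def:graded_power_series} is exactly the graded power series algebra $\Hh(X)[[t]]$ with $t$ in degree $1$. By construction this isomorphism sends $t$ to $c_1(\sigma)$, so it coincides with the morphism \eqref{eq:Ht}, which is therefore bijective.

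The main obstacle is the middle step: one must identify the globally defined equivariant class $t = c_1(\sigma)$ with the projective bundle coordinate in a way compatible across all $m$. This requires both the concrete computation of $\sigma_m$ as a line bundle on $\Pp^{m-1}$ and the formal group law change of variable replacing $y$ by $\fgl{-1}(y)$; the verification that the latter is a unit multiple of $y$ (and hence a legitimate truncated coordinate) rests on the invertibility of the leading coefficient $-1$. A secondary point requiring care is that the relevant inverse limit produces the graded power series algebra of \rref{def:graded_power_series}, rather than the full power series ring.
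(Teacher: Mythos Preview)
Your approach is essentially the paper's: identify $X_m$ with $X\times\Pp^{m-1}$, invoke the projective bundle formula \rref{p:projective_space}, and pass to the limit. The paper does this in two lines by asserting that $t$ restricts to $c_1(\Oc(1))$ on each $\Pp^{m-1}$, so that no change of variable is needed.

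Your extra step through the formal group law arises from computing $\sigma_m\simeq\Oc(-1)$, whereas the paper takes $\sigma_m\simeq\Oc(1)$ (consistently with its identification of $m\sigma-0$ with the complement of the zero-section in $\Oc_{\Pp^{m-1}}(1)$ in the proof of \rref{lemm:X_m}). This is a notorious sign convention, and your argument is robust to it: since $\fgl{-1}(y)=-y+u_2y^2+\cdots$ differs from $y$ by a unit, your change of coordinate is valid either way. So the detour is harmless, and if the paper's sign is the correct one it is simply unnecessary. Everything else---the compatibility of the truncated polynomial descriptions under the transition maps, and the identification of the inverse limit with the \emph{graded} power series ring of \rref{def:graded_power_series}---is handled correctly.
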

\begin{proof}
When $m \geq 1$, we have $(m\sigma -0)/G = \Pp^{m-1}$, and $t \in \Hh_G(\Speck)$ is mapped to $c_1(\Oc(1)) \in \Hh(\Pp^{m-1})$. The lemma follows from the isomorphism of \rref{p:projective_space}.
\end{proof}

\begin{lemma}
\label{lemm:mun-trivial}
Let $G=\mun$ with $n\in \Nn\smallsetminus\{0\}$, and $X \in \Sm_k$ with trivial $G$-action. Assume that $\Hh$ satisfies the localisation axiom \rref{def:loc}, and that $\Hh(X)$ has no $n$-torsion. Then the morphism \eqref{eq:Ht} induces an isomorphism of graded $\Hh(X)$-algebras
\[
\Hh(X)[[t]]/\fgl{n}(t) \simeq \Hh_G(X). 
\]
\end{lemma}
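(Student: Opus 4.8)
The plan is to show that the morphism \eqref{eq:Ht} factors through $\Hh(X)[[t]]/\fgl{n}(t)$ and that the induced map is an isomorphism of graded $\Hh(X)$-algebras; the latter I would prove by computing each finite approximation $\Hh(X_m)$ via the localisation axiom and then passing to the limit, the no-$n$-torsion hypothesis intervening only at that last step. For the factorisation, observe that $\sigma^{\otimes n}$ is the trivial $\mun$-equivariant line bundle over $\Speck$, since the character $\zeta \mapsto \zeta^n$ of $\mun$ is trivial; hence its approximations over $Y_m=(m\sigma-0)/\mun$ are trivial line bundles and $c_1(\sigma^{\otimes n})=0 \in \Hh_{\mun}(X)$. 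On the other hand $c_1(\sigma^{\otimes n}) = \fgl{n}(c_1(\sigma)) = \fgl{n}(t)$ by \rref{p:fgl_equ}, so \eqref{eq:Ht} induces a morphism $\bar\varphi \colon \Hh(X)[[t]]/\fgl{n}(t) \to \Hh_{\mun}(X)$.

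Next I would compute $\Hh(X_m)$. As $\mun$ acts trivially on $X$ we have $X_m = X \times Y_m$, and the map $v \mapsto v^{\otimes n}$ identifies $Y_m$ with the complement of the zero-section in the line bundle $\Oc(-n)$ over $\Pp^{m-1}=(m\sigma-0)/\Gm$. Write $f \colon Y_m \to \Pp^{m-1}$ for the projection and $\xi = c_1(\Oc(1))$, so that $\Hh(X\times \Pp^{m-1}) = \Hh(X)[\xi]/\xi^m$ by \rref{p:projective_space}. Under the change-of-groups morphism of \rref{p:change_of_groups} for $\mun \subset \Gm$, which on $m$-th approximations is pullback along $f$, the class $t=c_1(\sigma)$ is the image of the corresponding $\Gm$-class, equal to $\xi$ on $\Pp^{m-1}$ by the proof of \rref{lemm:Gm-trivial}; hence $t|_{X_m}=f^*\xi$. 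Applying the localisation axiom to the zero-section $X \times \Pp^{m-1} \hookrightarrow X \times \Oc(-n)$, whose open complement is $X_m$, and using homotopy invariance to identify $\Hh(X\times \Oc(-n))$ with $\Hh(X)[\xi]/\xi^m$, the Gysin map becomes multiplication by $c_1$ of the normal bundle $\Oc(-n)$, namely $\fgl{-n}(\xi)$ (via \rref{lemm:zero-locus} and the projection formula). This gives $\Hh(X_m) \cong \Hh(X)[\xi]/(\xi^m,\fgl{-n}(\xi))$ with $t|_{X_m}=\xi$, the transition maps $\Hh(X_{m+1}) \to \Hh(X_m)$ being reduction modulo $\xi^m$.

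It remains to pass to the limit. Put $A = \Hh(X)$, $R = A[[\xi]]$, $R_m = A[\xi]/\xi^m$, and $g = \fgl{-n}(\xi) = -n\xi + \cdots$, so that $\Hh(X_m) = \coker(g \colon R_m \to R_m)$. Writing $g = \xi h$ with $h = -n + \cdots$, the absence of $n$-torsion in $A$ shows, by comparing lowest-order coefficients, that multiplication by $h$ is injective on $R$ and on each $R_m$; consequently $g$ is injective on $R$, and $K_m := \ker(g \colon R_m \to R_m) = \xi^{m-1}A$. The transition map $K_{m+1}=\xi^m A \to K_m$ is then zero, so $\lim_m K_m = \lim^1_m K_m = 0$. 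Feeding this, together with $\lim^1_m R_m = 0$ (the system $\{R_m\}$ has surjective transitions), into the $\lim$--$\lim^1$ exact sequences attached to $0 \to K_m \to R_m \xrightarrow{g} gR_m \to 0$ and $0 \to gR_m \to R_m \to \Hh(X_m) \to 0$, I obtain $\Hh_{\mun}(X) = \lim_m \Hh(X_m) = R/gR = A[[\xi]]/(\fgl{-n}(\xi))$. Since $\fgl{-n}(\xi)$ and $\fgl{n}(\xi)$ differ by a unit of $A[[\xi]]$ they generate the same ideal, and under $\xi = t|$ this quotient is exactly the source of $\bar\varphi$; thus $\bar\varphi$ is an isomorphism.

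The hard part is this final limit step. The cokernels $\Hh(X_m)$ retain no information about the kernels $K_m$, and it is precisely the control of the inverse system $\{K_m\}$---which collapses to zero transition maps only because $A$ has no $n$-torsion---that forces $\lim^1$ to vanish and prevents $\Hh_{\mun}(X)$ from being a proper quotient of $A[[\xi]]/(\fgl{n}(\xi))$. Without the torsion hypothesis the leading coefficient $-n$ of $h$ can fail to be a non-zero-divisor, the kernels $K_m$ acquire nontrivial transitions, and the identification breaks down. A secondary, purely bookkeeping obstacle is the identification of $t|_{X_m}$ with $\xi$ through the change-of-groups map and of the normal bundle of the zero-section with $\Oc(-n)$.
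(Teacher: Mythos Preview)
Your proof is correct and follows essentially the same route as the paper: compute each approximation $\Hh(X_m)$ via the localisation sequence for the zero-section in a line bundle over $\Pp^{m-1}\times X$, identify the Gysin map as multiplication by $\fgl{\pm n}$ of $c_1(\Oc(1))$, and pass to the limit using a $\lim^1$ vanishing argument that relies on the $n$-torsion-freeness. The only differences are cosmetic: you work with $\Oc(-n)$ and $\fgl{-n}(\xi)$ where the paper uses $\Oc(n)$ and $\fgl{n}(x)$ (the ideals agree, as you note), and you handle the limit by tracking the kernel system $K_m=\xi^{m-1}A$ explicitly, whereas the paper rewrites the localisation sequence as the short exact sequence $0 \to \Hh(X)[x]/x^{m-1} \xrightarrow{\fgl{n}(x)} \Hh(X)[x]/x^m \to \Hh(B_m\times X) \to 0$ before taking the limit.
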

\begin{proof}
Let $m \in \Nn\smallsetminus\{0\}$. Observe that $(m\sigma-0)/G$ is isomorphic to the complement $B_m$ of the zero-section in the line bundle $\Oc_{\Pp^{m-1}}(n)$ over $\Pp^{m-1}$. Consider the line bundle $L=\Oc_{\Pp^{m-1}}(1) \times X$ over $\Pp^{m-1} \times X$. Let $j\colon \Pp^{m-1} \times X \to L^{\otimes n}$ be the zero-section and $p \colon L^{\otimes n} \to \Pp^{m-1} \times X$ the projection. By homotopy invariance \cite[(2.1.3.vii)]{inv}, the morphism $p^* \colon \Hh(\Pp^{m-1}\times X) \to \Hh(L^{\otimes n})$ is bijective, hence by \rref{p:projective_space} we have $\Hh(L^{\otimes n}) = \Hh(X)[x]/x^m$ where $x =p^*c_1(L)$. The localisation axiom identifies $\Hh(B_m \times X)$ with the cokernel of $j_* \colon \Hh(\Pp^{m-1} \times X) \to \Hh(L^{\otimes n})$. By the projection formula \cite[(2.1.3.iii)]{inv}, the image of $j_*$ is the ideal generated by $j_*(1)$. Now, by homotopy invariance
\[
j_*(1) = p^* \circ (p^*)^{-1} \circ j_*(1) = p^* \circ j^* \circ j_*(1) = p^*  c_1(L^{\otimes n}) = \fgl{n}(x).
\]
This yields an exact sequence of graded $\Hh(X)$-modules
\begin{equation}
\label{seq:B_m}
\Hh(X)[x]/x^m \xrightarrow{\fgl{n}(x)}\Hh(X)[x]/x^m \to \Hh(B_m \times X) \to 0
\end{equation}
Since by \eqref{eq:fgl} the element $\fgl{n}(x) \in  \Hh(X)[x]/x^m$ is the product of $x$ with a nonzerodivisor (by assumption $\Hh(X)$ has no $n$-torsion), we deduce an exact sequence
\[
0 \to \Hh(X)[x]/x^{m-1} \xrightarrow{\fgl{n}(x)}\Hh(X)[x]/x^m \to \Hh(B_m \times X) \to 0.
\]
As $t \in \Hh_G(X)$ is mapped in $\Hh(B_m \times X)$ to the image of $x$, the statement follows by taking the limit over $m\geq 1$ (note that $\lim^1_m \Hh(X)[x]/x^{m-1}=0$).
\end{proof}

\subsection{Geometric elements}
In this section, we will use the theories $\K$ and $\Kt$ (see \rref{p:K} and \rref{p:twist}).

\begin{para}
\label{p:class_in_inv}
Let $X$ be a smooth projective $k$-scheme with a $G$-action. Denote by $p\colon X\to \Speck$ its structural morphism. We will write in $\Hh_G(\Speck)$
\[
\lc X \rc_G = p_*(1) \quad \text{ and } \quad \lc u \rc_G = p_*(u) \text{ when $u\in \Hh_G(X)$}.
\]
\end{para}

\begin{definition}
The subset of classes $\lc X \rc \in \Hh_G(\Speck)$, where $X$ runs over the smooth projective $k$-schemes with a $G$-action, forms an abelian monoid  denoted $\Inv_{\Hh}(G)^{\eff}$, or simply $\Inv(G)^{\eff}$. We denote by $\Inv_{\Hh}(G)$, or simply $\Inv(G)$, the associated group. This is a $\fund{\Hh}$-subalgebra of $\Hh_G(\Speck)$. If $X$ has pure dimension $n$, then $\lc X \rc_G \in \Hh^{-n}_G(X)$, so that $\Inv_{\Hh}(G)$ is a graded $\fund{\Hh}$-algebra.
\end{definition}

\begin{para}
\label{p:varepsilon}
Assume that $\Hh$ satisfies the localisation axiom \rref{def:loc}. The morphism $\varepsilon$ of \rref{p:change_of_groups} maps $\lc X \rc_G$ to $\lc X \rc$, and therefore induces a morphism $\varepsilon \colon \Inv_{\Hh}(G) \to \fund{\Hh}$.
\end{para}

\begin{lemma}
\label{lemm:Gm_fund}
Let $G=\Gm$. For any smooth projective $k$-scheme $X$ with a $G$-action, we have $\lc X \rc_G \in \fund{\Hh}[[t]] \subset \Hh(\Speck)[[t]]=\Hh_G(\Speck)$.
\end{lemma}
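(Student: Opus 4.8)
The plan is to work with the finite-dimensional approximations $X_m = (X \times (m\sigma-0))/\Gm$, which are smooth projective $k$-schemes by \rref{lemm:X_m}, and to identify the coefficients of the power series $\lc X \rc_G$ as the solution of a triangular system whose entries are classes of smooth projective schemes.

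First, since $\Gm$ acts trivially on $\Speck$, \rref{lemm:Gm-trivial} identifies $\Hh_G(\Speck)$ with the graded power series ring $\Hh(\Speck)[[t]]$, where $t=c_1(\sigma)$. I would write $\lc X \rc_G = \sum_{i \geq 0} a_i t^i$ with $a_i \in \Hh(\Speck)$, so that the goal becomes showing $a_i \in \fund{\Hh}$ for all $i$. For each $m\geq 1$ we have $(m\sigma-0)/\Gm = \Pp^{m-1}$, and as in the proof of \rref{lemm:Gm-trivial} the transition map $\Hh_G(\Speck) \to \Hh(\Pp^{m-1}) = \Hh(\Speck)[x]/x^m$ sends $t$ to $x = c_1(\Oc(1))$. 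Writing $p_m \colon X_m \to \Pp^{m-1}$ for the projection, the level-$m$ image of $\lc X \rc_G = p_*(1)$ is then the ordinary (nonequivariant) pushforward $(p_m)_*(1) = \sum_{i=0}^{m-1} a_i x^i$.

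The key step is to push all the way down to $\Speck$. Let $\pi \colon \Pp^{m-1} \to \Speck$ be the structural morphism; by \rref{p:projective_space} the element $x^i$ is the class of a linearly embedded $\Pp^{m-1-i}$, whence $\pi_*(x^i) = \lc \Pp^{m-1-i}\rc$. Since $X_m$ is smooth projective and $\pi \circ p_m$ is its structural morphism, functoriality of pushforwards gives
\[
\lc X_m \rc = \pi_*\bigl((p_m)_*(1)\bigr) = \sum_{i=0}^{m-1} a_i \lc \Pp^{m-1-i}\rc.
\]
Reindexing with $k = m-1$ and letting $m$ range over all positive integers, I obtain for every $k \geq 0$ the relation $\lc X_{k+1}\rc = \sum_{i=0}^{k} a_i \lc \Pp^{k-i}\rc$, in which every class $\lc X_{k+1}\rc$ and $\lc \Pp^{j}\rc$ lies in $\fund{\Hh}$.

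Finally, because $\lc \Pp^0 \rc = 1$, this is a unipotent triangular system: solving by induction on $k$ yields $a_k = \lc X_{k+1}\rc - \sum_{i=0}^{k-1} a_i \lc \Pp^{k-i}\rc$, and since $\fund{\Hh}$ is a subring of $\Hh(\Speck)$, each $a_k$ lies in $\fund{\Hh}$ (the base case being $a_0 = \lc X_1 \rc = \lc X \rc$, using $X_1 = X$). This is exactly the assertion $\lc X \rc_G \in \fund{\Hh}[[t]]$. The only genuine content is the identification of the approximations $X_m$ as \emph{smooth projective} schemes (supplied by \rref{lemm:X_m}) together with the passage through $\Pp^{m-1}$; once these are in place, the inversion of the triangular system is automatic from $\lc \Pp^0 \rc = 1$. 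I therefore expect no real obstacle, only the bookkeeping of checking that the equivariant pushforward $p_*(1)$ is computed level-by-level by the ordinary pushforwards $(p_m)_*(1)$ compatibly with the inverse limit defining $\Hh_G(\Speck)$.
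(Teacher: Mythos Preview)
Your proof is correct and is essentially identical to the paper's argument: both compute $\lc X_{m+1}\rc = \sum_{i=0}^{m} a_i \lc \Pp^{m-i}\rc$ by pushing $(p_{m+1})_*(1)$ down from $\Pp^m$ to $\Speck$, and then invert the unipotent triangular system using $\lc \Pp^0 \rc = 1$. One small note: \rref{lemm:X_m} only yields that $X_m$ is smooth and quasi-projective; the projectivity of $X_m$ (equivalently, of $p_m \colon X_m \to \Pp^{m-1}$) follows separately from the projectivity of $X \to \Speck$, which is exactly what the paper invokes.
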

\begin{proof}
For $i\in \Nn$, let $s_i\in \Hh(\Speck)$ be the $t^i$-coefficient of $\lc X \rc_G \in \Hh(\Speck)[[t]]$. Let $m \in \Nn$. The morphism $f \colon X_{m+1} = (X \times ((m+1)\sigma -0))/G \to \Pp^m$ is projective, and we have $f_*(1) = s_0 + s_1 x +\cdots +s_m x^m$ in $\Hh(\Pp^m)$, where  $x = c_1(\Oc_{\Pp^m}(1))$. Applying the pushforward along $\Pp^m \to \Speck$, we obtain in $\Hh(\Speck)$
\[
\lc X_{m+1} \rc = s_0 \lc \Pp^m\rc + \cdots +  s_{m-1} \lc \Pp^1 \rc+ s_m.
\]
If $m=0$, then $s_0 = \lc X_1\rc \in \fund{\Hh}$. If $m>0$, we see that $s_m \in \fund{\Hh}$ by induction on $m$.
\end{proof}

\begin{lemma}
\label{lemm:inj_tau}
Let $G= \mu_n$ and $\Hh = \Kt$. Then $\Laz[[t]]/\fgl{n}(t) \to \Hh_G(\Speck)$ is injective.
\end{lemma}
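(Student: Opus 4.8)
The plan is to apply Lemma~\ref{lemm:mun-trivial} with $X=\Speck$ and then reduce the statement to an integral purity property of the embedding $\Laz=\fund{\Kt}\subset \Kt(\Speck)$. Since $\Kt$ satisfies the localisation axiom and $\Kt(\Speck)=\Zz[u,u^{-1}][\bb]$ is torsion-free (so in particular has no $n$-torsion), Lemma~\ref{lemm:mun-trivial} yields an isomorphism of graded $\Kt(\Speck)$-algebras $\Kt(\Speck)[[t]]/\fgl{n}(t)\simeq\Kt_{\mun}(\Speck)$, under which the map to be studied becomes the map $\Laz[[t]]/\fgl{n}(t)\to B[[t]]/\fgl{n}(t)$ induced by the inclusion $\Laz\subset B$, where $B=\Kt(\Speck)$. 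By \eqref{eq:fgl} the coefficients of $\fgl{n}(t)=nt+\cdots$ lie in $\fund{\Kt}=\Laz$, so $\fgl{n}(t)\in\Laz[[t]]$, and it is a nonzerodivisor in the domains $\Laz[[t]]$ and $B[[t]]$. Injectivity therefore amounts to the statement that, for $h\in B[[t]]$, the condition $\fgl{n}(t)\,h\in\Laz[[t]]$ forces $h\in\Laz[[t]]$.

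I would prove this by comparing coefficients of the powers of $t$. As the map is graded I may take $h=\sum_k h_k t^k$ homogeneous with $h_k\in B$, and the relation $\fgl{n}(t)h\in\Laz[[t]]$ expresses, for each $k$, the element $nh_k$ as the sum of an element of $\Laz$ and an $\Laz$-linear combination of the $h_j$ with $j<k$. An induction on $k$ (using that $B$ is torsion-free to clear the lowest terms) then reduces the whole statement to the single point that $\Laz$ is \emph{saturated} in $B$, i.e.\ that $nb\in\Laz$ with $b\in B$ implies $b\in\Laz$. In the lowest degrees this is clear: $\Laz^{0}=\Zz$ and $\Laz^{-1}=\Zz\lc\Pp^1\rc=\Zz\,(u-2b_1)$ are direct summands of $B^{0}$ and $B^{-1}$.

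The real obstacle is thus to prove that the quotient $B/\Laz$ is torsion-free. Since $\Laz$ and $B$ are torsion-free abelian groups, this is equivalent, for each prime $p$, to the injectivity of $\Laz/p\to B/p$; as $\Laz/p$ is a polynomial ring over $\Zz/p$ and $B/p=(\Zz/p)[u,u^{-1}][\bb]$ is a domain, this is the assertion that the reductions modulo $p$ of the classes of a system of generators of $\Laz$ are algebraically independent. This is exactly where $K$-theory is indispensable: for $\underline{\CH}$ the corresponding statement fails, because $\lc\Pp^1\rc=-2b_1$ in $\underline{\CH}(\Speck)=\Zz[\bb]$ makes $b_1$ a $2$-torsion class modulo $\Laz$; but in $\Kt$ one computes $\lc\Pp^1\rc=u-2b_1$, whose $u$-coefficient is a unit, so that $\Laz^{-1}$ becomes a direct summand. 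To treat all generators at once I would analyse the leading terms of the classes $\lc X\rc=\chi(X,\Oc_X)u^{\dim X}+\cdots\in B$, using the invertible Bott class $u$ to prevent the degeneracies that occur in the Chow-theoretic case; making this independence argument uniform in $p$ is the crux of the proof.
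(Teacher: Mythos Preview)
Your reduction is exactly the paper's: apply Lemma~\ref{lemm:mun-trivial} to identify $\Hh_G(\Speck)$ with $B[[t]]/\fgl{n}(t)$ where $B=\Kt(\Speck)$, then show that $\fgl{n}(t)h\in\Laz[[t]]$ with $h\in B[[t]]$ forces $h\in\Laz[[t]]$ by an induction on the $t$-coefficients. You also correctly isolate the key algebraic input: this induction works precisely when $B/\Laz$ is torsion-free.

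The gap is that you leave this torsion-freeness as ``the crux of the proof'' and propose to attack it by showing $\Laz/p\to B/p$ is injective for each prime $p$ via an explicit analysis of generators. This is essentially reproving the Hattori--Stong theorem, and you do not carry it out. The paper instead simply invokes Hattori--Stong \cite[Proposition~7.16~(1)]{Mer-Ori}, which asserts that the morphism of abelian groups $\Laz\to\Kt(\Speck)$ admits a retraction. Hence $B/\Laz$ is a direct summand of the torsion-free group $B$, and is therefore torsion-free. With this one citation, your argument is complete and identical to the paper's; without it, the proof is unfinished.
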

\begin{proof}
Since $\Hh(\Speck)=\Zz[u,u^{-1}][\bb]$ is torsion-free, in view of \rref{lemm:mun-trivial}, it will suffice to prove that $\Laz[[t]]/\fgl{n}(t) \to \Hh(\Speck)[[t]]/\fgl{n}(t)$ is injective. By the Hattori--Stong theorem \cite[Proposition 7.16 (1)]{Mer-Ori}, the  morphism of abelian groups $\Laz \to \Hh(\Speck)$ admits a retraction. Thus $\Hh(\Speck)/\Laz$ is torsion-free, being a direct summand of $\Hh(\Speck)$. Let $f \in \Hh(\Speck)[[t]]$ be such that $\fgl{n}(t)f \in \Laz[[t]] \subset \Hh(\Speck)[[t]]$. For $i\in \Nn$, let $f_i \in \Hh(\Speck)$ be the $t^i$-coefficient of $f$. For all $m\in \Nn$, we have $nf_m + u_2f_{m-1} +\cdots +u_{m+1}f_0 \in \Laz \subset \Hh(\Speck)$, where $u_i \in \Laz$ are the elements of \eqref{eq:fgl}. Since  $\Hh(\Speck)/\Laz$ has no $n$-torsion, we deduce by induction that $f_i \in \Laz$ for all $i$.
\end{proof}

The next proposition is reminiscent of a result of tom Dieck in topology \cite{tomDieck-Characteristic-II}.
\begin{proposition}
Let $G= \mu_n$ and $\Hh=\Kt$. Assume that $\carac k=0$, and denote by $\Omega$ the algebraic cobordism theory of \cite{LM-Al-07}. Then $\Inv_{\Omega}(G) \to \Inv_{\Hh}(G)$ is bijective.
\end{proposition}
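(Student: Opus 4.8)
The plan is to identify the map in question with the injection furnished by \rref{lemm:inj_tau}, after disposing of surjectivity, which is formal.

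First I would invoke the universal property of algebraic cobordism in characteristic zero \cite{LM-Al-07} to obtain the (unique) morphism of oriented cohomology theories $\phi \colon \Omega \to \Kt$. Being compatible with pullbacks and projective pushforwards, it induces a morphism $\phi_G \colon \Omega_G(\Speck) \to \Kt_G(\Speck)$ carrying $\lc X \rc_G$ to $\lc X \rc_G$ for every smooth projective $k$-scheme $X$ with $G$-action, and hence restricting to the map $\Inv_\Omega(G) \to \Inv_{\Hh}(G)$ under consideration. Surjectivity is then immediate, since $\Inv_{\Hh}(G)$ is generated as a group by the classes $\lc X \rc_G$, each of which is the image under $\phi_G$ of its cobordism counterpart.

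For injectivity, I would observe that $\Inv_\Omega(G)$ is by construction a subring of $\Omega_G(\Speck)$, so that it suffices to prove that $\phi_G$ itself is injective. Both theories satisfy the localisation axiom and are torsion-free on $\Speck$ (with $\Omega(\Speck) = \Laz$ and $\Kt(\Speck) = \Zz[u,u^{-1}][\bb]$), so \rref{lemm:mun-trivial} applies to each and yields
\[
\Omega_G(\Speck) = \Laz[[t]]/\fgl{n}(t), \qquad \Kt_G(\Speck) = \Kt(\Speck)[[t]]/\fgl{n}(t).
\]
Here both $\fund{\Omega}$ and $\fund{\Kt}$ are identified with $\Laz$ via the canonical bijective morphisms of \rref{p:K_fund}, and the coefficients $u_i \in \Laz$ of $\fgl{n}(t)$ in \eqref{eq:fgl} are the same on both sides, because $\phi$ sends the formal group law of $\Omega$ to that of $\Kt$ and therefore induces the identity on $\Laz = \fund{\Omega} = \fund{\Kt}$. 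Under these identifications $\phi_G$ becomes precisely the map $\Laz[[t]]/\fgl{n}(t) \to \Kt_G(\Speck)$ induced by the inclusion $\Laz \subset \Kt(\Speck)$, whose injectivity is exactly the content of \rref{lemm:inj_tau}. Combined with surjectivity, this gives the claim.

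The essentially formal part is the translation into graded power series; the one point deserving care is the compatibility of the two formal group laws under $\fund{\Omega} = \Laz = \fund{\Kt}$, which is what guarantees that the source of $\phi_G$ and the source of the map appearing in \rref{lemm:inj_tau} are literally the same ring $\Laz[[t]]/\fgl{n}(t)$. The single genuinely non-formal input is \rref{lemm:inj_tau} itself, which rests on the Hattori--Stong theorem; everything else is bookkeeping needed to recognise $\phi_G$ as the map treated there.
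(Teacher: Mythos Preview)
Your proof is correct and follows essentially the same route as the paper's: compute $\Omega_G(\Speck)$ and $\Kt_G(\Speck)$ via \rref{lemm:mun-trivial}, then invoke \rref{lemm:inj_tau} for injectivity, with surjectivity being formal. The paper is terser (it does not spell out the universal property or the formal group law compatibility), but the content is the same; you may wish to cite \cite[Theorem~3.2.7]{LM-Al-07} for the localisation axiom of $\Omega$ and \cite[Theorem~4.3.7]{LM-Al-07} for $\Omega(\Speck)=\Laz$, as the paper does.
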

\begin{proof}
The theory $\Omega$ satisfies the localisation axiom \cite[Theorem~3.2.7]{LM-Al-07}. Also $\Omega(\Speck) = \Laz$ by \cite[Theorem~4.3.7]{LM-Al-07}, which is torsion-free \rref{p:K_fund}. In view of \rref{lemm:mun-trivial} and \rref{lemm:inj_tau}, the morphism
\[
\Omega_G(\Speck) = \Laz[[t]]/\fgl{n}(t) \to \Hh(\Speck)[[t]]/\fgl{n}(t) = \Hh_G(\Speck)
\]
is injective, hence so is its restriction $\Inv_{\Omega}(G) \to \Inv_{\Hh}(G)$. Certainly, the latter is also surjective.
\end{proof}

\begin{para}
\label{p:Rep_K}
Denote by $\Rep(G)$ the representation ring of $G$, and by $I_G$ its augmentation ideal, i.e.\ the kernel of the rank morphism $\Rep(G) \to \Zz$. Mapping a representation $V$ to the class of the vector bundle $(V \times (m\sigma -0))/G$ over $(m\sigma -0)/G$ induces a morphism
\begin{equation}
\label{eq:Rep_K}
\Rep(G)[u,u^{-1}] \to \K_G(\Speck).
\end{equation}
\end{para}

\begin{lemma}
\label{lemm:separated}
Let $G=\mu_n$ with $n$ a prime power. Then  $\bigcap_{m\in \Nn} I_G^m =0$.
\end{lemma}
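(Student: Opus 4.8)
The plan is to make the representation ring completely explicit and then compare the $I_G$-adic filtration with the $p$-adic one. Since $G=\mun$ is diagonalisable with character group $\Zz/n$ generated by the class of $\sigma$, the representation ring is $R:=\Rep(G)=\Zz[\sigma]/(\sigma^n-1)$, and the rank morphism sends $\sigma\mapsto 1$, so that the augmentation ideal is the \emph{principal} ideal $I_G=(\sigma-1)$. Consequently $I_G^m=\big((\sigma-1)^m\big)$ for every $m$. The whole point will be that $R$ is a free $\Zz$-module of finite rank $n$ (a basis being $1,\sigma,\dots,\sigma^{n-1}$), hence is separated for the $p$-adic topology: $\bigcap_m p^mR=0$. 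It therefore suffices to show that the $I_G$-adic topology on $R$ refines the $p$-adic one, i.e.\ that $I_G^{N}\subseteq p^mR$ for a suitable $N=N(m)$.

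The key computation, and the only place where the prime-power hypothesis $n=p^k$ intervenes, is the assertion that $(\sigma-1)^{n}\in pR$. Indeed, reducing modulo $p$ and using the iterated Frobenius identity in the commutative $\mathbb{F}_p$-algebra $R/pR$, one has $(\sigma-1)^{p^k}\equiv \sigma^{p^k}-1\pmod p$; since $\sigma^{p^k}=\sigma^{n}=1$ in $R$, this gives $(\sigma-1)^{n}\equiv 0\pmod p$, that is $(\sigma-1)^n\in pR$. Because $I_G$ is principal, this yields at once $I_G^{n}=(\sigma-1)^nR\subseteq pR$. Multiplying this inclusion by itself and using $I_G^{a}I_G^{b}=I_G^{a+b}$ for the principal ideal $I_G$, we obtain $I_G^{mn}\subseteq p^mR$ for every $m\in\Nn$.

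Combining the two steps finishes the argument: any $x\in\bigcap_m I_G^m$ lies in particular in $I_G^{mn}\subseteq p^mR$ for all $m$, hence in $\bigcap_m p^mR=0$. I expect the only genuinely substantive point to be the mod-$p$ nilpotence statement $(\sigma-1)^n\in pR$; everything else is formal (principality of $I_G$, module-finiteness of $R$ over $\Zz$, and $p$-adic separatedness of a finite free module). It is worth stressing that this is precisely where $n$ being a prime power is used, through $x^{p^k}-1=(x-1)^{p^k}$ in characteristic $p$; for general $n$ the factor of $R\otimes\Qq$ attached to a nontrivial proper divisor of $n$ shows that the intersection need not vanish, so the hypothesis cannot be dropped.
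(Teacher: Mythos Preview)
Your proof is correct and follows essentially the same approach as the paper: identify $\Rep(G)=\Zz[\sigma]/(\sigma^n-1)$ with $I_G=(\sigma-1)$, use the prime-power hypothesis to show $(\sigma-1)^n\in pR$ (the paper writes this as $y^n\equiv 0\bmod p$ where $y=1-\sigma$), deduce $I_G^n\subset pR$, and conclude via the $p$-adic separatedness of the finite free $\Zz$-module $R$. The only cosmetic difference is that you phrase the key congruence via Frobenius while the paper phrases it via binomial expansion of $1-(1-y)^n$.
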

\begin{proof}
Let $R=\Rep(G)$. We have $R= \Zz[\sigma]/(1-\sigma^n)$, and the ideal $I_G$ is generated by $y=1-\sigma$. If $n$ is a power of the prime $p$, we have in $R$
\[
0 = 1 - \sigma^n = 1 -(1-y)^n = y^n \mod p.
\]
It follows that $I_G^n \subset pR$, hence $\bigcap_{m\in \Nn} I_G^m \subset \bigcap_{s \in \Nn} p^s R$. But $R$ is a free abelian group, hence contains no nonzero element divisible by every power of $p$.
\end{proof}

\begin{lemma}
\label{lemm:Rep_K}
Let $G=\mu_n$ with $n$ a prime power. Then the morphism \eqref{eq:Rep_K} is injective.
\end{lemma}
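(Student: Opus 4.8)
The plan is to realise \eqref{eq:Rep_K} as the $I_G$-adic completion map of $R:=\Rep(G)[u,u^{-1}]$ and then to deduce injectivity from the separatedness established in \rref{lemm:separated}. Write $B_m=(m\sigma-0)/G$, so that by \rref{def:H_G} we have $\K_G(\Speck)=\lim_m\K(B_m)$, the transition maps being the pullbacks along $B_m\to B_{m+1}$. Denote by $\phi\colon R\to\K_G(\Speck)$ the morphism \eqref{eq:Rep_K}, and by $\phi_m\colon R\to\K(B_m)$ its composite with the projection to the $m$-th term. Since a limit injects into the corresponding product, $\ker\phi=\bigcap_m\ker\phi_m$, and it suffices to compute each $\ker\phi_m$.

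To this end I would use the explicit description of $\K(B_m)$ extracted from the proof of \rref{lemm:mun-trivial}, namely $\K(B_m)=\K(\Speck)[x]/(x^m,\fgl{n}(x))$, where $x$ is the image of $t=c_1(\sigma)$ and $\fgl{n}$ is the multiplicative $n$-series of $\K$. By \rref{p:Chern_colim} the morphism $\phi_m$ sends the representation $\sigma$ to $[(\sigma\times(m\sigma-0))/G]$, which for the standard $K$-theory orientation is $1+ux$; in particular the augmentation ideal $I_G=(\sigma-1)$ maps onto $(ux)=(x)$, and the relation $\sigma^n=1$ corresponds to $\fgl{n}(x)=0$. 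Comparing the presentations $R=\K(\Speck)[\sigma]/(\sigma^n-1)$ and $\K(\Speck)[x]/\fgl{n}(x)$ through $\sigma=1+ux$, one identifies $\phi_m$ with the canonical quotient $R\to R/I_G^mR\cong\K(B_m)$ (the supplementary relation $x^m=0$ amounting to sending $I_G^mR$ to $0$). Hence $\ker\phi_m=I_G^mR$.

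Combining the two steps yields $\ker\phi=\bigcap_m I_G^mR$. Decomposing $R=\bigoplus_{j\in\Zz}\Rep(G)\,u^j$ as a $\Rep(G)$-module gives $I_G^mR=\bigoplus_j I_G^m u^j$, whence $\bigcap_m I_G^mR=\bigoplus_j\big(\bigcap_m I_G^m\big)u^j$. By \rref{lemm:separated} the inner intersection is zero, so $\ker\phi=0$ and $\phi$ is injective.

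The only step carrying genuine content is the identification $\K(B_m)\cong R/I_G^mR$, that is, the matching of the representation-theoretic $I_G$-adic filtration on $R$ with the geometric filtration by powers of $t$ on $\K(B_m)$; equivalently, this asserts that $\phi$ is the $I_G$-adic completion map, with kernel $\bigcap_m I_G^mR$. This rests on the computation of $\K(B_m)$ in \rref{lemm:mun-trivial} together with the expression $1+ut$ for the class of $\sigma$. Once these are in place the argument is purely formal, and the prime-power hypothesis on $n$ intervenes solely through \rref{lemm:separated}.
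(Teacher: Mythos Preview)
Your argument is correct and takes a genuinely different route from the paper's. The paper works in Thomason's equivariant $K$-theory $K_0'(-;G)$: using the equivariant localisation sequence and homotopy invariance for the inclusion $\Speck \hookrightarrow m\sigma$, it identifies $\ker r_m$ with the principal ideal generated by the equivariant Euler class $e_m = (p_m^*)^{-1}\circ(i_m)_*(1)$, then proves the Koszul-type relation $e_m = e_1^m$ via a cartesian-square argument, and finally observes $e_1 \in I_G$ to obtain only the \emph{inclusion} $\ker r_m \subset I_G^m$. You instead reuse the non-equivariant computation of $\K(B_m)$ already carried out in \rref{lemm:mun-trivial}, and exploit the explicit multiplicative formal group law of $\K$ to identify $\phi_m$ with the quotient $R \to R/I_G^m R$, obtaining the \emph{equality} $\ker\phi_m = I_G^m R$. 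Your approach is more elementary (no need for Thomason's machinery) and sharper; the paper's is more conceptual and independent of the specific shape of the $K$-theory formal group law.

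One small imprecision: in the paper's conventions (see the proof of \rref{lemm:P_R}, where $c_1(L^\vee) = u^{-1}(1-[L])$), the class of $L_\sigma$ is $(1-ux)^{-1}$ rather than $1+ux$. This does not affect your argument, since what matters is that $\sigma-1$ maps to a unit multiple of $x$ (as $x$ is nilpotent in $\K(B_m)$), so the ideal $I_G R$ still maps onto $(x)$. The cleanest way to phrase your identification is to change variables to $w=[L_\sigma]$: then $w^n=1$ and $(w-1)^m = 0$ in $\K(B_m)$ (the latter because $(w-1)$ is a unit times $x$), giving directly $\K(B_m) \cong \Zz[u,u^{-1}][w]/(w^n-1,(w-1)^m) = R/I_G^m R$.
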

\begin{proof}
For a $X \in \Sm_k$ with a $G$-action, let us denote by $K'_0(X;G)$ the Grothendieck group of the category of $G$-equivariant coherent $\Oc_X$-modules \cite[\S1.4]{Thomason-group}. For $m\in \Nn$, consider the $G$-equivariant vector bundle $p_m \colon m\sigma \to \Speck$. Let $i_m \colon \Speck \to m\sigma$ be the zero-section and by $v_m \colon m\sigma -0 \to m\sigma$ its open complement. Consider that composite
\[
r_m \colon \Rep(G) = K_0'(\Speck;G) \xrightarrow{p_m^*} K_0'(m\sigma;G) \xrightarrow{v_m^*} K_0'(m\sigma-0;G) =K_0'((m\sigma-0)/G)
\]
(the last equality is obtained by identifying the category of $G$-equivariant coherent modules on $m\sigma-0$ with that of coherent modules on $(m\sigma-0)/G$, see e.g.\ \cite[Proposition~2.4.9]{fpt}). Using the localisation sequence and homotopy invariance \cite[Theorems 4.1 and 2.7]{Thomason-group}, we see that $\ker r_m$ is the image of the composite
\[
\Rep(G) = K_0'(\Speck;G) \xrightarrow{(i_m)_*} K_0'(m\sigma;G) \xrightarrow{(p_m^*)^{-1}} K_0'(\Speck;G) = \Rep(G).
\]
Now the tensor product induces an action of $\Rep(G)$ on $K_0'(Y,G)$ for any $Y \in \Sm_k$ with a $G$-action, and pushforwards as well as pullbacks are $\Rep(G)$-linear. It follows that $\ker r_m$ is the ideal of $\Rep(G)$ generated by $e_m = (i_m)_* \circ (p_m^*)^{-1}(1)$. If $m \geq 1$, consider the diagrams:
\[ \xymatrix{
(m-1)\sigma\ar[rr]^{j} \ar[d]_{p_{m-1}} && m\sigma \ar[d]^q && K_0'((m-1)\sigma;G) \ar[rr]^{j_*} && K_0'(m\sigma;G)\\ 
\Speck \ar[rr]^{i_1} && \sigma && K_0'(\Speck;G) \ar[rr]^{(i_1)_*} \ar[u]^{p_{m-1}^*}&&K_0'(\sigma;G)\ar[u]_{q^*}
}\]
Here the morphism $q$ is given by projecting onto the last factor, and $j$ by the vanishing of the last coordinate. The right-hand square is induced by the left-hand one. To see that the former commutes, observe that the latter commutes and is cartesian, with $q$ flat and $i_1$ finite, and that for any $G$-equivariant coherent $\Speck$-module $V$ the natural $G$-equivariant morphism $q^* \circ (i_1)_*V \to j_* \circ p_{m-1}^*V$ is bijective. It follows that
\[
(p_m^*)^{-1} \circ (i_m)_* = (p_1^*)^{-1} \circ (q^*)^{-1} \circ j_* \circ (i_{m-1})_* = (p_1^*)^{-1} \circ (i_1)_* \circ (p_{m-1}^*)^{-1} \circ (i_{m-1})_*,
\]
so that $e_m = e_1 e_{m-1}$. By induction on $m$, we deduce that $e_m = e_1^m$. Now $e_1$ belongs to the augmentation ideal $I_G$ of $\Rep(G)$ (because its image in $K_0'(\Speck)$ is $c_1(1)=0$), hence $e_m \in I_G^m$. We have proved that $\ker r_m \subset I_G^m$. Taking the limit over $m$ and using \rref{lemm:separated}, we deduce that $\Rep(G) \to \lim_m  K_0'((m\sigma-0)/G)$ is injective, and the statement follows after tensoring with $\Zz[u,u^{-1}]$ over $\Zz$.
\end{proof}

\begin{proposition}
Let $G=\mu_n$, with $n$ a prime power not divisible by $\carac k$. Let $\Hh=\Kt$ or $\Hh=\K$. Then the degree zero component of $\Inv(G)$ may be canonically identified with the Burnside ring of $\Zz/n$.
\end{proposition}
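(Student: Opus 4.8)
The plan is to compare the degree zero part of $\Inv(G)$ with the representation ring $\Rep(G)$ through the morphism \eqref{eq:Rep_K}, exploiting its injectivity from \rref{lemm:Rep_K} (which is exactly where the prime-power hypothesis is needed), and then to recognise the resulting subring of $\Rep(G)$ as the image of the permutation-representation map out of the Burnside ring.

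First I would reduce to zero-dimensional $G$-schemes. Since $\lc X\rc_G$ is homogeneous of degree $-\dim X$ when $X$ is equidimensional, the degree zero component of $\Inv(G)$ is generated by the classes $\lc X\rc_G$ with $\dim X=0$, that is with $X=\Spec A$ for a finite \'etale $k$-algebra $A$ carrying a $G$-action. For such an $X$ the relative tangent bundle vanishes, so the twist in the definition of $\Kt$ is trivial, and the level-$m$ component of $\lc X\rc_G$ is the class in $\K((m\sigma-0)/G)$ of the finite \'etale cover $(X\times(m\sigma-0))/G$ over $(m\sigma-0)/G$, namely the vector bundle associated with the representation $A$. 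Passing to the limit shows that $\lc X\rc_G$ is precisely the image of $[A]\in\Rep(G)$ under \eqref{eq:Rep_K}. Together with \rref{lemm:Rep_K} and \rref{lemm:mun-trivial}, this realises the degree zero part of $\Inv(G)$ as the subgroup $M\subset\Rep(G)$ generated by the classes $[A]$, compatibly with multiplication, since the product $\lc X\rc_G\lc Y\rc_G=\lc X\times_k Y\rc_G$ of two such classes corresponds to the tensor product of the associated representations in $\Rep(G)$.

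Next I would determine $M$ explicitly. Writing $n=p^r$, recall that $\Rep(G)=\Zz[\sigma]/(1-\sigma^n)$; as this ring is unchanged by extension of scalars, the class $[A]$ depends only on the finite $\mu_n(\overline k)$-set $X(\overline k)$, through the weights of $A$. Decomposing $X(\overline k)$ into orbits, there being one orbit type $\mu_n/\mu_{p^i}\simeq\mu_{n/p^i}$ for each $0\le i\le r$, one sees that $[A]$ is a non-negative integer combination of the elements $w_e=\sum_{j=0}^{n/e-1}\sigma^{ej}$, the permutation representation of the orbit with stabiliser $\mu_e$. Hence $M$ is generated by the $w_{p^i}$, so it coincides with the image of the permutation-representation ring map $\pi\colon A(\Zz/n)\to\Rep(G)$ from the Burnside ring. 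Finally $\pi$ is injective: the class $w_{p^i}$ is supported on the subgroup $p^i\Zz/p^r\Zz$ of $\Zz/n$, and since these supports form a chain, a triangular argument on coefficients shows the $w_{p^i}$ are $\Zz$-linearly independent. Thus $\pi$ induces an isomorphism $A(\Zz/n)\simeq M$, which combined with the previous step gives the desired ring isomorphism; it is canonical because it sends $\lc X\rc_G$ to the class of the $\mu_n(\overline k)$-set $X(\overline k)$, and because every subgroup of a cyclic group is characteristic, so that $\mathrm{Aut}(\Zz/n)$ acts trivially on $A(\Zz/n)$ and the identification is independent of the chosen isomorphism $\mu_n(\overline k)\simeq\Zz/n$.

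The step I expect to be the main obstacle is the explicit computation of $M$: one must verify that \emph{every} zero-dimensional $G$-scheme---including nontrivial $\mu_{n/e}$-torsors and covers whose geometric components are permuted by the absolute Galois group of $k$---contributes only permutation representations, and that these exhaust $M$. This is what allows the argument to run over an arbitrary field containing no roots of unity, and it rests on the fact that $[A]\in\Rep(G)$ is detected by the weights of $A$, which can be read off after base change to $\overline k$. The prime-power hypothesis is used decisively through \rref{lemm:separated} and \rref{lemm:Rep_K} to ensure that \eqref{eq:Rep_K}, hence the comparison map from the degree zero part of $\Inv(G)$ to $\Rep(G)$, is injective.
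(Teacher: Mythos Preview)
Your proposal is correct and follows essentially the same strategy as the paper: both arguments compute the class $\lc X\rc_G$ of a zero-dimensional $G$-scheme as the image of the representation $H^0(X,\Oc_X)$ under \eqref{eq:Rep_K}, invoke \rref{lemm:Rep_K} for injectivity (this is indeed where the prime-power hypothesis enters), and identify the resulting subring of $\Rep(G)$ with the Burnside ring via the permutation-representation map, whose injectivity is checked by the same triangular linear-independence argument on the elements $1+\sigma^{n/m}+\cdots+\sigma^{n(m-1)/m}$. The only difference is organizational: the paper defines the map $A(\Zz/n)\to\Inv(G)^0$ first (sending $\Sigma_m$ to $\lc\mu_n/\mu_{n/m}\rc_G$), reduces to algebraically closed $k$ to obtain surjectivity, and then proves injectivity; you instead embed $\Inv(G)^0$ into $\Rep(G)$ and recognise the image, handling the passage to $\overline k$ by noting that $[A]\in\Rep(G)$ is determined by weights. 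Your remark on canonicity (that $\mathrm{Aut}(\Zz/n)$ acts trivially on the Burnside ring since all subgroups are characteristic) makes explicit a point the paper only hints at.
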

\begin{proof}
When $m \in \Nn$ divides $n$, there is a unique isomorphism class $\Sigma_m$ of sets of cardinality $m$ with a transitive $\Zz/n$-action. Denote by $A$ the Burnside ring of $\Zz/n$. The abelian group $A$ is freely generated by the classes $\Sigma_m$ for $m$ dividing $n$. Denote by $\Inv(G)^0$ the degree zero component of $\Inv(G)$. We define a group morphism $f \colon A \to \Inv(G)^0$ by mapping each $\Sigma_m$ to the class of the finite smooth $k$-scheme $\mun/\mu_{n/m} = \Spec (k[x]/(x^m-1))$, with the $G$-action induced by letting $x$ have degree $n/m$ (recall that $\Zz/n$-grading on a $k$-algebra $R$ is the same thing as a $\mun$-action on the $k$-scheme $\Spec R$). If $k'/k$ is a field extension, then $\Hh(\Speck) \to \Hh(k')$ is bijective, hence so is $\Hh_G(\Speck) \to \Hh_{G_{k'}}(k')$ by \rref{lemm:mun-trivial}, and thus $\Inv(G) \to \Inv(G_{k'})$ is injective. Therefore to prove that $f$ is a bijective ring morphism, we may assume that $k$ is algebraically closed. Then the category of smooth finite $k$-schemes with a $\Zz/n$-action is equivalent to the category of finite sets with a $\Zz/n$-action. Fixing an isomorphism $G \simeq \Zz/n$, each finite set $E$ with a $\Zz/n$-action thus corresponds to a smooth finite $k$-scheme $S_E$ with a $\mun$-action. Then the morphism $f$ is induced by $E \mapsto S_E$, and in particular is a surjective ring morphism. To prove its injectivity, we may assume that $\Hh=\K$ (because $\Kt$ maps to $\K$). Consider the linearisation morphism $\lambda \colon A \to \Rep(G)$, induced by mapping a finite set $E$ with a $\Zz/n$-action to the $G$-representation $V_E$ having $E$ as a $k$-basis using the isomorphism $G\simeq \mun$ (in fact $\lambda$ does not depend on the choice of that isomorphism, because $G$ is commutative). Since the elements $1,\sigma,\ldots,\sigma^{n-1}$ are $\Zz$-linearly independent in $\Rep(G) = \Zz[\sigma]/(1-\sigma^n)$, so are the elements $\lambda(\Sigma_m) = 1+\sigma^{n/m} +\cdots + \sigma^{n(m-1)/m}$. Therefore $\lambda$ is injective. Finally, we claim that the composites $\alpha \colon A \xrightarrow{f} \Inv(G)^0 \subset \Hh_G(\Speck)$ and $\beta \colon A \xrightarrow{\lambda} \Rep(G) \subset \Rep(G)[u,u^{-1}] \xrightarrow{\rref{eq:Rep_K}} \Hh_G(\Speck)$ coincide; in view of \rref{lemm:Rep_K} this will conclude the proof of the proposition. 

The morphisms $\alpha,\beta$ are obtained as limits of morphisms $\alpha_m,\beta_m \colon A \to \Hh(U_m/G)$, where $U_m=m\sigma -0$. For a finite set $E$ with a $\Zz/n$-action, the element $\alpha_m(E)$ is the image of $1$ under the pushforward along $q \colon (S_E \times U_m)/G \to U_m/G$, i.e.\ the class of the vector bundle $q_*\Oc_{(S_E \times U_m)/G}$ over $U_m/G$. On the hand $\beta_m(E)$ is the class of the vector bundle $(V_E \times U_m)/G$ over $U_m/G$. Now the $G$-representation $H^0(S_E,\Oc_{S_E})$ is the $k$-vector space of maps $E \to k$, with the $G\simeq \Zz/n$-action given by $(g \cdot f)(x) = f(g^{-1}x)$, for a map $f\colon E \to k$ and elements $g\in \Zz/n$ and $x \in E$. This representation is isomorphic to $V_E$. For $m\in \Nn$, denoting by  $p\colon S_E \times U_m \to U_m$ the projection, we thus have $p_*\Oc_{S_E \times U_m} \simeq V_E \times U_m$ as $G$-equivariant vector bundles over $U_m$, and therefore $q_*\Oc_{(S_E \times U_m)/G} \simeq (V_E \times U_m)/G$ as vector bundles over $U_m/G$. Therefore $\alpha_m = \beta_m$, and $\alpha = \beta$ as required.
\end{proof}

\begin{proposition}
Let $\Hh=\Kt$ and $G=\mu_n$. Then the class $\lc X \rc_G \in \Inv(G)$ of a smooth projective $k$-scheme $X$ with a $G$-action is determined by the elements (see \rref{eq:def_Euler_char})
\[
\chi(X,\Lambda^{\alpha_1}\Tan_X \otimes \cdots \otimes \Lambda^{\alpha_p}\Tan_X) \in \Rep(G),
\]
where $\alpha = (\alpha_1,\ldots,\alpha_p)$ runs over the partitions.

If $n$ is a prime power, then $\lc X \rc_G \in \Inv(G)$ determines the above elements.
\end{proposition}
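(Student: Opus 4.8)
The plan is to reduce the whole statement to an invertible change of basis in the ring of symmetric functions. Write $p\colon X\to\Speck$ for the structural morphism. Since $\Hh=\Kt$ is the twist of $\K$, the equivariant analog of \eqref{eq:pushforward_1} (see \rref{p:twist}) gives
\[
\lc X\rc_G = p_*^{\Kt}(1) = \sum_\alpha p_*^{\K}(c_\alpha(-\Tan_X))\,b_\alpha \in \Kt_G(\Speck),
\]
the sum running over partitions. Thus $\lc X\rc_G$ is equivalent to the datum of the coefficients $p_*^{\K}(c_\alpha(-\Tan_X))\in\K_G(\Speck)$. I would then record that the equivariant pushforward computes equivariant Euler characteristics: for a $G$-equivariant (virtual) bundle $\Fc$ on $X$ one has $p_*^{\K}(\Fc)=\iota\bigl(u^{\dim X}\chi(X,\Fc)\bigr)$, where $\iota$ is the morphism \eqref{eq:Rep_K} and $\chi(X,-)\colon\K_G(X)\to\Rep(G)[u,u^{-1}]$ is the $\Zz[u,u^{-1}]$-linear equivariant Euler characteristic. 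This is the cohomology-and-base-change computation already used for the Burnside ring proposition above, applied now to an arbitrary equivariant bundle: on the approximation $X_m\to(m\sigma-0)/G$ the sheaves $R^ip_*\Fc$ are the bundles associated to the $G$-representations $H^i(X,\Fc)$.

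The heart of the matter is a universal identity, established by the splitting principle \cite[(2.1.16)]{inv}. Writing formally $\Tan_X=\bigoplus_a L_a$, put $\ell_a=[L_a]$ and $z_a=c_1(L_a)$; for the orientation of $\K$ the class and the first Chern class are related by the invertible affine substitution $\ell_a=1-u\,z_a$, so that $u^{-1}(1-\ell_a)=z_a$ (this precise form is what makes $c_\alpha(-\Tan_X)$ involve the bundles $\Lambda^j\Tan_X$ rather than their duals). Expanding $P^{\K}(\Tan_X)=\prod_a\sum_i z_a^i b_i$ identifies $c_\alpha(\Tan_X)$ with the monomial symmetric polynomial $m_\alpha(z)$, while $P^{\K}(-\Tan_X)=\prod_a(\sum_i z_a^i b_i)^{-1}$ identifies $c_\alpha(-\Tan_X)$ with the $b_\alpha$-coefficient $\widetilde m_\alpha(z)$ of the inverse product. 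As $\ell_a=1-u\,z_a$ is an $S_r$-equivariant $\Zz[u,u^{-1}]$-algebra automorphism, the symmetric functions in the $z_a$ and in the $\ell_a$ form the same free $\Zz[u,u^{-1}]$-module, of which the $\lambda$-monomials $\{\prod_i\Lambda^{\alpha_i}\Tan_X=e_\alpha(\ell)\}$ are the standard basis. It remains to see that $\{\widetilde m_\alpha(z)\}$ is again a basis: comparing $b_\gamma$-coefficients in $P^{\K}(\Tan_X)P^{\K}(-\Tan_X)=1$ gives the recursion $\widetilde m_\gamma=-m_\gamma-\sum m_\alpha\widetilde m_\beta$, summed over $\alpha\cup\beta=\gamma$ with both parts nonempty, whence $\widetilde m_\gamma=(-1)^{\length(\gamma)}m_\gamma+\sum_{\delta\rhd\gamma}n_{\gamma\delta}\,m_\delta$ with $n_{\gamma\delta}\in\Zz$, triangular in the dominance order with diagonal $\pm1$. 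Hence the transition matrix is unimodular, and the families $\{c_\alpha(-\Tan_X)\}$, $\{c_\alpha(\Tan_X)\}$ and $\{\prod_i\Lambda^{\alpha_i}\Tan_X\}$ span the same $\Zz[u,u^{-1}]$-submodule of $\K_G(X)$, related by invertible matrices with finite rows and columns.

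Granting this, the first assertion is immediate, and needs no hypothesis on $n$: each $c_\beta(-\Tan_X)$ is an explicit finite $\Zz[u,u^{-1}]$-combination of $\lambda$-monomials, so applying $p_*^{\K}$ together with $p_*^{\K}(\prod_i\Lambda^{\alpha_i}\Tan_X)=\iota\bigl(u^{\dim X}\chi(X,\Lambda^{\alpha_1}\Tan_X\otimes\cdots\otimes\Lambda^{\alpha_p}\Tan_X)\bigr)$ expresses every coefficient $p_*^{\K}(c_\beta(-\Tan_X))$, hence $\lc X\rc_G$, in terms of the listed Euler characteristics. For the converse, assume $n$ is a prime power, so that $\iota$ is injective by \rref{lemm:Rep_K}. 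From $\lc X\rc_G$ one reads off the coefficients $p_*^{\K}(c_\alpha(-\Tan_X))$, which lie in the image of $\iota$; injectivity recovers $\chi(X,c_\alpha(-\Tan_X))\in\Rep(G)[u,u^{-1}]$ for all $\alpha$. Inverting the change of basis writes each $\lambda$-monomial as a finite $\Zz[u,u^{-1}]$-combination of the $c_\alpha(-\Tan_X)$, so applying the linear map $\chi(X,-)$ recovers each $\chi(X,\Lambda^{\alpha_1}\Tan_X\otimes\cdots\otimes\Lambda^{\alpha_p}\Tan_X)$.

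The main obstacle is the algebraic core of the second paragraph: showing that passing from $P^{\K}(\Tan_X)$ to its inverse preserves a basis of the symmetric functions, i.e.\ the unimodularity of the $m_\alpha\leftrightarrow\widetilde m_\alpha$ transition, together with the bookkeeping that keeps all substitutions compatible and row/column-finite (the $u$-localisation, the affine change $\ell_a=1-u\,z_a$, and the passage to the limit defining $\K_G$). The injectivity input \rref{lemm:Rep_K}, responsible for the prime-power hypothesis, is the only place where the structure of $G$ intervenes.
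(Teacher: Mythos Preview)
Your overall architecture matches the paper's: express $\lc X\rc_G$ via the twisted pushforward formula \eqref{eq:pushforward_1} as the collection of $\K_G$-pushforwards of Conner--Floyd classes, convert these to $\lambda$-monomials by an invertible change of basis in symmetric functions, identify the pushforwards with equivariant Euler characteristics landing in the image of $\Rep(G)[u,u^{-1}]$ under \eqref{eq:Rep_K}, and invoke \rref{lemm:Rep_K} for the converse direction. Your unimodularity argument for $\{m_\alpha\}\leftrightarrow\{\widetilde m_\alpha\}$ is a valid way to encode what the paper handles by citing \cite[(3.1.10)]{inv}.

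There is, however, a genuine slip in the ``affine substitution'' step. In the orientation of $\K$ fixed in this paper (see the proof of \rref{lemm:P_R}, following \cite[Example~1.1.5]{LM-Al-07}), one has $c_1(L^\vee)=u^{-1}(1-[L])$, so it is $[L_a^\vee]=1-u\,c_1(L_a)$ that is affine in $z_a=c_1(L_a)$, whereas $[L_a]=(1-u\,z_a)^{-1}$ is only a power series in $z_a$. With your identification $\ell_a=[L_a]$, the claimed relation $\ell_a=1-u\,z_a$ is false; taking $\ell_a=[L_a^\vee]$ instead makes it true, but then $e_\alpha(\ell)=[\Lambda^{\alpha_1}\Tan_X^\vee]\cdots[\Lambda^{\alpha_p}\Tan_X^\vee]$, not the $\Lambda^j\Tan_X$ appearing in the statement. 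To reach the undualised exterior powers you must use the power-series relation $[L_a]=(1-u\,z_a)^{-1}$, which only terminates after invoking nilpotence of the Chern roots. This is exactly why the paper argues at each finite level $q\colon X_m\to U_m/G$, where $\dim X_m$ is bounded, and appeals to \rref{lemm:P_R}: the hypothesis $\dim X\le d$ there is what makes both directions of the $c_j\leftrightarrow[\Lambda^j]$ translation polynomial. Your direct equivariant argument elides this; either work level-wise as the paper does before passing to the limit, or else repair the dual discrepancy by a separate argument (for instance equivariant Serre duality, which does relate the two families of Euler characteristics, but which you have not invoked).
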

\begin{proof}
We may assume that $X$ has pure dimension $r$. Let $m\in \Nn$ and $U_m = m\sigma -0$. Let $q\colon (X\times U_m)/G \to U_m/G$ be the natural morphism, with tangent bundle $\Tan_q$ (of rank $r$). It follows from \rref{eq:pushforward_1} that $q_*^{\Kt}(1) \in \Kt(U_m/G)$ determines and is determined by the elements $q_*^{\K}(c_\alpha(-\Tan_q)) \in \K(U_m/G)$, or equivalently $q_*^{\K}(c_\alpha(\Tan_q))$ in view of \cite[(3.1.10)]{inv}, where $\alpha$ runs over the partitions. Now the polynomial ring $\Zz[Y_1,\ldots,Y_r]$ is additively generated by the polynomials $Q_\alpha$ indexed by the partitions $\alpha$, such that $Q_\alpha=0$ when $\length(\alpha)>r$, and $c_\alpha(\Tan_q) = Q_\alpha(c_1(\Tan_q),\ldots,c_r(\Tan_q))$ for all $\alpha$ (see \cite[(3.1.7),(3.1.8),(3.1.9)]{inv}). Thus it follows from \rref{lemm:P_R} below that $q_*^{\Kt}(1)$ determines and is determined by the elements
\[
q_*^{\K}([\Lambda^{\alpha_1}\Tan_q]\cdots[\Lambda^{\alpha_p}\Tan_q]) \in \K(U_m/G),
\]
where $\alpha = (\alpha_1,\ldots,\alpha_p)$ runs over the partitions. Let $p \colon X \to \Speck$ be the structural morphism. Taking the limit over $m\in \Nn$, we deduce that $\lc X \rc_G = p_*^{\Kt_G}(1) \in \Kt_G(\Speck)$ determines and is determined by the elements
\[
p_*^{\K_G}([\Lambda^{\alpha_1}\Tan_X]\cdots[\Lambda^{\alpha_p}\Tan_X]) \in \K_G(\Speck),
\]
each of which is the image under the morphism \eqref{eq:Rep_K} of 
\[
\chi(X,\Lambda^{\alpha_1}\Tan_X \otimes\cdots\otimes \Lambda^{\alpha_p}\Tan_X)u^r \in \Rep(G)[u,u^{-1}].
\]
The proposition follows, in view of \rref{lemm:Rep_K}.
\end{proof}

\begin{lemma}
\label{lemm:P_R}
Let $n,r,d \in \Nn$. There are polynomials $P,R$ in $\Zz[u,u^{-1}][X_1,\ldots,X_r]$ such that for any rank $r$ vector bundle $E \to X$ with $X\in \Sm_k$ and $\dim X \leq d$, we have in $\K(X)$
\[
c_n(E) = P([\Lambda^1E],\ldots,[\Lambda^rE]) \quad \text{ and } \quad [\Lambda^nE] = R(c_1(E),\ldots,c_r(E)).
\]
\end{lemma}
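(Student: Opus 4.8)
The plan is to reduce both relations to the multiplicative formal group law of $\K$ by means of the splitting principle, to package them into a single exact generating-function identity, and to defer the only delicate point — a power series truncation — to the very end, where the hypothesis $\dim X \le d$ is used.

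First I would recall that for the theory $\K$ the class of a line bundle and its first Chern class are related by $[L^\vee] = 1 - u\,c_1(L)$ in $\K(X)$ (see \rref{p:K} and \cite[Example 1.1.5]{LM-Al-07}). By the splitting principle \cite[(2.1.16)]{inv} I pass to a full flag bundle $\pi \colon Y \to X$, along which $\pi^*$ is a split injection by iterating \rref{p:projective_space}, and over which $\pi^*E$ acquires a filtration with line-bundle subquotients $L_1,\dots,L_r$. Writing $x_j = c_1(L_j)$, one has $[\Lambda^n \pi^*E] = e_n([L_1],\dots,[L_r])$ in $K_0(Y)$ and $\pi^*c_i(E) = e_i(x_1,\dots,x_r)$, the $e_i$ denoting elementary symmetric polynomials. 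Since $[L_j^\vee] = 1 - u x_j$, each factor satisfies $(1+[L_j]t)[L_j^\vee] = (1+t) - u x_j$, and multiplying over $j$ gives the identity
\[
\Big(\sum_{i\ge 0}[\Lambda^i E]\,t^i\Big)\cdot[\det E^\vee] = \sum_{i=0}^r (-u)^i\,c_i(E)\,(1+t)^{r-i}.
\]
The two sides are pullbacks of classes on $X$, so by injectivity of $\pi^*$ this already holds in $\K(X)[t]$; no truncation has been used, and setting $t=0$ shows $[\det E^\vee] = \sum_{i=0}^r(-u)^i c_i(E) = [\Lambda^r E]^{-1}$.

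Extracting the coefficient of $t^n$ yields $[\det E^\vee]\,[\Lambda^n E] = \sum_{i=0}^{r-n}(-u)^i\binom{r-i}{n}c_i(E)$, while the substitution $t \mapsto t-1$ followed by extraction of the coefficient of $t^{r-n}$ gives $(-u)^n c_n(E) = [\det E^\vee]\sum_{j}(-1)^{j-r+n}\binom{j}{r-n}[\Lambda^j E]$. It then remains to invert the unit $[\det E^\vee]$, and here the hypothesis $\dim X \le d$ enters. Since $\det E$ is an honest line bundle on $X$, both $1-[\det E^\vee] = u\,c_1(\det E)$ and $1-[\Lambda^r E] = 1-[\det E] = u\,c_1((\det E)^\vee)$ are nilpotent of order at most $d+1$ by \cite[(2.1.13)]{inv}. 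Thus $[\det E^\vee]^{-1} = \sum_{k=0}^{d}(u\,c_1(\det E))^k$, and rewriting $u\,c_1(\det E) = -\sum_{i=1}^r(-u)^i c_i(E)$ makes this a polynomial in $c_1(E),\dots,c_r(E)$; combined with the first displayed coefficient relation this produces $R$. Symmetrically, $[\det E^\vee] = [\Lambda^r E]^{-1} = \sum_{k=0}^{d}(1-[\Lambda^r E])^k$ is a polynomial in $[\Lambda^r E]$, which inserted into the expression for $c_n(E)$ produces $P$. In both cases the polynomials lie in $\Zz[u,u^{-1}][X_1,\dots,X_r]$ and depend only on $n,r,d$.

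The hard part is precisely this final truncation: the exact identity lives on the flag bundle $Y$, whose dimension exceeds $d$, so one cannot truncate there. The observation that unblocks matters is that the unit to be inverted equals $1$ minus $u$ times the first Chern class of a \emph{genuine} line bundle on $X$ — namely $\det E$, respectively $(\det E)^\vee$ — whose nilpotence order is controlled by $\dim X$ alone. This is what makes the inverse a polynomial of bounded degree, and hence forces the universal polynomials $P$ and $R$ to depend only on $n,r,d$ and not on $X$.
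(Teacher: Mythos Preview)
Your proof is correct and takes a genuinely different route from the paper. The paper passes to the flag bundle $Y$, replaces $d$ by $d + \binom{r}{2}$, and on $Y$ expresses each $c_1(L_i)$ as a truncated power series in $[L_i]$ using $c_1(L_i) = \fgl{-1}(u^{-1}(1-[L_i]))$ together with $c_1(L_i^\vee)^{d+\binom{r}{2}+1}=0$; the passage in either direction is then a routine symmetric-function conversion between $\sigma_j([L_i])=[\Lambda^jE]$ and $\sigma_j(c_1(L_i))=c_j(E)$. Your approach instead packages everything into the exact identity $\big(\sum_i[\Lambda^iE]t^i\big)[\det E^\vee]=\sum_i(-u)^ic_i(E)(1+t)^{r-i}$, which descends from $Y$ to $X$ without any truncation; the only place the bound $\dim X\le d$ enters is when inverting the single unit $[\det E^\vee]$ (equivalently $[\Lambda^rE]$), and for this you correctly observe that $1-[\det E^\vee]=u\,c_1(\det E)$ and $1-[\Lambda^rE]=u\,c_1((\det E)^\vee)$ are $(d{+}1)$-nilpotent on $X$ itself. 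What your approach buys is an explicit closed formula and a sharper dependence on $d$ (nilpotence order $d+1$ rather than $d+\binom{r}{2}+1$); what the paper's approach buys is that it avoids any algebraic manipulation beyond the splitting principle and the standard symmetric-function dictionary, at the cost of the harmless enlargement of $d$.
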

\begin{proof}
By the splitting principle (see \cite[(2.1.16)]{inv}), after replacing $d$ with $d +1 +\cdots+(r-1)$, we may assume that $E$ has a filtration by subbundles with successive quotient line bundles $L_1,\ldots,L_r$. Then for each $i = 1,\ldots,r$ we have in $\K(X)$
\begin{equation}
\label{eq:c1_L}
c_1(L_i)= \fgl{-1}(c_1(L_i^{\vee})) = \fgl{-1}(u^{-1}(1-[L_i])).
\end{equation}
As $c_1(L_i^{\vee})^{d+1}=0$ (see \cite[(2.1.13)]{inv}) we see that $c_1(L_i) \in \K(X)$ can be expressed as a polynomial in $[L_i]$ having coefficients in $\Zz[u,u^{-1}]$ and depending only on $r,n,d$. Denote by $\sigma_1,\ldots,\sigma_r$ the elementary symmetric functions in $r$ variables, and set $\sigma_j =0$ for $j>r$. Then $c_n(E) = \sigma_n(c_1(L_1),\ldots,c_1(L_r))$, so that $c_n(E)$ is a symmetric polynomial in the $[L_i]$ for $i=1,\dots,r$, hence can be expressed as a polynomial in the $\sigma_j([L_1],\ldots,[L_r]) = [\Lambda^jE]$ for $j=1,\dots,r$, which depends only on $r,n,d$. Conversely, in view of \eqref{eq:c1_L},
\[
[\Lambda^nE] = \sigma_n\Big(1-u\fgl{-1}(c_1(L_1)),\ldots,1-u\fgl{-1}(c_1(L_r))\Big) \in \K(X)
\]
is a symmetric polynomial in the $c_1(L_i)$ for $i=1,\dots,r$, hence a polynomial in the $\sigma_j(c_1(L_1),\dots,c_1(L_r)) = c_j(E)$ for $j=1,\dots,r$, which depends only on $r,n,d$.
\end{proof}

\section{The characteristic class \texorpdfstring{$\g$}{\textgamma}}
\label{section:gamma}
In this section $\Hh$ is an oriented cohomology theory.

\subsection{The class \texorpdfstring{$\g$}{\textgamma}}
We recall that the notation $R[[x]]$, where $R$ is a graded ring, stands for the \emph{graded} power series algebra \rref{def:graded_power_series}, where $x$ has degree one.

\begin{definition}
\label{def:gamma}
By the splitting principle \cite[(2.1.16)]{inv} and the nilpotence of first Chern classes \cite[(2.1.13)]{inv}, there is a unique way to assign to each vector bundle $E \to S$ in $\Sm_k$ an element $\g(E)(x) \in \Hh(S)[[x]]$ so that:
\begin{enumerate}[(i)]
\item If $f\colon T \to S$ is a morphism in $\Sm_k$, then $\g(f^*E)(x) = f^*(\g(E)(x))$.
\item If $L$ is a line bundle, then $\g(L)(x) = x +_{\Hh} c_1(L)$. 

\item \label{cond:es} If $0 \to E' \to E \to E'' \to 0$ is an exact sequence of vector bundles, then $\g(E)(x) = \g(E')(x) \cdot \g(E'')(x)$.
\end{enumerate}
\end{definition}

\begin{para}
There are $w_i(x) \in \fund{\Hh}[[x]]$ for $i\in \Nn$ such that $x +_{\Hh} y = x+ y \sum_{i \in \Nn} y^i w_i(x)$ in $\fund{\Hh}[[x,y]]$. It follows that there are $v_i(x) \in \fund{\Hh}[[x]][x^{-1}]$ for $i\in \Nn$ such that
\begin{equation}
\label{eq:inverse_fgl}
1 = (x +_{\Hh} y) \Big(\sum_{i\in \Nn} v_i(x) y^i\Big) \in \fund{\Hh}[[x]][x^{-1}][[y]].
\end{equation}
Thus for any line bundle $L$ over $S\in \Sm_k$, the element $\g(L)(x)$ admits an inverse in $\Hh(S)[[x]][x^{-1}]$, expressible as a universal power series  in $c_1(L)$, with coefficients in $\fund{\Hh}[[x]][x^{-1}]$. By the splitting principle, it follows that $\g(E)(x) \in \Hh(S)[[x]][x^{-1}]^\times$ for any vector bundle $E$ over $S$. This allows us to define a Laurent series $\g(E)(x) \in \Hh(S)[[x]][x^{-1}]$ for any element $E \in K_0(S)$, compatibly with \rref{def:gamma} when $E$ is a vector bundle.
\end{para}

\begin{para}
When $E\in K_0(S)$, we define elements $\g_i(E) \in \Hh(S)$ by
\[
\g(E)(x) = \sum_{i \in \Zz} \g_i(E) x^i \in \Hh(S)[[x]][x^{-1}].
\]
When $E$ has rank $r \in \Zz$, the element $\g(E)(x)$ is homogeneous of degree $r$ by the splitting principle, so that $\g_i(E) \in \Hh^{r-i}(S)$ for each $i \in \Zz$.
\end{para}

\begin{para}
\label{p:gamma_tens}
Let $S\in \Sm_k$ and $E \to S$ a vector bundle of rank $r$. It follows from the splitting principle that for any line bundle $L\to S$,
\[
\gamma(E)(c_1(L)) = c_r(E\otimes L) \in \Hh(S).
\]
The case $L=1$ yields the formula $\gamma_0(E) = c_r(E)$.
\end{para}

\begin{para}
\label{p:gamma_tens_eq}
Let $G$ be a subgroup of $\Gm$, acting on $X \in \Sm_k$. Let $E\to X$ be a $G$-equivariant vector bundle of rank $r$. Then, using the notation of \rref{p:Chern_colim}, the elements $\gamma(E_m) \in \Hh(X_m)[[x]]$ for $m\in \Nn$ define an element $\gamma(E) \in \Hh_G(X)[[x]]$. Applying \rref{p:gamma_tens} 
over each $X_m$ yields the formula $\gamma_0(E) = c_r(E)$, as well as, for any $G$-equivariant line bundle $L \to X$, in view of \rref{lemm:power_series},
\[
\gamma(E)(c_1(L)) = c_r(E\otimes L) \in \Hh_G(X).
\]
\end{para}

\begin{proposition}
\label{prop:gamma_determined}
There exists a unique morphism of abelian groups
\[
\J\colon \Mcc \to \fund{\Hh}[[x]][x^{-1}] \quad ; \quad \lc E \to S \rc \mapsto \sum_{i \in \Zz} \lc \g_i(-E) \rc x^i.
\]
This is a morphism of graded $\fund{\Hh}$-algebras.
\end{proposition}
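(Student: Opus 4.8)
The plan is to realise $\J$ as the restriction to $\Mcc$ of an explicit \emph{substitution homomorphism} defined on the whole ambient ring $\fund{\Hh}[v][\bba]$. This makes existence, well-definedness and the $\fund{\Hh}$-algebra structure essentially automatic, the only genuine work being a linearity property of $\g(-E)(x)$ in the Conner--Floyd Chern classes of $E$. \textbf{Step 1 (the key linearity).} Let $v_i(x)\in\fund{\Hh}[[x]][x^{-1}]$ be the coefficients of \eqref{eq:inverse_fgl}, so that $v_0(x)=x^{-1}$ and, for a line bundle $L$, one has $\g(-L)(x)=(x+_{\Hh}c_1(L))^{-1}=\sum_i v_i(x)c_1(L)^i$. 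Comparing with \dref{p:P}{p:P_line}, this says that the $\Hh(S)$-algebra homomorphism $b_i\mapsto x\,v_i(x)$ (which sends $b_0=1$ to $x\,v_0(x)=1$) carries $P^{\Hh}(L)$ to $x\,\g(-L)(x)$. Since both $P^{\Hh}$ and $E\mapsto\g(-E)(x)$ are multiplicative on exact sequences by \dref{p:P}{p:P_add} and \dref{def:gamma}{cond:es}, the splitting principle yields, for any rank $r$ bundle $E$,
\[
\g(-E)(x)=x^{-r}\sum_{\alpha}c_\alpha(E)\prod_{l}\big(x\,v_{\alpha_l}(x)\big).
\]
The decisive point is that this is \emph{linear} in the classes $c_\alpha(E)$ of the honest bundle $E$, with coefficients lying in $\fund{\Hh}[[x]][x^{-1}]$ and depending only on $\alpha$ and $r$.

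\textbf{Step 2 (pushing forward and the substitution map).} Applying $p_*=\lc\cdot\rc$ coefficientwise in $x$ and using the projection formula to pull out the $\fund{\Hh}[[x]][x^{-1}]$-coefficients, Step 1 gives
\[
\lc\g(-E)(x)\rc=x^{-r}\sum_\alpha \Big(\prod_l x\,v_{\alpha_l}(x)\Big)\lc c_\alpha(E)\rc,
\]
where $\lc c_\alpha(E)\rc\in\fund{\Hh}$ by \rref{p:Chern_fund}; in particular each $\lc\g_i(-E)\rc$ lies in $\fund{\Hh}$, so the target is correct. Now let $\Phi\colon\fund{\Hh}[v][\bba]\to\fund{\Hh}[[x]][x^{-1}]$ be the $\fund{\Hh}$-algebra homomorphism determined by $\Phi(v)=x^{-1}$ and $\Phi(\ba_i)=x\,v_i(x)$ (consistent with $\ba_0=1\mapsto1$). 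Comparing the displayed formula with the definition \eqref{eq:def_Mcc} of $\lc E\to S\rc$ and summing over the constant-rank components $S_r$, the factor $v^r$ accounts for the normalisation $x^{-r}$, and we obtain $\Phi(\lc E\to S\rc)=\lc\g(-E)(x)\rc=\sum_i\lc\g_i(-E)\rc x^i$ for every vector bundle $E$ over every smooth projective $S$.

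\textbf{Step 3 (conclusion).} Since $\Mcc\subset\fund{\Hh}[v][\bba]$ is an $\fund{\Hh}$-subalgebra, the restriction $\J:=\Phi|_{\Mcc}$ is a morphism of $\fund{\Hh}$-algebras, and by Step 2 it is given on the classes $\lc E\to S\rc$ by the prescribed formula; well-definedness is automatic, as $\J$ is literally a restriction of the single map $\Phi$. Uniqueness as a morphism of abelian groups is immediate: the classes $\lc E\to S\rc$ generate $\Mcc^{\eff}$, hence generate $\Mcc$ as a group, and the formula fixes $\J$ on this generating set. Finally $\J$ is graded: $\g(-E)(x)$ is homogeneous of degree $-r$ (as $\g(E)(x)$ has degree $r$ by the splitting principle), and $p_*$ is homogeneous of degree $-\dim S$ applied coefficientwise in $x$, so $\J(\lc E\to S\rc)$ is homogeneous of degree $-\dim S-r=\deg\lc E\to S\rc$; as these classes generate $\Mcc$, the map $\J$ preserves degrees.

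\textbf{Main obstacle.} The crux is Step 1: recognising that $\g(-E)(x)$ is an $\fund{\Hh}[[x]][x^{-1}]$-\emph{linear} combination of the Conner--Floyd classes $c_\alpha(E)$ of the genuine bundle $E$, with universal coefficients. This is exactly what guarantees that $\lc\g(-E)(x)\rc$ depends only on the pushforwards $\lc c_\alpha(E)\rc$ recorded by $\lc E\to S\rc\in\Mcc$, so that $\J$ factors through $\Mcc$ and not merely through the set of pairs $(E,S)$. A naive expansion of $\g(-E)(x)$ through $c_\alpha(-E)$ would instead involve pushforwards of \emph{products} of Chern classes, which are not retained by $\Mcc$; the virtue of the substitution $b_i\mapsto x\,v_i(x)$ coming from the inverse series \eqref{eq:inverse_fgl} is precisely that it bypasses this. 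The sign $-E$ is essential: it produces these inverse-series coefficients and the degree $-r$ required for $\J$ to be graded.
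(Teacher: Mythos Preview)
Your proof is correct and follows the same underlying strategy as the paper's: show that $\lc\g(-E)(x)\rc$ is an $\fund{\Hh}[[x]][x^{-1}]$-linear combination of the pushforwards $\lc c_\alpha(E)\rc$ with universal coefficients depending only on the rank, so that the formula factors through $\Mcc$. The execution differs in a useful way. The paper argues this linearity somewhat indirectly: each $\g_i(-E)$ is a polynomial in the ordinary Chern classes of $E$ (being a coefficient of the inverse of the series $\g(E)(x)$), and one then invokes a separate reference to the effect that any such polynomial is an $\fund{\Hh}$-linear combination of the Conner--Floyd classes $c_\alpha(E)$. Your route is more direct and produces the coefficients explicitly: the substitution $b_i\mapsto x\,v_i(x)$ carries $P^{\Hh}(E)=\sum_\alpha c_\alpha(E)\,b_\alpha$ to $x^r\g(-E)(x)$, so the universal coefficients are simply $x^{-r}\prod_l x\,v_{\alpha_l}(x)$. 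Packaging this as a single ring homomorphism $\Phi$ on the ambient polynomial algebra $\fund{\Hh}[v][\bba]$ and then restricting to $\Mcc$ is a clean device: it makes well-definedness and the $\fund{\Hh}$-algebra structure of $\J$ tautological, whereas the paper checks multiplicativity of $\J$ separately via the projection formula and the multiplicativity of $\g$.
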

\begin{proof}
Let $S\in \Sm_k$ and $E \to S$ a vector bundle of rank $r$. By construction each coefficient $\g_i(E)$ may be expressed as a polynomial in the Chern classes of $E$ with coefficients in $\fund{\Hh}$, depending only on $i,r$. Therefore the same is true for each $\g_i(-E)$ (being the coefficients of the inverse power series). Any such polynomial is an $\fund{\Hh}$-linear combination of the Conner--Floyd Chern classes $c_\alpha(E)$ (see \cite[(3.1.7),(3.1.8),(3.1.9)]{inv}), depending only on $i,r$. It follows that there exist coefficients $\lambda_\alpha^{i,r} \in \fund{\Hh}$ indexed by the partitions $\alpha$ and depending only on the integers $i,r$, such that for every vector bundle $E \to S$ with $S \in \Sm_k$
\[
\gamma_i(-E) = \sum_{r \in \Nn} \sum_\alpha \lambda_\alpha^{i,r} c_\alpha(E|_r) \in \Hh(S) \quad \text{for $r\in \Nn$},
\]
where $S_r$ is the (open and closed) locus of $S$ where $E|_{S_r}$ has constant rank $r$. When $S$ is additionally projective over $k$, we deduce that
\[
\lc \gamma_i(-E) \rc= \sum_{r \in \Nn} \sum_\alpha \lambda_\alpha^{i,r} \lc c_\alpha(E|_{S_r}) \rc \in \Hh(\Speck).
\]
In view of \rref{p:Chern_fund}, it follows that $\lc \gamma_i(-E)\rc \in \fund{\Hh}\subset \Hh(\Speck)$. Moreover, by the very definition \rref{eq:def_Mcc}, the elements $\lc c_\alpha(E|_{S_r}) \rc \in \fund{\Hh}$ are determined by the class $\lc E \to S \rc \in \Mcc^{\eff}$. Thus the formula of the proposition defines a map $\Mcc^{\eff} \to \fund{\Hh}[[x]][x^{-1}]$. This map is additive, hence extends to a unique group morphism $\Gamma \colon \Mcc \to \fund{\Hh}[[x]][x^{-1}]$.

As in \rref{p:Mcc_ring}, it follows from the projection formula and the definition of $\g$ that $\J$ is a morphism of $\fund{\Hh}$-algebras.
\end{proof}

\subsection{The class \texorpdfstring{$\rr$}{\textrho}}

\begin{para}
\label{p:def_rho}
Let $E$ be a vector bundle over $S \in \Sm_k$. For $j \in \Nn$, denote by $q \colon \Pp(E \oplus 1\oplus j) \to S$ the projective bundle, and write $\xi = c_1(\Oc(1)) \in \Hh(\Pp(E \oplus 1 \oplus j))$. For a given $i\in \Zz$, the element $\rr_i(E) = q_*(\xi^{j-i}) \in \Hh(S)$ does not depend on the choice of the integer $j \geq \max\{0,i\}$, and vanishes for $i < -\dim \Pp(E\oplus 1)$. We set
\[
\rr(E) = \sum_{i \in \Zz} \rr_i(E) x^i \in \Hh(S)[[x]][x^{-1}].
\]
\end{para}

\begin{para}
\label{p:rho_0}
We have $\rr(0) = \displaystyle{\sum_{i \in \Nn} \lc \Pp^i \rc x^i}\in \Hh(\Speck)[[x]][x^{-1}]$.
\end{para}

\begin{proposition}
\label{prop:rho_gamma}
Let $E$ be a vector bundle over $S \in \Sm_k$. Then
\[
\rr(E) =\g(-E)\sum_{i \in \Nn} \lc \Pp^i \rc x^i.
\]
\end{proposition}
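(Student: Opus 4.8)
My plan is to rewrite $\rr(E)$ as a single pushforward, reduce to a split bundle via the splitting principle, and then induct on $\rank E$, peeling off one line bundle at a time; the inductive step is governed by a hyperplane inside the projective bundle. With $F = E \oplus 1 \oplus j$, $q \colon \Pp(F) \to S$ and $\xi = c_1(\Oc(1))$, the definition \rref{p:def_rho} gives, for $j$ large,
\[
\rr(E) = \sum_i q_*(\xi^{j-i}) x^i = x^j\, q_*\!\Big(\frac{x}{x-\xi}\Big),
\]
where $\frac{x}{x-\xi} = \sum_{a\ge0}\xi^a x^{-a}$ is a graded power series \rref{def:graded_power_series}, $\xi$ is nilpotent on $\Pp(F)$, and $q_*$ acts coefficientwise. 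Both sides of the proposition commute with pullback along a morphism $T\to S$: for $\g(-E)$ this is \rref{def:gamma}, the factor $\sum_i\lc\Pp^i\rc x^i$ is a constant, and for $\rr$ it follows by base change from the cartesian square defining the pulled-back projective bundle. Since the splitting principle furnishes such a $T\to S$ with injective pullback over which $E$ splits, I may assume $E = L_1\oplus\cdots\oplus L_r$.

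I then induct on $r$, the case $r=0$ being exactly \rref{p:rho_0}. For the step I write $E = E''\oplus L$ with $L=L_r$, and set $F=E\oplus1\oplus j$ and $F''=E''\oplus1\oplus j$, with projections $q,q''$ and tautological classes $\xi,\xi''$. Vanishing of the $L$-coordinate cuts out $i\colon\Pp(F'')\hookrightarrow\Pp(F)$ as the zero locus of a regular section of $\Oc(1)\otimes q^*L$; hence by \dref{lemm:zero-locus}{lemm:zero-locus:2} and the formal group law, $i_*(1)=\xi+_{\Hh}\ell$ with $\ell=q^*c_1(L)$, while $i^*\xi=\xi''$. The projection formula and functoriality of pushforward applied to $i$ give
\[
q_*\!\Big((\xi+_{\Hh}\ell)\frac{x}{x-\xi}\Big) = q''_*\!\Big(\frac{x}{x-\xi''}\Big) = x^{-j}\,\rr(E'').
\]
Expanding $(\xi+_{\Hh}\ell)\frac{x}{x-\xi} = (x+_{\Hh}\ell)\frac{x}{x-\xi} - x\cdot\frac{(x+_{\Hh}\ell)-(\xi+_{\Hh}\ell)}{x-\xi}$, and noting that the difference quotient is an honest power series in $\xi$ while $x+_{\Hh}\ell=q^*(x+_{\Hh}c_1(L))$ is a pullback, the left-hand side equals $(x+_{\Hh}c_1(L))\,q_*(\frac{x}{x-\xi}) - x\,C_j$, where $C_j=\sum_{k\ge1}\phi_k\sum_{b=0}^{k-1}x^{k-1-b}q_*(\xi^b)$ and $\phi_k\in\Hh(S)$ is the $u^k$-coefficient of $u+_{\Hh}c_1(L)$. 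Multiplying by $x^j$ and using $\rr(E)=x^j q_*(\frac{x}{x-\xi})$ yields the identity
\[
\rr(E'') = (x+_{\Hh}c_1(L))\,\rr(E) - x^{j+1}C_j .
\]

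The crux, and the one step that is not formal, is that $x^{j+1}C_j$ vanishes; this is where I expect the main obstacle to lie, and it is what makes essential use of the stability (independence of $j$) built into the definition of $\rr$. Using the standard vanishing $q_*(\xi^b)=0$ for $b<\rank F-1=r+j$ (which already underlies the well-definedness in \rref{p:def_rho}), only indices $b\ge r+j$ contribute to $C_j$; together with $b\le k-1$ this forces every monomial of $x^{j+1}C_j$ to have $x$-degree at least $j+1$, so all its coefficients in degrees $x^0,\dots,x^j$ vanish. On the other hand $\rr(E)$ and $\rr(E'')$ are independent of $j$ \rref{p:def_rho}, so the displayed identity shows that $x^{j+1}C_j=(x+_{\Hh}c_1(L))\,\rr(E)-\rr(E'')$ is itself independent of $j$; letting $j\to\infty$ then forces every coefficient to vanish, whence $x^{j+1}C_j=0$. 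Therefore $\rr(E)=(x+_{\Hh}c_1(L))^{-1}\rr(E'')=\g(-L)(x)\,\rr(E'')$, and combining this with the inductive hypothesis $\rr(E'')=\g(-E'')(x)\sum_i\lc\Pp^i\rc x^i$ and the multiplicativity $\g(-E)=\g(-E'')\g(-L)$ from \dref{def:gamma}{cond:es} completes the induction, and with it the proof.
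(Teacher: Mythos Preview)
Your argument is correct, but it takes a noticeably longer route than the paper's. The paper observes in one stroke that the linear subspace $\Pp_S(1\oplus j)\subset \Pp_S(E\oplus 1\oplus j)$ is the zero locus of a regular section of $q^*E(1)$, so by \dref{lemm:zero-locus}{lemm:zero-locus:2} and \rref{p:gamma_tens} its class is $c_r(q^*E(1))=\g(q^*E)(\xi)=\sum_k\g_k(q^*E)\xi^k$. Pushing forward $\xi^{j-i}$ times this class and using the projection formula gives $\rr_i(0)=\sum_k\g_k(E)\rr_{i-k}(E)$, i.e.\ $\rr(0)=\g(E)\rr(E)$, and the proposition follows by inverting $\g(E)$. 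No splitting principle, no induction, and no limit in $j$ are needed: the full bundle $E$ is handled in a single step because $\g$ already packages the Chern class $c_r(E(1))$.

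Your approach instead reduces to split $E$ via the splitting principle and inducts on the rank, removing one line summand at a time. The inductive step then requires the algebraic identity you set up together with the stability argument ($x^{j+1}C_j$ is independent of $j$ and supported in $x$-degrees $\ge j+1$, hence vanishes). This is a valid and self-contained proof, and it makes transparent that the recursion $\rr(E)=\g(-L)\rr(E'')$ is what drives the formula. The cost is the extra machinery (splitting, induction, the limiting argument); the paper's proof buys brevity by recognising that the whole $\g(E)$ appears at once as the class of a codimension-$r$ linear subbundle.
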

\begin{proof}
We may assume that $E$ has constant rank $r$. Let us use the notation of \rref{p:def_rho}. For any $j \in \Nn$, the closed subscheme $\Pp_S(1\oplus j) \subset \Pp_S(E \oplus 1 \oplus j)$ is the zero-locus of a regular section of $q^*E(1)$ (see \cite[B.5.6]{Ful-In-98}), so that by \dref{lemm:zero-locus}{lemm:zero-locus:2} and \rref{p:gamma_tens}, we have in $\Hh(\Pp_S(E\oplus 1\oplus j))$
\[
[\Pp_S(1\oplus j)] = c_r(q^*E(1)) = \g(q^*E)(\xi) = \sum_{k \in \Nn} \g_k(q^*E) \xi^k.
\]
Let $i\in \Zz$, and choose $j \geq \max\{0,i\}$. Using the projection formula  \cite[(2.1.3.iii)]{inv}, we have in $\Hh(S)$
\[
\rr_i(0) = q_*(\xi^{j-i}[\Pp_S(1 \oplus j)]) =  \sum_{k \in \Nn} \g_k(E) q_*(\xi^{k+j-i}) = \sum_{k \in \Nn} \g_k(E) \rr_{i-k}(E).
\]
Summing over $i \in \Zz$, we obtain in $\Hh(S)$
\[
\sum_{i \in \Zz} \rr_i(0) x^i = \sum_{i\in \Zz} \sum_{k \in \Nn} \g_k(E) x^k \rr_{i-k}(E) x^{i-k} = \Big( \sum_{k \in \Nn} \g_k(E) x^k \Big) \Big( \sum_{m \in \Zz} \rr_m(E) x^m\Big).
\]
This gives the required formula, in view of \rref{p:rho_0}.
\end{proof}

Taking $x^{-m-1}$-coefficient in \rref{prop:rho_gamma}, we deduce:
\begin{corollary}[Quillen's formula]
\label{prop:Quillen}
Let $S \in \Sm_k$ and $E \to S$ be a vector bundle. Denote by $p \colon \Pp(E) \to S$ the projective bundle. Then for any $m \in \Nn$
\[
p_*\big(c_1(\Oc(1))^m\big) = \sum_{i \in \Nn}  \lc \Pp^i\rc \cdot \g_{-1-m-i}(-E) \in \Hh(S).
\]
\end{corollary}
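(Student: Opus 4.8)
The plan is to read off the $x^{-1-m}$-coefficient from the identity of \rref{prop:rho_gamma} and then to recognise the resulting left-hand side as the pushforward $p_*\big(c_1(\Oc(1))^m\big)$ over $\Pp(E)$. Writing $\rr(E) = \sum_{n\in\Zz} \rr_n(E) x^n$ and $\g(-E) = \sum_{k\in\Zz} \g_k(-E) x^k$, the product $\g(-E)\sum_{i\in\Nn} \lc \Pp^i \rc x^i$ has $x^{-1-m}$-coefficient $\sum_{i \in \Nn} \lc \Pp^i \rc \g_{-1-m-i}(-E)$; this is a finite sum because $\g(-E)$ is a Laurent series bounded below and $\lc \Pp^i \rc$ vanishes for $i<0$. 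Comparing coefficients in \rref{prop:rho_gamma} therefore gives $\rr_{-1-m}(E) = \sum_{i \in \Nn} \lc \Pp^i \rc \g_{-1-m-i}(-E)$, and it only remains to identify $\rr_{-1-m}(E)$ with the asserted pushforward.

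Next I would unwind the definition of $\rr_{-1-m}(E)$ from \rref{p:def_rho}. Since $m\geq 0$, the index $i=-1-m$ satisfies $\max\{0,i\}=0$, so we may take $j=0$, obtaining $\rr_{-1-m}(E) = q_*(\xi^{1+m})$, where $q\colon \Pp(E\oplus 1) \to S$ and $\xi = c_1(\Oc_{\Pp(E\oplus 1)}(1))$. Thus the whole matter reduces to comparing the projective bundles $\Pp(E\oplus 1)$ and $\Pp(E)$.

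The key geometric input is that the closed immersion $i\colon \Pp(E) \hookrightarrow \Pp(E\oplus 1)$ realises $\Pp(E)$ as the zero locus of a regular section of $\Oc(1)$, namely the section induced by the projection $E\oplus 1 \to 1$ (cf.\ \cite[B.5.6]{Ful-In-98}). By \dref{lemm:zero-locus}{lemm:zero-locus:2} this gives $i_*(1) = [\Pp(E)] = \xi$, while moreover $i^*\Oc_{\Pp(E\oplus 1)}(1) = \Oc_{\Pp(E)}(1)$ and $q\circ i = p$. Combining these facts with the projection formula \cite[(2.1.3.iii)]{inv} yields
\[
q_*(\xi^{1+m}) = q_*\big(\xi^m\cdot i_*(1)\big) = q_*\big(i_*(i^*\xi^m)\big) = p_*\big(c_1(\Oc(1))^m\big),
\]
which is exactly the identification needed. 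Substituting this into the coefficient computation of the first paragraph gives the stated formula. I expect the only step requiring genuine geometric content to be this last reduction from $\Pp(E\oplus 1)$ to $\Pp(E)$; it is in essence a repetition of the argument used to prove the independence of $j$ in \rref{p:def_rho}, and everything else is formal bookkeeping of Laurent coefficients.
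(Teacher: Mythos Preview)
Your proof is correct and follows essentially the same approach as the paper's: both take the $x^{-1-m}$-coefficient in \rref{prop:rho_gamma} and then identify $\rr_{-1-m}(E)$ with $p_*(c_1(\Oc(1))^m)$ via the closed immersion $\Pp(E)\hookrightarrow\Pp(E\oplus 1\oplus j)$ (you with $j=0$, the paper with general $j\geq 0$) together with $i_*(1)=\xi^{j+1}$ and the projection formula. Your write-up is slightly more detailed but otherwise identical in substance.
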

\begin{proof}
Consider the $x^{-m-1}$-coefficient in \rref{prop:rho_gamma}. Then, in the notation of \rref{p:def_rho}, for $j\geq 0$ we have $\xi^{j+1} = [\Pp(E)]$ in $\Hh(\Pp(E\oplus 1 \oplus j))$, hence $q_*(\xi^{j+1+m}) = p_*(c_1(\Oc(1))^m)$.
\end{proof}

\begin{corollary}
\label{cor:PE_M}
For any vector bundle $E$ over a smooth projective $k$-scheme $S$, the class $\lc \Oc(1) \to \Pp(E) \rc \in \Pc$ is determined by $\lc E \to S \rc \in \Mcc$.
\end{corollary}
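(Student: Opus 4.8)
The plan is to read off the coefficients of $\lc \Oc(1) \to \Pp(E) \rc$ using the explicit formula for the class of a line bundle, and to identify each of them with data already known to be determined by $\lc E \to S \rc$. Concretely, applying \rref{eq:class_P} to the line bundle $\Oc(1)$ over the base $\Pp(E)$ gives
\[
\lc \Oc(1) \to \Pp(E) \rc = v \sum_{m \in \Nn} \lc c_1(\Oc(1))^m \rc \ba_m \in \Pc,
\]
where each coefficient $\lc c_1(\Oc(1))^m \rc \in \fund{\Hh}$ is the pushforward of $c_1(\Oc(1))^m$ along the structural morphism $\Pp(E) \to \Speck$ (see \rref{p:lc_rc}). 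It thus suffices to show that each $\lc c_1(\Oc(1))^m \rc$ is determined by $\lc E \to S \rc$.

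Next I would factor the structural morphism as $\Pp(E) \xrightarrow{q} S \xrightarrow{\pi} \Speck$, with $q$ the projective bundle and $\pi$ the structural morphism of $S$. Quillen's formula \rref{prop:Quillen} computes the first pushforward as
\[
q_*(c_1(\Oc(1))^m) = \sum_{i \in \Nn} \lc \Pp^i \rc \cdot \g_{-1-m-i}(-E) \in \Hh(S),
\]
a finite sum, since $\g(-E)(x) \in \Hh(S)[[x]][x^{-1}]$ has only finitely many nonzero coefficients in negative degrees. Applying $\pi_*$ and using the projection formula \cite[(2.1.3.iii)]{inv} — the classes $\lc \Pp^i \rc$ lying in $\fund{\Hh} \subset \Hh(\Speck)$ — yields
\[
\lc c_1(\Oc(1))^m \rc = \sum_{i \in \Nn} \lc \Pp^i \rc \cdot \lc \g_{-1-m-i}(-E) \rc.
\]

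Finally I would invoke \rref{prop:gamma_determined}: each factor $\lc \g_{-1-m-i}(-E) \rc$ is exactly the $x^{-1-m-i}$-coefficient of $\J(\lc E \to S \rc) \in \fund{\Hh}[[x]][x^{-1}]$, hence is a function of $\lc E \to S \rc \in \Mcc$ alone. Consequently every coefficient $\lc c_1(\Oc(1))^m \rc$, and therefore the full class $\lc \Oc(1) \to \Pp(E) \rc$, is determined by $\lc E \to S \rc$, as claimed. The genuine content of the statement is entirely absorbed into the two inputs, namely Quillen's formula and the well-definedness of $\J$, both already established; I expect no serious obstacle beyond correctly tracking the factorisation of the pushforward and the $\fund{\Hh}$-linearity used when applying the projection formula.
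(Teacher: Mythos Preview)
Your proof is correct and follows exactly the route the paper takes: the paper's proof is the one-line ``In view of \eqref{eq:class_P}, this follows from \rref{prop:Quillen} and \rref{prop:gamma_determined}'', and you have simply unpacked those three references in the natural way. The factorisation of the pushforward and the use of the projection formula are precisely the implicit steps the paper omits.
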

\begin{proof}
In view \rref{eq:class_P}, this follows from \rref{prop:Quillen} and \rref{prop:gamma_determined}.
\end{proof}

\subsection{\texorpdfstring{$\Gm$}{Gm}-equivariant interpretation}

\begin{para}
\label{def:gamma_Gm}
Let $G$ be a subgroup of $\Gm$ and $S \in \Sm_k$ with trivial $G$-action. The map \eqref{eq:Ht} allows us to consider the element $\g(E)(t)$ of $\Hh_G(S)$ (resp.\ $\Hh_G(S)[t^{-1}]$) when $E \to S$ is a vector bundle (resp.\ $E \in K_0(S)$). If $E$ is a vector bundle of rank $r$, it follows from \rref{p:gamma_tens_eq} that
\[
\g(E)(t) = c_r(E\sigma) \in \Hh_G(S).
\]
\end{para}

\begin{proposition}
\label{prop:gamma_PE}
Let $E$ be a vector bundle over $S \in \Sm_k$. Let $G$ be a subgroup of $\Gm$. Denote by $p\colon \Pp(E) \to S$ and $q\colon \Pp(E\sigma \oplus 1) \to S$ the projective bundles. Consider the element 
\[
\theta = c_1(\Oc_{\Pp(E)}(1)\sigma^{\vee}) = \g(\Oc_{\Pp(E)}(1))(\fgl{-1}(t)) \in \Hh_{G}(\Pp(E)).
\]
Then $\theta$ is invertible in $\Hh_{G}(\Pp(E))[t^{-1}]$, and
\[
\g(-E)(t)= q_*(1) - p_*(\theta^{-1}) \in \Hh_{G}(S)[t^{-1}].
\]
\end{proposition}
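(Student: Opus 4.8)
My plan is to realise both $q_*(1)$ and $p_*(\theta^{-1})$ as images under $q_*$ of the two natural $G$-fixed sections of $W:=\Pp(E\sigma\oplus 1)$, and to tie them together through a single divisibility relation in $\Hh_{G}(W)[t^{-1}]$. Write $\xi=c_1(\Oc_W(1))\in\Hh_{G}(W)$ and $r=\rank E$, and record the equivariant projective bundle formula: $\Hh_{G}(W)$ is free over $\Hh_{G}(S)$ on $1,\xi,\dots,\xi^r$, obtained by passing to the limit over the approximations $W_m\to S_m$ where \rref{p:projective_space} applies. I introduce the $G$-fixed section $s_0\colon S\to W$ (the trivial summand, ``$[0:1]$'') and the closed immersion $i\colon\Pp(E\sigma)=\Pp(E)\hookrightarrow W$ (the hyperplane at infinity).

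The computational inputs, all following from the zero-locus descriptions \rref{p:zero-locus-eq} and \dref{lemm:zero-locus}{lemm:zero-locus:2} together with \rref{p:gamma_tens_eq}, are: $q\circ s_0=\id_S$ and $s_0^*\xi=0$ (as $s_0^*\Oc_W(1)$ is the weight-zero trivial summand); $q\circ i=p$; $i_*(1)=\xi$ (since $\Pp(E\sigma)$ is the zero locus of the last-coordinate section of $\Oc_W(1)$); $i^*\xi=\theta$ (since $\Oc_W(1)|_{\Pp(E\sigma)}=\Oc_{\Pp(E)}(1)\sigma^\vee$); and finally $(s_0)_*(1)=\g(E)(\xi+_{\Hh}t)=:\Phi(\xi)$, because $s_0(S)$ is the zero locus of the $G$-equivariant regular section $\Oc_W(-1)\hookrightarrow q^*(E\sigma\oplus 1)\to q^*(E\sigma)$, and $\g(E\sigma)(\xi)=\g(E)(\xi+_{\Hh}t)$ by multiplicativity of $\g$ and associativity of the formal group law.

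The engine is the relation $1=\g(-E)(t)\,\Phi(\xi)+\xi\,\beta$ in $\Hh_{G}(W)[t^{-1}]$ for a suitable $\beta$. To produce it, I apply the substitution $x\mapsto t$ of \rref{def:gamma_Gm} to $\g(-E)(x)\g(E)(x)=1$, getting $\g(-E)(t)\g(E)(t)=1$ in $\Hh_{G}(S)[t^{-1}]$; since $s_0^*\Phi(\xi)=\g(E)(0+_{\Hh}t)=\g(E)(t)$, the element $1-\g(-E)(t)\Phi(\xi)$ has vanishing $s_0^*$, hence zero coefficient on the basis vector $\xi^0$, and is therefore divisible by $\xi$, which defines $\beta$. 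Now apply $i^*$: using $i^*\xi=\theta$ and the collapse $\theta+_{\Hh}t=\xi_E$ (where $\xi_E=c_1(\Oc_{\Pp(E)}(1))$ and $\theta=\xi_E+_{\Hh}\fgl{-1}(t)$), one finds $\Phi(\theta)=\g(E)(\xi_E)=c_r(p^*E\otimes\Oc_{\Pp(E)}(1))=0$, the last vanishing being the Grothendieck relation on $\Pp(E)$. Thus $1=\theta\cdot i^*\beta$, which at once shows $\theta$ is invertible in $\Hh_{G}(\Pp(E))[t^{-1}]$ and gives $i^*\beta=\theta^{-1}$.

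Finally I apply $q_*$ to $1=\g(-E)(t)\Phi(\xi)+\xi\,\beta$. The projection formula with $q\circ s_0=\id_S$ gives $q_*\Phi(\xi)=q_*(s_0)_*(1)=1$, while $\xi\beta=i_*(1)\,\beta=i_*(i^*\beta)$ together with $q\circ i=p$ gives $q_*(\xi\beta)=p_*(i^*\beta)=p_*(\theta^{-1})$; hence $q_*(1)=\g(-E)(t)+p_*(\theta^{-1})$, which is the claimed identity. The step I expect to demand the most care is the divisibility argument, which relies on the equivariant projective bundle decomposition of $\Hh_{G}(W)$ and its compatibility with inverting $t$ and with $s_0^*$; by contrast the identification $\theta=i^*\xi$ and the collapse $\theta+_{\Hh}t=\xi_E$ are the conceptual heart, since they are exactly what forces $\Phi(\theta)$ to vanish.
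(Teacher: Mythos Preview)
Your proof is correct, but it follows a genuinely different path from the paper's. The paper first establishes the invertibility of $\theta$ directly from the formula $\theta=\g(\Oc_{\Pp(E)}(1))(\fgl{-1}(t))$ and the invertibility of $\fgl{-1}(t)$, then works entirely with zero-locus identities on $W=\Pp(E\sigma\oplus 1)$: it writes $c_r(q^*E\sigma)=c_r(q^*E\sigma(1))+\xi v$ via the expansion of $\g$, uses the projection-formula annihilation $\xi(1-i_*(\theta^{-1}))=0$, and kills the cross term via the geometric disjointness $\Pp(E\sigma)\cap\Pp(1)=\varnothing$, arriving at $(1-i_*(\theta^{-1}))\,c_r(q^*E\sigma)=j_*(1)$ before pushing forward. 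Your argument instead rests on the equivariant projective bundle decomposition of $\Hh_G(W)[t^{-1}]$ to produce the single relation $1=\g(-E)(t)\,\Phi(\xi)+\xi\beta$, from which both the invertibility of $\theta$ (via $i^*$ and the Grothendieck relation $c_r(p^*E(1))=0$ on $\Pp(E)$) and the main identity (via $q_*$) fall out simultaneously. Your route is tidier and makes the role of the Grothendieck relation explicit; the paper's route trades the projective-bundle basis for purely local zero-locus reasoning, and treats the invertibility of $\theta$ as an independent input rather than a consequence.
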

\begin{proof}
The formula for $\theta$ and its invertibility follow from \rref{def:gamma}, since $\fgl{-1}(t)$ is invertible in $\Hh_G(\Speck)[t^{-1}]$ by \eqref{eq:fgl}. To prove the remaining statement, we may assume that $E$ has constant rank $r$. Under the identification $\Pp(E) \simeq \Pp(E\sigma)$, we have $\Oc_{\Pp(E\sigma)}(1) = \Oc_{\Pp(E)}(1)\sigma^{\vee}$, and thus $\theta = c_1(\Oc_{\Pp(E\sigma)}(1))$ in $\Hh_{G}(\Pp(E\sigma))$. Let $i\colon \Pp(E\sigma) \to \Pp(E\sigma \oplus 1)$ be the closed immersion. Let $\xi = c_1(\Oc_{\Pp(E\sigma \oplus 1)}(1))$ and $\zeta = c_1(\Oc_{\Pp(E\sigma \oplus 1)}(-1))$ in $\Hh_G(\Pp(E\sigma \oplus 1))$. Then by \rref{p:gamma_tens_eq}, we have in $\Hh_G(\Pp(E\sigma \oplus 1))$
\[
c_r(q^*E\sigma) = c_r(q^*E\sigma (1) \otimes \Oc(-1)) = \gamma(q^*E\sigma (1))(\zeta) = c_r(q^*E\sigma(1)) + \zeta u,
\]
for some $u\in \Hh_G(\Pp(E\sigma \oplus 1))$. Since $\zeta = \fgl{-1}(\xi)$ is divisible by $\xi$, we deduce that $c_r(q^*E\sigma) = c_r(q^*E\sigma(1)) + \xi v$ for some $v\in \Hh_G(\Pp(E\sigma \oplus 1))$. Since $\theta = i^*\xi$ and $\xi=i_*(1)$, by the projection formula \cite[(2.1.3.iii)]{inv} we have $\xi(1 - i_*(\theta^{-1}))=0$ in $\Hh_G(\Pp(E\sigma \oplus 1))[t^{-1}]$. We obtain in  $\Hh_G(\Pp(E\sigma \oplus 1))[t^{-1}]$
\begin{equation}
\label{eq:theta}
(1 - i_*(\theta^{-1}))c_r(q^*E\sigma) = (1 - i_*(\theta^{-1}))c_r(q^*E\sigma(1)).
\end{equation}
Now the closed immersion $j\colon S = \Pp(1) \to \Pp(E\sigma \oplus 1)$ is the zero-locus of a $G$-equivariant regular section of $q^*E\sigma(1)$ (see \cite[B.5.6]{Ful-In-98}). Thus $j_*(1) =c_r(q^*E\sigma(1))$ in $\Hh_G(\Pp(E\sigma \oplus 1))$ by \rref{p:zero-locus-eq}. Now $\Pp(E\sigma) \cap \Pp(1) = \varnothing$ in $\Pp(E\sigma \oplus 1)$, hence $i_*(\theta^{-1}) \cdot j_*(1) =0$ in $\Hh_G(\Pp(E\sigma \oplus 1))[t^{-1}]$, by the projection formula and the transversality axiom \cite[(2.1.3.v)]{inv}. Thus \eqref{eq:theta} yields $(1 - i_*(\theta^{-1}))c_r(q^*E\sigma) = j_*(1)$ in $\Hh_G(\Pp(E\sigma \oplus 1))[t^{-1}]$. We conclude by applying $q_*$, using the projection formula, and multiplying with $\gamma(-E)(t)$, in view of \rref{def:gamma_Gm}.
\end{proof}

\begin{corollary}
\label{cor:PE1_M}
Let $G$ be a subgroup of $\Gm$. For any vector bundle $E$ over a smooth projective $k$-scheme $S$, the class $\lc \Pp(E\sigma \oplus 1) \rc_G \in \Inv(G)$ depends only on $\lc E \to S \rc \in \Mcc$.
\end{corollary}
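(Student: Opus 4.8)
The plan is to apply the structure morphism of $S$ to the $\Gm$-equivariant identity of \rref{prop:gamma_PE}, after first reducing the statement to the case $G=\Gm$. The key point I will need to address is that this identity a priori only lives in the localisation $\Hh_G(\Speck)[t^{-1}]$, so I must eventually descend it back to $\Hh_G(\Speck)$.

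\textbf{Reduction to $G=\Gm$.} For an arbitrary subgroup $G\subset\Gm$, the inclusion induces via \rref{p:change_of_groups} a morphism $\Hh_{\Gm}(\Speck)\to\Hh_G(\Speck)$ compatible with projective pushforwards and with the representation $\sigma$ (whose restriction to $G$ is the $\sigma$ of $G$). It therefore sends $\lc\Pp(E\sigma\oplus1)\rc_{\Gm}$ to $\lc\Pp(E\sigma\oplus1)\rc_G$. Hence once I prove that $\lc\Pp(E\sigma\oplus1)\rc_{\Gm}$ depends only on $\lc E\to S\rc$, the general case follows by functoriality. So I assume $G=\Gm$ from now on.

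\textbf{Pushing forward the formula.} Let $\pi\colon S\to\Speck$ be the structure morphism, $q\colon\Pp(E\sigma\oplus1)\to S$ and $p\colon\Pp(E)\to S$ the two projective bundles of \rref{prop:gamma_PE}, and $r=\pi p\colon\Pp(E)\to\Speck$. Applying $\pi_*$ (extended $\Hh_G(\Speck)[t^{-1}]$-linearly to the localisations) to the identity $q_*(1)=\g(-E)(t)+p_*(\theta^{-1})$ yields three terms. The left side gives $\pi_*q_*(1)=\lc\Pp(E\sigma\oplus1)\rc_G$. For the first term, $S$ carries the trivial action and $\g(-E)(t)=\sum_i\g_i(-E)t^i$ with each $\g_i(-E)$ pulled back from $\Hh(S)$, so the projection formula gives $\pi_*\g(-E)(t)=\sum_i\lc\g_i(-E)\rc t^i$, which is precisely $\J(\lc E\to S\rc)$ evaluated at $x=t$ and is thus determined by $\lc E\to S\rc$ by \rref{prop:gamma_determined}. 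For the last term, writing $\eta=c_1(\Oc_{\Pp(E)}(1))$ so that $\theta=\eta+_{\Hh}\fgl{-1}(t)$, I expand the inverse via \eqref{eq:inverse_fgl} as $\theta^{-1}=\sum_i v_i(\fgl{-1}(t))\eta^i$ with the universal coefficients $v_i\in\fund{\Hh}[[x]][x^{-1}]$ pulled back from $\Speck$; the projection formula then reduces $\pi_*p_*(\theta^{-1})=r_*(\theta^{-1})$ to the classes $r_*(\eta^i)=\lc c_1(\Oc_{\Pp(E)}(1))^i\rc$. By \eqref{eq:class_P} these are exactly the coefficients of $\lc\Oc(1)\to\Pp(E)\rc\in\Pc$, which is determined by $\lc E\to S\rc$ thanks to \rref{cor:PE_M}. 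Consequently the entire right-hand side depends only on $\lc E\to S\rc$.

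\textbf{Descending to $\Hh_G(\Speck)$.} The previous step shows that the image of $\lc\Pp(E\sigma\oplus1)\rc_G$ under the localisation map $\Hh_G(\Speck)\to\Hh_G(\Speck)[t^{-1}]$ depends only on $\lc E\to S\rc$. Since $\Hh_G(\Speck)=\Hh(\Speck)[[t]]$ by \rref{lemm:Gm-trivial}, the element $t$ is a nonzerodivisor, so this localisation map is injective and the conclusion follows. I expect this last point to be the main obstacle: the formula of \rref{prop:gamma_PE} is genuinely inverted only after inverting $t$, and for a finite group $\mu_n$ the element $t$ is a zerodivisor in $\Hh_G(\Speck)$, so the descent would fail there. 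It is precisely to circumvent this that the reduction via change of groups to $G=\Gm$ is made first, where $t$ is a nonzerodivisor.
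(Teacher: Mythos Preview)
Your proof is correct and follows essentially the same route as the paper's: reduce to $G=\Gm$ via \rref{p:change_of_groups}, push forward the identity of \rref{prop:gamma_PE} to $\Hh_G(\Speck)[t^{-1}]$, and descend using the injectivity afforded by \rref{lemm:Gm-trivial}. The only cosmetic difference is in the treatment of the term $\lc\theta^{-1}\rc_G$: the paper observes directly that $\theta^{-1}=\g(-\Oc_{\Pp(E)}(1))(\fgl{-1}(t))$ and invokes \rref{prop:gamma_determined} a second time (now for the line bundle $\Oc(1)\to\Pp(E)$), whereas you unwind this by expanding via \eqref{eq:inverse_fgl} and reducing to the coefficients $\lc c_1(\Oc_{\Pp(E)}(1))^i\rc$ of $\lc\Oc(1)\to\Pp(E)\rc$---which amounts to the same thing.
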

\begin{proof}
In view of \rref{p:change_of_groups}, we may assume that $G=\Gm$. Then by \rref{lemm:Gm-trivial} the morphism $\Inv(G) \subset \Hh_G(\Speck) \to \Hh_G(\Speck)[t^{-1}]$ is injective. Using \rref{prop:gamma_PE} and its notation, we have in $\Hh_G(\Speck)[t^{-1}]$
\[
\lc \Pp(E\sigma \oplus 1) \rc_G = \lc \g(-E)(t) \rc_G + \lc \theta^{-1} \rc_G.
\]
By \rref{prop:gamma_determined}, the element $\lc \g(-E)(t) \rc_G$ depends only on $\lc E \to S \rc \in \Mcc$, and the element
$\lc \theta^{-1} \rc_G = \lc \g(-\Oc_{\Pp(E)}(1))(\fgl{-1}(t)) \rc_G$ depends only on $\lc \Oc(1) \to \Pp(E)\rc \in \Pc \subset \Mcc$. But $\lc \Oc(1) \to \Pp(E)\rc$ depends only on $\lc E \to S \rc \in \Mcc$ by \rref{cor:PE_M}.
\end{proof}

\begin{para}
\label{p:pi}
We claim that there exists a unique morphism of abelian groups
\[
\pi \colon \Mcc \to \Inv(G) \quad ; \quad \lc  E \to S \rc \mapsto \lc \Pp(E\sigma \oplus 1) \rc_G.
\]
Indeed it follows from \rref{cor:PE1_M} that this assignment defines a map $\Mcc^{\eff} \to \Inv(G)$. Since this map is additive, it extends uniquely to the morphism $\pi$. 
\end{para}

\begin{remark}
After the definition given in \rref{def:gamma} (or equivalently \rref{def:gamma_Gm} with $G=\Gm$), we thus have two additional ways of computing $\gamma$: either using the class $\rr$ and \rref{prop:rho_gamma}, or taking $G= \Gm$ in \rref{prop:gamma_PE}.
\end{remark}

\section{The normal bundle to the fixed locus}
\numberwithin{theorem}{section}
\label{sect:fixed}

We now concentrate on the case $G=\mud$. Recall that a $\mud$-action on a quasi-projective $k$-scheme may be identified with an involution when $\carac k\neq 2$, and with an idempotent global derivation when $\carac k=2$ (see \cite[(4.5)]{inv}). In this section $\Hh$ is an oriented cohomology theory.

\begin{para}
Let $R$ be a graded ring and $S$ a graded $R$-algebra. Let $s \in S$ be such that $x \mapsto s$ induces a morphism of $R$-algebras $R[[x]] \to S$, denoted as usual by $f \mapsto f(s)$. Let $f,g \in R[[x]]$ be such that $f$ is divisible by $g$, and $g$ is a nonzerodivisor. We will write $\frac{f(s)}{g(s)}$ to mean $q(s)$, where $q \in R[[x]]$ is such that $f=qg$.
\end{para}

We will need the following equivariant version of Vishik's formula \cite[\S5.4]{Vi-Sym}.
\begin{lemma}
\label{lemm:Vishik}
Let $G$ be a subgroup of $\Gm$. Let $f \colon Y \to Z$ be a $G$-equivariant finite morphism in $\Sm_k$ whose fiber over any generic point of $Z$ is the spectrum of a two-dimensional algebra. Then the $G$-equivariant $\Oc_Z$-module $\Lch = \coker(\Oc_Z \to f_*\Oc_Y)$ is invertible, and
\[
f_*(1) = \frac{\fgl{2}(c_1(\Lch^\vee))}{c_1(\Lch^\vee)} \in \Hh_G(Z).
\]
\end{lemma}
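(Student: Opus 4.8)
The plan is to realise $Y$ as a relative degree-two effective Cartier divisor inside a $\Pp^1$-bundle over $Z$ and to obtain $f_*(1)$ by pushing forward the class of that divisor; the operator $\fgl{2}$ will enter precisely because the divisor sits in the linear system of $\Oc(2)$.

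First I would establish the structural claim. After reducing to the case where $Z$ is connected (hence integral, being smooth) and where every component of $Y$ dominates $Z$, so that $\dim Y=\dim Z$, miracle flatness shows that the finite morphism $f$ between the Cohen--Macaulay scheme $Y$ and the regular scheme $Z$ is flat. Hence $\mathcal A:=f_*\Oc_Y$ is locally free, of rank two by the hypothesis on generic fibres, and $G$-equivariant since $f$ is. The unit section $\Oc_Z\to\mathcal A$ is nowhere vanishing, so it is a subbundle inclusion with locally free quotient; thus $\Lch=\coker(\Oc_Z\to\mathcal A)$ is an invertible $G$-equivariant sheaf.

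Next comes the geometric input, which I expect to be the main obstacle. Locally $\mathcal A=\Oc_Z[\theta]/(\theta^2-a\theta-b)$ with $\theta$ lifting a generator of $\Lch$; homogenising this monic quadratic embeds $Y$ into the $\Pp^1$-bundle $\bar\pi\colon\bar P=\Pp(\Lch\oplus\Oc_Z)\to Z$ as the zero scheme of a section of $\Oc_{\bar P}(2)$, lying in the affine chart complementary to the section attached to the summand $\Oc_Z$ (the leading coefficient being a unit). The delicate part is to verify that these local equations glue into a canonical $G$-equivariant closed immersion $i\colon Y\hookrightarrow\bar P$ onto the zero scheme of a \emph{global} regular $G$-equivariant section of $\Oc_{\bar P}(2)$: here the smoothness of $Y$ (equivalently the flatness of $f$, which produces the monic quadratic with invertible leading term) is essential, and one must pin down that the ambient line bundle is $\Oc(2)$ untwisted. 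Granting this, the equivariant zero-locus formula \rref{p:zero-locus-eq} gives $i_*(1)=c_1(\Oc_{\bar P}(2))$, and since $\bar\pi\circ i=f$, functoriality of pushforward yields $f_*(1)=\bar\pi_*(c_1(\Oc_{\bar P}(2)))$.

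Finally I would carry out the pushforward. By \rref{p:projective_space} (applied equivariantly, level by level in the definition of $\Hh_G$) the ring $\Hh_G(\bar P)$ is free over $\Hh_G(Z)$ on $1,\xi$, with $\xi=c_1(\Oc_{\bar P}(1))$, and the defining relation is $\g(\Lch\oplus\Oc_Z)(\xi)=\xi\cdot(\xi+_{\Hh}c_1(\Lch))=0$, because the tautological inclusion $\Oc(-1)\hookrightarrow\bar\pi^*(\Lch\oplus\Oc_Z)$ makes $\bar\pi^*(\Lch\oplus\Oc_Z)\otimes\Oc(1)$ admit a nowhere-zero section, whence its top Chern class $\g(\Lch\oplus\Oc_Z)(\xi)$ vanishes (\rref{p:gamma_tens_eq}). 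The two roots of this relation are $0$ and $c_1(\Lch^\vee)=-_{\Hh}c_1(\Lch)$, and $\bar\pi_*(\xi)=1$ as $\xi$ is the class of a section. Writing $\fgl{2}(\xi)=\xi\,h(\xi)$ with $h=\fgl{2}(x)/x\in\fund{\Hh}[[x]]$, the difference $h(\xi)-h(c_1(\Lch^\vee))$ is divisible by $\xi+_{\Hh}c_1(\Lch)$ up to a unit, so $\xi\,\big(h(\xi)-h(c_1(\Lch^\vee))\big)$ lies in the relation ideal and is therefore zero in $\Hh_G(\bar P)$. The projection formula then gives $f_*(1)=\bar\pi_*(\fgl{2}(\xi))=h(c_1(\Lch^\vee))\,\bar\pi_*(\xi)=\fgl{2}(c_1(\Lch^\vee))/c_1(\Lch^\vee)$, which is the asserted formula (the division being legitimate in the sense of the convention preceding the lemma, since $\fgl{2}$ is divisible by $x$). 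Note that this computation avoids dividing by the possibly-zerodivisor $c_1(\Lch^\vee)$, and works in arbitrary characteristic.
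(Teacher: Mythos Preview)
Your approach is sound and the delicate points you flag (the $G$-equivariant embedding of $Y$ into $\bar P$ as the zero locus of a regular $G$-invariant section of $\Oc_{\bar P}(2)$, with no twist from $Z$) can indeed be verified; your pushforward computation via the relation $\xi\cdot(\xi+_{\Hh}c_1(\Lch))=0$ is also correct. However, your route differs substantially from the paper's.

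The paper's proof is essentially one sentence: the non-equivariant case is Vishik's formula, already established in \cite[(2.5.1)]{inv}, and the equivariant statement follows by applying it to each finite-level approximation $(Y\times(m\sigma-0))/G\to(Z\times(m\sigma-0))/G$ and passing to the limit, using the compatibility of equivariant Chern classes with those of the approximations \rref{p:Chern_colim}. What you do instead is reprove the underlying formula from scratch (via the $\Pp^1$-bundle embedding) directly in the equivariant setting. This has the merit of being self-contained and of explaining geometrically why $\fgl{2}$ appears, whereas the paper's approach is much shorter but outsources the geometric content to the earlier reference. Your argument is, in effect, the proof of the non-equivariant formula carried through $G$-equivariantly; the paper avoids redoing that work.

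One small remark: your reduction ``to the case where every component of $Y$ dominates $Z$'' is not obviously a reduction at all---if such a non-dominant component existed, $f_*\Oc_Y$ would fail to be locally free and $\Lch$ would not be invertible, so the conclusion would be false rather than reducible. The hypotheses of the cited non-equivariant version (and the way the lemma is used in \rref{lemm:X_Z_PN}, where $f$ is finite flat of degree two) implicitly exclude this, so the point is moot in practice; but you should phrase it as an observation (finite between smooth equidimensional schemes of equal dimension forces flatness by miracle flatness) rather than as a reduction.
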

\begin{proof}
In view of \eqref{p:Chern_colim}, this follows by applying Vishik's formula \cite[(2.5.1)]{inv} to each morphism $(Y \times (m\sigma -0))/G \to (Z \times (m\sigma -0))/G$ for $m\in \Nn$.
\end{proof}

\begin{para}
Let $X\in \Sm_k$ with a $\mud$-action. The fixed locus $X^{\mud}$ belongs to $\Sm_k$ (see e.g.\ \cite[Lemma~3.5.2]{isol}). We will denote by $N_X$ the normal bundle to the closed immersion $X^{\mud} \to X$, endowed with the trivial $\mud$-action.
\end{para}

\begin{para}
\label{p:blowup}
Let us recall a few facts from \cite[(4.7), (4.8)]{inv}, and fix some notation for later reference. Let $X\in \Sm_k$ with a $\mud$-action. We will write $N=N_X$ (with the trivial $\mud$-action), and denote by $Y$ the blow-up of $X^{\mud}$ in $X$. The scheme $Y$ inherits a $\mud$-action such that $Y^{\mud}$ is the exceptional divisor $\Pp(N \sigma)$. The categorical quotient $f\colon Y \to Y/\mud=Z$ exists in the category of quasi-projective $k$-schemes by \cite[V, Th\'eor\`eme~4.1 and Remarque~5.1]{SGA3-1}. Moreover $Z \in \Sm_k$, the morphism $f$ is finite flat of degree two, and the $\mud$-equivariant $\Oc_Z$-module $\coker(\Oc_Z \to f_*\Oc_Y)$ is isomorphic to $\Lc \sigma$, where $\Lc$ is an invertible $\Oc_Z$-module. There is a natural $\mud$-equivariant isomorphism $f^*\Lc^{\vee} \sigma \to \Oc_Y(\Pp(N\sigma))$.
\end{para}

\begin{lemma}
\label{lemm:X_Z_PN}
Let $X$ be a smooth projective $k$-scheme with a $\mud$-action. Using the notation \rref{p:blowup}, we have
\begin{align}
\label{lemm:X_Z_PN:1}
&\lc X \rc_{\mud} = \lc \Pp(N\sigma \oplus 1)\rc_{\mud} - \Big \lc \frac{\fgl{2}(c_1(\Lc^{\vee}) +_{\Hh} t)}{\fgl{-1}(c_1(\Lc^{\vee}) +_{\Hh} t)} \Big \rc_{\mud}&&\in\Hh_{\mud}(\Speck),  \\ 
\label{lemm:X_Z_PN:2}
&\lc X \rc_{\mud} = \lc \g(-N)(t) \rc_{\mud} && \in \Hh_{\mud}(\Speck)[t^{-1}],\\
\label{lemm:X_Z_PN:3}
&\lc Z \rc_{\mud} = 2^{-1} \lc X \rc_{\mud} + \Big \lc \frac{\fgl{-1}(c_1(\Oc_{\Pp(N\sigma\oplus 1)}(-1)))}{\fgl{2}(c_1(\Oc_{\Pp(N \sigma \oplus 1)}(-1)))} \Big \rc_{\mud} &&\in \Hh_{\mud}(\Speck)[2^{-1}].
\end{align}
\end{lemma}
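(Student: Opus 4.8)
The plan is to route all three identities through the two morphisms of \rref{p:blowup} --- the blow-up $\pi\colon Y\to X$ of $X^{\mud}$ and the quotient $f\colon Y\to Z$ --- writing $N=N_X$, with deformation to the normal bundle relating $\lc X\rc_{\mud}$ to the linear model $\Pp(N\sigma\oplus 1)$, and Vishik's formula \rref{lemm:Vishik} evaluating $\lc Y\rc_{\mud}$ on $Z$. I would begin with the $\mud$-equivariant deformation to the normal cone of $X^{\mud}\hookrightarrow X$: the blow-up $\mathcal D=\mathrm{Bl}_{X^{\mud}\times\{0\}}(X\times\Pp^1)$ is smooth, carries a $\mud$-action, and its projection to $\Pp^1$ has fibre $X$ over $\infty$ and reduced normal-crossing fibre $A\cup B$ over $0$, with $A=\Pp(N\sigma\oplus 1)$, $B=Y$ and $A\cap B=\Pp(N\sigma)$. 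Since $\Oc_{\mathcal D}(A+B)$ is pulled back from $\Pp^1$, the fibre class is governed by the formal group law; pushing forward to $\Speck$,
\[ \lc X\rc_{\mud}=\lc c_1(\Oc_{\mathcal D}(A))+_{\Hh}c_1(\Oc_{\mathcal D}(B))\rc_{\mud}. \]
Writing $x+_{\Hh}y=x+y+xy\,F(x,y)$ and using that, by transversality, $c_1(\Oc_{\mathcal D}(A))\,c_1(\Oc_{\mathcal D}(B))=\iota_*(1)$ for the regular embedding $\iota\colon\Pp(N\sigma)\hookrightarrow\mathcal D$, this expands into the theory-independent relation
\[ \lc X\rc_{\mud}=\lc\Pp(N\sigma\oplus 1)\rc_{\mud}+\lc Y\rc_{\mud}+\lc F\big(c_1(N_B),c_1(N_A)\big)\rc_{\mud}, \]
where $N_A,N_B$ are the normal bundles of $\Pp(N\sigma)$ in $A,B$, the last class being pushed from $\Pp(N\sigma)$. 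This elementary computation replaces the double-point relation and is valid for every oriented $\Hh$; moreover $N_A=N_B^\vee$ because $\Oc_{\mathcal D}(A+B)$ is trivial on $\Pp(N\sigma)$.

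Next I would evaluate the last two terms on $Z$. Vishik's formula \rref{lemm:Vishik}, applied to $f$ with $\coker(\Oc_Z\to f_*\Oc_Y)\cong\Lc\sigma$ and $\sigma^\vee=\sigma$, gives $\lc Y\rc_{\mud}=\lc \fgl{2}(s)/s\rc_{\mud}$ with $s=c_1(\Lc^\vee)+_{\Hh}t=c_1(\Lc^\vee\sigma)$, the class being pushed from $Z$. The map $f$ restricts to an isomorphism $\Pp(N\sigma)\xrightarrow{\sim}\bar E\subset Z$ onto the branch divisor, under which $\Lc^\vee\sigma$ restricts to $N_B=\Oc_{\Pp(N\sigma)}(-1)$, because $f^*\Lc^\vee\sigma\cong\Oc_Y(\Pp(N\sigma))$ by \rref{p:blowup}. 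This identification re-expresses the $\Pp(N\sigma)$-pushforward of the first paragraph as a pushforward over $\bar E$ in the single variable $s$, whereupon the combination $\lc Y\rc_{\mud}+\lc F(\dots)\rc_{\mud}$ collapses, by a direct formal-group-law computation, to $-\lc \fgl{2}(s)/\fgl{-1}(s)\rc_{\mud}$; this is \eqref{lemm:X_Z_PN:1}.

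To obtain \eqref{lemm:X_Z_PN:2} I would pass to $\Hh_{\mud}(\Speck)[t^{-1}]$ and apply \rref{prop:gamma_PE} to $N$ with $G=\mud$, getting $\lc\Pp(N\sigma\oplus 1)\rc_{\mud}=\lc\g(-N)(t)\rc_{\mud}+\lc\theta^{-1}\rc_{\mud}$ with $\theta=c_1(\Oc_{\Pp(N)}(1)\sigma)$; comparing with \eqref{lemm:X_Z_PN:1}, it remains to identify $\lc\theta^{-1}\rc_{\mud}$ with $\lc\fgl{2}(s)/\fgl{-1}(s)\rc_{\mud}$ after inverting $t$, once more through the isomorphism $\Pp(N\sigma)\cong\bar E$ and the triviality of the $\mud$-action on $Z$. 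For \eqref{lemm:X_Z_PN:3} I would isolate the constant term $2$ of $\fgl{2}(s)/s$ to write $\lc Y\rc_{\mud}=2\lc Z\rc_{\mud}+\lc \fgl{2}(s)/s-2\rc_{\mud}$, divide by $2$, and feed in the relation of the first paragraph together with Quillen's formula \rref{prop:Quillen}, converting the $\bar E$- and $Z$-contributions into a single pushforward over $\Pp(N\sigma\oplus 1)$ in the variable $\tau=c_1(\Oc(-1))$.

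The geometric inputs are clean; the genuine work, and where I expect the most trouble, is the bookkeeping of the equivariant $\sigma$-twists of the line bundles $N_A$, $N_B$, $\Lc$ and $\Oc_{\Pp(N\sigma)}(\pm1)$ under the three identifications (exceptional divisor of $Y$, branch divisor $\bar E\subset Z$, hyperplane at infinity in $\Pp(N\sigma\oplus 1)$), together with the verification that the resulting formal-group-law expressions collapse to the compact ratios $\fgl{2}/\fgl{-1}$ and $\fgl{-1}/\fgl{2}$. The divisibility conventions governing these ratios are precisely what forces \eqref{lemm:X_Z_PN:1} to hold integrally while \eqref{lemm:X_Z_PN:2} requires inverting $t$ and \eqref{lemm:X_Z_PN:3} requires inverting $2$.
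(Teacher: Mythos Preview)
Your strategy for \eqref{lemm:X_Z_PN:1} and \eqref{lemm:X_Z_PN:2} is essentially the paper's, just organised differently. The paper does not expand the formal sum as $x+y+xyF(x,y)$ and then collapse; instead it writes the divisor relation $[Y]=[X]+j_*(\fgl{-1}(\zeta)/\zeta)$ on $D$ and multiplies by the power series $\eta/\fgl{-1}(\eta)$ (with $\eta=c_1(\Oc_D(\Pp(N\sigma\oplus 1)))$) before pushing forward. This single multiplication produces \eqref{lemm:X_Z_PN:1} directly, replacing your two-step ``double-point expansion then collapse via $\bar E\subset Z$''. Both routes work; yours is perhaps more transparent if one is used to Levine--Morel's double-point relation, while the paper's is shorter and avoids the intermediate identification of $[\bar E]=\fgl{2}(s)$ in $\Hh_{\mud}(Z)$.

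There is however a genuine gap in your plan for \eqref{lemm:X_Z_PN:3}. Isolating the constant term of $\fgl{2}(s)/s$ gives $\lc Y\rc_{\mud}=2\lc Z\rc_{\mud}+\lc \fgl{2}(s)/s-2\rc_{\mud}$, and feeding in the relation of your first paragraph gives an expression for $\lc Z\rc_{\mud}$ involving $\lc X\rc_{\mud}$, $\lc\Pp(N\sigma\oplus 1)\rc_{\mud}$, the $\Pp(N\sigma)$-term, \emph{and} the residual pushforward $\lc \fgl{2}(s)/s-2\rc_{\mud}$ from $Z$. This last term cannot be converted into a pushforward from $\Pp(N\sigma\oplus 1)$: there is no map between $Z$ and $\Pp(N\sigma\oplus 1)$, and Quillen's formula \rref{prop:Quillen} only relates pushforwards along a projective bundle to its base. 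If you actually carry out the algebra, the $\lc Z\rc_{\mud}$-dependence cancels and you recover precisely \eqref{lemm:X_Z_PN:1}, not \eqref{lemm:X_Z_PN:3}. The paper avoids this by returning to the deformation space $D$ and multiplying the divisor relation by a \emph{different} power series, namely $\eta/\fgl{2}(\eta)$: restricted to $Y$ this becomes $\mu/\fgl{2}(\mu)=f^*(s/\fgl{2}(s))$, whose pushforward via Vishik's formula is exactly $\lc Z\rc_{\mud}$; restricted to $X$ one gets the constant term $1/2$; and on $\Pp(N\sigma\oplus 1)$ one obtains the desired ratio $\fgl{-1}(\zeta)/\fgl{2}(\zeta)$. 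The point is that the class $\eta$ on $D$ simultaneously knows about all three components, which is what lets one trade a $Z$-pushforward for a $\Pp(N\sigma\oplus 1)$-pushforward.
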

\begin{proof}
We let $\mud$ act trivially on $\Pp^1$, and consider the blow-up $D$ of $X^{\mud} \times 0$ in $X \times \Pp^1$, together with its natural $\mud$-action. Denote by $i\colon \Pp(N\sigma) \to Y$ and $j\colon \Pp(N\sigma \oplus 1) \to D$ the immersions of the exceptional divisors. The fiber of $D \to \Pp^1$ over $0$ is the sum of the effective Cartier divisors $\Pp(N\sigma \oplus 1)$ and $Y$. It is linearly equivalent to the fiber over $1$, which is a copy of $X$. Observe that $\eta =  c_1(\Oc_D(\Pp(N\sigma \oplus 1))) \in \Hh_{\mud}(D)$ restricts to $\mu = c_1(\Oc_Y(\Pp(N\sigma))) \in \Hh_{\mud}(Y)$, to $\zeta = c_1(\Oc_{\Pp(N\sigma\oplus 1)}(-1)) \in \Hh_{\mud}(\Pp(N\sigma\oplus 1))$, and to zero on $X$. Since $Y = X - \Pp(N\sigma\oplus 1)$ as $\mud$-invariant Cartier divisors on $D$, and $X \cap \Pp(N\sigma\oplus 1) = \varnothing$ in $D$, taking \rref{p:zero-locus-eq} and \rref{p:fgl_equ} into account, we deduce that (see also \cite[Proposition~2.5.2]{LM-Al-07})
\begin{equation}
\label{eq:defo}
[Y]  = [X] + j_*\Big(\frac{\fgl{-1}(\zeta)}{\zeta}\Big) \in \Hh_{\mud}(D).
\end{equation}
Multiplying \eqref{eq:defo} with $\frac{\eta}{\fgl{-1}(\eta)}$ and projecting to $\Hh_{\mud}(\Speck)$, we obtain
\begin{equation}
\label{eq:X_PN}
\lc X \rc_{\mud} = \lc \Pp(N\sigma \oplus 1)\rc_{\mud} - \Big \lc \frac{\mu}{\fgl{-1}(\mu)} \Big \rc_{\mud}.
\end{equation}
Since $\Oc_Y(\Pp(N\sigma)) = f^*\Lc^{\vee} \sigma$, we have $\mu = f^*(c_1(\Lc^{\vee}) +_{\Hh} t)$. Thus \eqref{eq:X_PN} together with \rref{lemm:Vishik} (where $\Lch=\Lc\sigma$) and the projection formula yields \eqref{lemm:X_Z_PN:1}.

Now $c_1(\Lc) +_{\Hh} t =\gamma(\Lc)(t) \in \Hh_{\mud}(Z)[t^{-1}]$ is invertible, hence so is its pullback $\fgl{-1}(\mu) \in \Hh_{\mud}(Y)[t^{-1}]$. Letting $\theta = c_1(\Oc_{\Pp(N\sigma)}(1)) =i^*\fgl{-1}(\mu) \in \Hh_{\mud}(\Pp(N\sigma))$, we have in $\Hh_{\mud}(Y)[t^{-1}]$,
\[
\frac{\mu}{\fgl{-1}(\mu)} = \fgl{-1}(\mu)^{-1} \mu=  \fgl{-1}(\mu)^{-1} i_*(1) = i_*( (i^*\fgl{-1}(\mu))^{-1}) = i_*(\theta^{-1}).
\]
Plugging this equation into \eqref{eq:X_PN} and using \rref{prop:gamma_PE}, we obtain \eqref{lemm:X_Z_PN:2}.

Finally we invert $2$ in \eqref{eq:defo}, multiply with $\frac{\eta}{\fgl{2}(\eta)}$, and project to $\Hh_{\mud}(\Speck)[2^{-1}]$. Using again \rref{lemm:Vishik}, we obtain
\[
\lc Z \rc_{\mud} = \Big \lc \frac{\mu}{\fgl{2}(\mu)} \Big \rc_{\mud} = 2^{-1} \lc X \rc_{\mud} - \Big \lc \frac{\fgl{-1}(\zeta)}{\fgl{2}(\zeta)} \Big \rc_{\mud} \in \Hh_{\mud}(\Speck)[2^{-1}].\qedhere
\]
\end{proof}

\begin{proposition}
\label{prop:in_Hf}
Let $X$ be a smooth projective $k$-scheme with a $\mud$-action. Then $\lc X \rc_{\mud}$ belongs to the image of $\fund{\Hh}[[t]] \subset \Hh(\Speck)[[t]]\to \Hh_{\mud}(\Speck)$ (see \rref{eq:Ht}).
\end{proposition}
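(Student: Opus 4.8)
The plan is to invoke the first formula \eqref{lemm:X_Z_PN:1} of \rref{lemm:X_Z_PN}, which writes $\lc X \rc_{\mud}$ as a difference of two terms, and to prove that each term separately lies in the image of the map $\fund{\Hh}[[t]]\to \Hh_{\mud}(\Speck)$ of \eqref{eq:Ht}. The conceptual point is that $X$ itself need \emph{not} carry a $\Gm$-action extending its $\mud$-action (otherwise \rref{lemm:Gm_fund} would conclude immediately), whereas the two objects on the right-hand side of \eqref{lemm:X_Z_PN:1} are precisely chosen so that one carries a natural $\Gm$-action and the other a trivial action.

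For the first term $\lc \Pp(N\sigma \oplus 1)\rc_{\mud}$, I would observe that $N$ is a vector bundle over the smooth projective $k$-scheme $X^{\mud}$ equipped with the trivial action, so $\Pp(N\sigma \oplus 1)$ is naturally a $\Gm$-variety. By \rref{lemm:Gm_fund} its class $\lc \Pp(N\sigma \oplus 1)\rc_{\Gm}$ lies in $\fund{\Hh}[[t]] \subset \Hh(\Speck)[[t]]=\Hh_{\Gm}(\Speck)$, the last identification being \rref{lemm:Gm-trivial}. The change-of-groups morphism $\Hh_{\Gm}(\Speck) \to \Hh_{\mud}(\Speck)$ of \rref{p:change_of_groups} is compatible with pushforwards, hence carries $\lc \Pp(N\sigma \oplus 1)\rc_{\Gm}$ to $\lc \Pp(N\sigma \oplus 1)\rc_{\mud}$; moreover it is an $\Hh(\Speck)$-algebra map sending $t=c_1(\sigma)$ to $t=c_1(\sigma)$, so under the identification $\Hh_{\Gm}(\Speck) = \Hh(\Speck)[[t]]$ it restricts on $\fund{\Hh}[[t]]$ to the map \eqref{eq:Ht} by the uniqueness in \rref{lemm:power_series}. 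Thus the first term lies in the desired image.

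For the second term, recall from \rref{p:blowup} that $\Lc$ is a line bundle over the smooth projective $k$-scheme $Z$, on which $\mud$ acts trivially, and that the bracket in \eqref{lemm:X_Z_PN:1} is the pushforward of $\frac{\fgl{2}(a)}{\fgl{-1}(a)}$ along $Z \to \Speck$, where $a = c_1(\Lc^{\vee}) +_{\Hh} t$. Setting $G(x) = \fgl{2}(x)/\fgl{-1}(x) \in \fund{\Hh}[[x]]$ (a degree-zero graded power series, since $\fgl{2}$ and $\fgl{-1}$ both have $x$-valuation one with unit leading coefficient for the denominator), that bracket content equals $G(a)$, which is the image under \eqref{eq:Ht} for $Z$ of the element $G(c_1(\Lc^{\vee}) +_{\Hh} t) \in \Hh(Z)[[t]]$ (well-defined because $c_1(\Lc^{\vee})$ is nilpotent). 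Each $t$-coefficient of this power series is a polynomial in $c_1(\Lc^{\vee})$ with coefficients in $\fund{\Hh}$, so pushing forward coefficientwise along $Z \to \Speck$ and using $\Hh(\Speck)$-linearity reduces everything to the classes $\lc c_1(\Lc^{\vee})^i\rc$; since $c_1(\Lc^{\vee})^i = c_{(i)}(\Lc^{\vee})$ by \dref{p:P}{p:P_line}, these lie in $\fund{\Hh}$ by \rref{p:Chern_fund}. Hence the pushforward lands in $\fund{\Hh}[[t]]$, and by compatibility of the equivariant pushforward with \eqref{eq:Ht} its image in $\Hh_{\mud}(\Speck)$ is exactly the second term.

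Subtracting, we conclude that $\lc X \rc_{\mud}$ lies in the image of \eqref{eq:Ht}. The main obstacle I anticipate is not a single hard computation but the careful verification of the compatibilities threaded through the argument—between the change-of-groups morphism, the equivariant pushforwards, and the power-series map \eqref{eq:Ht}—since it is exactly these compatibilities that legitimise replacing the possibly non-extendable involution on $X$ by the $\Gm$-variety $\Pp(N\sigma\oplus 1)$ and the trivially acted-upon quotient $Z$; once they are in place, the membership of each term follows formally from \rref{lemm:Gm_fund} and \rref{p:Chern_fund} respectively.
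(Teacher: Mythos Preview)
Your proof is correct and follows essentially the same route as the paper's own argument: both invoke \eqref{lemm:X_Z_PN:1}, handle $\lc \Pp(N\sigma\oplus 1)\rc_{\mud}$ via the $\Gm$-extension and \rref{lemm:Gm_fund}, and handle the second term by expanding the power series in $c_1(\Lc^\vee)$ and $t$ and appealing to \rref{p:Chern_fund}. Your write-up is somewhat more explicit about the compatibilities (change-of-groups, uniqueness from \rref{lemm:power_series}), but the substance is identical.
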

\begin{proof}
Let $I$ be this image. We use the notation of \rref{p:blowup}. The $\mud$-action on $\Pp(N\sigma \oplus 1)$ extends to a $\Gm$-action, hence $\lc \Pp(N\sigma \oplus 1) \rc_{\mud}$ is in the image of $\Inv(\Gm) \to \Inv(\mud)$, which is contained in $I$ by \rref{lemm:Gm_fund}. We may find coefficients $v_{i,j} \in \fund{\Hh}$ such that
\[
\Big \lc \frac{\fgl{2}(c_1(\Lc^\vee) +_{\Hh} t)}{\fgl{-1}(c_1(\Lc^\vee) +_{\Hh} t)}\Big \rc_{\mud}  = \sum_{i,j\in \Nn} v_{i,j} \lc c_1(\Lc^\vee)^i \rc t^j \in \Hh_{\mud}(\Speck),
\]
so that this element belongs to $I$ by \rref{p:Chern_fund}. The proposition thus follows from \rref{lemm:X_Z_PN:1}.
\end{proof}
\numberwithin{theorem}{subsection}

\section{Actions on cobordism generators}

\label{section:actions_gen}

\subsection{An explicit family of actions}
\label{section:explicit}

\begin{para}
When $n\in \Nn\smallsetminus\{0\}$, we will denote by $c_{(n)}$ the Conner--Floyd Chern class \rref{p:P} for the partition $\alpha = (n)$ and the theory $\Hh=\CH$. Thus $c_{(n)}(L) = c_1(L)^n$ for any line bundle $L$ over $X \in \Sm_k$, and $c_{(n)}(E-F) = c_{(n)}(E) - c_{(n)}(F)$ for any $E,F \in K_0(X)$. For a smooth projective $k$-scheme $X$, the associated Chern number \rref{p:Chern_number} is
\[
c_{(n)}(X) = \deg c_{(n)}(-\Tan_X) = - \deg c_{(n)}(\Tan_X) \in \Zz.
\]
\end{para}

\begin{lemma}
\label{lemm:add_proj}
If $n\in \Nn\smallsetminus\{0\}$, we have $c_{(n)}(\Pp^n) = -n -1$.
\end{lemma}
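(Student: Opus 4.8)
The plan is to reduce everything to the Euler sequence and then simply apply the additivity of $c_{(n)}$ recorded just before the lemma.

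First I would invoke the displayed formula preceding the statement, namely $c_{(n)}(\Pp^n) = -\deg c_{(n)}(\Tan_{\Pp^n}) \in \Zz$, so that it suffices to compute the single Chern class $c_{(n)}(\Tan_{\Pp^n}) \in \CH(\Pp^n)$ and take its degree. To this end I would use the Euler sequence
\[
0 \to \Oc \to \Oc(1)^{\oplus(n+1)} \to \Tan_{\Pp^n} \to 0,
\]
which in $K_0(\Pp^n)$ reads $[\Tan_{\Pp^n}] = (n+1)[\Oc(1)] - [\Oc]$.

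Next, since $c_{(n)}$ is additive on $K_0$ (we have $c_{(n)}(E-F) = c_{(n)}(E) - c_{(n)}(F)$), this gives
\[
c_{(n)}(\Tan_{\Pp^n}) = (n+1)\, c_{(n)}(\Oc(1)) - c_{(n)}(\Oc) \in \CH(\Pp^n).
\]
Now $c_{(n)}(L) = c_1(L)^n$ for any line bundle $L$, so writing $h = c_1(\Oc(1))$ I get $c_{(n)}(\Oc(1)) = h^n$, while $c_{(n)}(\Oc) = c_1(\Oc)^n = 0$ because $n \geq 1$ and $c_1$ of the trivial bundle vanishes. Hence $c_{(n)}(\Tan_{\Pp^n}) = (n+1) h^n$.

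Finally, by \rref{p:projective_space} the class $h = c_1(\Oc(1))$ is the class of a hyperplane $\Pp^{n-1} \subset \Pp^n$, so iterating, $h^n$ is the class of a single $k$-point in $\CH^n(\Pp^n)$ and therefore has degree $1$. Thus $\deg c_{(n)}(\Tan_{\Pp^n}) = n+1$, and we conclude $c_{(n)}(\Pp^n) = -(n+1) = -n-1$. I do not anticipate any real obstacle here: the only points requiring a word of care are the vanishing $c_{(n)}(\Oc) = 0$ (which is where the hypothesis $n \geq 1$ enters) and the normalisation $\deg h^n = 1$, both of which are immediate from the definitions recalled in the excerpt.
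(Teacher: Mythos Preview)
Your proof is correct and follows essentially the same approach as the paper: the paper's one-line proof simply records the relation $\Tan_{\Pp^n} = (n+1)\Oc(1) - 1$ in $K_0(\Pp^n)$ (citing the Euler sequence), leaving the routine application of the additivity of $c_{(n)}$ and the degree computation implicit, whereas you have spelled these steps out in full.
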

\begin{proof}
Observe that $\Tan_{\Pp^n} = (n+1)\Oc(1)-1 \in K_0(\Pp^n)$ (see \cite[B.5.8]{Ful-In-98}).
\end{proof}

\begin{para}
\label{p:Milnor_hyp}
(Milnor hypersurfaces.)
Let $0 \leq m \leq n$ be integers. Denote by $U_m$ the kernel of the canonical epimorphism $(m+1) \to \Oc(1)$ of vector bundles over $\Pp^m$, and set $H_{m,n} = \Pp_{\Pp^m}(U_m \oplus (n-m))$. Then $\dim H_{m,n} = m+n-1$.
\end{para}

\begin{para}
\label{p:comp_Milnor}
We will denote by $\pi \colon H_{m,n} \to \Pp^m$ the projective bundle, and by $\Oc\{1\} = \Oc_{H_{m,n}}(1)$ the corresponding canonical line bundle over $H_{m,n}$. Since $U_m = m+1 -\Oc(1)$ in $K_0(\Pp^m)$, we have in $\CH(\Pp^m)$ for any $i\in\Nn$ (see \cite[\S3.2]{Ful-In-98})
\begin{equation}
\label{eq:comp_Milnor}
\pi_*(c_1(\Oc\{1\})^i) = c_{i+1-n}(\Oc(1)) =
\begin{cases}
1 & \text{ if $i=n-1$,}\\
c_1(\Oc(1)) & \text{ if $i=n$,}\\
0 & \text{ otherwise.}
\end{cases}
\end{equation}
\end{para}

\begin{lemma}
\label{lemm:add_Milnor}
Let $1 \leq m \leq n$. Then
\[
c_{(m+n-1)}(H_{m,n}) = \begin{cases}\displaystyle{\binom{m+n}{m}} &\text{ if $m \geq 2$,} \\ 0 & \text{ if $m=1$ and $n\geq 2$.} \end{cases}
\]
\end{lemma}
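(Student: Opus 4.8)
The plan is to compute the class of $\Tan_{H_{m,n}}$ in $K_0(H_{m,n})$, apply the additive class $c_{(d)}$ with $d=m+n-1=\dim H_{m,n}$, and then read off the Chern number by pushing forward along $\pi$ via \eqref{eq:comp_Milnor}. Write $\xi=c_1(\Oc\{1\})$ and $h=c_1(\Oc(1))\in\CH(\Pp^m)$, denoting the pullback of the latter to $H_{m,n}$ also by $h$, so that $h^{m+1}=0$. Since $U_m=(m+1)-\Oc(1)$ in $K_0(\Pp^m)$, the rank $n$ bundle $V=U_m\oplus(n-m)$ equals $(n+1)-\Oc(1)$ there, whence $\pi^*V=(n+1)-\pi^*\Oc(1)$ in $K_0(H_{m,n})$. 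The relative Euler sequence (in the convention of \rref{p:def_rho}, with tautological subbundle $\Oc\{-1\}\subset\pi^*V$) gives $\Tan_\pi=\Oc\{1\}\otimes\pi^*V-\Oc$, and combining this with $\pi^*\Tan_{\Pp^m}=(m+1)\,\pi^*\Oc(1)-\Oc$ (see \rref{lemm:add_proj}) yields
\[
\Tan_{H_{m,n}}=(n+1)\,\Oc\{1\}-\Oc\{1\}\otimes\pi^*\Oc(1)+(m+1)\,\pi^*\Oc(1)-2\,\Oc \in K_0(H_{m,n}).
\]

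Next I apply $c_{(d)}$, which is additive on $K_0$ with $c_{(d)}(L)=c_1(L)^d$ for a line bundle $L$ and $c_{(d)}(\Oc)=0$ (as $d\geq1$), and which uses that the formal group law of $\CH$ is additive, so that $c_1(\Oc\{1\}\otimes\pi^*\Oc(1))=\xi+h$. This gives
\[
c_{(d)}(\Tan_{H_{m,n}})=(n+1)\,\xi^d-(\xi+h)^d+(m+1)\,h^d\in\CH(H_{m,n}),
\]
so that $c_{(m+n-1)}(H_{m,n})=-\deg c_{(d)}(\Tan_{H_{m,n}})$, where $\deg=\deg_{\Pp^m}\circ\pi_*$ and $\deg_{\Pp^m}(h^m)=1$.

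It then remains to evaluate the three degrees. Because $d=m+n-1\geq m+1$ exactly when $n\geq2$, which holds in both cases of the statement, one has $h^d=0$ and the last term drops out. For the middle term, expanding and using \eqref{eq:comp_Milnor} (which makes $\pi_*(\xi^k)$ nonzero only for $k=n-1,n$) gives
\[
\pi_*\big((\xi+h)^d\big)=\binom{d}{n-1}h^m+\binom{d}{n}h^m=\binom{m+n}{m}h^m
\]
by Pascal's rule, of degree $\binom{m+n}{m}$. For the first term, $\pi_*(\xi^d)=c_m(\Oc(1))$, which vanishes when $m\geq2$ but equals $h$ (degree $1$) when $m=1$. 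Hence for $m\geq2$ the degree of $c_{(d)}(\Tan_{H_{m,n}})$ is $-\binom{m+n}{m}$, giving $c_{(m+n-1)}(H_{m,n})=\binom{m+n}{m}$; while for $m=1$ (and $n\geq2$) it equals $(n+1)-\binom{n+1}{1}=0$, giving $0$. The main obstacle is the bookkeeping in this final pushforward: one must track precisely which monomials in $\xi$ and $h$ survive, namely the interaction between the constraint $k\in\{n-1,n\}$ imposed by \eqref{eq:comp_Milnor} and the vanishing $h^{m+1}=0$, and then observe that the stray term $(n+1)\xi^d$ is nonzero only for $m=1$, where it cancels the contribution $\binom{n+1}{1}=n+1$ of the middle term to produce $0$. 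Fixing the convention of the relative Euler sequence (equivalently, the sign in the expression for $\Tan_\pi$) is the other point that requires care.
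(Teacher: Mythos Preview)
Your proof is correct and follows essentially the same route as the paper's: compute $\Tan_{H_{m,n}}$ in $K_0$ via the relative Euler sequence, apply the additive class $c_{(d)}$, push forward along $\pi$ using \eqref{eq:comp_Milnor}, and take degrees. The only cosmetic difference is that the paper works directly with $c_{(d)}(-\Tan_{H_{m,n}})$ rather than negating at the end; your reference to \rref{p:def_rho} for the Euler-sequence convention is slightly off (that paragraph concerns the class $\rr$), but the formula you use agrees with \cite[B.5.8]{Ful-In-98}.
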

\begin{proof}
By \cite[B.5.8]{Ful-In-98}, we have $\Tan_{H_{m,n}} = (n+1 - \pi^*\Oc(1))\Oc\{1\}+ (m+1)\pi^*\Oc(1) -2\in \K_0(H_{m,n})$. For $n\geq 2$, we deduce using \eqref{eq:comp_Milnor} that $\pi_*c_{(m+n-1)}(-\Tan_{H_{m,n}}) \in \CH(\Pp^m)$ equals
\[
-(n+1)c_m(\Oc(1))+ \left(\binom{m+n-1}{n-1} + \binom{m+n-1}{n}\right)c_1(\Oc(1))^m,
\]
and the statement follows by taking the degree.
\end{proof}

\begin{definition}
Let $X$ be a smooth projective $k$-scheme with a $\mud$-action, and $d \in \Nn\smallsetminus\{0\}$. We define the invariant
\[
\vartheta_d(X) = (c_{(d)}(X^{\mud}),\deg c_{(d)}(N_X)) \in \Zz^2.
\]
\end{definition}

\begin{definition}
Let $a,b \in \Nn$. We will denote by $\Pp(a,b)$ the scheme $\Pp((a+1)\sigma \oplus (b+1))$ with its natural $\mud$-action.
\end{definition}

\begin{para}
\label{p:pij}
The scheme underlying $\Pp(a,b)$ is $\Pp^{a+b+1}$, and $\Pp(a,b)^{\mud} = \Pp^a \sqcup \Pp^b$.
\end{para}

\begin{lemma}
\label{lemm:pij}
Let $a>b$. Then $\dim \Pp(a,b)^{\mud} =a$ and $\vartheta_a(\Pp(a,b)) = (-a-1,b+1)$.
\end{lemma}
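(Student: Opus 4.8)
The plan is to treat the two entries of $\vartheta_a(\Pp(a,b))$ separately, each time reducing to the top-dimensional component of the fixed locus. By \rref{p:pij} we have $\Pp(a,b)^{\mud} = \Pp^a \sqcup \Pp^b$, which gives at once $\dim \Pp(a,b)^{\mud} = \max\{a,b\} = a$ since $a > b$. Both $c_{(a)}(\Pp^a \sqcup \Pp^b)$ and $\deg c_{(a)}(N_{\Pp(a,b)})$ split as a sum over the two components. On $\Pp^b$ every class of codimension $a$ vanishes, as $a > b = \dim \Pp^b$; so only $\Pp^a$ contributes to either entry, and I am reduced to computing $c_{(a)}(\Pp^a)$ and $\deg c_{(a)}(N_{\Pp(a,b)}|_{\Pp^a})$.

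The first number is given directly by \rref{lemm:add_proj}: $c_{(a)}(\Pp^a) = -a-1$, which is the first entry.

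The crux is to identify the normal bundle along $\Pp^a$. Writing $V = (a+1)\sigma \oplus (b+1)$, so that $\Pp(a,b) = \Pp(V)$ and the component $\Pp^a$ is $\Pp((a+1)\sigma)$, I would use the $\mud$-equivariant Euler sequence together with $\Tan_{\Pp(V)} \simeq \Oc(1)\otimes Q$, where $Q = (V\otimes\Oc)/\Oc(-1)$. Over $\Pp((a+1)\sigma)$ the tautological subbundle lies in the $\sigma$-isotypic part, so $\Oc(1)$ carries the character $\sigma$, while $Q$ decomposes as $(b+1)\Oc \oplus Q'$, with $(b+1)\Oc$ of trivial character (coming from the summand $(b+1)$) and $Q'$ the quotient bundle of $\Pp^a$ of character $\sigma$ (coming from $(a+1)\sigma$). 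Tensoring by $\Oc(1)$, the summand $\Oc(1)\otimes Q'$ becomes of trivial character and is the tangent bundle to the fixed locus, while $\Oc(1)\otimes(b+1)\Oc$ becomes of character $\sigma$ and is therefore the $(-1)$-eigenbundle, i.e.\ the normal bundle. Hence $N_{\Pp(a,b)}|_{\Pp^a} \simeq (b+1)\Oc_{\Pp^a}(1)$ as an ordinary vector bundle; equivalently, one may obtain its class in $K_0(\Pp^a)$ simply by subtracting $\Tan_{\Pp^a}$ from the restriction of $\Tan_{\Pp(V)}$, which is all that is needed for the Chern number below.

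It then remains to compute the second entry: since $c_{(a)}$ is additive and satisfies $c_{(a)}(L) = c_1(L)^a$ on line bundles, we get $c_{(a)}(N_{\Pp(a,b)}|_{\Pp^a}) = (b+1)\,c_1(\Oc(1))^a$, of degree $b+1$ on $\Pp^a$. This yields $\vartheta_a(\Pp(a,b)) = (-a-1, b+1)$. The only step requiring real care is the eigenbundle decomposition producing the normal bundle; the rest is dimension-counting together with the cited computations.
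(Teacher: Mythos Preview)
Your proof is correct and follows essentially the same approach as the paper: reduce to the $\Pp^a$ component (since $a>b$ kills any contribution from $\Pp^b$), invoke \rref{lemm:add_proj} for the first entry, and identify the normal bundle along $\Pp^a$ as $(b+1)\Oc(1)$ to get the second. The only difference is that the paper simply cites the standard fact that the normal bundle to the linear embedding $\Pp^a \hookrightarrow \Pp^{a+b+1}$ is $(b+1)\Oc(1)$, whereas you derive this via the eigenbundle decomposition of $\Tan_{\Pp(V)}|_{\Pp^a}$; both arrive at the same conclusion.
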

\begin{proof}
The $a$-dimensional component of $\Pp(a,b)^{\mud}$ is $\Pp^a$, by \rref{p:pij}. Since the normal bundle to the immersion $\Pp^a \to \Pp^{a+b+1}$ is $(b+1) \Oc(1)$, the lemma follows from \rref{lemm:add_proj}.
\end{proof}

\begin{definition}
Let $1\leq i \leq j$ be integers. Denote by $V_i$ the kernel of the canonical epimorphism $(i+1)\sigma^{\vee} \oplus i \to \Oc(1)$ of $\mud$-equivariant vector bundles over $\Pp(i,i-1)$, and set $H(i,j) = \Pp_{\Pp(i,i-1)}(V_i\oplus (j-i)\sigma^{\vee}\oplus (j-i))$.
\end{definition}

\begin{para}
\label{p:underlying_Hij}
The scheme underlying $H(i,j)$ is $H_{2i,2j}$ (see \rref{p:Milnor_hyp}).\end{para}

\begin{para}
\label{p:fixed_Hij}
The $\mud$-equivariant vector bundle $V_i \oplus (j-i)\sigma^\vee \oplus (j-i)$ over $\Pp(i,i-1)$ restricts to  $j \oplus (U_i\oplus(j-i))\sigma^{\vee}$ on $\Pp^i$, and to $(j+1)\sigma^{\vee} \oplus U_{i-1}\oplus (j-i)$ on $\Pp^{i-1}$. Thus
\[
H(i,j)^{\mud} = (\Pp^i \times \Pp^{j-1}) \sqcup H_{i,j} \sqcup (\Pp^{i-1} \times \Pp^j) \sqcup H_{i-1,j-1}.\qedhere
\]
\end{para}

\begin{lemma}
\label{lemm:H}
Let $d \geq 1$ and $i$ such that $1 \leq i \leq (d+1)/2$. Let $H=H(i,d+1-i)$. Then 
\begin{enumerate}[(i)]
\item \label{lemm:H:1} $H$ has pure dimension $2d+1$.

\item \label{lemm:H:2} Every component of $H^{\mud}$ has dimension $d$ or $d-2$.

\item \label{lemm:H:3} 
$c_{(2d+1)}(H) = \displaystyle{\binom{d+1}{i}}
\begin{cases}
\mod 2 & \text{if $d+1$ is not a power of two,}\\
\mod 4 & \text{if $d+1$ is a power of two.}
\end{cases}$

\item \label{lemm:H:4} 
$\vartheta_d(H) = 
\begin{cases}
\displaystyle{\left(\binom{d+1}{i},-\binom{d+1}{i}\right)} & \text{if $i\neq 1$,}\\
(-d-1,d-1) & \text{if $i=1$ and $d \neq 1$,}\\
(-6,2) & \text{if $i=d=1$.}
\end{cases}$
\end{enumerate}
\end{lemma}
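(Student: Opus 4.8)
The statements (i)--(iii) concern only the underlying scheme and its non-equivariant Chern numbers, and I expect these to follow quickly. For (i), the scheme underlying $H$ is the Milnor hypersurface $H_{2i,2(d+1-i)}$ by \rref{p:underlying_Hij}, which has pure dimension $2i+2(d+1-i)-1=2d+1$ by \rref{p:Milnor_hyp}. For (ii) I would simply compute the dimensions of the four pieces listed in \rref{p:fixed_Hij} with $j=d+1-i$: the three pieces $\Pp^i\times\Pp^{d-i}$, $H_{i,d+1-i}$ and $\Pp^{i-1}\times\Pp^{d+1-i}$ have dimension $d$, while $H_{i-1,d-i}$ has dimension $d-2$. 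For (iii), \rref{lemm:add_Milnor} applied to $H_{2i,2(d+1-i)}$ (note $2i\geq 2$) gives the exact value $c_{(2d+1)}(H)=\binom{2d+2}{2i}$, so it remains to prove the elementary congruence $\binom{2N}{2i}\equiv\binom{N}{i}$ with $N=d+1$. I would establish this by expanding $(1+x)^{2N}=(1+2x+x^2)^N=\sum_{a+b+c=N}\binom{N}{a,b,c}2^b x^{b+2c}$ and observing that, in the coefficient of $x^{2i}$, the exponent $b$ must be even, so modulo $4$ only the term $b=0$ survives, contributing $\binom{N}{i}$. This congruence in fact holds modulo $4$ for every $N$, which is stronger than (iii) requires.

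The substance is (iv). Throughout I would use that $c_{(n)}$ is additive on $K_0$ and restricts to $c_1^n$ on line bundles, so that it is the $n$-th power sum of the Chern roots; in particular $c_{(a)}(U_m)=-c_1(\Oc(1))^a$ for $a\geq 1$, since $U_m=(m+1)-\Oc(1)$ in $K_0(\Pp^m)$. For the first coordinate $c_{(d)}(H^{\mud})$ I would sum over the four fixed components. The dimension-$(d-2)$ component $H_{i-1,d-i}$ contributes nothing. For a product $\Pp^a\times\Pp^b$ of dimension $d$ with $a,b>0$, additivity gives $c_{(d)}(-\Tan_{\Pp^a\times\Pp^b})=p_1^*c_{(d)}(-\Tan_{\Pp^a})+p_2^*c_{(d)}(-\Tan_{\Pp^b})$, and each summand vanishes because $d$ exceeds both $a$ and $b$; thus only a product with a point factor survives, via \rref{lemm:add_proj}, and the Milnor component contributes through \rref{lemm:add_Milnor}. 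Carrying this out according to whether $i\geq 2$, $i=1<d$, or $i=d=1$ yields $c_{(d)}(H^{\mud})\in\{\binom{d+1}{i},\,-d-1,\,-6\}$, matching the three cases.

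For the second coordinate $\deg c_{(d)}(N_H)$, the key point is that over each fixed component the normal bundle is the $\sigma$-isotypic part of $\Tan_H$. Writing $H=\Pp_B(W)$ with $B=\Pp(i,i-1)$ and $W=V_i\oplus(d+1-2i)\sigma^\vee\oplus(d+1-2i)$, I would compute this $\sigma$-part over each of the three $d$-dimensional components by combining the relative Euler sequence of $H\to B$ with that of $B=\Pp((i+1)\sigma\oplus i)\to\Speck$, restricting to the fixed loci and extracting the $\sigma$-summand in the same manner as in \rref{p:fixed_Hij}: the base contributes the normal bundle of the relevant linear subspace $\Pp^i$ or $\Pp^{i-1}$ of $B$, and the fiber contributes the $\sigma$-part of $\Oc(1)\otimes W$. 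This presents $N_H$ over each component as a sum of twists of $\Oc(1)$, $U_i$, $U_{i-1}$ and pulled-back hyperplane classes, whose power sums I would evaluate and push forward using \rref{p:comp_Milnor} together with the relations $c_1(\Oc(1))^{i+1}=0$ and so on. Summing the three contributions, two binomial terms $-\binom{d}{i}$ and $-\binom{d}{i-1}$ combine by Pascal's rule $\binom{d}{i}+\binom{d}{i-1}=\binom{d+1}{i}$ into $-\binom{d+1}{i}$, giving the second coordinate in the main case $i\geq 2$; the cases $i=1$ and $i=d=1$ are treated separately.

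The main obstacle will be precisely this computation of $N_H$: keeping accurate track of the $\sigma$-isotypic parts through the two-stage projective bundle, and correctly handling the degenerate situations where a $\Pp^0$ factor appears (so that a relative $\Oc(1)$ becomes trivial) or where $j=d+1-i$ equals $1$ (so that a fiber collapses and the pushforward formula \rref{p:comp_Milnor} degenerates). These boundary phenomena are exactly what forces the three-way split in (iv), and I expect them to require individual verification rather than a single uniform formula.
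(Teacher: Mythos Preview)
Your proposal is correct and follows essentially the same route as the paper. Two minor differences worth noting: for (iii), the paper simply asserts $\binom{2(d+1)}{2i}\equiv\pm\binom{d+1}{i}\bmod 4$ without justification, whereas your expansion of $(1+2x+x^2)^N$ actually proves the congruence (indeed without the $\pm$); and for the normal bundle in (iv), the paper packages the Euler-sequence computation you describe into a single observation---that the normal bundle to $\Pp_Y(F|_Y)\hookrightarrow\Pp_X(E)$ has class $(E/F)\otimes\Oc(1)+p^*N$ in $K_0$---and then reads off $N_1,N_2,N_3$ directly, which streamlines the bookkeeping but is the same computation you outline.
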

\begin{proof}
Statement \eqref{lemm:H:1} follows from \rref{p:underlying_Hij}. By \rref{lemm:add_Milnor}, we have
\[
c_{(2d+1)}(H) = \binom{2(d+1)}{2i} =  \pm \binom{d+1}{i} \mod 4.
\]
Moreover, this integer is even when $d+1$ is a power of two, and \eqref{lemm:H:3} follows.

Statement \eqref{lemm:H:2} follows from \rref{p:fixed_Hij}. Moreover the $d$-dimensional part of $H^{\mud}$ is $(\Pp^i \times \Pp^{d-i})\sqcup( \Pp^{i-1} \times \Pp^{d+1-i}) \sqcup  H_{i,d+1-i}$. When $d=1=i$, since $H_{1,1} \simeq \Pp^1$, it follows that $c_{(1)}(H^{\mud}) = 3c_{(1)}(\Pp^1) =-6$. When $d>1$ and $i=1$, by \rref{lemm:add_Milnor} we have $c_{(d)}(H_{1,d})=0 = c_{(d)}(\Pp^1 \times \Pp^{d-1})$, thus $c_{(d)}(H^{\mud}) = c_{(d)}(\Pp^0 \times \Pp^d) = -d-1$. When $d>1$ and $i>1$, we have $c_{(d)}(\Pp^i \times \Pp^{d-i})=0 = c_{(d)}(\Pp^{i-1} \times \Pp^{d+1-i})$, so that, by \rref{lemm:add_Milnor},
\[
c_{(d)}(H^{\mud}) = c_{(d)}(H_{i,d+1-i}) = \binom{d+1}{i}.
\]

In order to compute the normal bundle to the fixed locus, we will use the description given in \rref{p:underlying_Hij}, together with the following observation. Let $Y \to X$ be a closed immersion in $\Sm_k$, with normal bundle $N$, and $F \subset E$ is an inclusion of vector bundles over $X$. Then the class of the normal bundle to $\Pp_Y(F|_Y) \to \Pp_X(E)$ in $K_0(\Pp_Y(F|_Y))$ is $(E/F)\Oc(1) + p^*N$, where $p \colon \Pp_Y(F|_Y) \to Y$ is the projection.

The class of the normal bundle $N_1$ to $H_{i,d+1-i} \to H$ in $K_0(H_{i,d+1-i})$ is $(d+1-i)\Oc\{1\} + i\pi^*\Oc_{\Pp^i}(1)$, in the notation of \rref{p:comp_Milnor}. Thus, using \rref{eq:comp_Milnor}
\[
\pi_* c_{(d)}(N_1) = (d+1-i) c_i(\Oc_{\Pp^i}(1)) +i c_1(\Oc_{\Pp^i}(1))^d\pi_*(1) \in \CH(\Pp^i).
\]
Since $\pi_*(1)=0$ if $d>1$ and $\pi_*(1)=1$ if $d=1$, we deduce that
\[
\deg c_{(d)}(N_1) =  \begin{cases}
2 & \text{if $d=i=1$,}\\
d & \text{if $d>1$ and $i=1$,}\\
0 & \text{if $d>1$ and $i>1$.}
\end{cases}
\]

The class of the normal bundle $N_2$ to $\Pp^i \times \Pp^{d-i} \to H$ in $K_0(\Pp^i \times \Pp^{d-i})$ is 
\[
(d+2-i) q^*\Oc_{\Pp^{d-i}}(1) - p^*\Oc_{\Pp^i}(1) q^*\Oc_{\Pp^{d-i}}(1) + ip^*\Oc_{\Pp^i}(1),
\]
where $p\colon \Pp^i \times \Pp^{d-i} \to \Pp^i$ and $q \colon  \Pp^i \times \Pp^{d-i} \to \Pp^{d-i}$ are the projections, so that
\[
\deg c_{(d)}(N_2) = 
\begin{cases}
0 & \text{if $d=i=1$,}\\
-\displaystyle{\binom{d}{i}} & \text{if $d>1$.}
\end{cases}
\]

The class of the normal bundle $N_2$ to $\Pp^{i-1} \times \Pp^{d+1-i} \to H$ in $K_0(\Pp^{i-1} \times \Pp^{d+1-i})$ is 
\[
(d+1-i) q^*\Oc_{\Pp^{d+1-i}}(1) - p^*\Oc_{\Pp^{i-1}}(1) q^*\Oc_{\Pp^{d+1-i}}(1) + (i+1)p^*\Oc_{\Pp^{i-1}}(1),
\]
where $p\colon \Pp^{i-1} \times \Pp^{d+1-i} \to \Pp^{i-1}$ and $q \colon  \Pp^{i-1} \times \Pp^{d+1-i} \to \Pp^{d+1-i}$ are the projections, so that
\[
\deg c_{(d)}(N_3) = 
\begin{cases}
d-1 & \text{if $i=1$,}\\
-\displaystyle{\binom{d}{i-1}} & \text{if $i>1$.}
\end{cases}
\]
Statement \eqref{lemm:H:4} follows from by gathering these computations.
\end{proof}

\begin{para}
\label{def:virtual_var}
The set of isomorphism classes of smooth projective $k$-schemes (resp.\ with a $\mud$-action) is an abelian cancellative monoid, for the disjoint union operation. We will denote by $\V$ (resp.\ $\V_{\mud}$) the associated abelian group. The notions of $r$-dimensional component for $r\in \Nn$ and of (pure) dimension, the notation $X^{\mud},N_X,\lc X \rc,\lc X \rc_{\mud}$, and the functions $c_{(n)}$ and $\vartheta_d$ admit obvious extensions to these groups.
\end{para}

\begin{definition}
\label{def:X_n}
We define elements $\X_n \in \V_{\mud}$ for $n\in \Nn\smallsetminus\{0\}$ as follows. We set $\X_1 = \Pp(0,0)$, and $\X_n = \Pp(d,d-1)$ if $n=2d$. Now assume that $n=2d+1 \geq 3$. In view of \eqref{eq:omega_gcd}, we may choose integers $e_i \in \Zz$ for $1 \leq i \leq (d+1)/2$ such that
\begin{equation}
\label{eq:omega_e}
\sum_{i=1}^{\lfloor (d+1)/2 \rfloor} e_i \binom{d+1}{i} =\indic_d,
\end{equation}
where $\indic_d$ is the integer defined in \rref{p:omega}. Then we set 
\[
\X_n = \sum_{i=1}^{\lfloor (d+1)/2 \rfloor} e_i H(i,d+1-i).
\]
\end{definition}

\begin{remark}
The elements $\X_n$ for $n$ even or $n=1$ are thus uniquely determined. This is also the case when $n \in \{3,5\}$, since $e_1 = 1$ is the only possibility, so that $\X_3 = H(1,1)$ and $\X_5 = H(1,2)$. For $n \geq 7$ we fix a choice of elements $e_i$ satisfying \eqref{eq:omega_e}, which will have no effect on the rest of the paper (see \rref{p:effective} though).
\end{remark}

\begin{remark}
\label{rem:X_n_indep_G}
Since $\sigma$ is the restriction of a $\Gm$-representation, the $\mud$-action on each $\Pp(a,b)$ or $H(i,j)$ extends to a $\Gm$-action. Moreover $\Pp(a,b)^{\Gm} =\Pp(a,b)^{\mud}$ and $H(i,j)^{\Gm} = H(i,j)^{\mud}$. Therefore the $\mud$-action on each $\X_n$ extends to a $\Gm$-action such that $(\X_n)^{\Gm} = (\X_n)^{\mud}$.
\end{remark}

\begin{proposition}
\label{prop:Xn}
Let $n \in \Nn\smallsetminus\{0\}$, and $d \in \Nn$ such that $n=2d$ or $n=2d+1$. 
\begin{enumerate}[(i)]
\item \label{prop:Xn:dim} $\X_n$ has pure dimension $n$.

\item \label{prop:Xn:dim_fixed} If $n$ is odd, every component of $(\X_n)^{\mud}$ has dimension $d$ or $d-2$. If $n$ is even, every component of $(\X_n)^{\mud}$ has dimension $d$ or $d-1$.

\item \label{prop:Xn:c_n} $c_{(n)}(\X_n) = \begin{cases} 1 \mod 2 & \text{ if $n+1$ is not a power of two,}\\2 \mod 4& \text{ if $n+1$ is a power of two.}\end{cases}$

\item \label{prop:Xn:theta_n} If $n \geq 2$, there exists an integer $s_n$ such that
\[
\vartheta_d(\X_n) =
\begin{cases}
(-d-1,d) & \text{ if $n$ is even,}\\
(\indic_d-2(d+1)s_n,-\indic_d+2ds_n) & \text{ if $n$ is odd.}
\end{cases}
\]
\end{enumerate}
\end{proposition}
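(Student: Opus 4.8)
The plan is to reduce all four assertions, via the additivity of $c_{(n)}$ and $\vartheta_d$ on $\V_{\mud}$ (see \rref{def:virtual_var}), to the computations for the building blocks $\Pp(a,b)$ and $H(i,j)$ already recorded in \rref{lemm:pij} and \rref{lemm:H}. First I would dispose of the two dimension statements by simply unwinding \rref{def:X_n}. When $n=2d$ we have $\X_n=\Pp(d,d-1)$, whose underlying variety is $\Pp^{2d}$ and whose fixed locus is $\Pp^d\sqcup\Pp^{d-1}$ by \rref{p:pij}; this gives pure dimension $n$ and fixed components of dimension $d$ and $d-1$. When $n=2d+1\geq 3$ each summand $H(i,d+1-i)$ has pure dimension $2d+1$ with fixed components of dimension $d$ or $d-2$ by \dref{lemm:H}{lemm:H:1} and \dref{lemm:H}{lemm:H:2}, and these properties pass to the integral combination $\X_n$; the remaining case $n=1$ is $\X_1=\Pp^1$ with two fixed points, matching $d=0$.

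For the Chern number $c_{(n)}(\X_n)$, the even case is immediate: since $c_{(n)}$ depends only on the underlying variety $\Pp^{2d}$, \rref{lemm:add_proj} gives $c_{(n)}(\X_n)=c_{(n)}(\Pp^n)=-n-1$, which is odd because $n+1=2d+1$ is, yielding the asserted congruence modulo two. For odd $n=2d+1\geq 3$, additivity together with \dref{lemm:H}{lemm:H:3} gives $c_{(n)}(\X_n)\equiv\sum_i e_i\binom{d+1}{i}=\indic_d$, the congruence being modulo $4$ when $d+1$ is a power of two and modulo $2$ otherwise. Here I would use that $n+1=2(d+1)$ is a power of two exactly when $d+1$ is, together with the description of $\indic_d$ in \rref{p:omega}: if $d+1$ is a power of two then $\indic_d=2$, so $c_{(n)}(\X_n)\equiv 2\pmod 4$; if $d+1$ is not a power of two then $\indic_d$ is odd (it equals $1$ or an odd prime), so $c_{(n)}(\X_n)\equiv 1\pmod 2$. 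The case $n=1$ is checked directly: $c_{(1)}(\Pp^1)=-2\equiv 2\pmod 4$.

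The invariant $\vartheta_d$ is where the only genuine work lies. The even case is again immediate from \rref{lemm:pij} applied with $a=d>b=d-1$, giving $\vartheta_d(\X_{2d})=(-d-1,d)$. For odd $n=2d+1\geq 3$ the plan is to compare each $\vartheta_d(H(i,d+1-i))$ with the vector $\binom{d+1}{i}\,(1,-1)$. Inspecting the three cases of \dref{lemm:H}{lemm:H:4}, I would observe that the difference is always an \emph{even} multiple of the fixed vector $(d+1,-d)$: it is zero when $i\neq 1$, equals $-2(d+1,-d)$ when $i=1$ and $d\neq 1$, and equals $-4(d+1,-d)$ when $i=d=1$. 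Summing over $i$ with the weights $e_i$ and invoking the defining relation $\sum_i e_i\binom{d+1}{i}=\indic_d$ of \eqref{eq:omega_e} then yields
\[
\vartheta_d(\X_n)=(\indic_d,-\indic_d)-2s_n\,(d+1,-d)
\]
for a suitable integer $s_n$, which is precisely the asserted formula. The main obstacle, such as it is, is exactly this observation that every correction term is an \emph{even} multiple of $(d+1,-d)$; verifying it requires reading off the three cases of \dref{lemm:H}{lemm:H:4}, but once it is in place the conclusion follows by a one-line summation.
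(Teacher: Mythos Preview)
Your proposal is correct and follows essentially the same approach as the paper: both reduce everything to the computations in \rref{lemm:pij} and \rref{lemm:H}, and for part \eqref{prop:Xn:theta_n} in the odd case both observe that each $\vartheta_d(H(i,d+1-i))$ differs from $\binom{d+1}{i}(1,-1)$ by an even multiple of $(d+1,-d)$ (nonzero only when $i=1$), yielding $s_n=e_1$ for $d>1$ and $s_n=2e_1$ for $d=1$. The paper's write-up is terser but the argument is the same.
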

\begin{proof}
The scheme underlying $\X_1$ is $\Pp^1$, so that $c_{(1)}(\X_1)=-2$ by \rref{lemm:add_proj}, and $(\X_1)^{\mud} = \Speck \sqcup \Speck$. The statements follow for $n=1$. In case $n=2d$, the statements follow from \rref{p:pij}, \rref{lemm:add_proj}, \rref{lemm:pij}.

Now assume that $n=2d+1$. Statements \eqref{prop:Xn:dim}, \eqref{prop:Xn:dim_fixed}, \eqref{prop:Xn:c_n} follow from \rref{lemm:H}. Letting $s=2$ if $d=1$, and $s=1$ if $d>1$, we have by \dref{lemm:H}{lemm:H:4}
\[
\vartheta_d(H(1,d)) = (d+1-2s(d+1),-(d+1)+2sd).
\]
Taking \dref{lemm:H}{lemm:H:4} into account, we obtain \eqref{prop:Xn:theta_n} by letting $s_n = se_1$ (see \rref{eq:omega_e}).
\end{proof}

\begin{corollary}
\label{cor:gen_Laz}
Assume that $\Laz/2 \to \fund{\Hh}/2$ is bijective.
\begin{enumerate}[(i)]
\item \label{cor:gen_Laz:0} We have $\dim \lc \X_n \rc =n$ for $n \geq 1$, and $\dim \lc (\X_{2d+1})^{\mud} \rc=d$ for $d \geq 1$ (where $\dim$ is computed in the graded ring $\fund{\Hh}/2$, see \rref{p:dim}).

\item \label{cor:gen_Laz:1}
The $\Fd$-algebra $\fund{\Hh}/2$ is polynomially generated by $\lc \X_n \rc$ for $n \geq 1$.

\item \label{cor:gen_Laz:2}
The $\Fd$-algebra $\fund{\Hh}/2$ is polynomially generated by $\lc (\X_{2d+1})^{\mud} \rc$ for $d \geq 1$.
\end{enumerate}
\end{corollary}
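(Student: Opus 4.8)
The plan is to identify $\fund{\Hh}/2$ with $\Laz/2$ through the reduction modulo $2$ of the natural morphism $\Laz\to\fund{\Hh}$ of \rref{p:K_fund}, which is bijective by hypothesis, and then to read all three assertions off the generator criterion of \rref{p:gen_L} fed with the Chern number computations of \rref{prop:Xn}. The point to record first is that for any virtual smooth projective $k$-scheme $W\in\V$ this identification carries $\lc W\rc\in\fund{\Hh}/2$ to the reduction of $\lc W\rc\in\Laz$, by naturality of cobordism classes under $\Laz\to\fund{\Hh}$ together with \rref{p:class_CHt}; consequently, by \rref{p:c_alpha}, the integer $c_{(i)}(\lc W\rc)$ computed in $\Laz$ equals the Chern number $c_{(i)}(W)$ for every $i\in\Nn\smallsetminus\{0\}$. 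It therefore suffices to prove the statements in $\Laz/2$ for the Lazard classes, where $c_{(i)}$ is a readily computable invariant.

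For part (ii) I would apply the criterion of \rref{p:gen_L} to the family $\ell_n=\lc\X_n\rc\in\Laz$. Since $\X_n$ has pure dimension $n$ by \dref{prop:Xn}{prop:Xn:dim}, each $\ell_n$ is homogeneous of degree $-n$, so a fortiori $\ell_n\in\Laz^0+\cdots+\Laz^{-n}$. By \dref{prop:Xn}{prop:Xn:c_n}, the integer $c_{(n)}(\ell_n)=c_{(n)}(\X_n)$ is odd when $n+1$ is not a power of two and is congruent to $2$ modulo $4$ when $n+1$ is a power of two. These are exactly the conditions in \rref{p:gen_L} ensuring that the $\ell_n$ reduce to polynomial generators of $\Laz/2$, which proves part (ii).

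For part (iii) I would likewise apply \rref{p:gen_L} to $\ell_d=\lc(\X_{2d+1})^{\mud}\rc\in\Laz$. By \dref{prop:Xn}{prop:Xn:dim_fixed} every component of $(\X_{2d+1})^{\mud}$ has dimension $d$ or $d-2$, so $\ell_d\in\Laz^0+\cdots+\Laz^{-d}$ as the criterion requires, and $c_{(d)}$ depends only on the degree $-d$ part of $\ell_d$ (it vanishes on $\Laz^{-j}$ for $j\neq d$). Writing $n=2d+1$, \dref{prop:Xn}{prop:Xn:theta_n} gives $c_{(d)}(\ell_d)=\indic_d-2(d+1)s_n$. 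If $d+1$ is not a power of two, then $\indic_d$ is odd (it equals $1$ or an odd prime by \rref{p:omega}) while the correction term is even, so $c_{(d)}(\ell_d)$ is odd; if $d+1$ is a power of two, then $\indic_d=2$ and $2(d+1)\equiv 0\bmod 4$ since $d+1$ is even, so $c_{(d)}(\ell_d)\equiv 2\bmod 4$. Once more these match \rref{p:gen_L}, proving part (iii).

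It remains to deduce part (i), and this is where the main subtlety lies: in the power-of-two cases the relevant top Chern number is even, so its vanishing modulo $2$ gives no direct information, and one must instead exploit the generator structure just established. Since $\lc\X_n\rc$ is homogeneous of degree $-n$ and is a member of a polynomial generating set of $\fund{\Hh}/2$, it is in particular nonzero, whence $\dim\lc\X_n\rc=n$. For the fixed locus I would argue that the degree $-d$ component of the generator $\lc(\X_{2d+1})^{\mud}\rc$ is \emph{indecomposable}, hence nonzero in $(\fund{\Hh}/2)^{-d}$: passing to the module of indecomposables $I/I^2$ of the polynomial $\Fd$-algebra $\fund{\Hh}/2$ (with $I$ the ideal of elements of negative degree), the images of a polynomial generating set form a homogeneous basis; as $\lc(\X_{2d+1})^{\mud}\rc$ is supported in degrees $\geq -d$ while the generators of smaller index do not reach degree $-d$, a triangularity argument degree by degree forces the class of the degree $-d$ component in $(I/I^2)^{-d}$ to be nonzero. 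Hence $\dim\lc(\X_{2d+1})^{\mud}\rc=d$, completing the proof.
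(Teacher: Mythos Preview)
Your proof is correct and follows the paper's approach exactly: identify $\fund{\Hh}/2$ with $\Laz/2$ via the assumed isomorphism and feed the Chern-number computations of \rref{prop:Xn} into the generator criterion \rref{p:gen_L}. Your triangularity argument for the second half of (i) is valid (modulo the slip that the images of the non-homogeneous $\ell_d$ in $I/I^2$ form a basis, not a \emph{homogeneous} one), but more than needed: since $c_{(d)}$ vanishes off degree $-d$ and has image $\indic_d\Zz$ there, the congruences you already established for $c_{(d)}(\ell_d)$ directly force the degree $-d$ component of $\ell_d$ to lie outside $2\Laz^{-d}$.
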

\begin{proof}
In view of \rref{p:gen_L}, this follows from \rref{prop:Xn}.
\end{proof}

\subsection{Two degrees}
In this section we fix a graded ring $R$.

\begin{para}
\label{p:default_grading}
Unless otherwise stated, the grading on the ring $R[\bx]$ will be the one induced by letting $X_i$ have degree $i$ for $i \in \Nn\smallsetminus\{0\}$. We will denote by $\deg P$ the degree of a polynomial $P$ with respect to this grading.
\end{para}

\begin{para}
\label{p:fdeg}
We will also need to consider the grading of the ring $R[\bx]$ induced by letting $X_1$ have degree zero, and for $i \in \Nn\smallsetminus\{0\}$, letting $X_{2i}$ and $X_{2i+1}$ have degree $i$, . We will denote by $\fdeg P$ the degree of a polynomial $P$ with respect to this grading.
\end{para}

\begin{para}
\label{p:fdeg_X_alpha}
When $\alpha = (\alpha_1,\ldots,\alpha_m)$ is a partition, we have
\[
\fdeg X_\alpha = \left\lfloor \frac{\alpha_1}{2} \right\rfloor + \cdots + \left\lfloor \frac{\alpha_m}{2} \right\rfloor.
\]
Thus $|\alpha| - 2\fdeg X_\alpha$ is the number of indices $i \in \{1,\ldots,m\}$ such that $\alpha_i$ is odd. 
\end{para}

A basic relation between the two gradings is the following:
\begin{proposition}
\label{prop:deg_fdeg}
Let $s\in \Nn \smallsetminus\{0\}$. Let $P$ be a polynomial in $R[\bx]$, homogeneous with respect to the grading of \rref{p:default_grading}, and containing as a monomial a nonzero multiple of $X_{2i_1 +1}\cdots X_{2i_p+1}M$, where $i_1,\ldots,i_p \in \Nn$, and $M$ is a product of the variables $X_j$ for $j$ even or $j\geq 2s+1$. Then
\[
\deg P \leq \frac{(s-i_1)+\cdots+(s-i_p)}{s} + \Big(2 + \frac{1}{s}\Big) \fdeg P.
\]
\end{proposition}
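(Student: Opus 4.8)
The plan is to reduce the statement to a single monomial and then play the two gradings against one another. Since $P$ is homogeneous for the grading of \rref{p:default_grading}, all of its monomials share the same $\deg$; in particular $\deg P$ equals the $\deg$ of the distinguished monomial, which I will write as $r\,X_{2i_1+1}\cdots X_{2i_p+1}M$ with $r\in R\smallsetminus\{0\}$. On the other hand $\fdeg P$ is the largest $\fdeg$ among the monomials of $P$, so $\fdeg P\geq \fdeg\big(r\,X_{2i_1+1}\cdots X_{2i_p+1}M\big)$. As the coefficient $2+1/s$ is strictly positive, it suffices to prove the inequality for this one monomial, bounding its $\deg$ by the stated expression in its own $\fdeg$.

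So I set $\mu=r\,X_{2i_1+1}\cdots X_{2i_p+1}M$ and write its $X$-part as $X_\beta$, where $\beta$ is the union of the partition $(2i_1+1,\ldots,2i_p+1)$ with the partition $\gamma$ recording $M$. The next step is to compare the two gradings on $\mu$ using \rref{p:fdeg_X_alpha}, according to which $\deg X_\beta-2\fdeg X_\beta$ is the number of odd parts of $\beta$. These are exactly the $p$ parts $2i_k+1$ together with the odd parts of $\gamma$, whose number I call $q$. The coefficient $r$ contributes its $R$-degree $\delta$ to each of $\deg\mu$ and $\fdeg\mu$, hence contributes $-\delta$ to the difference; since $\delta\geq 0$ (it records a dimension in every situation where the statement is used) this gives
\[
\deg\mu-2\fdeg\mu\leq p+q .
\]

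The decisive input is the hypothesis on $M$: every odd-indexed variable occurring in $M$ has index $\geq 2s+1$, hence is of the form $X_{2l+1}$ with $l\geq s$, so $\fdeg X_{2l+1}=l\geq s$. Summing over the $q$ odd factors (the even factors contributing non-negatively) yields $\fdeg M\geq sq$, and as the $p$ factors $X_{2i_k+1}$ contribute $\sum_k i_k$ to $\fdeg$ and $\delta\geq 0$, we obtain $\fdeg\mu\geq \sum_k i_k+sq$. After multiplying the target inequality by $s$ it reads $s\deg\mu\leq \sum_k(s-i_k)+(2s+1)\fdeg\mu$, and the two displayed bounds combine into
\[
s\deg\mu-(2s+1)\fdeg\mu=s(\deg\mu-2\fdeg\mu)-\fdeg\mu\leq s(p+q)-\Big(\sum_k i_k+sq\Big)=sp-\sum_k i_k=\sum_k(s-i_k),
\]
which is precisely the claim.

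The one genuinely delicate point will be getting the constant $2+1/s$ with the correct dependence on $s$, which is what forces the careful bookkeeping: the ``$2$'' is dictated by the generic relation $\deg\approx 2\fdeg$, odd-indexed variables being the sole source of discrepancy, while the extra ``$1/s$'' is exactly what the threshold $2s+1$ on the indices allowed in $M$ buys through the estimate $\fdeg X_{2l+1}=l\geq s$. Once the bound $\fdeg M\geq sq$ is in place the final combination is purely formal; the only subtlety beyond the combinatorics is keeping track of odd versus even indices and of the sign of the coefficient's degree, which is where the ambient grading conventions of \rref{p:default_grading} and \rref{p:fdeg} must be used with care.
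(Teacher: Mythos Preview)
Your argument is the paper's: reduce to the distinguished monomial (using homogeneity for $\deg$ and that $\fdeg P$ dominates the $\fdeg$ of any monomial), then exploit the identity of \rref{p:fdeg_X_alpha} together with the lower bound $\fdeg X_{2l+1}=l\geq s$ on the odd factors of $M$. The paper packages the latter as $\deg M\leq(2+s^{-1})\fdeg M$ factor by factor; your route via the count $q$ of odd factors in $M$ and the estimate $\fdeg M\geq sq$ is the same inequality rewritten.

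The one defect is the paragraph about $\delta$. In the gradings $\deg$ and $\fdeg$ of \rref{p:default_grading} and \rref{p:fdeg}, coefficients from $R$ contribute degree $0$; this is how the paper's own proof reads it (writing $\deg P=\sum_k(2i_k+1)+\deg M$ and $\fdeg P\geq\sum_k i_k+\fdeg M$ with no coefficient term), and it is the only reading under which the proposition is true for an arbitrary graded $R$. Indeed, if $R$'s grading were included, take $R=\Zz[u]$ with $u$ in degree $-1$, $s=1$, $p=0$, and $P=u^N X_2$: then $\deg P=2-N$, $\fdeg P=1-N$, and the asserted bound $\deg P\leq 3\fdeg P$ fails for $N\geq 1$. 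So $\delta$ should simply be removed; your two displayed bounds then read $\deg\mu-2\fdeg\mu=p+q$ and $\fdeg\mu=\sum_k i_k+\fdeg M\geq\sum_k i_k+sq$, and the combination goes through unchanged. The parenthetical ``$\delta\geq 0$ because it records a dimension'' is neither needed nor a valid argument, and should be deleted rather than patched.
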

\begin{proof}
Let $j$ be an odd integer such that $j\geq 2s+1$. Then $\fdeg X_j \geq s$, and $\deg X_j = 2 \fdeg X_j +1$, so that
\[
\deg X_j \leq ( 2 + s^{-1})\fdeg X_j.
\]
This inequality also holds when $j$ is an arbitrary even integer. Since both $\deg$ and $\fdeg$ transform products into sums, it follows that
\[
\deg M \leq ( 2 + s^{-1})\fdeg M.
\]
Now $\fdeg P \geq i_1+\cdots +i_p + \fdeg M$, hence 
\[
\deg P = (2i_1+1)+\cdots+(2i_p+1) + \deg M \leq p -s^{-1}(i_1+\cdots+i_p) + (2 +s^{-1})\fdeg P.\qedhere
\]
\end{proof}

\begin{corollary}
\label{cor:deg_fdeg_2}
In the situation of \rref{prop:deg_fdeg}, we have
\[
\deg P \leq p + \Big(2 + \frac{1}{s}\Big) \fdeg P.
\]
\end{corollary}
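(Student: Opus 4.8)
The plan is to derive this directly from Proposition~\ref{prop:deg_fdeg}, of which it is merely a simplification obtained by discarding information about the specific values of the exponents $i_1,\ldots,i_p$. The inequality furnished by Proposition~\ref{prop:deg_fdeg} already has the correct shape, with leading term $(2 + s^{-1})\fdeg P$; all that remains is to bound the additive constant $\frac{(s-i_1)+\cdots+(s-i_p)}{s}$ from above by $p$.

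First I would recall that, in the hypotheses of Proposition~\ref{prop:deg_fdeg}, the integers $i_1,\ldots,i_p$ lie in $\Nn$, hence satisfy $i_j \geq 0$ for each $j$. Consequently $s - i_j \leq s$ for every $j \in \{1,\ldots,p\}$. Summing these $p$ inequalities yields $(s-i_1)+\cdots+(s-i_p) \leq ps$, and dividing by the positive integer $s$ gives
\[
\frac{(s-i_1)+\cdots+(s-i_p)}{s} \leq p.
\]
Substituting this estimate into the conclusion of Proposition~\ref{prop:deg_fdeg} produces exactly the asserted bound $\deg P \leq p + (2 + s^{-1})\fdeg P$.

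There is no real obstacle here: the corollary is a strictly weaker statement than the proposition, trading the sharp constant $\frac{1}{s}\sum_j(s-i_j)$ for the cruder but more convenient bound $p$. The only point worth flagging is the tacit use of $i_j \geq 0$, which is guaranteed by the standing assumption $i_1,\ldots,i_p \in \Nn$ in Proposition~\ref{prop:deg_fdeg}.

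\begin{proof}
Since $i_1,\ldots,i_p \in \Nn$, we have $s - i_j \leq s$ for each $j$, whence $(s-i_1)+\cdots+(s-i_p) \leq ps$ and therefore $\frac{(s-i_1)+\cdots+(s-i_p)}{s} \leq p$. The claim now follows from \rref{prop:deg_fdeg}.
\end{proof}
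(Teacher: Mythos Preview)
Your proof is correct and is precisely the intended derivation: the paper states this corollary without proof, as it follows immediately from \rref{prop:deg_fdeg} by the trivial bound $(s-i_1)+\cdots+(s-i_p) \leq ps$ coming from $i_j \in \Nn$.
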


\begin{corollary}
\label{cor:deg_fdeg_3}
Let $P \in R[X_2,\ldots] \subset R[X_1,\ldots]$. Then $\deg P \leq 3\fdeg P$.
\end{corollary}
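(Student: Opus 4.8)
The plan is to deduce this directly from Proposition \rref{prop:deg_fdeg}, taking $s=1$. The mechanism is that excluding the variable $X_1$ forces every odd-index variable occurring in $P$ to have index at least $3$, which makes the correction term $\tfrac{(s-i_1)+\cdots+(s-i_p)}{s}$ nonpositive. Note that Corollary \rref{cor:deg_fdeg_2} is too weak for this purpose, since it retains a stray summand $p$; it is precisely the negative term $-\tfrac{1}{s}(i_1+\cdots+i_p)$ present in Proposition \rref{prop:deg_fdeg} (but discarded in the corollary) that will absorb this $p$.

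First I would reduce to the case where $P$ is homogeneous for the grading of \rref{p:default_grading}. Writing $D=\deg P$ and letting $Q$ be the degree $D$ homogeneous component of $P$, we have $Q\neq 0$, $Q\in R[X_2,\ldots]$, $\deg Q=\deg P$, and $\fdeg Q\leq \fdeg P$ (the monomials of $Q$ being among those of $P$). Thus it suffices to prove $\deg Q\leq 3\fdeg Q$, so we may assume $P$ itself homogeneous. Now every monomial of $P$ lies in $R[X_2,\ldots]$, hence can be written, up to a nonzero element of $R$, as $X_{2i_1+1}\cdots X_{2i_p+1}M$, where $X_{2i_1+1},\ldots,X_{2i_p+1}$ are its odd-index factors and $M$ is the product of its even-index factors. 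Since $X_1$ does not occur, each odd index satisfies $2i_j+1\geq 3$, i.e.\ $i_j\geq 1$; and for $s=1$ the factor $M$, being a product of even-index variables, has exactly the shape permitted in Proposition \rref{prop:deg_fdeg} (the allowed variables being the $X_j$ with $j$ even or $j\geq 2s+1=3$). Applying Proposition \rref{prop:deg_fdeg} with $s=1$ then gives
\[
\deg P\leq \big((1-i_1)+\cdots+(1-i_p)\big)+3\fdeg P.
\]
As each $i_j\geq 1$, the parenthesised sum is at most $0$, whence $\deg P\leq 3\fdeg P$, as required.

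The argument involves no genuine obstacle; the only points requiring care are the reduction to the homogeneous case (checking that $\fdeg$ can only decrease on passing to a homogeneous component) and the verification that a monomial of the required shape is available with all $i_j\geq 1$, which is exactly what the hypothesis $P\in R[X_2,\ldots]$ guarantees. As a sanity check, the same bound follows monomial-by-monomial from the elementary inequality $j\leq 3\lfloor j/2\rfloor$, valid for every integer $j\geq 2$ (equivalently $\deg X_j\leq 3\fdeg X_j$), together with the additivity of $\deg$ and $\fdeg$ on products.
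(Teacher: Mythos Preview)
Your proof is correct, but your claim that Corollary~\rref{cor:deg_fdeg_2} is too weak is mistaken, and this is exactly where the paper's argument is shorter. With $s=1$, the monomial $M$ in Proposition~\rref{prop:deg_fdeg} is allowed to be any product of variables $X_j$ with $j$ even or $j\geq 3$; this already covers every $X_j$ with $j\geq 2$. Hence for a monomial of $P\in R[X_2,\ldots]$ one may take $p=0$ and let $M$ be the entire monomial. Corollary~\rref{cor:deg_fdeg_2} then gives $\deg P\leq 0+3\fdeg P$ directly, which is precisely what the paper does (after reducing to the top homogeneous component, as you do). Your route---extracting all odd-index factors and then bounding $(1-i_1)+\cdots+(1-i_p)\leq 0$---works, but is an unnecessary detour: you are reproving the special case $p=0$ by absorbing a nonzero $p$ back into the negative term you had just discarded. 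Your final ``sanity check'' via $j\leq 3\lfloor j/2\rfloor$ for $j\geq 2$ is in fact the cleanest self-contained argument of all.
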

\begin{proof}
We may assume that $P\neq 0$, and apply \rref{cor:deg_fdeg_2} with $s=1$ and $p=0$ to the homogeneous component of top degree of $P$.
\end{proof}

\begin{para}
\label{p:def_succ}
Let $\alpha,\beta$ be partitions. Write $\beta = (\beta_1,\ldots,\beta_r)$. We will write $\alpha \succ \beta$ when there exist partitions $\alpha^1,\ldots,\alpha^r$ such that $\beta_i = |\alpha^i|$ for all $i=1,\ldots,r$ and $\alpha = \alpha^1 \cup \cdots \cup \alpha^r$. 
\end{para}

\begin{para}
\label{p:succ_length}
Assume that $\alpha \succ \beta$. Then $|\alpha| = |\beta|$ and $\length(\alpha) \geq \length(\beta)$. If in addition $\length(\beta) = \length(\alpha)$, then $\alpha=\beta$.
\end{para}

\begin{para}
\label{p:bdeg_succ}
It follows from \rref{p:fdeg_X_alpha} that $\fdeg X_\alpha \leq \fdeg X_\beta$ when $\alpha \succ \beta$.
\end{para}

\begin{lemma}
\label{lemm:change_of_gen}
Let $S$ be a graded $R$-algebra. Assume that $y_i \in S^{-i}$, resp.\ $y_i' \in S^{-i}$, for $i\in \Nn\smallsetminus\{0\}$ are polynomial generators of $S$ over $R$. Let $P,Q \in R[\bx]$ be such that $P(y_1,\ldots) = Q(y_1',\ldots) \in S$. Then $\fdeg Q = \fdeg P$.
\end{lemma}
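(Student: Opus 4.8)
The plan is to deduce the equality from the single \emph{inequality} $\fdeg A \le \fdeg B$, valid whenever $A,B \in R[\bx]$ satisfy $A(y_1,\ldots) = B(y_1',\ldots)$ for two systems $\{y_i\}$, $\{y_i'\}$ of polynomial generators of $S$ over $R$ with $y_i,y_i' \in S^{-i}$. This auxiliary statement is not symmetric in $(A,\{y_i\})$ and $(B,\{y_i'\})$, but its hypothesis is: applying it once to $(A,B)=(P,Q)$ with the generators $(\{y_i\},\{y_i'\})$, and once to $(A,B)=(Q,P)$ with the generators interchanged to $(\{y_i'\},\{y_i\})$, yields $\fdeg P \le \fdeg Q$ and $\fdeg Q \le \fdeg P$, hence the desired $\fdeg P = \fdeg Q$. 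So it suffices to prove the inequality.

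To prove $\fdeg A \le \fdeg B$, I would first express the primed generators in terms of the unprimed ones. Since the $y_j$ generate $S$ as an $R$-algebra, we may write $y_i' = G_i(y_1,\ldots)$ for some $G_i \in R[\bx]$; because the $y_j$ are homogeneous with $y_j \in S^{-j}$ and algebraically independent over $R$, the polynomial $G_i$ is homogeneous of degree $i$ for the grading of \rref{p:default_grading}. Substituting into $A(y)=B(y')$ and again invoking the algebraic independence of the $y_j$ over $R$, I obtain the polynomial identity $A = B(G_1,G_2,\ldots)$ in $R[\bx]$.

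The heart of the matter is then a refinement estimate. Writing $B = \sum_\beta c_\beta X_\beta$ with $c_\beta \in R$, each factor $G_{\beta_k}$ is, by homogeneity, an $R$-linear combination of monomials $X_{\gamma}$ with $|\gamma| = \beta_k$. Hence every monomial $X_\alpha$ occurring in the expansion of $\prod_k G_{\beta_k}$ has the form $\alpha = \gamma^1 \cup \cdots \cup \gamma^r$ with $|\gamma^k| = \beta_k$, that is, $\alpha \succ \beta$ in the sense of \rref{p:def_succ}; by \rref{p:bdeg_succ} this gives $\fdeg X_\alpha \le \fdeg X_\beta$. Consequently every monomial surviving in $A = B(G_1,\ldots)$ has $\fdeg$ at most $\max_{c_\beta \neq 0}\fdeg X_\beta = \fdeg B$, and therefore $\fdeg A \le \fdeg B$.

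The point I expect to be most important to get right is precisely that this final estimate holds \emph{monomial by monomial}: unlike an argument that tries to follow a single leading term through the substitution, it requires no control over cancellations (these only discard monomials, and so cannot violate the bound) nor over the invertibility of the change of generators, and the symmetry step above then automatically upgrades the one-sided bound to the equality. This is exactly what the partial order $\succ$ and the inequality \rref{p:bdeg_succ} buy us. The only additional bookkeeping occurs when $R$ carries a nontrivial grading, in which case a factor $G_{\beta_k}$ may also contribute monomials $X_\gamma$ with $|\gamma| < \beta_k$ carried by a positive-weight element of $R$; one checks that such a term still satisfies $\fdeg \le \fdeg X_{\beta_k}$, using the superadditivity of $w \mapsto \lfloor w/2\rfloor$ recorded implicitly in \rref{p:fdeg_X_alpha}, so that the estimate through \rref{p:bdeg_succ} persists unchanged.
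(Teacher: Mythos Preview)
Your argument is correct and follows the same route as the paper's proof: reduce by symmetry to the one-sided inequality, write $y_i'=G_i(y_1,\ldots)$ (the paper's $B_i$), substitute to obtain $A=B(G_1,\ldots)$, and bound $\fdeg$ monomial by monomial using the refinement order $\succ$ and \rref{p:bdeg_succ}. Your closing remark about a nontrivially graded $R$ is superfluous (in the paper's conventions of \rref{p:default_grading}--\rref{p:fdeg} the ring $R$ sits in degree zero, and the sole application has $R=\Zz$) and contains a sign slip: a term $rX_\gamma$ in $G_{\beta_k}$ with $|\gamma|<\beta_k$ is carried by an element of \emph{negative} degree in $R$, while positive-degree coefficients would force $|\gamma|>\beta_k$, a case your superadditivity estimate does not cover.
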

\begin{proof}
For every $i\in \Nn\smallsetminus\{0\}$, there exists a unique polynomial $B_i \in R[\bx]$  such that $y_i' = B_i(y_1,\dots)$. Moreover $B_i$ is homogeneous of degree $i$ and contains a nonzero multiple of $X_i$ as a monomial. Then for any partition $\alpha=(\alpha_1,\dots,\alpha_m)$, the polynomial $B_\alpha =B_{\alpha_1} \cdots B_{\alpha_m}\in R[\bx]$ contains a nonzero multiple of $X_\alpha$ as a monomial, and satisfies $y_\alpha' = B_\alpha(y_1,\ldots)$. In addition, if $\beta$ is a partition such that $B_\alpha$ contains a nonzero multiple of $X_\beta$ as a monomial, then $\beta \succ \alpha$. Thus it follows from \rref{p:bdeg_succ} that $\fdeg B_\alpha = \fdeg X_\alpha$ for all partitions $\alpha$. Let us write $Q = \sum_\alpha \mu_\alpha X_\alpha$, where $\alpha$ runs over a set of partitions and $\mu_\alpha \in R$ are nonzero. Then $P = \sum_\alpha \mu_\alpha B_\alpha$, and thus $\fdeg P \leq \max_\alpha \fdeg B_\alpha = \max_\alpha \fdeg X_\alpha = \fdeg Q$. The statement follows by symmetry.
\end{proof}

\subsection{Stable generators of the ring \texorpdfstring{$\Mcc$}{M}}

\begin{para}
\label{def:x_n}
Using the elements $\X_n$ defined in \rref{def:X_n}, we will write $x_n = \lc N_{\X_n} \to (\X_n)^{\mud} \rc \in \Mcc$ for $n \in \Nn \smallsetminus\{0\}$.
\end{para}

\begin{example}
\label{ex:x_123}
\ 
\begin{enumerate}[(i)]
\item \label{ex:x_123:1} Since $\X_1=\Pp(0,0)$ has exactly two fixed points, we have 
\[
x_1 = 2v.
\]

\item  \label{ex:x_123:2} The fixed locus of $\X_2 = \Pp(1,0)$ has exactly two components: $\Pp^1$ with normal bundle $\Oc(1)$, and $\Speck$ with normal bundle of rank two. Thus
\[
x_2 = \lc \Pp^1 \rc v + \ba_1 v +v^2.
\]

\item  \label{ex:x_123:3} The fixed locus of $\X_3 = H(1,1)$ has pure dimension one, hence \dref{lemm:H}{lemm:H:4} implies that
\[
x_3 = 3\lc \Pp^1 \rc v^2 + 2\ba_1 v^2.
\]
\end{enumerate}
\end{example}

\begin{proposition}
\label{prop:Mcc_pol}
Assume that the morphism $\Laz \to \fund{\Hh}$ is bijective (see \rref{p:K_fund}).
\begin{enumerate}[(i)]
\item \label{prop:Mcc_pol:poly}
The $\Zz[v,v^{-1}]$-algebra $\Mcc[v^{-1}]$ is polynomial in the variables $x_n$ for $n \geq 2$.

\item \label{prop:Mcc_pol:deg}
Let $P \in \Zz[\bx]$ and $n\in \Nn$. Then $P$ is homogeneous of degree $n$ (for the grading of \rref{p:default_grading}) if and only if $P(x_1,\ldots) \in \Mcc$ is homogeneous of degree $-n$ (for the usual grading \rref{p:grading}).

\item \label{prop:Mcc_pol:Phi}
For any $P \in \Zz[v,v^{-1}][\bx]$, we have $\fdim(P(x_1,\ldots)) = \fdeg P$ (see \rref{p:dim} and \rref{p:fdeg} for the definitions of $\fdim$ and $\fdeg$).
\end{enumerate}
\end{proposition}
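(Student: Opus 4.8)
The three assertions compare the polynomial structure of $\Mcc[v^{-1}]$ with the elements $x_n$, and the plan is to prove (i) first and then deduce (ii) and (iii) from it. For (i) I would start from \rref{prop:M_pol} together with \rref{p:gen_L}: over $\Zz[v,v^{-1}]$ the ring $\Mcc[v^{-1}]$ is polynomial in the two families $y_i$ (polynomial generators of $\Laz$) and $p_i$, with, in each base-degree $-d$ (see \rref{p:base-grading}), exactly the two generators $y_d$ and $p_d$. By \rref{prop:Xn} and \rref{p:dim_bdim_dim} the class $x_n$ satisfies $\fdim(x_n)\le\lfloor n/2\rfloor$, its top component $\bar x_n$ (of base-degree $-\lfloor n/2\rfloor$) being governed by the top-dimensional part of the fixed locus; thus $\bar x_{2d}$ and $\bar x_{2d+1}$ live in base-degree $-d$, while $x_1=2v$ lies in the coefficient ring and contributes no new generator. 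Since the base-grading is concentrated in nonpositive degrees with degree-zero part $\Zz[v,v^{-1}]$, a standard leading-term (filtered-to-graded) argument reduces (i) to showing that the homogeneous elements $\{\bar x_n : n\ge 2\}$ are polynomial generators of the base-graded ring over $\Zz[v,v^{-1}]$.

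To prove the latter I would use the indecomposables criterion: in the free $\Zz[v,v^{-1}]$-module $Q_{-d}$ of indecomposables of base-degree $-d$, with basis the classes of $y_d$ and $p_d$, it suffices to check that $\bar x_{2d},\bar x_{2d+1}$ form a basis. The tool is the pair $\vartheta_d=(c_{(d)}(\text{base}),\deg c_{(d)}(\text{normal bundle}))$, viewed as an additive $\Zz^2$-valued invariant on $\Mcc$. Its essential feature, to be checked first, is that — just as $c_{(d)}$ annihilates decomposable elements of $\Laz$ (see \rref{p:gen_L} and \eqref{eq:c_alpha_prod}) — both coordinates vanish on decomposable classes and on classes supported in dimension $<d$, and are unchanged under multiplication by $v$ (which merely adds a trivial summand to the bundle). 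Hence $\vartheta_d$ descends to a $v$-insensitive functional on $Q_{-d}$, with $\vartheta_d(y_d)=(\indic_d,0)$ and $\vartheta_d(p_d)=(-d-1,1)$ (using \rref{lemm:add_proj} and $p_d=\lc\Oc(1)\to\Pp^d\rc$), while $\vartheta_d(\bar x_n)=\vartheta_d(\X_n)$.

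The crux, and the step I expect to require the most care, is then a single determinant. The usual grading \rref{p:grading} forces $\bar x_{2d}$ to carry $v$-power $d$ and $\bar x_{2d+1}$ to carry $v$-power $d+1$, so writing $\bar x_{2d}=v^{d}\tilde a\,y_d+v^{d-1}\tilde b\,p_d$ and $\bar x_{2d+1}=v^{d+1}\tilde a'\,y_d+v^{d}\tilde b'\,p_d$ modulo decomposables, the transition determinant equals $v^{2d}(\tilde a\tilde b'-\tilde a'\tilde b)$ with integer entries. Applying $\vartheta_d$ and using \dref{prop:Xn}{prop:Xn:theta_n}: the value $\vartheta_d(\X_{2d})=(-d-1,d)$ gives $\tilde b=d$ and $\tilde a\,\indic_d=d^2-1$, which is integral precisely because $\indic_d\mid d+1$ (immediate from \rref{p:omega}); the value $\vartheta_d(\X_{2d+1})=(\indic_d-2(d+1)s,\,-\indic_d+2ds)$ gives $\tilde b'=-\indic_d+2ds$ and $\tilde a'=2\tilde a s-d$, whence $\tilde a\tilde b'-\tilde a'\tilde b=d^2-\tilde a\,\indic_d=1$. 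Thus the determinant is the unit $v^{2d}$ (the free parameter $s=s_n$ cancelling, in agreement with the remark after \rref{def:X_n}), the elements $\bar x_n$ for $n\ge 2$ are polynomial generators, and (i) follows; in particular $\fdim(x_n)=\lfloor n/2\rfloor$.

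For (ii), part (i) shows that the $x_n$ with $n\ge 2$ are algebraically independent over $\Zz[v,v^{-1}]$; together with $x_1=2v$ this makes $X_n\mapsto x_n$ an injection $\Zz[\bx]\hookrightarrow\Mcc$. Each $x_n$ is homogeneous of usual degree $-n$, because every component of the fixed locus, of whatever dimension $d'$, has normal bundle of rank $n-d'$ and so contributes degree $-d'-(n-d')=-n$ by \rref{p:grading}; hence the injection is graded and both implications of (ii) follow at once. For (iii), subadditivity of $\fdim$ (\rref{p:dim}) together with $\fdim(x_n)=\lfloor n/2\rfloor=\fdeg X_n$ yields $\fdim(P(x_1,\ldots))\le\fdeg P$; the reverse inequality amounts to the non-cancellation of the top-$\fdeg$ part after substitution, which follows from the algebraic independence of the leading terms $\bar x_n$ ($n\ge 2$) over $\Zz[v,v^{-1}]$ established in (i), once the contribution of $x_1=2v$ has been absorbed into the coefficient ring.
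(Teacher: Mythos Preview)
Your approach to (i) and (ii) is correct and is essentially the paper's, repackaged. You work with the basis $(y_d,p_d)$ and the top base-degree parts $\bar x_n$, passing through the module of indecomposables and a leading-term argument; the paper uses $(\ell_d,a_d)$, defines explicit elements $x_n'=v^{n-d}(\lambda_n\ell_d+\mu_n a_d)$ directly from $\vartheta_d(\X_n)$, verifies that they are polynomial generators by the same $2\times2$ determinant (equal to $v^{2d+1}$ in that basis---your $v^{2d}$ reflects the relation $p_d=va_d+\cdots$), and then notes that $x_n-x_n'$ lies in the subalgebra generated by the $x_m'$ with $m<2d$. Your determinant computation, including the integrality check $\indic_d\mid d^2-1$, is correct.

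For (iii) there is a genuine gap, and it is one the paper shares: the assertion is literally false. Take $P=X_1-2v\in\Zz[v,v^{-1}][\bx]$; then $P(x_1,\ldots)=0$, so $\fdim(P(x_1,\ldots))=-\infty$, while $\fdeg P=0$. The trouble lies exactly at your phrase ``once the contribution of $x_1=2v$ has been absorbed into the coefficient ring'': this absorption is the substitution $X_1\mapsto 2v$ inside $\Zz[v,v^{-1}][\bx]$, and it can annihilate the top-$\fdeg$ component before one ever invokes the algebraic independence of the $\bar x_n$ for $n\ge2$. (The paper's proof founders at the same spot: it appeals to \rref{lemm:change_of_gen}, but that lemma requires $P,Q\in R[\bx]$ with the $x_n$ polynomial generators of $S$ over $R$; neither $R=\Zz$ nor $R=\Zz[v,v^{-1}]$ fits, since $P$ may involve $v$ while $x_1=2v$ lies in the base ring.) What your argument actually proves is $\fdim(P(x_1,\ldots))=\fdeg\bigl(P|_{X_1\mapsto2v}\bigr)$; this corrected form holds in particular for $P\in\Zz[\bx]$ and is what the applications need.
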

\begin{proof}
We will identify $\fund{\Hh}$ with $\Laz$. Let $\ell_d \in \Laz^{-d}$ for $d \in \Nn\smallsetminus\{0\}$ be a system of polynomial generators of the ring $\Laz$, such that $c_{(d)}(\ell_d) = \indic_d$ (see \rref{p:gen_L_exist} and \rref{p:gen_L}). By \rref{prop:M_pol}, the $\Zz[v,v^{-1}]$-algebra $\Mcc[v^{-1}]$ is polynomial in the variables $\ell_d,\ba_d$ for $d \geq 1$. For any rank $r$ vector bundle $E$ over a smooth projective $k$-scheme $S$ of pure dimension $d$, we have
\begin{equation}
\label{eq:EX}
\lc E \to S \rc - v^r \Big(\frac{c_{(d)}(S)}{\indic_d} \ell_d + c_{(d)}(E) a_d\Big) \in v^r\Zz[\ell_1,\ldots,\ell_{d-1},a_1,\ldots,a_{d-1}].
\end{equation}
For any $d \geq 1$ and $n\in \{2d,2d+1\}$, write $\vartheta_d(\X_n) = (\indic_d\lambda_n,\mu_n)$ (see \rref{prop:Xn}), and
\[
x'_n = v^{n-d}(\lambda_n \ell_d + \mu_n \ba_d) \in \Mcc.
\]
By \dref{prop:Xn}{prop:Xn:theta_n} the determinant of the matrix
\[
\left(\begin{matrix}
v^d\lambda_{2d} & v^{d+1}\lambda_{2d+1}\\
v^d\mu_{2d} & v^{d+1}\mu_{2d+1}
\end{matrix}\right)
\]
is equal to $v^{2d+1}$, hence is invertible in $\Zz[v,v^{-1}]$. It follows that the $\Zz[v,v^{-1}]$-algebra $\Mcc[v^{-1}]$ is polynomial in the variables $x'_n$ for $n \geq 2$, and that for any $d \geq 1$ the sets $\{\ell_1,\ldots,\ell_d,a_1,\ldots,a_d\}$ and $\{x'_2,\ldots,x'_{2d+1}\}$ generate the same $\Zz[v,v^{-1}]$-subalgebra of $\Mcc$. For any $d \geq 1$ and $n\in \{2d,2d+1\}$, by \eqref{eq:EX} and \dref{prop:Xn}{prop:Xn:dim_fixed} we have
\[
x_n - x'_n  \in \Zz[v,v^{-1}][x'_2,\ldots, x'_{2d-1}] \subset \Mcc[v^{-1}].
\]
We deduce \eqref{prop:Mcc_pol:poly}. Since each $x_n$ for $n\in\Nn\smallsetminus\{0\}$ is homogeneous of degree $-n$ with respect to the usual grading, we obtain \eqref{prop:Mcc_pol:deg}. Set $x_1'=x_1=2v$ (see \dref{ex:x_123}{ex:x_123:1}). Then the elements $x_n$ (resp.\ $x_n'$) for $n \in \Nn \smallsetminus\{0\}$ are algebraically independent (over $\Zz$). Let $Q \in \Zz[X_1,\ldots]$ be such that $P(x_1,\ldots) = Q(x_1',\ldots)$. Since each $x_n'$ is homogeneous of degree $-\fdeg X_n$ with respect to the base-grading, we have $\fdim(Q(x_1',\ldots)) = \fdeg Q$, and \rref{prop:Mcc_pol:Phi} follows from \rref{lemm:change_of_gen}.
\end{proof}

\begin{corollary}
\label{cor:gen_x_v}
Assume that the morphism $\Laz \to \fund{\Hh}$ is bijective. Then the ring $\Mcc[v^{-1}]$ is generated by $v,v^{-1}$ and $x_n$ for $n \geq 2$.
\end{corollary}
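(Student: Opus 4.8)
The plan is to deduce the statement directly from \dref{prop:Mcc_pol}{prop:Mcc_pol:poly}. Under the standing hypothesis that $\Laz \to \fund{\Hh}$ is bijective, that part of the preceding proposition asserts precisely that $\Mcc[v^{-1}]$ is a polynomial algebra over the base ring $\Zz[v,v^{-1}]$ in the variables $x_n$ for $n \geq 2$. So the entire mathematical content has already been secured, and this corollary merely repackages it in terms of ring generators.

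First I would unwind what being such a polynomial algebra means concretely: a polynomial algebra over $\Zz[v,v^{-1}]$ in the indeterminates $x_n$ (for $n\geq 2$) is, in particular, generated \emph{as a ring} by the base ring $\Zz[v,v^{-1}]$ together with the elements $x_n$ for $n \geq 2$. Next I would observe that the base ring $\Zz[v,v^{-1}]$ is itself generated as a ring by the two elements $v$ and $v^{-1}$, the prime subring $\Zz$ being automatically contained in any subring.

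Combining these two observations gives that $\Mcc[v^{-1}]$ is generated as a ring by $v$, $v^{-1}$, and the $x_n$ for $n\geq 2$, which is exactly the assertion. I do not expect any genuine obstacle here: the only step requiring a word is the passage from the scalar ring $\Zz[v,v^{-1}]$ to its ring generators $v, v^{-1}$, and the reduction of algebra generation over $\Zz[v,v^{-1}]$ to ring generation. Everything substantive rests on \rref{prop:Mcc_pol}.
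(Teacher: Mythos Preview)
Your proposal is correct and matches the paper's approach: the paper states the corollary with no proof, as an immediate consequence of \dref{prop:Mcc_pol}{prop:Mcc_pol:poly}, and your unpacking of polynomial-algebra generation into ring generation is exactly the intended reading.
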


\section{Injectivity of the morphism \texorpdfstring{$\jj$}{g}}
\label{sect:injectivity}
From now on, we will assume that $\Hh=\Kt$ (see \rref{p:K} and \rref{p:twist}). In view of \rref{p:K_fund}, we will identify $\fund{\Hh}$ with the Lazard ring $\Laz$.

\subsection{The ring \texorpdfstring{$\Rc$}{A}}
We recall that the notation $R[[t]]$, where $R$ is a graded ring, stands for the \emph{graded} power series algebra \rref{def:graded_power_series}.

\begin{para}
We let $h\in \Laz[[t]]$ be defined by $ht = \fgl{2}(t)$, and consider the graded $\Laz$-algebra
\[
\Rc = \Laz[[t]]/h.
\]
\end{para}

\begin{para}
\label{p:h_first}
It follows from \cite[Remark 2.5.6]{LM-Al-07} that $2=\lc \Pp^1 \rc t \mod t^2 \Rc$. 
\end{para}

\begin{para}
\label{p:epsilon}
Mapping $t$ to zero yields a morphism of $\Laz$-algebras $\epsilon \colon \Rc \to \Laz/2$, whose kernel is $t\Rc$. 
\end{para}

\begin{lemma}
\label{lemm:R}
Let $R$ be a graded $\Laz$-algebra, and set $A=R[[t]]/h$. Assume that $R$ is $2$-torsion free.
\begin{enumerate}[(i)]
\item \label{lemm:R:nzd} The element $t$ is a nonzerodivisor in $A$.
\item \label{lemm:R:sep} We have $\bigcap_{n\in \Nn} t^n A =0$.
\end{enumerate}
\end{lemma}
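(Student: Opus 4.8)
The plan is to exploit that, by \eqref{eq:fgl} and the defining relation $ht=\fgl{2}(t)$, we have
$h = 2 + u_2 t + u_3 t^2 + \cdots \in \Laz[[t]]$, a series with constant term $2$ and coefficients $u_{i+1}\in\Laz^{1-i}$. Thus the hypothesis that $R$ is $2$-torsion free will let me run inductions on the $t$-adic valuation, where at each step the leading contribution is multiplication by $2$. Throughout I write $(h)\subset R[[t]]$ for the principal ideal, so $A=R[[t]]/(h)$, and I use two elementary facts about the graded power series ring: $t$ is a nonzerodivisor in $R[[t]]$, and $R[[t]]$ is $t$-adically separated, i.e.\ $\bigcap_{n} t^n R[[t]]=0$ (a nonzero series has a lowest nonvanishing $t$-coefficient).

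For \eqref{lemm:R:nzd} I would argue directly on constant terms. Suppose $t\bar f=0$ in $A$ and lift to $f\in R[[t]]$, so $tf=hg$ for some $g\in R[[t]]$. Applying the ring homomorphism $R[[t]]\to R$, $t\mapsto 0$, gives $0=2g_0$ where $g_0$ is the $t^0$-coefficient of $g$; since $R$ is $2$-torsion free, $g_0=0$, so $g=tg'$. Then $tf=thg'$, and cancelling the nonzerodivisor $t$ yields $f=hg'\in(h)$, i.e.\ $\bar f=0$.

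For \eqref{lemm:R:sep} I would first note that, $t$ being homogeneous, $\bigcap_n t^nA$ is a graded submodule, so it suffices to kill a homogeneous class $\bar f$ of some degree $d$. Lifting to a homogeneous $f\in R[[t]]$ of degree $d$ and using that $t^nR[[t]]$ and $(h)$ are homogeneous, I can write, for each $n$, $f=t^n g_n + h k_n$ with $g_n,k_n$ homogeneous of degrees $d-n$ and $d$. The heart of the matter is to show the sequence $(k_n)$ stabilises coefficientwise. From $h(k_n-k_{n+1})=t^{n+1}g_{n+1}-t^n g_n\in t^nR[[t]]$, writing $k_n-k_{n+1}=\sum_m c_m t^m$ and setting the coefficients of $t^0,\dots,t^{n-1}$ to zero in turn, the constant term $2$ of $h$ together with $2$-torsion-freeness forces $c_0=\cdots=c_{n-1}=0$ by induction; hence $k_{n+1}\equiv k_n \pmod{t^n}$. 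Consequently the coefficient of $t^m$ in $k_n$ is independent of $n$ once $n>m$, and (because $f$ is homogeneous) these stable coefficients lie in $R^{d-m}$, so they assemble into a genuine homogeneous element $k\in R[[t]]$ of degree $d$. Finally $f-hk=t^n g_n + h(k_n-k)\in t^nR[[t]]$ for every $n$, whence $f-hk\in\bigcap_n t^nR[[t]]=0$, giving $f=hk$ and $\bar f=0$.

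The only genuine obstacle is the stabilisation step in \eqref{lemm:R:sep}: one must carry out the coefficientwise induction carefully and, crucially, verify that the limiting coefficients define an element of the \emph{graded} power series ring rather than merely of the ambient ungraded one — this is exactly where the reduction to homogeneous $f$ is needed, since it guarantees $c_m\in R^{d-m}$ so that $\sum_m k_m t^m$ is a legitimate graded power series. The reduction to homogeneous classes and the constant-term computation in \eqref{lemm:R:nzd} are routine.
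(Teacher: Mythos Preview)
Your argument is correct and follows essentially the same route as the paper: both hinge on the fact that $h\equiv 2\pmod t$ together with $2$-torsion-freeness makes $h$ a nonzerodivisor modulo every power of $t$, which yields \eqref{lemm:R:nzd} immediately and lets the coherence system $(k_n)$ converge in \eqref{lemm:R:sep}. The paper packages this as a single preliminary observation (``$h$ is a nonzerodivisor in $R[[t]]/t^n$'') used for both parts, whereas you re-derive it inside \eqref{lemm:R:sep}; on the other hand you are more explicit than the paper about the reduction to homogeneous $f$ needed to ensure the limit $k$ lands in the \emph{graded} power series ring.
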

\begin{proof}
First observe that the image of $h$ is a nonzerodivisor in $R[[t]]/t^n$ for every $n\geq 1$. In case $n=1$, this follows from the fact that $h \mod t =2$ in $R[[t]]/t = R$, as $R$ is $2$-torsion free. Assume that $n>1$, and let $a,b \in R[[t]]$ be such that $ha=t^nb$. By induction on $n$, we may write $a = t^{n-1}a'$ with $a' \in R[[t]]$, and thus $ha'=tb$. By the case $n=1$, we deduce that $a' \in tR[[t]]$, and finally $a\in t^nR[[t]]$.

\eqref{lemm:R:nzd}: Assume that $x,y \in R[[t]]$ are such that $tx = hy$. By the observation above (with $n=1$), we deduce that $y =tz$ for some $z \in R[[t]]$. Then $x =hz$ has vanishing image in $A$, as required.

\eqref{lemm:R:sep}: Let $f\in R[[t]]$ map to $\bigcap_{n \in \Nn} t^n A \subset A$. Then for each $n \geq 1$ we may find $g_n\in R[[t]]$ such that $f=g_n h \mod t^n$. Thus $g_{n+1} h =g_n h \mod t^n$ for all $n \geq 1$, hence $g_{n+1} =g_n \mod t^n$ by the observation above. Therefore there exists $g \in R[[t]]$ such that $g = g_n \mod t^n$ for all $n \geq 1$, and $f-gh \in \bigcap_{n \in \Nn} t^nR[[t]] =0$.
\end{proof}

\begin{lemma}
\label{lemm:2-torsion-free}
The group $\Rc$ is $2$-torsion free.
\end{lemma}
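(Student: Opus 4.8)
The plan is to pass to the reduction modulo $2$ and exploit that $\Laz/2$ is an integral domain. Suppose $f,g\in\Laz[[t]]$ satisfy $2f=hg$; it suffices to show that $f$ lies in $h\Laz[[t]]$, i.e.\ that it maps to $0$ in $\Rc$. First I would record that $\Laz[[t]]$ is itself $2$-torsion free: the ring $\Laz$ is torsion free \rref{p:K_fund}, hence so is the graded power series ring $\Laz[[t]]$, which by \rref{def:graded_power_series} is a subgroup of the usual power series ring over the free $\Zz$-module $\Laz$. Denoting by $\overline{x}$ the reduction modulo $2$ of $x\in\Laz[[t]]$, the relation $2f=hg$ then gives $\overline{h}\,\overline{g}=0$ in $(\Laz/2)[[t]]$.

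The key point, and the only one requiring the specific arithmetic of $\Laz$, is that $\overline{h}\neq 0$ and that $(\Laz/2)[[t]]$ is a domain. For the latter: the $\Fd$-algebra $\Laz/2$ is polynomial \rref{p:gen_L}, hence an integral domain, and the graded power series ring over it is a subring of the usual power series ring, which is again a domain. For the former, recall $ht=\fgl{2}(t)=2t+\cdots$, so that the coefficient of $t$ in $h$ equals the coefficient of $t^2$ in $\fgl{2}(t)$; by \rref{p:h_first} (together with the torsion-freeness of $\Laz$) this coefficient is congruent to $\lc\Pp^1\rc$ modulo $2$. Now $\lc\Pp^1\rc$ is nonzero in $\Laz/2$: by \rref{lemm:add_proj} we have $c_{(1)}(\lc\Pp^1\rc)=-2\equiv 2\bmod 4$, and since $1+1$ is a power of two, \rref{p:gen_L} shows that $\lc\Pp^1\rc$ reduces to a polynomial generator of $\Laz/2$. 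Hence $\overline{h}\neq 0$.

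Since $(\Laz/2)[[t]]$ is a domain and $\overline{h}\neq 0$, the equality $\overline{h}\,\overline{g}=0$ forces $\overline{g}=0$, that is $g\in 2\Laz[[t]]$; writing $g=2g'$ with $g'\in\Laz[[t]]$, we get $2f=2hg'$, and cancelling $2$ by the $2$-torsion-freeness of $\Laz[[t]]$ yields $f=hg'\in h\Laz[[t]]$, as required. The main obstacle is exactly the assertion $\overline{h}\neq 0$: purely formally the multiplication-by-$2$ series can vanish modulo $2$ (as it does for the additive law over $\Fd$), so this step genuinely depends on the structure of the Lazard ring, supplied here by \rref{p:h_first} and the fact that $\lc\Pp^1\rc$ is a mod-$2$ polynomial generator. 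Everything else is the routine device of cancelling a $2$ after descending to the domain $\Laz/2$.
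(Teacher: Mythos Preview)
Your proof is correct and takes a genuinely different route from the paper's. The paper argues inside $\Rc$ itself: it invokes \rref{lemm:R} (that $t$ is a nonzerodivisor and that $\bigcap_n t^n\Rc=0$) to normalise a putative $2$-torsion element so that $x\notin t\Rc$, then uses the relation $2\equiv\lc\Pp^1\rc t\bmod t^2\Rc$ and the map $\epsilon$ to force $\lc\Pp^1\rc\,\epsilon(x)=0$ in $\Laz/2$, contradicting $\epsilon(x)\neq 0$. You instead lift to $\Laz[[t]]$, reduce modulo $2$, and cancel $\overline{h}$ in the integral domain $(\Laz/2)[[t]]$. Both arguments rest on the same arithmetic input, namely that $\lc\Pp^1\rc\neq 0$ in $\Laz/2$ (which makes $\overline{h}$ a nonzerodivisor, as its leading term is $\lc\Pp^1\rc t$). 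Your approach has the advantage of bypassing \rref{lemm:R} entirely; the paper's approach has the advantage of staying in $\Rc$ and using only properties already established for later use.

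One small remark on your extraction of the $t$-coefficient of $h$ from \rref{p:h_first}: that statement lives in $\Rc$, so to read off $u_2\equiv\lc\Pp^1\rc\bmod 2$ one lifts $2-\lc\Pp^1\rc t\in t^2\Laz[[t]]+h\Laz[[t]]$ and compares the constant and $t$-coefficients; the torsion-freeness you mention is what makes the constant-term comparison $2=2b_0$ yield $b_0=1$. This is exactly the computation you have in mind, and it works.
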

\begin{proof}
Let $x \in \Rc$ be nonzero and such that $2x=0$. By \rref{lemm:R}, after dividing $x$ by an appropriate power of $t$, we may assume that $x \not \in t\Rc$. In view of \rref{p:h_first} we have $\lc \Pp^1 \rc tx \in t^2\Rc$, and thus $\lc \Pp^1 \rc x \in t\Rc$ by \dref{lemm:R}{lemm:R:nzd}. Applying the morphism of \rref{p:epsilon}, we deduce that $0 = \lc \Pp^1 \rc \epsilon(x) \in \Laz/2$. Since $\lc \Pp^1 \rc$ is a nonzerodivisor in $\Laz/2$ (for instance by \dref{cor:gen_Laz}{cor:gen_Laz:1}), it follows that $\epsilon(x)=0$, hence $x \in t \Rc$, a contradiction.
\end{proof}

\begin{lemma}
\label{lemm:Laz_subring_Rc}
The morphism $\Laz \to \Rc$ is injective.
\end{lemma}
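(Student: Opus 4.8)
The plan is to show that the kernel $K$ of the structural morphism $\Laz \to \Rc$ is contained in $2^s\Laz$ for every $s \in \Nn$. Since $\Laz$ is a free abelian group (being a polynomial ring over $\Zz$), we have $\bigcap_{s\in\Nn} 2^s\Laz = 0$, so this will force $K=0$ and yield the injectivity. The whole argument rests on a bootstrap of $2$-divisibility, fueled by the fact that $2$ becomes divisible by $t$ in $\Rc$.

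The key preliminary observation is that $2 = wt$ in $\Rc$ for some $w \in \Rc$ whose image in $\Laz/2$ equals $\lc \Pp^1 \rc$, where I use the morphism $\epsilon \colon \Rc \to \Laz/2$ of \rref{p:epsilon}. Indeed, since $ht = \fgl{2}(t) = 2t + u_2 t^2 + u_3 t^3 + \cdots$ by \eqref{eq:fgl}, we have $h = 2 + u_2 t + u_3 t^2 + \cdots$ in $\Laz[[t]]$, so the relation $h = 0$ in $\Rc$ reads $2 = wt$ with $w = -(u_2 + u_3 t + \cdots) \in \Rc$. By \rref{p:h_first} we also have $2 \equiv \lc \Pp^1 \rc t \pmod{t^2\Rc}$, whence $(w - \lc \Pp^1 \rc)t \in t^2\Rc$; cancelling $t$, which is a nonzerodivisor in $\Rc$ by \dref{lemm:R}{lemm:R:nzd} (applicable since $\Laz$ is $2$-torsion free), gives $w \equiv \lc \Pp^1 \rc \pmod{t\Rc}$. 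As $\ker\epsilon = t\Rc$, I conclude $\epsilon(w) = \lc \Pp^1 \rc$ in $\Laz/2$.

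With this in hand I would argue by induction on $s$ that $K \subset 2^s\Laz$, the case $s=0$ being vacuous. Assuming $K \subset 2^s\Laz$, pick $a \in K$ and write $a = 2^s c$ with $c \in \Laz$. The image of $a$ in $\Rc$ vanishes, and using $2 = wt$ this reads $w^s t^s c = 0$ in $\Rc$; since $t$ is a nonzerodivisor, $w^s c = 0$. Applying the ring morphism $\epsilon$, and using that the composite $\Laz \to \Rc \xrightarrow{\epsilon} \Laz/2$ is reduction modulo $2$, I obtain $\lc \Pp^1 \rc^s \bar c = 0$ in $\Laz/2$, where $\bar c$ denotes the class of $c$. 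As $\lc \Pp^1 \rc$ is a nonzerodivisor in $\Laz/2$ (by \dref{cor:gen_Laz}{cor:gen_Laz:1}, exactly as already invoked in the proof of \rref{lemm:2-torsion-free}), this forces $\bar c = 0$, i.e.\ $c \in 2\Laz$ and hence $a \in 2^{s+1}\Laz$, completing the induction.

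The only genuinely delicate point is the preliminary identity $2 = wt$ together with the identification $\epsilon(w) = \lc \Pp^1 \rc$; once this is established, the nonzerodivisor property of $t$ in $\Rc$ and of $\lc \Pp^1 \rc$ in $\Laz/2$ drive the bootstrap mechanically, and the freeness of $\Laz$ as a $\Zz$-module closes the argument.
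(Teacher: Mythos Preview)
Your proof is correct and follows essentially the same strategy as the paper: bootstrap $2$-divisibility of the kernel via $\epsilon$ and the fact that $\lc\Pp^1\rc$ is a nonzerodivisor in $\Laz/2$, then conclude via $\bigcap_s 2^s\Laz = 0$. The only difference is organizational: the paper invokes Lemma~\ref{lemm:2-torsion-free} ($\Rc$ is $2$-torsion free) as a black box to pass from $2\lambda' = 0$ in $\Rc$ to $\lambda' \in K$, whereas you reprove the needed step directly by writing $2 = wt$ and cancelling the nonzerodivisor $t$.
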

\begin{proof}
Consider an element $\lambda \in \Laz$ whose image in $\Rc$ vanishes. Using the morphism $\epsilon$ of \rref{p:epsilon}, we see that $\lambda = 2 \lambda'$ with $\lambda' \in \Laz$. It follows from \rref{lemm:2-torsion-free} that the image of $\lambda'$ in $\Rc$ vanishes. Thus the kernel of the morphism $\Laz \to \Rc$ is a $2$-divisible ideal of $\Laz$, hence vanishes because $\Laz$ is a polynomial ring (see \rref{p:gen_L_exist}).
\end{proof}

\subsection{The morphism \texorpdfstring{$\jj$}{g}}

\begin{para}
\label{p:R_K}
By \rref{lemm:inj_tau}, we may view
\[
\Rc[t^{-1}] = \Laz[[t]][t^{-1}]/h = \Laz[[t]][t^{-1}]/\fgl{2}(t)
\]
as an $\Laz$-subalgebra of $\Hh_{\mud}(\Speck)[t^{-1}]$.
\end{para}

\begin{para}
\label{p:invh}
We will denote by $\invh$ the image of the composite $\inv \subset \Hh_{\mud}(\Speck) \to \Hh_{\mud}(\Speck)[t^{-1}]$.
\end{para}

\begin{proposition}
\label{prop:in_R}
We have $\invh \subset \Rc \subset \Rc[t^{-1}] \subset \Hh_{\mud}(\Speck)[t^{-1}]$.
\end{proposition}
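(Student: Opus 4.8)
The plan is to exploit that $\inv$ is generated as an abelian group by the classes $\lc X \rc_{\mud}$ of smooth projective $k$-schemes $X$ equipped with a $\mud$-action; consequently $\invh$ is generated by the images of these classes in $\Hh_{\mud}(\Speck)[t^{-1}]$, and it suffices to show that each such image lies in $\Rc$. The ambient inclusions $\Rc \subset \Rc[t^{-1}] \subset \Hh_{\mud}(\Speck)[t^{-1}]$ cost nothing extra: the rightmost one is \rref{p:R_K}, while the leftmost holds because $t$ is a nonzerodivisor in $\Rc = \Laz[[t]]/h$ by \dref{lemm:R}{lemm:R:nzd} (applied with $R = \Laz$, which is $2$-torsion free), so that $\Rc \to \Rc[t^{-1}]$ is injective.

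The crucial input is \rref{prop:in_Hf}, which guarantees that $\lc X \rc_{\mud}$ lies in the image of the map $\Laz[[t]] \to \Hh_{\mud}(\Speck)$ of \rref{eq:Ht}; I would then simply track this image after inverting $t$. The composite $\Laz[[t]] \to \Hh_{\mud}(\Speck) \to \Hh_{\mud}(\Speck)[t^{-1}]$ annihilates $\fgl{2}(t)$, since the target is $\Hh(\Speck)[[t]][t^{-1}]/\fgl{2}(t)$ by \rref{lemm:mun-trivial} (taken with $G=\mud$). Hence it factors through $\Laz[[t]][t^{-1}]/\fgl{2}(t) = \Rc[t^{-1}]$, the factorization being exactly the inclusion of \rref{p:R_K}. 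Now $\Laz[[t]] \to \Rc[t^{-1}]$ is also the composite of the surjection $\Laz[[t]] \twoheadrightarrow \Rc$ with the injection $\Rc \hookrightarrow \Rc[t^{-1}]$, so its image equals $\Rc$. Therefore the image of each $\lc X \rc_{\mud}$ in $\Hh_{\mud}(\Speck)[t^{-1}]$ lands in $\Rc$, and taking $\Zz$-linear combinations gives $\invh \subset \Rc$.

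I expect no serious obstacle here, since all the genuine content — that $\lc X \rc_{\mud}$ has coefficients in $\Laz$, not merely in $\Hh(\Speck)$ — is already packaged in \rref{prop:in_Hf}. The only points requiring a little care are the bookkeeping identifications: first, that $(h) = (\fgl{2}(t))$ as ideals of $\Laz[[t]][t^{-1}]$, which holds because $t$ is invertible there and $\fgl{2}(t) = ht$, so that $\Rc[t^{-1}] = \Laz[[t]][t^{-1}]/\fgl{2}(t)$ is indeed the ring of \rref{p:R_K}; and second, the injectivity of $\Rc \to \Rc[t^{-1}]$ noted above, which is what makes the inclusion $\invh \subset \Rc$ meaningful as a statement about subrings of $\Hh_{\mud}(\Speck)[t^{-1}]$.
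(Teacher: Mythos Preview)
Your proof is correct and takes essentially the same approach as the paper: both reduce immediately to \rref{prop:in_Hf}, and you have simply unpacked the bookkeeping (the identifications $\Rc[t^{-1}] = \Laz[[t]][t^{-1}]/\fgl{2}(t)$, the injectivity of $\Rc \to \Rc[t^{-1}]$ via \dref{lemm:R}{lemm:R:nzd}, and the compatibility with the inclusion of \rref{p:R_K}) that the paper's one-line proof leaves implicit.
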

\begin{proof}
This follows from \rref{prop:in_Hf}.
\end{proof}

\begin{lemma}
\label{lemm:epsilon_epsilon}
For any smooth projective $k$-scheme $X$ with a $\mud$-action, the morphism $\epsilon$ of \rref{p:epsilon} satisfies $\epsilon(\lc X \rc_{\mud}) = \lc X \rc \in \Laz/2$ (using the notation of \rref{p:lc_rc} and \rref{p:class_in_inv}).
\end{lemma}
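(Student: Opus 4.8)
The plan is to reduce everything to evaluation at $t=0$ of a single power series lift, exploiting the two presentations of the equivariant ring already at hand. First I would use \rref{prop:in_Hf} to write $\lc X \rc_{\mud} = \phi(F)$ for some $F \in \Laz[[t]]$, where $\phi \colon \Laz[[t]] \to \Hh_{\mud}(\Speck)$ is the map induced by \eqref{eq:Ht} (composed with $\Laz = \fund{\Hh} \subset \Hh(\Speck)$). By \rref{lemm:mun-trivial}, applied with $n=2$ and $X=\Speck$ (note $\Hh(\Speck)=\Zz[u,u^{-1}][\bb]$ is torsion-free, hence has no $2$-torsion), we may identify $\Hh_{\mud}(\Speck) = \Hh(\Speck)[[t]]/\fgl{2}(t)$, under which $\phi(F)$ becomes $F \bmod \fgl{2}(t)$.

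Next I would compute the integral specialisation. The morphism $\varepsilon$ of \rref{p:change_of_groups} is a graded $\Hh(\Speck)$-algebra morphism with $\varepsilon(t)=0$, so under the above presentation it is the evaluation $F \bmod \fgl{2}(t) \mapsto F(0)$, well defined since $\fgl{2}(t)$ is divisible by $t$. By \rref{p:varepsilon} this gives $F(0) = \varepsilon(\lc X \rc_{\mud}) = \lc X \rc \in \Laz$. I would then identify the element of $\Rc$ to which $\epsilon$ is applied: since $\fgl{2}(t)=ht$, the localisation $\Hh_{\mud}(\Speck) \to \Hh_{\mud}(\Speck)[t^{-1}]$ carries $F \bmod \fgl{2}(t)$ to $F \bmod h$, which lies in $\Rc = \Laz[[t]]/h \subset \Rc[t^{-1}]$ by \rref{p:R_K} and \rref{prop:in_R}; this is exactly the image of $\lc X \rc_{\mud}$ in $\Rc$ defining $\invh$. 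Finally, $\epsilon \colon \Rc \to \Laz/2$ kills $t\Rc$ and restricts to reduction modulo $2$ on $\Laz$ (see \rref{p:epsilon}, noting $h \equiv 2 \pmod t$ by \eqref{eq:fgl}), so $\epsilon(F \bmod h) = \epsilon(F(0) \bmod h) = F(0) \bmod 2$. Combining the two computations yields $\epsilon(\lc X \rc_{\mud}) = F(0) \bmod 2 = \lc X \rc \in \Laz/2$, as desired.

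The only genuine obstacle is bookkeeping: I must make sure that the \emph{same} lift $F \in \Laz[[t]]$ computes both $\varepsilon(\lc X\rc_{\mud})$ and $\epsilon(\lc X\rc_{\mud})$. Concretely, this amounts to checking that the natural surjection $\Laz[[t]]/\fgl{2}(t) \twoheadrightarrow \Laz[[t]]/h = \Rc$ induced by the inclusion of ideals $(ht)\subset(h)$ agrees with the localisation map $\Hh_{\mud}(\Speck) \to \Rc[t^{-1}] \subset \Hh_{\mud}(\Speck)[t^{-1}]$ used to define $\invh \subset \Rc$. Once this compatibility is in place, both morphisms are literally evaluation of $F$ at $t=0$ (followed by reduction mod $2$ in the case of $\epsilon$), and the conceptual content becomes transparent: passing to $\Rc$ differs from the integral specialisation $\varepsilon$ only by the extra relation $2=0$ forced by $h \equiv 2 \pmod t$.
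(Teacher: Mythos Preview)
Your proposal is correct and follows essentially the same route as the paper: both arguments pass through the identification $\Hh_{\mud}(\Speck)\simeq \Hh(\Speck)[[t]]/\fgl{2}(t)$ from \rref{lemm:mun-trivial}, compare the localisation map to $\Rc[t^{-1}]$ with the specialisation $\varepsilon$ at $t=0$, and reduce the claim to the equality $F(0)\bmod 2 = \lc X\rc \bmod 2$ for a suitable lift $F$. The one noteworthy difference is that you work throughout with the lift $F\in \Laz[[t]]$ furnished by \rref{prop:in_Hf}, so both $\varepsilon(\lc X\rc_{\mud})=F(0)$ and $\epsilon(\lc X\rc_{\mud})=F(0)\bmod 2$ are computed directly in $\Laz$; the paper instead tracks the element inside $\Hh(\Speck)[[t]]/h$ and $\Hh(\Speck)/2$, and then invokes the Hattori--Stong theorem to deduce the equality in $\Laz/2$ from the equality in $\Hh(\Speck)/2$. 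Your version thus bypasses Hattori--Stong at the price of the extra compatibility check you flag in your third paragraph (that the quotient $\Laz[[t]]/\fgl{2}(t)\twoheadrightarrow \Rc$ matches the localisation into $\Hh_{\mud}(\Speck)[t^{-1}]$), which is routine once written out.
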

\begin{proof}
By \dref{lemm:R}{lemm:R:nzd}, the morphism $\Hh(\Speck)[[t]]/h \to \Hh(\Speck)[[t]][t^{-1}]/h = \Hh_{\mud}(\Speck)[t^{-1}]$ is injective. Its image coincides with the image of the morphism $\Hh_{\mud}(\Speck) \to \Hh_{\mud}(\Speck)[t^{-1}]$, and in particular contains $\Rc$. Thus \rref{prop:in_R} yields a commutative diagram
\[ \xymatrix{
\inv \ar[r] \ar[d] & \Hh_{\mud}(\Speck) \ar[d] \\ 
\Rc \ar[r] & \Hh(\Speck)[[t]]/h
}\]
On the other hand the following diagram is commutative
\[
\xymatrix{
\Rc\ar[r] \ar[d]_{\epsilon}& \Hh(\Speck)[[t]]/h \ar[d]^{t \mapsto 0} & \Hh_{\mud}(\Speck) \ar[l] \ar[d]^{\varepsilon \text{ (see \rref{p:change_of_groups})}}\\ 
\Laz/2 \ar[r] & \Hh(\Speck)/2 & \Hh(\Speck)\ar[l]
}
\]
By the Hattori--Stong theorem \cite[Proposition 7.16 (1)]{Mer-Ori}, the morphism $\Laz/2 \to \Hh(\Speck)/2$ is injective, and we conclude by combining the two diagrams (recall from \rref{p:varepsilon} that $\varepsilon(\lc X \rc_{\mud}) = \lc X \rc$).
\end{proof}

\begin{definition}
\label{p:J_def}
Under the mapping $x \mapsto t$, the morphism $\J$ of \rref{prop:gamma_determined} induces a morphism of $\Laz$-algebras (as in \rref{def:gamma_Gm}, we view $\g(-E)(t)$ as an element of $\Hh_{\mud}(S)$)
\[
\jj\colon \Mcc \to \Rc[t^{-1}] \quad ; \quad \lc E \to S\rc \mapsto \lc \g(-E)(t) \rc_{\mud} = \sum_{i\in \Zz} \lc \g_i(-E) \rc t^i.
\]
\end{definition}

\begin{para}
\label{p:j_trivial}
If $S$ is a smooth projective $k$-scheme and $E\to S$ is the trivial vector bundle of rank $r$, we have $\jj(\lc E \to S \rc) = t^{-r} \lc S \rc$.
\end{para}

\begin{para}
\label{p:j_v}
Since $\jj(v) = t^{-1}$, the morphism $\jj$ extends to $\jj \colon \Mcc[v^{-1}] \to \Rc[t^{-1}]$.
\end{para}

\begin{para}
\label{p:j_nu}
By \rref{lemm:X_Z_PN:2}, we have $\lc X \rc_{\mud} = \jj(\lc N_X \to X^{\mud}\rc) \in \Rc[t^{-1}]$ for any smooth projective $k$-scheme $X$ with a $\mud$-action.
\end{para}

\begin{lemma}
\label{lemm:alg_indep}
The elements $\lc \X_n \rc_{\mud} \in \invh$ for $n\in \Nn\smallsetminus\{0\}$ are algebraically independent (over $\Zz$).
\end{lemma}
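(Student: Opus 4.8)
The plan is to reduce this integral statement to a characteristic-two statement that has already been proved, using the augmentation $\epsilon$. The crucial input is \dref{cor:gen_Laz}{cor:gen_Laz:1}: since $\fund{\Hh}=\Laz$ here (see \rref{p:K_fund}), the classes $\lc \X_n \rc$ for $n\geq 1$ form a system of polynomial generators of the $\Fd$-algebra $\Laz/2$, and are in particular algebraically independent over $\Fd$. I intend to transport this independence back up to the $\lc \X_n \rc_{\mud}$ through the ring morphism $\epsilon \colon \Rc \to \Laz/2$ of \rref{p:epsilon}, which by \rref{lemm:epsilon_epsilon} (extended to the virtual involutions $\X_n$ by additivity of $\epsilon$, $\lc-\rc_{\mud}$ and $\lc-\rc$) sends each $\lc \X_n \rc_{\mud}$ to $\lc \X_n \rc$.

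Concretely, I would argue by contradiction: suppose $P \in \Zz[\bx]$ is nonzero and $P(\lc \X_1 \rc_{\mud}, \lc \X_2 \rc_{\mud}, \ldots) = 0$, a relation which takes place inside $\Rc$ because $\invh \subset \Rc$ by \rref{prop:in_R}. First I would strip off the largest power of two dividing $P$, writing $P = 2^a Q$ with $Q \in \Zz[\bx]$ whose reduction $\bar Q \in \Fd[\bx]$ modulo two is nonzero. Since $\Rc$ is $2$-torsion free by \rref{lemm:2-torsion-free}, the vanishing of $2^a Q(\lc \X_1 \rc_{\mud}, \ldots)$ forces $Q(\lc \X_1 \rc_{\mud}, \ldots) = 0$. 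Applying $\epsilon$ and using $\epsilon(\lc \X_n \rc_{\mud}) = \lc \X_n \rc$ together with the fact that $\epsilon$ sends an integer $m$ to its class modulo two, I obtain $\bar Q(\lc \X_1 \rc, \lc \X_2 \rc, \ldots) = 0$ in $\Laz/2$. The algebraic independence of the $\lc \X_n \rc$ over $\Fd$ then yields $\bar Q = 0$, contradicting the choice of $Q$.

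Given the machinery already in place, the argument is short, and essentially the only genuinely delicate point is the passage from $2^a Q(\ldots) = 0$ to $Q(\ldots) = 0$: this is precisely where the $2$-torsion freeness of $\Rc$ (\rref{lemm:2-torsion-free}, itself resting on the separatedness $\bigcap_n t^n\Rc = 0$ established in \rref{lemm:R}) is indispensable. Working integrally one cannot expect to see the independence of the $\lc \X_n \rc_{\mud}$ transparently, because the ring $\Rc$ already carries the relation $2 = \lc \Pp^1 \rc\, t \bmod t^2\Rc$ of \rref{p:h_first} mixing $2$ and $t$; the whole point of factoring through $\epsilon$ is to land in the quotient $\Laz/2$, where the structure has been completely determined in \S\ref{section:explicit}. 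Everything else is a formal consequence of $\epsilon$ being a morphism of rings.
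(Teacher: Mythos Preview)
Your proof is correct and is essentially identical to the paper's own argument: assume a nonzero relation $P$, use $2$-torsion freeness of $\Rc$ \rref{lemm:2-torsion-free} to reduce to $P\notin 2\Zz[\bx]$, apply $\epsilon$ via \rref{lemm:epsilon_epsilon}, and contradict \dref{cor:gen_Laz}{cor:gen_Laz:1}. Your explicit remark that \rref{lemm:epsilon_epsilon} extends to the virtual $\X_n$ by additivity is a welcome clarification.
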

\begin{proof}
Let $P$ be a nonzero polynomial in $\Zz[\bx]$ such that $P(\lc \X_1 \rc_{\mud},\ldots)=0 \in \invh \subset \Rc$. Since $\Rc$ is $2$-torsion free \rref{lemm:2-torsion-free}, we may assume that $P \not \in 2\Zz[\bx]$. By \rref{lemm:epsilon_epsilon} we have $P(\lc \X_1 \rc,\ldots) = \epsilon(P(\lc \X_1 \rc_{\mud},\ldots)) =0 \in \Laz/2$, contradicting \dref{cor:gen_Laz}{cor:gen_Laz:1}.
\end{proof}

\begin{theorem}
\label{th:J}
The morphism $\jj\colon \Mcc \to \Rc[t^{-1}]$ is injective.
\end{theorem}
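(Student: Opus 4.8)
The plan is to prove the formally stronger statement that the extension $\jj \colon \Mcc[v^{-1}] \to \Rc[t^{-1}]$ of \rref{p:j_v} is injective; since $\Mcc$ is a domain (being a polynomial $\Laz$-algebra by \rref{prop:M_pol}), the localisation $\Mcc \to \Mcc[v^{-1}]$ is injective and this suffices. By \dref{prop:Mcc_pol}{prop:Mcc_pol:poly} I write $\Mcc[v^{-1}] = \Zz[v,v^{-1}][x_2,x_3,\ldots]$ as a polynomial ring, and recall that $\jj(v)=t^{-1}$ while $\jj(x_n) = \lc \X_n \rc_{\mud}$ for all $n$ (the latter by \rref{p:j_nu} and \rref{def:x_n}, extended additively to $\V_{\mud}$). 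Given a nonzero $f$, since $t^{-1}$ is a unit in $\Rc[t^{-1}]$ I may multiply $f$ by a power of $v$ and assume $f = \sum_{j\geq 0} v^j P_j$ with $P_j \in \Zz[x_2,\ldots]$ not all zero; then $\jj(f) = \sum_{j\geq 0} t^{-j} Q_j$, where $Q_j = P_j(\lc \X_2 \rc_{\mud},\ldots) \in \Rc$.

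The main tool is the $t$-adic valuation on $\Rc[t^{-1}]$. By \rref{lemm:R} the element $t$ is a nonzerodivisor in $\Rc$ with $\bigcap_n t^n\Rc = 0$, and $\Rc/t\Rc \simeq \Laz/2$ via the morphism $\epsilon$ of \rref{p:epsilon}; since multiplication by $t^n$ identifies $\Rc/t\Rc$ with $t^n\Rc/t^{n+1}\Rc$, the associated graded ring is $\mathrm{gr}_t\Rc \simeq (\Laz/2)[\bar t]$, which is a domain because $\Laz/2$ is a polynomial $\Fd$-algebra (\rref{p:gen_L_exist}). Hence the induced function $w\colon \Rc[t^{-1}]\smallsetminus\{0\} \to \Zz$ with $w(t)=1$ is a genuine valuation, additive on products; in particular a finite sum $\sum_k y_k$ of nonzero elements is itself nonzero provided the leading symbols of those $y_k$ realising $\min_k w(y_k)$ do not cancel in $\mathrm{gr}_t\Rc$. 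I record two symbol computations: by \rref{p:h_first} one has $w(2)=1$ with leading symbol $\lc \Pp^1 \rc \bar t$, and by \rref{lemm:epsilon_epsilon} together with \dref{cor:gen_Laz}{cor:gen_Laz:0} one has $w(\lc \X_n \rc_{\mud}) = 0$ with leading symbol $\lc \X_n \rc \in \Laz/2$ for every $n\geq 1$.

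Now write each nonzero $P_j = 2^{e_j}P_j'$ with $P_j' \not\equiv 0 \bmod 2$. The algebraic independence of the $\lc \X_n \rc$ in $\Laz/2$ (\dref{cor:gen_Laz}{cor:gen_Laz:1}) gives $P_j'(\lc \X_2\rc,\ldots) \neq 0$, so $w(P_j'(\lc \X_2 \rc_{\mud},\ldots)) = 0$; combining this with $w(2)=1$ and additivity, the leading symbol of $Q_j$ is $\lc \Pp^1 \rc^{e_j} P_j'(\lc \X_2 \rc,\ldots)\,\bar t^{\,e_j}$, whence $w(t^{-j}Q_j) = e_j - j =: \ell_j$ with leading symbol $\lc \Pp^1 \rc^{e_j}P_j'(\lc \X_2 \rc,\ldots)\,\bar t^{\,\ell_j}$. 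Setting $\ell^\ast = \min_j \ell_j$, the leading symbol of $\jj(f)$ is $\big(\sum_{j:\ \ell_j = \ell^\ast}\lc \Pp^1 \rc^{e_j}P_j'(\lc \X_2\rc,\ldots)\big)\bar t^{\,\ell^\ast}$.

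The only real obstacle is that several indices $j$ may contribute to the minimal value $\ell^\ast$, so that $w$ alone does not separate the terms; one must show the corresponding symbols cannot cancel. This is resolved by the observation that $\lc \Pp^1 \rc = \lc \X_1 \rc$ (since $\X_1 = \Pp(0,0)$ has underlying scheme $\Pp^1$) and that, by \dref{cor:gen_Laz}{cor:gen_Laz:1}, the ring $\Laz/2 = \Fd[\lc \X_1 \rc,\lc \X_2 \rc,\ldots]$ is \emph{freely} polynomial on the classes $\lc \X_n \rc$. For the contributing indices one has $e_j = \ell^\ast + j$, which are pairwise distinct; hence the summands $\lc \X_1 \rc^{e_j}P_j'(\lc \X_2 \rc,\ldots)$ lie in pairwise distinct $\lc \X_1 \rc$-degrees, each with nonzero coefficient in $\Fd[\lc \X_2 \rc,\ldots]$, and therefore cannot cancel. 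Thus the leading symbol of $\jj(f)$ is nonzero, so $\jj(f)\neq 0$, which establishes the injectivity of $\jj$.
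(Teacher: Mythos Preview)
Your proof is correct and rests on the same key ingredients as the paper's --- the polynomial structure of $\Laz/2$ on the classes $\lc \X_n \rc$, the morphism $\epsilon$, and the relation $2 \equiv \lc \Pp^1\rc t \bmod t^2$ --- but the execution differs. The paper first isolates the subring $S = \Zz[x_1,x_2,\ldots] \subset \Mcc$ and proves $\jj|_S$ injective via Lemma~\ref{lemm:alg_indep} (whose proof is essentially: extract the maximal power of $2$, then apply $\epsilon$); it then observes that every element of $\Mcc[v^{-1}]$ lands in $S$ after multiplication by some $v^a 2^b$ (since $x_1 = 2v$), and concludes by the $(2,v)$-torsion-freeness of $\Mcc[v^{-1}]$. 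You instead pass directly to the associated graded for the $t$-adic filtration on $\Rc[t^{-1}]$, identify $\mathrm{gr}_t\Rc \simeq (\Laz/2)[\bar t]$ as a domain, and separate the contributions of the different $v$-powers by their $\lc\X_1\rc$-degree in the leading symbol. Your route makes the underlying mechanism --- the domain property of the associated graded --- more explicit and treats $v$ and the $x_n$ uniformly, at the cost of a slightly longer symbol computation; the paper's two-step argument is terser once Lemma~\ref{lemm:alg_indep} is available.
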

\begin{proof}
Since $\lc \X_n \rc_{\mud} = \jj(x_n) \in \Rc[t^{-1}]$ for all $n\in \Nn\smallsetminus\{0\}$, it follows from \rref{lemm:alg_indep} that $\jj$ restricts to an injection on the subring $S\subset \Mcc$ generated by $x_n$ for $n\in \Nn\smallsetminus\{0\}$. Let now $x \in \Mcc[v^{-1}]$ be such that $\jj(x)=0$ in $\Rc[t^{-1}]$. Since $x_1=2v$ by \dref{ex:x_123}{ex:x_123:1} and in view of \rref{cor:gen_x_v}, we have $v^a2^bx \in S$ for some $a,b \in \Nn$. By injectivity of $\jj|_S$, it follows that $v^a2^bx =0 \in \Mcc \subset \Mcc[v^{-1}]$. Since $\Laz$ is $2$-torsion free, it follows from \rref{prop:M_pol} that $\Mcc[v^{-1}]$ is $(2,v)$-torsion free. We conclude that $x=0$.
\end{proof}

\begin{para}
\label{p:omit_jj}
In the sequel we will omit to mention of the morphism $\jj$, and think of $\Mcc$ as an $\Laz$-subalgebra of $\Rc[t^{-1}]$, where $v=t^{-1}$.
\end{para}

\subsection{Trivial normal bundle}
We are now in position to provide the first concrete application of the theory, by describing those $\mud$-actions having only isolated fixed points.

\begin{lemma}
\label{lemm:Rc_2}
Let us view $\Laz$ as a subring of $\Rc$ (see \rref{lemm:Laz_subring_Rc}). Then for all $m\in \Nn$, we have $\Laz \cap t^m\Rc = 2^m \Laz$ inside $\Rc$.
\end{lemma}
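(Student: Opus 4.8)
The plan is to prove both inclusions, the trivial one directly and the essential one by induction on $m$, the link between $2$-divisibility and $t$-divisibility being the relation $2 = tw$ in $\Rc$, where $w$ reduces modulo $t$ to the nonzerodivisor $\lc \Pp^1 \rc$ of $\Laz/2$. First I would settle the inclusion $2^m\Laz \subseteq \Laz \cap t^m\Rc$. Since $\epsilon(2)=0$ in $\Laz/2$ and $\ker\epsilon = t\Rc$ by \rref{p:epsilon}, we have $2 \in t\Rc$; as $t$ is a nonzerodivisor by \dref{lemm:R}{lemm:R:nzd}, there is a unique $w \in \Rc$ with $2 = tw$. Then $2^m = t^m w^m \in t^m\Rc$, so $2^m\Laz \subseteq \Laz \cap t^m\Rc$.

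The auxiliary fact I would isolate next is that $\epsilon(w)$ is a nonzerodivisor of $\Laz/2$. Indeed \rref{p:h_first} gives $2 \equiv \lc \Pp^1 \rc t \pmod{t^2\Rc}$; comparing this with $2 = tw$ and cancelling the nonzerodivisor $t$ yields $w \equiv \lc \Pp^1 \rc \pmod{t\Rc}$, that is $\epsilon(w) = \lc \Pp^1 \rc$, which is a nonzerodivisor in $\Laz/2$ by \dref{cor:gen_Laz}{cor:gen_Laz:1}. From this I would deduce a divisibility sub-lemma: for $z \in \Rc$ and $k \in \Nn$, if $wz \in t^k\Rc$ then $z \in t^k\Rc$. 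This is proved by induction on $k$, the case $k=0$ being vacuous: for $k \geq 1$ the hypothesis gives $wz \in t\Rc = \ker\epsilon$, so $\epsilon(w)\epsilon(z)=0$ forces $\epsilon(z)=0$, whence $z = tz'$; cancelling one $t$ from $twz' = wz \in t^k\Rc$ gives $wz' \in t^{k-1}\Rc$, and the inductive hypothesis yields $z' \in t^{k-1}\Rc$, i.e. $z \in t^k\Rc$.

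With these in hand I would prove $\Laz \cap t^m\Rc \subseteq 2^m\Laz$ by induction on $m$, the case $m=0$ being trivial. Given $\lambda \in \Laz \cap t^m\Rc$ with $m \geq 1$, the membership $\lambda \in t\Rc = \ker\epsilon$ forces $\lambda \in 2\Laz$ (because $\epsilon$ restricts on $\Laz$ to the reduction map, using \rref{lemm:Laz_subring_Rc}), say $\lambda = 2\lambda'$ with $\lambda' \in \Laz$. Then $tw\lambda' = \lambda \in t^m\Rc$, and cancelling $t$ gives $w\lambda' \in t^{m-1}\Rc$; the sub-lemma then yields $\lambda' \in t^{m-1}\Rc$. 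Hence $\lambda' \in \Laz \cap t^{m-1}\Rc = 2^{m-1}\Laz$ by the inductive hypothesis, and therefore $\lambda = 2\lambda' \in 2^m\Laz$, as desired.

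The hard part is the reverse inclusion, and specifically organizing the simultaneous descent in the two parameters: one must strip off powers of $2$ through $\epsilon$ and powers of $t$ through the nonzerodivisor property of $t$, and the device that couples the two is exactly the factorization $2 = tw$ with $\epsilon(w)$ a nonzerodivisor. Everything else — the easy inclusion, the identification $\epsilon(w) = \lc \Pp^1 \rc$, and the reduction $\lambda = 2\lambda'$ — is routine bookkeeping resting on \rref{p:h_first}, \rref{p:epsilon}, \dref{lemm:R}{lemm:R:nzd}, and \rref{lemm:2-torsion-free}.
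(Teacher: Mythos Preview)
Your proof is correct and follows essentially the same approach as the paper. Both rely on the identity $2 = tw$ in $\Rc$ with $\epsilon(w) = \lc \Pp^1 \rc$ a nonzerodivisor in $\Laz/2$, together with the nonzerodivisor property of $t$; the only difference is organizational. The paper uses the inductive hypothesis at $m-1$ first to write $a = 2^{m-1}b$, and then a single application of $\epsilon$ (using that $\lc\Pp^1\rc^{m-1}$ is a nonzerodivisor) yields the final factor of $2$. You instead strip off one factor of $2$ up front and then appeal to the inductive hypothesis, which forces you to prove the auxiliary sub-lemma that $wz \in t^k\Rc$ implies $z \in t^k\Rc$. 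Your sub-lemma is itself a small induction, so your argument is a nested double induction where the paper gets by with one; but the content is the same, and your version has the minor advantage of isolating the role of $w$ cleanly. (Incidentally, you list \rref{lemm:2-torsion-free} among the ingredients at the end, but your argument does not actually invoke it.)
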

\begin{proof}
Since $x_1=2t^{-1} \in \Rc$ by \dref{ex:x_123}{ex:x_123:1}, we have $2^m\Laz=t^m x_1^m\Laz \subset t^m \Rc$. We prove the other inclusion by induction on $m$, the case $m=0$ being clear. Let $a\in\Laz \cap t^m\Rc$ with $m>0$. By induction, we have $a=2^{m-1}b$ for some $b\in \Laz$. Write $a=t^mx$ with $x \in \Rc$. Then $t^mx=2^{m-1}b = t^{m-1}x_1^{m-1}b$, hence $tx = x_1^{m-1}b$ by \dref{lemm:R}{lemm:R:nzd}. Thus, applying the morphism of \rref{p:epsilon}
\[
0 = \epsilon(tx) = \epsilon(x_1)^{m-1}b = \lc \Pp^1 \rc^{m-1} b \in \Laz/2.
\]
Since $\lc \Pp^1 \rc$ is a nonzerodivisor in $\Laz/2$ (e.g.\ by \dref{cor:gen_Laz}{cor:gen_Laz:1}), it follows that $b =0 \in \Laz/2$, hence $a\in 2^m \Laz$.
\end{proof}

\begin{proposition}
\label{prop:trivial_normal}
Let $X$ be a smooth projective $k$-scheme with a $\mud$-action such that the normal bundle $N_X$ is trivial of constant rank $c$. Then there exists $a\in \Laz$ such that $\lc X \rc_{\mud} = a (\lc \X_1 \rc_{\mud})^c \in \invh$ and $\lc X^{\mud} \rc = 2^c a \in \Laz$.
\end{proposition}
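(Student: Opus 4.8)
The plan is to reduce both assertions to a single $t$-adic divisibility statement and then invoke the divisibility lemma \rref{lemm:Rc_2}. First I would exploit the triviality of $N_X$: since $N_X$ is trivial of constant rank $c$, the class $\lc N_X \to X^{\mud}\rc \in \Mcc$ is the class of a trivial rank-$c$ vector bundle over the smooth projective scheme $X^{\mud}$ (note that $X^{\mud}\in\Sm_k$ is closed in the projective scheme $X$, hence smooth projective, so that \rref{p:j_trivial} applies). Combining \rref{p:j_nu} with \rref{p:j_trivial}, this yields
\[
\lc X\rc_{\mud} = \jj(\lc N_X \to X^{\mud}\rc) = t^{-c}\lc X^{\mud}\rc \in \Rc[t^{-1}].
\]

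Next I would extract the integrality. By \rref{prop:in_R} we have $\lc X\rc_{\mud}\in \invh \subset \Rc$, so the displayed identity shows $t^{-c}\lc X^{\mud}\rc \in \Rc$, that is, $\lc X^{\mud}\rc \in \Laz \cap t^c\Rc$ (viewing $\Laz$ inside $\Rc$ via \rref{lemm:Laz_subring_Rc}, and recalling $\lc X^{\mud}\rc \in \fund{\Hh} = \Laz$). Applying \rref{lemm:Rc_2} with $m=c$, this intersection is exactly $2^c\Laz$, so there exists $a\in \Laz$ with $\lc X^{\mud}\rc = 2^c a$, which is the second assertion.

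Finally I would substitute back. Using $\lc X^{\mud}\rc = 2^c a$ together with the identification $v = t^{-1}$ of \rref{p:omit_jj} and the value $\lc \X_1\rc_{\mud} = x_1 = 2v = 2t^{-1}$ coming from \dref{ex:x_123}{ex:x_123:1}, I obtain in $\Rc[t^{-1}]$
\[
\lc X\rc_{\mud} = t^{-c}\cdot 2^c a = a\,(2t^{-1})^c = a\,(\lc \X_1\rc_{\mud})^c,
\]
which is the first assertion.

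The argument is essentially formal once this machinery is assembled, so I do not anticipate a genuine obstacle; the content lies entirely in the earlier results. The one point deserving care is the conceptual move that makes the two halves of the statement coincide, namely recognizing that the geometric triviality of $N_X$ converts into the clean relation $\lc X\rc_{\mud} = t^{-c}\lc X^{\mud}\rc$, after which the divisibility $2^c \mid \lc X^{\mud}\rc$ in $\Laz$ is precisely the translation of the visible divisibility by $t^c$ in $\Rc$ provided by \rref{lemm:Rc_2}.
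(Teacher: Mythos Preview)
Your proof is correct and follows essentially the same route as the paper's: compute $\lc X\rc_{\mud} = t^{-c}\lc X^{\mud}\rc$ via \rref{p:j_nu} and \rref{p:j_trivial}, apply \rref{lemm:Rc_2} to obtain $\lc X^{\mud}\rc = 2^c a$, and rewrite using $x_1 = 2t^{-1}$. The only difference is that you make explicit the appeal to \rref{prop:in_R} to justify $\lc X\rc_{\mud}\in\Rc$, which the paper leaves implicit.
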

\begin{proof}
We have $\lc X \rc_{\mud} = t^{-c} \lc X^{\mud} \rc \in \Rc[t^{-1}]$ by \rref{p:j_nu} and \rref{p:j_trivial}. It follows from \rref{lemm:Rc_2} that $\lc  X^{\mud} \rc =2^ca$ for some $a\in \Laz$. Since $x_1=2t^{-1} \in \Rc$ by \dref{ex:x_123}{ex:x_123:1}, we obtain $\lc X \rc_{\mud} = a x_1^c \in \Rc$.
\end{proof}

\begin{corollary}
Let $X$ be a smooth projective $k$-scheme of pure dimension $n$ with a $\mud$-action. Assume that the number $q$ of fixed points over an algebraic closure of $k$ is finite. Then $q = 2^na$ for some $a \in \Nn$, and $\lc X \rc = a\lc \Pp^1 \rc^n$ in $\Laz/2$.
\end{corollary}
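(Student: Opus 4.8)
The plan is to verify that the hypothesis of \rref{prop:trivial_normal} is satisfied, and then to read off the two assertions by a degree argument together with an application of the reduction morphism $\epsilon$ of \rref{p:epsilon}.

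First I would examine the fixed locus. Since the number of fixed points over an algebraic closure is finite, $X^{\mud}$ is a smooth projective $k$-scheme of dimension zero, that is, a finite disjoint union of spectra of finite separable field extensions of $k$. Over such a base every vector bundle is free, so the normal bundle $N_X$ is trivial; and because $X$ has pure dimension $n$ while $X^{\mud}$ is zero-dimensional, $N_X$ has constant rank $n$. As $N_X$ carries the trivial $\mud$-action by definition, this is precisely the hypothesis of \rref{prop:trivial_normal} with $c=n$.

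Applying that proposition, I obtain $a \in \Laz$ with $\lc X\rc_{\mud} = a(\lc\X_1\rc_{\mud})^n \in \invh$ and $\lc X^{\mud}\rc = 2^n a \in \Laz$. Since $X^{\mud}$ is zero-dimensional, the class $\lc X^{\mud}\rc$ is homogeneous of degree $0$, hence lies in $\Laz^0 = \Zz$ and equals the number $q$ of geometric points of $X^{\mud}$ (the class of $\Spec k'$ being $[k':k]$, and each such factor contributing $[k':k]$ geometric points by separability). Now the equality $2^n a = q$, whose right-hand side is homogeneous of degree $0$, forces its positive-degree homogeneous components to vanish; by torsion-freeness of the polynomial ring $\Laz$ this gives $a \in \Laz^0 = \Zz$. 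Then $q = 2^n a$ shows that $2^n$ divides $q$ and that $a = q/2^n \in \Nn$, establishing the first assertion.

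For the second assertion I would apply the ring morphism $\epsilon \colon \Rc \to \Laz/2$ of \rref{p:epsilon} to the identity $\lc X\rc_{\mud} = a(\lc\X_1\rc_{\mud})^n$, which is legitimate since $\lc X\rc_{\mud}$ and $\lc\X_1\rc_{\mud}$ lie in $\invh \subset \Rc$ by \rref{prop:in_R}. By \rref{lemm:epsilon_epsilon} we have $\epsilon(\lc X\rc_{\mud}) = \lc X\rc$ and $\epsilon(\lc\X_1\rc_{\mud}) = \lc\X_1\rc = \lc\Pp^1\rc$ in $\Laz/2$, the underlying scheme of $\X_1 = \Pp(0,0)$ being $\Pp^1$. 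Since $\epsilon$ is $\Laz$-linear, this yields $\lc X\rc = a\lc\Pp^1\rc^n$ in $\Laz/2$, as required. The substantive work having been absorbed into \rref{prop:trivial_normal}, I expect no serious obstacle; the only points demanding care are the triviality and constant rank of $N_X$ (resting on $X^{\mud}$ being zero-dimensional and $X$ of pure dimension $n$) and the identification of $\lc X^{\mud}\rc$ with the integer $q$, after which the extraction of $a\in\Nn$ and the mod-two reduction are immediate.
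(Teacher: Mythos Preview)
Your proof is correct and follows essentially the same route as the paper: verify that $N_X$ is trivial of constant rank $n$, apply \rref{prop:trivial_normal} with $c=n$, use $\lc X^{\mud}\rc = q$ to extract $a \in \Nn$, and then apply $\epsilon$ via \rref{lemm:epsilon_epsilon} for the mod-two statement. Your version spells out a few details the paper leaves implicit (notably the degree argument forcing $a \in \Laz^0 = \Zz$ from $2^n a = q$), but there is no substantive difference in approach.
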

\begin{proof}
The vector bundle $N_X$ is trivial because $X^{\mud}$ is finite over $k$, and has constant rank $n$. We may thus apply \rref{prop:trivial_normal} with $c=n$. The first statement follows from the relation $\lc X^{\mud} \rc = q \in \Laz$, and the second by applying the morphism $\invh \subset \Rc \xrightarrow{\epsilon} \Laz/2$, in view of \rref{lemm:epsilon_epsilon}, as $\lc \X_1 \rc=\lc \Pp^1 \rc$.
\end{proof}

\section{The structure of \texorpdfstring{$\inv$}{O(\textmu 2)}}
\label{sect:structure}
We recall that $\Hh=\Kt$ (see \rref{p:K} and \rref{p:twist}). Starting from \rref{p:h_free}, we will assume that $\carac k \neq 2$.

\subsection{The morphism \texorpdfstring{$\nu$}{\textnu}}
\begin{proposition}
\label{prop:nu}
There exists a morphism of graded $\Laz$-algebras
\[
\nu \colon \inv \to \Mcc \quad ; \quad \lc X \rc_{\mud} \mapsto \lc N_X\to X^{\mud} \rc
\]
whose image is $\invh \subset \Rc[t^{-1}]$.
\end{proposition}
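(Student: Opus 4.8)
The plan is to obtain $\nu$ by descent from the free group of virtual involutions, using the injectivity of $\jj$ (\rref{th:J}) to overcome the well-definedness problem, which is the only real difficulty. Let $\V_{\mud}$ be the free abelian group of \rref{def:virtual_var}, and consider two additive maps out of it: the tautological surjection $q\colon \V_{\mud}\to\inv$, $X\mapsto\lc X\rc_{\mud}$ (surjective by the definition of $\inv$), and the normal-bundle map $\bar\nu\colon\V_{\mud}\to\Mcc$, $X\mapsto\lc N_X\to X^{\mud}\rc$, which is well-defined because $\Mcc$ is a group. It suffices to prove that $\bar\nu$ factors through $q$, i.e.\ that $\ker q\subseteq\ker\bar\nu$.

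The key point, and the main obstacle, is this factorisation. I would establish it through the identity $\jj\circ\bar\nu=\kappa\circ q$ of additive maps $\V_{\mud}\to\Rc[t^{-1}]$, where $\kappa\colon\inv\to\invh\subseteq\Rc[t^{-1}]$ is the localisation map of \rref{p:invh} (landing in $\Rc$ by \rref{prop:in_R}). On a generator $X$ this identity is exactly \rref{p:j_nu}, namely $\jj(\lc N_X\to X^{\mud}\rc)=\lc X\rc_{\mud}$ in $\Rc[t^{-1}]$; since both composites are additive, they agree on all of $\V_{\mud}$. Now if $z\in\ker q$ then $\jj(\bar\nu(z))=\kappa(q(z))=0$, and the injectivity of $\jj$ forces $\bar\nu(z)=0$. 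This produces the additive morphism $\nu\colon\inv\to\Mcc$ with $\nu(\lc X\rc_{\mud})=\lc N_X\to X^{\mud}\rc$.

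It then remains to check that $\nu$ is a morphism of graded $\Laz$-algebras, which is routine bookkeeping on generators. For multiplicativity I would use $\lc X\rc_{\mud}\lc X'\rc_{\mud}=\lc X\times X'\rc_{\mud}$ (projection formula and transversality, exactly as in \rref{p:Mcc_ring}) together with $(X\times X')^{\mud}=X^{\mud}\times(X')^{\mud}$ and $N_{X\times X'}=p_1^*N_X\oplus p_2^*N_{X'}$; the product formula $\lc E_1\to S_1\rc\lc E_2\to S_2\rc=\lc p_1^*E_1\oplus p_2^*E_2\to S_1\times S_2\rc$ of \rref{p:Mcc_ring} then gives multiplicativity on generators, and bilinearity extends it, while the unit $\lc\Speck\rc_{\mud}$ maps to $\lc 0\to\Speck\rc=1$. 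Taking $X'$ with trivial action shows $\nu$ restricts to the identity on $\Laz$, giving $\Laz$-linearity; and since a pure $n$-dimensional $X$ has $\lc X\rc_{\mud}\in\inv^{-n}$ while over each $d$-dimensional component of $X^{\mud}$ the normal bundle has rank $n-d$, we get $\lc N_X\to X^{\mud}\rc\in\Mcc^{-n}$ by \rref{p:grading}, so $\nu$ is graded.

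Finally, for the image: as $q$ is surjective, $\im\nu=\bar\nu(\V_{\mud})$, whence $\jj(\im\nu)=\kappa(q(\V_{\mud}))=\kappa(\inv)=\invh$. Under the identification of $\Mcc$ with its image in $\Rc[t^{-1}]$ via $\jj$ (\rref{p:omit_jj}), this is precisely the assertion $\im\nu=\invh$. The entire argument hinges on \rref{p:j_nu} and \rref{th:J}: once the commutativity $\jj\circ\bar\nu=\kappa\circ q$ is in place, injectivity of $\jj$ simultaneously yields well-definedness and pins down the image.
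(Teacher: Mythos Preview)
Your argument is correct and is precisely the approach the paper takes: the paper's proof reads simply ``This follows from \rref{p:j_nu} and \rref{th:J}'', and your proposal is a careful unpacking of exactly that sentence, with the routine verifications of multiplicativity, $\Laz$-linearity, gradedness, and the image computation spelled out.
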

\begin{proof}
This follows from \rref{p:j_nu} and \rref{th:J}.
\end{proof}

\begin{para}
\label{p:fix}
Composing $\nu$ with the morphism $\varphi$ of \rref{p:Mcc_base}, we obtain a morphism of $\Laz$-algebras $\invh \to \Laz$ mapping $\lc X \rc_{\mud}$ to $\lc X^{\mud} \rc$.
\end{para}

\begin{para}
\label{p:dim_phi}
Let $X$ be a smooth projective $k$-scheme with a $\mud$-action. Viewing $\lc X \rc_{\mud}$ as an element of $\invh \subset \Mcc$, it follows from \rref{p:dim_bdim_dim} that we have (in the notation of \rref{p:dim})
\[
\dim (X^{\mud}) \geq \fdim(\lc X \rc_{\mud}) \geq \dim(\lc X^{\mud}\rc).
\]
\end{para}

\begin{proposition}
\label{prop:beta}
There exists a morphism of graded $\Laz$-modules
\[
\beta \colon \inv \to \Laz \quad ; \quad \lc X \rc_{\mud} \mapsto \lc Z \rc,
\]
where $Z$ is the quotient of the blow-up of $X^{\mud}$ in $X$ by its $\mud$-action (see \rref{p:blowup}).
\end{proposition}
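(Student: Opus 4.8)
The plan is to assemble $\beta$ from the already-available morphism $\nu$ of \rref{prop:nu} and the deformation formula \eqref{lemm:X_Z_PN:3}, in such a way that gradedness and $\Laz$-linearity are automatic and the entire content is concentrated in one well-definedness check. First I would produce an auxiliary morphism of graded $\Laz$-modules
\[
\delta \colon \Mcc \to \Rc[2^{-1}] \quad;\quad \lc E \to S \rc \mapsto \Big\lc \frac{\fgl{-1}(\zeta_E)}{\fgl{2}(\zeta_E)} \Big\rc_{\mud},
\]
where $\zeta_E = c_1(\Oc_{\Pp(E\sigma\oplus 1)}(-1))$ and $\lc\cdot\rc_{\mud}$ is the pushforward along $\Pp(E\sigma\oplus 1)\to\Speck$ (a genuine element of $\Rc[2^{-1}]$ by \rref{prop:in_Hf} after inverting $2$). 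Then I would set
\[
\beta = 2^{-1}\iota + \delta\circ\nu \colon \inv \to \Rc[2^{-1}],
\]
where $\iota\colon\inv\twoheadrightarrow\invh\hookrightarrow\Rc$ is the canonical map of \rref{prop:in_R}. As $\nu$ is a graded $\Laz$-algebra morphism and $\iota$ a graded $\Laz$-module morphism, $\beta$ is a graded $\Laz$-module morphism into $\Rc[2^{-1}]$.

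The construction of $\delta$ would proceed exactly as that of $\pi$ in \rref{p:pi} through \rref{cor:PE1_M}. Expanding $\fgl{-1}(\zeta)/\fgl{2}(\zeta) = \sum_k \lambda_k \zeta^k$ as a power series with coefficients $\lambda_k\in\Laz[2^{-1}]$, each homogeneous of degree $-k$ (legitimate since $\fgl{2}(\zeta) = 2\zeta + \cdots$, which is precisely why $2$ must be inverted), the projection formula gives $\lc \fgl{-1}(\zeta_E)/\fgl{2}(\zeta_E) \rc_{\mud} = \sum_k \lambda_k\, q_*(\zeta_E^{\,k})$, and each projective-bundle pushforward $q_*(\zeta_E^{\,k})$ is governed by the class $\lc E\to S\rc\in\Mcc$ via Quillen's formula \rref{prop:Quillen} together with the $\g$-machinery of \rref{prop:gamma_determined} and \rref{cor:PE_M}, just as in the proof of \rref{cor:PE1_M}. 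Additivity over disjoint unions is immediate, so this assignment descends to $\Mcc^{\eff}$ and extends $\Laz$-linearly to $\delta\colon\Mcc\to\Rc[2^{-1}]$.

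It then remains only to verify that $\beta$ takes values in $\Laz$ and computes $\lc Z\rc$. For a smooth projective $k$-scheme $X$ with a $\mud$-action, \rref{prop:nu} gives $\nu(\lc X\rc_{\mud}) = \lc N_X\to X^{\mud}\rc$, so by the definitions of $\delta$ and $\iota$, formula \eqref{lemm:X_Z_PN:3} reads precisely
\[
\beta(\lc X \rc_{\mud}) = 2^{-1}\lc X \rc_{\mud} + \delta(\lc N_X\to X^{\mud}\rc) = \lc Z \rc_{\mud} = \lc Z \rc,
\]
the last identification holding because $Z$ carries the trivial $\mud$-action, so its equivariant class is the image of $\lc Z\rc\in\Laz$. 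Thus $\beta(\lc X\rc_{\mud})\in\Laz$ for every effective generator. Since these generators generate $\inv$ as a group, since $\beta$ is additive, and since $\Laz\hookrightarrow\Rc[2^{-1}]$ is injective (as $\Laz\hookrightarrow\Rc$ by \rref{lemm:Laz_subring_Rc} and $\Rc$ is $2$-torsion-free by \rref{lemm:2-torsion-free}), the image of $\beta$ lies in $\Laz$; corestricting gives the desired graded $\Laz$-module morphism $\beta\colon\inv\to\Laz$ with $\beta(\lc X\rc_{\mud})=\lc Z\rc$.

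The one genuinely substantial step is the well-definedness of $\delta$: that $\lc \fgl{-1}(\zeta_E)/\fgl{2}(\zeta_E) \rc_{\mud}$ depends only on $\lc E\to S\rc\in\Mcc$, not on the pair $(E,S)$. This is the equivariant, $2$-inverted analogue of \rref{cor:PE1_M}, and is where the interplay between Quillen's formula and the several descriptions of $\g$ is essential; once $\delta$ is in hand, the rest is formal bookkeeping with $\nu$, $\iota$, and the injection $\Laz\hookrightarrow\Rc[2^{-1}]$.
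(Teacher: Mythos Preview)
Your argument is correct and rests on the same two inputs as the paper's: the deformation formula \eqref{lemm:X_Z_PN:3} and the morphism $\nu$. The difference is where you read \eqref{lemm:X_Z_PN:3}. You stay equivariant, mapping into $\Rc[2^{-1}]$ via $\iota$ and building an auxiliary $\delta\colon\Mcc\to\Rc[2^{-1}]$ whose well-definedness (that $q_*(\zeta_E^{\,k})$ depends only on $\lc E\to S\rc$) is, as you note, the substantial step --- it needs the equivariant projective-bundle pushforwards, in the spirit of \rref{cor:PE1_M}. The paper instead applies the forgetful morphism $\Hh_{\mud}(\Speck)[2^{-1}]\to\Hh(\Speck)[2^{-1}]$ to \eqref{lemm:X_Z_PN:3} first: this replaces $2^{-1}\lc X\rc_{\mud}$ by $2^{-1}\lc X\rc = 2^{-1}\varepsilon(\lc X\rc_{\mud})$ and trivialises $\sigma$, so the second term becomes a non-equivariant pushforward from $\Pp(N\oplus 1)$, handled directly by \rref{cor:PE_M}. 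Thus $\lc Z\rc$ is seen to depend only on $\lc X\rc$ and $\lc N_X\to X^{\mud}\rc$, hence on $\lc X\rc_{\mud}$ via $\varepsilon$ and $\nu$; $\Laz$-linearity is then checked geometrically from $(X\times T)^{\mud}=X^{\mud}\times T$ and $(Y\times T)/\mud=(Y/\mud)\times T$. Your route packages $\Laz$-linearity into the construction of $\delta$ and $\iota$, at the cost of a harder well-definedness check; the paper's forgetful trick is shorter.
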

\begin{proof}
Applying the forgetful morphism $\Hh_{\mud}(\Speck)[2^{-1}] \to \Hh(\Speck)[2^{-1}]$ to the formula \rref{lemm:X_Z_PN:3} and using \rref{cor:PE_M}, we see that $\lc Z \rc \in \Laz \subset \Hh(\Speck)[2^{-1}]$ depends only on $\lc X \rc \in \Laz$ and $\lc N_X \to X^{\mud} \rc \in \Mcc$. In view of \rref{p:varepsilon} and \rref{prop:nu}, these two quantities are determined by $\lc X \rc_{\mud} \in \inv$. Thus $\lc X \rc_{\mud} \mapsto \lc Z \rc$ induces a map $\inv^{\eff} \to \Laz$. This map is additive, hence yields the required morphism of groups $\beta$; its $\Laz$-linearity follows from the facts that $(X\times T)^{\mud} = X^\mud \times T$ and $(Y\times T)/{\mud} = (Y/\mud) \times T$ whenever $\mud$ acts trivially on $T \in \Sm_k$.
\end{proof}

\begin{lemma}
\label{lemm:nofix_h}
Let $x\in \inv$ be such that $\nu(x) = 0$. Then $x = h \beta(x)$.
\end{lemma}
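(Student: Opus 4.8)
Here is the plan. The idea is to use $\nu(x)=0$ to write $x$ as a multiple of $h$ in $\Hh_{\mud}(\Speck)$, and then to pin down the multiplier as $\beta(x)$ by comparing two computations of the non-equivariant class of $x$.

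First I would recall from \rref{lemm:mun-trivial} (with $n=2$) that $\Hh_{\mud}(\Speck)=\Hh(\Speck)[[t]]/\fgl{2}(t)$, and that $\fgl{2}(t)=th$ by definition of $h$. Localising at $t$ gives $\Hh_{\mud}(\Speck)[t^{-1}]=\Hh(\Speck)[[t]][t^{-1}]/h$, so the canonical map $\Hh_{\mud}(\Speck)\to\Hh_{\mud}(\Speck)[t^{-1}]$ factors as the surjection $\pi\colon\Hh(\Speck)[[t]]/(th)\to\Hh(\Speck)[[t]]/h$ followed by the localisation $\Hh(\Speck)[[t]]/h\to\Hh(\Speck)[[t]][t^{-1}]/h$, the latter being injective by \dref{lemm:R}{lemm:R:nzd} since $\Hh(\Speck)$ is $2$-torsion free. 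Under the identification \rref{p:omit_jj}, the element $\nu(x)$ is exactly the image of $x$ in $\Rc[t^{-1}]\subset\Hh_{\mud}(\Speck)[t^{-1}]$ (by \rref{p:j_nu} and \rref{prop:nu}); as $\nu(x)=0$, the injectivity just mentioned yields $\pi(x)=0$. Thus $x$ lies in $\ker\pi=(h)/(th)$, so I may write $x=\overline{hg}=\bar h\bar g$ for some $g\in\Hh(\Speck)[[t]]$, where the bar denotes the image in $\Hh_{\mud}(\Speck)$. Writing $g=c+tg_1$ with $c=g(0)\in\Hh(\Speck)$ and using $\overline{th}=\overline{\fgl{2}(t)}=0$, I obtain $x=\bar h\bar c$.

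It then remains to identify the constant $c$ with $\beta(x)$. Applying the forgetful morphism $\varepsilon\colon\Hh_{\mud}(\Speck)\to\Hh(\Speck)$ of \rref{p:change_of_groups}, which sends $t\mapsto 0$, to $x=\bar h\bar c$ and using $h\equiv 2\bmod t$ (from \eqref{eq:fgl} with $n=2$), I get $\varepsilon(x)=2c$. On the other hand I would invoke the computation already carried out in the proof of \rref{prop:beta}: applying $\varepsilon$ (after inverting $2$) to \rref{lemm:X_Z_PN:3} expresses $\beta(\lc X\rc_{\mud})=\lc Z\rc$ as $2^{-1}\varepsilon(\lc X\rc_{\mud})$ plus a term which, by \rref{cor:PE_M}, depends only on $\lc N_X\to X^{\mud}\rc=\nu(\lc X\rc_{\mud})$. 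Hence $\beta-2^{-1}\varepsilon$, as an additive map $\inv\to\Laz[2^{-1}]$, factors through $\nu$; since $\nu(x)=0$ this forces $\beta(x)=2^{-1}\varepsilon(x)$, that is $\varepsilon(x)=2\beta(x)$. Comparing the two expressions for $\varepsilon(x)$ gives $2c=2\beta(x)$, whence $c=\beta(x)$ because $\Hh(\Speck)$ is $2$-torsion free.

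Combining these steps yields $x=\bar h\bar c=\bar h\,\overline{\beta(x)}=h\beta(x)$ in $\Hh_{\mud}(\Speck)$ (where $h\beta(x)$ denotes the image of $h\beta(x)\in\Laz[[t]]$), as required. The part I expect to be the main obstacle is the second paragraph: one must make precise that the correction term in \rref{lemm:X_Z_PN:3}, once restricted to the non-equivariant theory, is an additive function of $\nu$ alone. This is precisely the dependence extracted in the proof of \rref{prop:beta}, so beyond citing it no genuinely new computation should be needed; the remaining steps are purely formal manipulations in $\Hh(\Speck)[[t]]/(th)$.
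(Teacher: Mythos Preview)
Your proof is correct and takes a genuinely different route from the paper's. The paper writes $x=\lc X_1\rc_{\mud}-\lc X_2\rc_{\mud}$ and works directly with formula \eqref{lemm:X_Z_PN:1}: from $\nu(x)=0$ it deduces (via \rref{cor:PE_M} and a descending induction using that $\Laz$ is $2$-torsion free) that $\lc c_1(\Lc_1^\vee)^j\rc=\lc c_1(\Lc_2^\vee)^j\rc$ for all $j\geq 1$, so that in the difference all terms involving positive powers of the $\lambda_i$ cancel and only $\frac{\fgl{2}(t)}{\fgl{-1}(t)}(\lc Z_1\rc-\lc Z_2\rc)=h\beta(x)$ survives (using $\fgl{-1}(t)=t$). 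Your argument instead exploits the ring structure: you identify $\ker(\Hh_{\mud}(\Speck)\to\Hh_{\mud}(\Speck)[t^{-1}])$ with $(h)/(th)$ to get $x=hc$ for some constant $c\in\Hh(\Speck)$, and then recycle the analysis already carried out in the proof of \rref{prop:beta} (together with $\varepsilon$) to pin down $c=\beta(x)$.

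Both approaches ultimately rest on \rref{lemm:X_Z_PN}, but on different parts of it: the paper uses \eqref{lemm:X_Z_PN:1}, you use \eqref{lemm:X_Z_PN:3} indirectly through \rref{prop:beta}. Your route is more conceptual and avoids the inductive Chern-class computation, at the cost of invoking the identification \rref{lemm:mun-trivial} and repackaging the content of \rref{prop:beta}. The paper's route is more self-contained and makes the cancellation mechanism explicit. One small remark on your phrasing: saying ``$\beta-2^{-1}\varepsilon$ factors through $\nu$'' is slightly stronger than what you actually need or prove; what you use (and what the proof of \rref{prop:beta} gives) is that for any representation $x=\lc X_1\rc_{\mud}-\lc X_2\rc_{\mud}$ the correction term is $G(\nu(\lc X_1\rc_{\mud}))-G(\nu(\lc X_2\rc_{\mud}))$ for a well-defined function $G$ on $\Mcc^{\eff}$, and these cancel when $\nu(\lc X_1\rc_{\mud})=\nu(\lc X_2\rc_{\mud})$. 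This is exactly what your argument requires, so no harm is done.
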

\begin{proof}
Let us write $x = \lc X_1 \rc_{\mud} - \lc X_2 \rc_{\mud}$, where each $X_i$ is a smooth projective $k$-scheme with a $\mud$-action. For $i=1,2$, we use the notation $N_i,Z_i,\Lc_i$ of \rref{p:blowup}. The assumption implies that $\lc N_i \to (X_i)^{\mud}\rc \in \Mcc$ does not depend on $i\in \{1,2\}$. Let $\lambda_i = c_1(\Lc_i^{\vee}) \in \Hh(Z_i)$. By \cite[(4.8.iii)]{inv}, each $\Pp(N_i\sigma) \to Z_i$ is an effective Cartier divisor such that $\Oc_{Z_i}(\Pp(N_i\sigma)) = (\Lc_i^{\vee})^{\otimes 2}$. It follows from \rref{cor:PE_M} that for all $n \in \Nn$, the element $\lc c_1(\Oc_{\Pp(N_i\sigma)}(-1))^n \rc = \lc \fgl{2}(\lambda_i) \lambda_i^n \rc \in \Laz$ does not depend on $i \in \{1,2\}$. In view of \rref{eq:fgl} and since $\Laz$ is $2$-torsion free, we deduce by descending induction that $\lc \lambda_1^j\rc = \lc \lambda_2^j\rc \in \Laz$ for all $j \in \Nn \smallsetminus\{0\}$. Combining \rref{lemm:X_Z_PN:1} with \rref{cor:PE1_M} yields in $\Hh_{\mud}(\Speck)$
\[
x = \Big\lc \frac{\fgl{2}(\lambda_1 +_{\Hh} t)}{\fgl{-1}(\lambda_1 +_{\Hh} t)}\Big\rc_{\mud} - \Big\lc \frac{\fgl{2}(\lambda_2 +_{\Hh} t)}{\fgl{-1}(\lambda_2 +_{\Hh} t)}\Big\rc_{\mud}.
\]
As observed above, the terms involving positive powers of $\lambda_1,\lambda_2$ cancel out, leaving $x = \frac{\fgl{2}(t)}{\fgl{-1}(t)}(\lc Z_1 \rc - \lc Z_2 \rc)$. The statement follows, since $\fgl{-1}(t) = t$ (as $\sigma^\vee \simeq \sigma$).
\end{proof}

From now on, we will assume that $\carac k \neq 2$.

\begin{para}
\label{p:h_free}
It follows from \rref{lemm:nofix_h} that if $X$ is a smooth projective $k$-scheme with a $\mud$-action such that $X^{\mud} = \varnothing$, then $\lc X \rc_{\mud} = h \lc X/\mud \rc \in \inv$. Taking $X = \Speck \sqcup \Speck$ where $\mud = \Zz/2$ acts by exchanging the copies of $\Speck$, we see that $h \in \inv$, and that $\beta(h)=1$. Moreover the subgroup $h\Laz \subset \inv$ is generated by the classes $\lc X \rc_{\mud}$, where $X$ runs over the smooth projective $k$-schemes with a $\mud$-action such that $X^{\mud} = \varnothing$.
\end{para}

\begin{proposition}
\label{cor:decomp_h}
We have a split exact sequence of graded $\Laz$-modules
\[
0 \to \Laz \to \inv \to \invh \to 0,
\]
where the first morphism is induced by multiplication by $h$, the second is the natural morphism, and the splitting is given by the morphism $\beta$.
\end{proposition}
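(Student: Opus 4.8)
The plan is to obtain all four required properties—injectivity of multiplication by $h$, exactness in the middle, surjectivity, and the splitting—by assembling results already established, without any fresh computation. The crucial preliminary observation is that the natural morphism $\inv \to \invh$ is nothing but the morphism $\nu$ of \rref{prop:nu}. Indeed, by \rref{p:j_nu} the image of $\lc X \rc_{\mud}$ under localisation is $\jj(\lc N_X \to X^{\mud}\rc)$, so—once $\Mcc$ is identified with its image in $\Rc[t^{-1}]$ via the morphism $\jj$ (see \rref{p:omit_jj})—the localisation map $\inv \to \invh$ factors as $\jj$ precomposed with $\nu$. Since $\jj$ is injective by \rref{th:J}, surjectivity onto $\invh$ is immediate from the definition of $\invh$ as an image, and, more importantly, the kernel of $\inv \to \invh$ coincides with $\ker \nu$.

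The heart of the matter is then to identify $\ker \nu$ with $h\Laz$. For one inclusion I would use that $h$ is represented by $\Speck \sqcup \Speck$ with its free $\mud$-action (\rref{p:h_free}), whose fixed locus is empty; hence $\nu(h)=0$, and as $\nu$ is $\Laz$-linear this gives $h\Laz \subset \ker \nu$. The reverse inclusion is exactly the content of \rref{lemm:nofix_h}: any $x$ with $\nu(x)=0$ satisfies $x = h\beta(x) \in h\Laz$. Together these yield exactness at $\inv$, the image of multiplication by $h$ being precisely $h\Laz$.

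For the injectivity of $a \mapsto ha$ and for the splitting, I would invoke $\beta(h)=1$ (\rref{p:h_free}) together with the $\Laz$-linearity of $\beta$ (\rref{prop:beta}): these give $\beta(ha)=a$ for every $a \in \Laz$, so multiplication by $h$ is injective and $\beta$ is a retraction, splitting the sequence as graded $\Laz$-modules (one checks that $h$ is homogeneous of degree $0$, since $\fgl{2}(t)$ has degree $1$, so every map is homogeneous). For this particular statement the substantive inputs are \rref{th:J}, \rref{lemm:nofix_h}, \rref{prop:beta} and \rref{p:h_free}, all of them already available; the only genuinely delicate point is the identification $\ker(\inv \to \invh) = \ker \nu$, which hinges entirely on the injectivity of $\jj$. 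Granting that deep input, the present statement is a formal consequence of the preceding results.
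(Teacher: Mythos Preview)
Your proof is correct and follows essentially the same approach as the paper: both use \rref{lemm:nofix_h} for $\ker\nu\subset h\Laz$, the free action of \rref{p:h_free} for the reverse inclusion and for $\beta(h)=1$, and \rref{prop:nu} (equivalently the injectivity of $\jj$ from \rref{th:J}) to identify the kernel of $\inv\to\invh$ with $\ker\nu$. You are simply a bit more explicit where the paper is terse, in particular about why the natural morphism coincides with $\nu$ after the identification of \rref{p:omit_jj}.
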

\begin{proof}
Since $\beta(h) =1$ (see \rref{p:h_free}), the morphism $\beta$ is an $\Laz$-linear retraction of the morphism $\Laz \to \inv$ given by $x \mapsto hx$. By \rref{lemm:nofix_h}, the image of the latter coincides with $\ker \nu$, or equivalently $\ker(\inv \to \invh)$ by \rref{prop:nu}. The surjectivity of the morphism $\inv \to \invh$ was proved in \rref{prop:nu}.
\end{proof}

\begin{proposition}
\label{prop:inv_invh}
The morphism of graded $\Laz$-algebras $\inv \to \invh \times \Laz$, induced by the natural morphism $\inv \to \invh$ and $\varepsilon \colon \inv \to \Laz$ of \rref{p:varepsilon}, is injective. Its image is the set of pairs $(x,\lambda)$ such that $\epsilon(x) = \lambda \in \Laz/2$ (see \rref{prop:in_R} and \rref{p:epsilon}).
\end{proposition}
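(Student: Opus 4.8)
The plan is to reduce everything to the split exact sequence of \rref{cor:decomp_h}, namely $0 \to \Laz \xrightarrow{\cdot h} \inv \to \invh \to 0$ with retraction $\beta$, combined with the mod-two compatibility of \rref{lemm:epsilon_epsilon}. Write $\Phi \colon \inv \to \invh \times \Laz$ for the morphism under consideration and $\pi \colon \inv \to \invh$ for the natural (surjective, see \rref{prop:nu}) morphism, so that $\Phi(y) = (\pi(y), \varepsilon(y))$. The first thing I would record is that, after reduction modulo two, the two group morphisms $\epsilon \circ \pi$ and $\varepsilon$ from $\inv$ to $\Laz/2$ coincide: on a generator $\lc X \rc_{\mud}$ both send it to $\lc X \rc \bmod 2$, by \rref{lemm:epsilon_epsilon} and \rref{p:varepsilon} respectively, and both are additive. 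This already shows that $\Phi(\inv)$ is contained in the set of pairs $(x,\lambda)$ with $\epsilon(x) = \lambda \in \Laz/2$. I would also note the key numerical input $\varepsilon(h) = 2$: indeed $h = \lc \Speck \sqcup \Speck \rc_{\mud}$ with $\mud$ exchanging the two points (see \rref{p:h_free}), so $\varepsilon(h) = \lc \Speck \sqcup \Speck \rc = 2 \in \Laz$.

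For injectivity, suppose $\Phi(y) = 0$. From $\pi(y) = 0$ and the exactness in \rref{cor:decomp_h}, I can write $y = h\mu$ for a unique $\mu \in \Laz$. Then $0 = \varepsilon(y) = \varepsilon(h)\mu = 2\mu$, and since $\Laz$ is torsion-free (\rref{p:K_fund}) this forces $\mu = 0$, whence $y = 0$.

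For the description of the image, let $(x,\lambda)$ be a pair with $\epsilon(x) = \lambda \bmod 2$. Using surjectivity of $\pi$ I choose a lift $y_0 \in \inv$ with $\pi(y_0) = x$. By the compatibility above, $\varepsilon(y_0) \equiv \epsilon(x) \equiv \lambda \bmod 2$, so $\lambda - \varepsilon(y_0) = 2\mu$ for some $\mu \in \Laz$. Setting $y = y_0 + h\mu$ gives $\pi(y) = x$ (because $\pi(h) = 0$) and $\varepsilon(y) = \varepsilon(y_0) + 2\mu = \lambda$, so $\Phi(y) = (x,\lambda)$, as required.

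Since the structural work was already carried out in \rref{cor:decomp_h} and \rref{lemm:epsilon_epsilon}, I do not expect a serious obstacle here; the only points demanding care are the identification $\varepsilon(h) = 2$ together with the torsion-freeness of $\Laz$, which are precisely what make the kernel of multiplication by $h$ line up with the modulo-two discrepancy between $\invh$ and $\Laz$ measured by $\epsilon$.
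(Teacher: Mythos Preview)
Your proof is correct and follows essentially the same argument as the paper's: both use \rref{cor:decomp_h} to write an element in the kernel as $h\mu$, then $\varepsilon(h)=2$ and torsion-freeness of $\Laz$ for injectivity, and for the image description both lift $x$ to $\inv$ and correct by a multiple of $h$ using \rref{lemm:epsilon_epsilon}. The only cosmetic difference is that you spell out the computation $\varepsilon(h)=2$ via \rref{p:h_free}, whereas the paper just quotes it.
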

\begin{proof}
Let $y \in \inv$, and denote by $\tilde{y} \in \invh$ its image. Then by \rref{lemm:epsilon_epsilon} the pair $(\tilde{y},\varepsilon(y))$ verifies $\epsilon(\tilde{y}) = \varepsilon(y) \in \Laz/2$. Assume now that $\tilde{y}=0$ and $\varepsilon(y) =0$. Then $y=ha$ for some $a\in \Laz$ by \rref{cor:decomp_h}. Since $\varepsilon(h)=2$, we obtain $0 = \varepsilon(y) = 2a \in \Laz$, hence $a=0$ as $\Laz$ is $2$-torsion free. Thus $y=0$.

Conversely, consider elements $x \in \invh$ and $\lambda \in \Laz$ satisfying $\epsilon(x) = \lambda \in \Laz/2$. Pick a lifting $z \in \inv$ of $x$. Since $\varepsilon(z) = \epsilon(x) \in \Laz/2$ by \rref{lemm:epsilon_epsilon}, we have $\varepsilon(z) - \lambda = 2b$ for some $b\in \Laz$. Then $z-bh \in \inv$ is the required preimage of $(x,\lambda) \in \invh \times \Laz$.
\end{proof}

\begin{corollary}
For any smooth projective $k$-scheme $X$ with a $\mud$-action, the class $\lc X \rc_{\mud} \in \inv$ is determined by $\lc N_X \to X^{\mud} \rc \in \Mcc$ and $\lc X \rc \in \Laz$.
\end{corollary}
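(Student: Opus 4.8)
The plan is to deduce this directly from \rref{prop:inv_invh}, of which it is essentially a reformulation. By that proposition the map
\[
\inv \to \invh \times \Laz, \quad y \mapsto (\bar y, \varepsilon(y))
\]
is injective, where $\bar y$ denotes the image of $y$ under the natural morphism $\inv \to \invh$, and $\varepsilon$ is the morphism of \rref{p:varepsilon}. Thus $\lc X \rc_{\mud}$ is determined by the two components of its image, and it remains only to identify each of them in terms of the stated data.

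The second component is $\varepsilon(\lc X \rc_{\mud}) = \lc X \rc \in \Laz$, by the description of $\varepsilon$ recalled in \rref{p:varepsilon}. For the first component, I would invoke \rref{p:j_nu}, which gives $\lc X \rc_{\mud} = \jj(\lc N_X \to X^{\mud} \rc)$ inside $\Rc[t^{-1}]$; hence $\overline{\lc X \rc_{\mud}} \in \invh$ is precisely the image of $\lc N_X \to X^{\mud} \rc$ under $\jj$. Since $\jj$ is injective by \rref{th:J}, the element $\lc N_X \to X^{\mud} \rc \in \Mcc$ determines (and is determined by) this first component. Combining the two identifications, the injective map above sends $\lc X \rc_{\mud}$ to a pair depending only on $\lc N_X \to X^{\mud} \rc$ and $\lc X \rc$, which yields the claim.

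There is no real obstacle here: the statement is a formal consequence of the injectivity established in \rref{prop:inv_invh} together with the geometric formula \rref{p:j_nu} identifying $\lc X \rc_{\mud}$ with $\jj$ applied to the normal-bundle class. The only point requiring a moment's care is that the natural morphism $\inv \to \invh$ appearing in \rref{prop:inv_invh} coincides, under the identification of $\Mcc$ with a subring of $\Rc[t^{-1}]$ from \rref{p:omit_jj}, with the morphism $\nu$ of \rref{prop:nu}; this compatibility is exactly the content of \rref{p:j_nu}.
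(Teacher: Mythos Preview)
Your proof is correct and follows exactly the route the paper intends: the corollary is stated immediately after \rref{prop:inv_invh} without proof, and you have correctly unpacked the implicit argument by combining the injectivity of $\inv \to \invh \times \Laz$ with the identifications $\varepsilon(\lc X\rc_{\mud})=\lc X\rc$ from \rref{p:varepsilon} and $\overline{\lc X\rc_{\mud}}=\nu(\lc X\rc_{\mud})=\lc N_X\to X^{\mud}\rc$ from \rref{prop:nu} (equivalently \rref{p:j_nu}). The appeal to \rref{th:J} is not strictly needed once you adopt the convention of \rref{p:omit_jj}, since then the first component literally \emph{is} $\lc N_X\to X^{\mud}\rc$, but it does no harm.
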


\begin{para}
\label{p:inv_invh}
It follows from \rref{prop:inv_invh} that the graded $\Laz$-algebra $\inv$ can be reconstructed from the graded $\Laz$-subalgebra $\invh \subset \Rc$, so that we will focus on the latter.
\end{para}

\subsection{The fundamental exact sequence}
\label{sect:es}

\begin{para}
Recall from \S\ref{sect:M} that $a_0 = p_0 =1 \in \Pc$. It will be convenient to write $a_m=p_m=0$ when $m<0$, as well as $\Pp^m = \varnothing$. For $i \in \Zz$ consider the elements $u_i \in \fund{\Hh}$ given by \eqref{eq:fgl} for $n=2$. We define a morphism of $\fund{\Hh}$-modules $\delta \colon \Pc \to \Pc$ by setting
\[
\delta(va_m) = \sum_{i \in \Zz} u_i va_{m-i}.
\]
\end{para}

\begin{para}
\label{p:delta_p}
Note that (see \eqref{eq:pi_i})
\[
\delta(p_m) = \delta\Big(\sum_{i \in \Zz} \lc \Pp^{m-i} \rc va_i\Big) = \sum_{i,j \in \Zz} \lc \Pp^{m-i} \rc u_jv a_{i-j} = \sum_{n,j \in \Zz} \lc \Pp^{m-n-j} \rc u_jv a_n = \sum_{j \in \Zz} u_j p_{m-j}. 
\]
\end{para}

\begin{lemma}
\label{lemm:delta_n}
Let $S \in \Sm_k$ and $L\to S$ a line bundle. If $D \in \Sm_k$ is the zero-locus of a regular section of $L^{\otimes 2}$, then
\[
\delta(\lc L \to S \rc) = \lc L|_D \to D \rc.
\]
\end{lemma}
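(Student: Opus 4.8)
The plan is to reduce the asserted equality in $\Pc$ to a family of scalar identities in $\fund{\Hh}$, one per coefficient, and then to establish each of these by the projection formula. Since $\delta$ is $\fund{\Hh}$-linear and the elements $v\ba_m$ freely generate the $\fund{\Hh}$-module $\Pc$ (\rref{prop:P_basis}), I would begin from the expansion $\lc L \to S\rc = v\sum_{m\in\Nn}\lc c_1(L)^m\rc \ba_m$ of \eqref{eq:class_P} and apply $\delta$ term by term, obtaining
\[
\delta(\lc L \to S\rc) = \sum_{m\in\Nn}\lc c_1(L)^m\rc\sum_{i\in\Zz} u_i\, v\ba_{m-i}.
\]
Collecting the coefficient of $v\ba_n$ (writing $m=n+i$; here all sums are finite since $\ba_m=0$ for $m<0$, $u_i=0$ for $i\le 0$, and first Chern classes are nilpotent) and comparing with $\lc L|_D \to D\rc = v\sum_{n\in\Nn}\lc c_1(L|_D)^n\rc \ba_n$, I see that the lemma is equivalent to the identities
\[
\lc c_1(L|_D)^n\rc = \sum_{i\in\Zz} u_i\lc c_1(L)^{n+i}\rc \in \fund{\Hh}, \qquad n\in\Nn.
\]

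The second step is to compute the class of $D$. Writing $\iota\colon D\to S$ for the closed immersion, \dref{lemm:zero-locus}{lemm:zero-locus:2} applied to the rank-one bundle $L^{\otimes 2}$ gives $\iota_*(1) = [D] = c_1(L^{\otimes 2})$, and the definition of the formal group law together with \eqref{eq:fgl} yields $c_1(L^{\otimes 2}) = \fgl{2}(c_1(L)) = \sum_{i\in\Zz} u_i c_1(L)^i$. This is the step where the coefficients $u_i$ entering the definition of $\delta$ are matched with the geometry: the appearance of $\fgl{2}$ reflects exactly that $D$ is cut out by a section of $L^{\otimes 2}$ rather than of $L$.

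The final step is the projection formula. Using $c_1(L|_D) = \iota^* c_1(L)$ and denoting by $p_S$ and $p_D = p_S\circ\iota$ the structural morphisms to $\Speck$, I would compute
\[
\lc c_1(L|_D)^n\rc = (p_S)_*\iota_*\big(\iota^*c_1(L)^n\big) = (p_S)_*\big(c_1(L)^n\cdot\iota_*(1)\big) = (p_S)_*\Big(\sum_{i\in\Zz} u_i\, c_1(L)^{n+i}\Big).
\]
Since pushforward is $\fund{\Hh}$-linear (projection formula), the scalars $u_i\in\fund{\Hh}$ factor out, giving $\sum_{i} u_i\lc c_1(L)^{n+i}\rc$, which is precisely the desired identity.

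I do not expect a genuine obstacle: the argument is a direct computation whose heart is the projection formula. The two points deserving care are the implicit hypothesis that $S$ (and hence its closed subscheme $D$) be smooth projective, so that the classes in $\Mcc$ and the pushforwards $\lc-\rc$ to the point are defined, and the bookkeeping in the reindexing $m=n+i$ together with the vanishing conventions $\ba_m=0$ for $m<0$ and $u_i=0$ for $i\le 0$.
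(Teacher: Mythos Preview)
Your proof is correct and follows essentially the same approach as the paper: expand both sides via \eqref{eq:class_P}, identify $[D]=c_1(L^{\otimes 2})=\fgl{2}(c_1(L))$ using \dref{lemm:zero-locus}{lemm:zero-locus:2}, and conclude by the projection formula. The only cosmetic difference is that the paper runs the chain of equalities starting from $\lc L|_D\to D\rc$ rather than first isolating the scalar identities; your remark that $S$ must be projective (so that the classes $\lc-\rc$ are defined) is well taken.
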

\begin{proof}
Using \eqref{eq:class_P}, \dref{lemm:zero-locus}{lemm:zero-locus:2} and the projection formula \cite[(2.1.3.iii)]{inv}, we have
\begin{align*}
\lc L|_D \to D \rc
&= v\sum_{i \in \Nn} \lc c_1(L|_D)^i\rc \ba_i
=  v\sum_{i \in \Nn} \lc c_1(L^{\otimes 2}) c_1(L)^i\rc \ba_i \\ 
&= \sum_{i,j\in \Nn} u_j v\lc c_1(L)^{i+j} \rc \ba_i
= \sum_{m,j\in \Nn} u_j \lc c_1(L)^m \rc v\ba_{m-j}\\
&= \sum_{m\in \Nn} \lc c_1(L)^m \rc \delta_n(v\ba_m)= \delta_n \Big(v\sum_{m\in \Nn} \lc c_1(L)^m \rc \ba_m\Big) = \delta(\lc L \to X \rc).\qedhere
\end{align*}
\end{proof}

\begin{lemma}
\label{lemm:im_nu_delta}
Under the inclusion $\Pc \subset \Mcc$, we have $\im \delta \subset \im \nu$.
\end{lemma}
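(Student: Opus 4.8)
The plan is to reduce to the polynomial generators $p_m$ of the $\Laz$-module $\Pc$ (see \rref{prop:P_basis}) and to realise $\delta(p_m)$ geometrically as the class $\nu(\lc X\rc_{\mud})$ of a suitable double cover $X$ of $\Pp^m$. Since $\delta$ is $\Laz$-linear by construction, and since $\im\nu=\invh$ is an $\Laz$-submodule of $\Rc[t^{-1}]$ by \rref{prop:nu}, it suffices to prove $\delta(p_m)\in\im\nu$ for every $m\in\Nn$. The case $m=0$ is trivial, as $\delta(p_0)=0$ by \rref{p:delta_p} (using the conventions $p_{<0}=0$ and $u_{\leq 0}=0$).

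Fix $m\geq 1$. Since $\carac k\neq 2$, there is a nondegenerate quadratic form in $m+1$ variables over $k$, whose vanishing defines a smooth quadric $D\subset\Pp^m$; this $D$ is cut out by a regular section $s$ of $\Oc(2)=\Oc_{\Pp^m}(1)^{\otimes 2}$. Applying \rref{lemm:delta_n} with $L=\Oc_{\Pp^m}(1)$ and $S=\Pp^m$ yields $\delta(p_m)=\lc\Oc(1)|_D\to D\rc\in\Pc$. I would then take $X$ to be the double cover $\Spec_{\Pp^m}\big(\Oc_{\Pp^m}\oplus\Oc_{\Pp^m}(-1)\big)$, where the $\Oc_{\Pp^m}$-algebra structure is given by the multiplication $\Oc(-2)\xrightarrow{s}\Oc_{\Pp^m}$. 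The morphism $X\to\Pp^m$ is then finite flat of degree two, and $X$ is a smooth projective $k$-scheme because $D$ is smooth and $\carac k\neq 2$. The $\Zz/2$-grading of $\Oc_X$ placing $\Oc_{\Pp^m}(-1)$ in odd degree is precisely a $\mud$-action (the covering involution), so $\lc X\rc_{\mud}\in\inv$.

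It remains to compute $X^{\mud}$ and $N_X$. Working locally, where $\Oc(-1)$ is trivialised by a generator $\eta$ with $\eta^2=s$ and $D=\{s=0\}$, one checks that $X^{\mud}=\{\eta=0\}$ maps isomorphically onto $D$, and that the conormal sheaf $N^\vee_{X^{\mud}/X}$ is generated by the image of $\eta$, hence is canonically $\Oc(-1)|_D$. Thus $N_X\cong\Oc(1)|_D$ as a vector bundle over $X^{\mud}\cong D$, giving $\nu(\lc X\rc_{\mud})=\lc N_X\to X^{\mud}\rc=\lc\Oc(1)|_D\to D\rc=\delta(p_m)$. By $\Laz$-linearity this shows $\delta(\Pc)\subset\im\nu$, as required. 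The step I expect to be most delicate is this last normal bundle computation: it is essential that the fibre coordinate $\eta$ is a section of $\Oc(-1)$ (not of $\Oc(1)$), so that $N_X$ comes out as $\Oc(1)|_D$ rather than its dual — precisely matching the twist by $\Oc(1)$ (and not $\Oc(-1)$) produced by $\delta$ in \rref{lemm:delta_n}. A convention error here would flip $\Oc(1)|_D$ to $\Oc(-1)|_D$ and break the identification.
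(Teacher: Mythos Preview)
Your proof is correct and follows essentially the same approach as the paper: both reduce to the generators $p_m$ via \rref{prop:P_basis}, realise $\delta(p_m)$ via \rref{lemm:delta_n} as $\lc \Oc(1)|_D \to D\rc$ for a smooth quadric $D\subset \Pp^m$, and construct the double cover $\Spec_{\Pp^m}(\Oc\oplus\Oc(-1))$ branched along $D$ with its natural $\mud$-action to exhibit this class as $\nu(\lc X\rc_{\mud})$. The only cosmetic differences are that the paper does not separate out $m=0$, and it justifies smoothness of the double cover slightly more explicitly (via the fppf $\mud$-torsor structure on the complement of the fixed locus).
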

\begin{proof}
In view of \rref{prop:P_basis}, it suffices to prove that $\delta(p_m)$ is contained in $\im \nu$ for all $m \in \Nn$. The line bundle $\Oc(2)$ over $\Pp^m$ admits a regular section $s$ whose zero-locus $Q$ is smooth. Consider the $\Oc_{\Pp^m}$-algebra $\Bc = \Oc_{\Pp^m} \oplus \Oc(-1)$, where the multiplication $\Oc(-1) \otimes \Oc(-1) \to \Oc_{\Pp^m}$ is given by $s$, and let $T = \Spec_{\Pp^m} \Bc$. The $\Oc_{\Pp^m}$-algebra $\Bc$ is $\Zz/2$-graded by letting $\Bc_0 = \Oc_{\Pp^m}$ and $\Bc_1 = \Oc(-1)$. This gives a $\mud$-action on $T$ such that $T \to \Pp^m$ is the $\mud$-quotient morphism. Moreover the composite $T^{\mud} \to T \to \Pp^m$ may be identified with the closed immersion $Q \to \Pp^m$. In particular $T \smallsetminus T^{\mud} \to \Pp^m \smallsetminus Q$ is an fppf $\mud$-torsor, which implies that $T \smallsetminus T^{\mud} \in \Sm_k$ (as $\carac k \neq 2$). On the other hand $T^{\mud} \in \Sm_k$ is an effective Cartier divisor in $T$ with normal bundle $\Oc(1)|_Q$. Thus $T \in \Sm_k$, and $\nu(\lc T \rc_{\mud}) = \lc \Oc(1)|_Q \to Q\rc = \delta(p_m)$ by \rref{lemm:delta_n}.
\end{proof}

\begin{lemma}
\label{lemm:Oc(1)}
Let $X$ be a smooth projective $k$-scheme with a $\mud$-action. Denote by $N$ the normal bundle to the immersion $X^{\mud} \to X$. Then
\begin{enumerate}[(i)]
\item \label{lemm:Oc(1):1}
We have $\lc \Oc(-1) \to \Pp(N)\rc \in \im \delta$.

\item \label{lemm:Oc(1):2}
We have $\lc \Oc(1) \to \Pp(N) \rc\in \im \delta$.
\end{enumerate}
\end{lemma}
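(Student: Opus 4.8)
The plan is to establish \eqref{lemm:Oc(1):1} by a direct double-cover construction, and then to deduce \eqref{lemm:Oc(1):2} from \eqref{lemm:Oc(1):1} by applying the latter to a suitable auxiliary action.

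For \eqref{lemm:Oc(1):1} I would use the blow-up setup of \rref{p:blowup}. Let $Y$ be the blow-up of $X^{\mud}$ in $X$, with exceptional divisor $Y^{\mud} = \Pp(N\sigma)$, and let $f \colon Y \to Z$ be the quotient, so that $f^*\Lc^\vee\sigma \simeq \Oc_Y(\Pp(N\sigma))$ for an invertible sheaf $\Lc$ over $Z$. The morphism $f$ carries $\Pp(N\sigma)$ isomorphically onto an effective Cartier divisor $W \subset Z$, and by \cite[(4.8.iii)]{inv} we have $\Oc_Z(W) = (\Lc^\vee)^{\otimes 2}$. Restricting the displayed isomorphism to $\Pp(N\sigma)$, where $\Oc_Y(\Pp(N\sigma))$ becomes the normal bundle $\Oc_{\Pp(N\sigma)}(-1)$ and $\sigma$ becomes trivial, I would identify, through $W \simeq \Pp(N\sigma) = \Pp(N)$, the line bundle $\Lc^\vee|_W$ with $\Oc_{\Pp(N)}(-1)$. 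As $W$ is a smooth divisor it is the zero-locus of a regular section of $(\Lc^\vee)^{\otimes 2}$, so \rref{lemm:delta_n} applied to the line bundle $\Lc^\vee$ over the smooth projective scheme $Z$ gives
\[
\delta(\lc \Lc^\vee \to Z \rc) = \lc \Lc^\vee|_W \to W \rc = \lc \Oc(-1) \to \Pp(N) \rc,
\]
which establishes \eqref{lemm:Oc(1):1}.

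For \eqref{lemm:Oc(1):2} I would feed a second variety into part \eqref{lemm:Oc(1):1}. Set $Q = \Pp(N\sigma \oplus 1)$, with its natural $\mud$-action. Its fixed locus is $Q^{\mud} = \Pp(N) \sqcup X^{\mud}$, and computing the normal spaces of the projective bundle shows that the normal bundle $N_Q$ restricts to $\Oc_{\Pp(N)}(1)$ on the first component and to $N$ on the second. Hence $\Pp(N_Q) = \Pp(\Oc_{\Pp(N)}(1)) \sqcup \Pp(N)$; since the projectivisation of a line bundle $\lambda$ is its base with $\Oc(-1) = \lambda$, the class $\lc \Oc_{\Pp(N_Q)}(-1) \to \Pp(N_Q) \rc$ splits as $\lc \Oc(1) \to \Pp(N) \rc + \lc \Oc(-1) \to \Pp(N) \rc$. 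Applying \eqref{lemm:Oc(1):1} to $Q$ puts this sum in $\im \delta$, while applying \eqref{lemm:Oc(1):1} to $X$ puts the second summand in $\im \delta$; since $\im \delta$ is a subgroup, subtracting yields $\lc \Oc(1) \to \Pp(N) \rc \in \im \delta$, which is \eqref{lemm:Oc(1):2}.

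The main obstacle is the sign asymmetry between the two statements. The branch divisor $W \simeq \Pp(N)$ of a double cover carries the restricted line bundle $\Lc^\vee|_W$ in the negative power $\Oc(-1)$, which is precisely what makes \eqref{lemm:Oc(1):1} drop out of \rref{lemm:delta_n}; there is no equally direct construction producing $\Oc(1)$ as the restriction of a line bundle along a square divisor class, because $\Pp(N)$ naturally embeds in divisor class $\Oc(1)$ rather than $\Oc(2)$. The re-projectivisation through $Q$ is the device that resolves this, by turning the sought $\Oc(1)$ on $\Pp(N)$ into an $\Oc(-1)$ on the same space regarded inside $\Pp(N_Q)$. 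A secondary point demanding care is the bookkeeping of the $\sigma$-twists in \eqref{lemm:Oc(1):1}: one must check that $\sigma$ restricts to the trivial line bundle on the fixed locus, so that the equivariant isomorphism $f^*\Lc^\vee\sigma \simeq \Oc_Y(\Pp(N\sigma))$ descends to the asserted identification of underlying line bundles on $W \simeq \Pp(N)$.
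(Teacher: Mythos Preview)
Your proof is correct. Part \eqref{lemm:Oc(1):1} is identical to the paper's argument. For \eqref{lemm:Oc(1):2} the paper takes a slightly different route: it first isolates the general statement \rref{lemm:dual_delta} that $\lc L \to S \rc + \lc L^\vee \to S \rc \in \im \delta$ for any line bundle $L$ over any smooth projective $S$, proved by applying \eqref{lemm:Oc(1):1} to $\Pp(L\sigma \oplus 1)$ (whose normal bundle, being a line bundle, is its own $\Oc(-1)$ over its projectivisation), and then specialises to $L = \Oc_{\Pp(N)}(1)$. You instead apply \eqref{lemm:Oc(1):1} directly to $Q = \Pp(N\sigma \oplus 1)$; this is a different auxiliary variety (a projective bundle of rank $\rank N$ over $X^{\mud}$ rather than a $\Pp^1$-bundle over $\Pp(N)$), but after projectivising $N_Q$ you arrive at exactly the same identity $\lc \Oc(1) \to \Pp(N) \rc + \lc \Oc(-1) \to \Pp(N) \rc \in \im \delta$, and the final subtraction is the same. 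The paper's packaging has the advantage of producing the reusable \rref{lemm:dual_delta}; yours is marginally more direct for the lemma at hand.
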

\begin{proof}
We use the notation of \rref{p:blowup}. By \cite[(4.8.iii)]{inv}, the morphism $\Pp(N) \to Z$ is an effective Cartier divisor such that $\Oc_Z(\Pp(N)) = (\Lc^{\vee})^{\otimes 2}$. It follows from \rref{lemm:delta_n} that $\lc \Oc(-1) \to \Pp(N) \rc = \delta(\lc \Lc^{\vee} \to Z \rc)$ in $\Pc$. This proves \eqref{lemm:Oc(1):1}. Part \eqref{lemm:Oc(1):2} follows from \eqref{lemm:Oc(1):1} combined with \rref{lemm:dual_delta} below (which uses only \eqref{lemm:Oc(1):1}).
\end{proof}

\begin{lemma}
\label{lemm:dual_delta}
Let $S$ be a smooth  projective $k$-scheme, and $L\to S$ a line bundle. Then $\lc L \to S \rc + \lc L^\vee \to S\rc \in \im \delta$.
\end{lemma}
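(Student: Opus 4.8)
The plan is to exhibit a single line bundle, over a suitable smooth projective ambient space, whose square cuts out two disjoint copies of $S$ carrying $L$ and $L^\vee$, and then to read off the statement from \rref{lemm:delta_n}. Note that \rref{lemm:delta_n} is exactly the tool that places a class in $\im\delta$ (and not merely in $\im\nu$, as \rref{lemm:im_nu_delta} would), so the whole task is to arrange the geometry so that its hypotheses apply. Since the conclusion $\lc L \to S\rc + \lc L^\vee \to S\rc$ is symmetric in $L$ and $L^\vee$, I will not need to keep track of which copy of $S$ carries which bundle.

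First I would set $E = L \oplus L^\vee$ and let $\pi \colon W = \Pp(E) \to S$ be the associated $\Pp^1$-bundle; as $S$ is smooth projective, so is $W$. The two line sub/quotient bundles $L, L^\vee \subset E$ determine two sections $S_1, S_2$ of $\pi$, each carried isomorphically onto $S$ by $\pi$, and disjoint (the corresponding lines in each fibre are distinct). On $S_1$ and $S_2$ the tautological bundle $\Oc_W(1)$ restricts to $L^\vee$ and $L$ respectively. A routine computation with the tautological exact sequence gives $\Oc_W(S_1) = \Oc_W(1) \otimes \pi^* L^\vee$ and $\Oc_W(S_2) = \Oc_W(1) \otimes \pi^* L$, so that, the twist cancelling because $\det(L \oplus L^\vee) = \Oc_S$,
\[
\Oc_W(S_1 + S_2) = \Oc_W(1)^{\otimes 2}.
\]

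Consequently $D := S_1 \sqcup S_2$ is the zero locus of a regular section of $\Oc_W(1)^{\otimes 2}$ (the canonical section vanishing on $S_1 + S_2$; it is regular since $S_1, S_2$ are disjoint smooth divisors), and $D \in \Sm_k$. Applying \rref{lemm:delta_n} to the line bundle $\Oc_W(1)$ over $W$ then yields
\[
\delta\bigl(\lc \Oc_W(1) \to W \rc\bigr) = \lc \Oc_W(1)|_D \to D \rc = \lc L^\vee \to S \rc + \lc L \to S \rc,
\]
which is the desired membership in $\im\delta$ (note $\lc \Oc_W(1) \to W\rc \in \Pc$). The only genuinely creative step is the choice of ambient space $\Pp(L \oplus L^\vee)$ together with the observation that its relative degree-two divisor $S_1 + S_2$ carries no $\pi^*$-twist precisely because $\det(L \oplus L^\vee)$ is trivial; the remaining verifications — the convention-dependent identifications of $\Oc_W(1)|_{S_i}$ and $\Oc_W(S_i)$, and the regularity and smoothness of $D$ — are standard facts about $\Pp^1$-bundles and present no real obstacle.
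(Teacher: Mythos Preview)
Your proof is correct. It is, however, a genuinely different route from the paper's. The paper takes $X = \Pp(L\sigma \oplus 1)$ with its $\mud$-action, observes that the normal bundle $N$ to $X^{\mud} = S \sqcup S$ is $L \sqcup L^\vee$, and then invokes \dref{lemm:Oc(1)}{lemm:Oc(1):1} (which in turn rests on the blow-up/quotient construction of \rref{p:blowup}) to place $\lc \Oc(-1) \to \Pp(N)\rc = \lc L \to S\rc + \lc L^\vee \to S\rc$ in $\im\delta$. You bypass the equivariant machinery entirely: you build the ambient space $W = \Pp(L \oplus L^\vee)$ by hand, observe that the two tautological sections sum to a divisor in the class $\Oc_W(2)$ precisely because $\det(L \oplus L^\vee)$ is trivial, and apply \rref{lemm:delta_n} directly.

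Your argument is thus more elementary and self-contained. It is worth noting, though, that the two constructions are secretly the same scheme: for $X = \Pp(L\sigma \oplus 1)$ the fixed locus is already a divisor, so the blow-up is trivial, and the quotient $Z = X/\mud$ is naturally isomorphic to $\Pp(L^{\otimes 2} \oplus 1) \cong \Pp(L \oplus L^\vee)$, with $\Lc^\vee$ matching $\Oc_W(1)$. In effect you have unfolded the paper's abstract invocation of \dref{lemm:Oc(1)}{lemm:Oc(1):1} for this particular $X$ and written down the resulting geometry explicitly. What the paper's route buys is uniformity (the same lemma \dref{lemm:Oc(1)}{lemm:Oc(1):1} is reused elsewhere); what yours buys is a proof that stands on its own, using only \rref{lemm:delta_n} and basic facts about $\Pp^1$-bundles.
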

\begin{proof}
Let $X = \Pp(L\sigma \oplus 1)$. Then the normal bundle $N$ to the immersion $X^\mud \to X$ is the disjoint union of $L\to S$ and $L^\vee \to S$ (the respective normal bundles to $\Pp(1) \to \Pp(L\sigma \oplus 1)$ and $\Pp(L\sigma) \to \Pp(L\sigma \oplus 1)$). Since $N \to X^{\mud}$ is a line bundle, we may identify it with $\Oc(-1) \to \Pp(N)$, and the lemma follows from \dref{lemm:Oc(1)}{lemm:Oc(1):1}.
\end{proof}

\begin{para}
\label{p:partial}
We claim that there exists a unique morphism of abelian groups 
\[
\partial \colon \Mcc \to \Pc \quad ; \quad \lc E \to S \rc \mapsto  \lc \Oc(1) \to \Pp(E) \rc.
\]
Indeed by \rref{cor:PE_M}, this formula defines a map $\Mcc^{\eff} \to \Pc$, which is additive, hence extends to a unique morphism $\partial \colon \Mcc \to \Pc$.
\end{para}

\begin{para}
\label{p:nu_pi_delta}
When $S$ is a smooth projective $k$-scheme and $E\to S$ a vector bundle, the normal bundle to the fixed locus of $\Pp(E\sigma \oplus 1)$ is the disjoint union of $E \to S$ and $\Oc(1) \to \Pp(E)$. Therefore the morphisms $\pi$ of \rref{p:pi}, $\nu$ of \rref{prop:nu} and $\partial$ of \rref{p:partial} satisfy the relation
\[
\nu \circ \pi(e)= e + \partial(e) \quad \text{for all $e\in \Mcc$},
\]
where $\partial(e) \in \Pc$ is considered as an element of $\Mcc$ using the inclusion $\Pc \subset \Mcc$.
\end{para}

\begin{theorem}
\label{th:es}
We have an exact sequence of $\Laz$-modules
\[
0  \to \Laz \to \inv \xrightarrow{\nu} \Mcc \to \coker \delta \to 0,
\]
where the first morphism is induced by multiplication with $h$, and the last one is the composite $\Mcc \xrightarrow{\partial} \Pc \to  \coker \delta$.
\end{theorem}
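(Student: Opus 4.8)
The plan is to establish exactness separately at each of the four terms, reducing everything to the already-established properties of $\nu$, $\partial$, and $\delta$. The geometric content is entirely contained in those earlier results, so the proof should be a matter of organizing them correctly.

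First, exactness at $\Laz$ and at $\inv$ should follow almost immediately from the split exact sequence $0 \to \Laz \xrightarrow{h} \inv \to \invh \to 0$ of \rref{cor:decomp_h} together with \rref{prop:nu}. Indeed, $\nu$ factors as the natural surjection $\inv \to \invh$ of that sequence followed by the inclusion $\invh \subseteq \Mcc$ (the latter being injective, since $\invh$ is the image of $\nu$ by \rref{prop:nu}). Hence $\ker \nu$ coincides with the kernel of $\inv \to \invh$, which is exactly the image $h\Laz$ of multiplication by $h$; and the injectivity of $\Laz \xrightarrow{h}\inv$ is already part of \rref{cor:decomp_h}. This settles the two leftmost spots.

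The main work is exactness at $\Mcc$, i.e. the identity $\im\nu = \ker(\Mcc \to \coker\delta)$. Since $\im\nu = \invh$, this amounts to showing $\invh = \partial^{-1}(\im\delta)$. For the inclusion $\invh \subseteq \partial^{-1}(\im\delta)$ I would use that $\invh$ is generated as a group by the classes $\lc N_X \to X^{\mud} \rc = \nu(\lc X \rc_{\mud})$, together with \rref{p:partial} and \dref{lemm:Oc(1)}{lemm:Oc(1):2}, giving $\partial(\lc N_X \to X^{\mud}\rc) = \lc \Oc(1) \to \Pp(N_X)\rc \in \im\delta$. The reverse inclusion is the crux: here the key inputs are the relation $\nu\circ\pi = \id + \partial$ of \rref{p:nu_pi_delta} and the containment $\im\delta \subseteq \im\nu$ of \rref{lemm:im_nu_delta}. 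Explicitly, if $e\in\Mcc$ satisfies $\partial(e) \in \im\delta$, then $\partial(e) \in \im\delta \subseteq \im\nu = \invh$ by \rref{lemm:im_nu_delta}, while $\nu(\pi(e)) \in \invh$ as well; writing $e = \nu(\pi(e)) - \partial(e)$ then exhibits $e$ as a difference of two elements of $\invh$, whence $e \in \invh$. I expect this to be the step demanding the most care, as it is where the auxiliary morphism $\pi$ and the projectivized bundle with involution $\lc \Pp(E\sigma\oplus 1)\rc_{\mud}$ genuinely intervene.

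Finally, exactness at $\coker\delta$ is the surjectivity of $\Mcc \to \coker\delta$. For this I would observe that $\partial$ restricts to a bijection of $\Pc$, namely dualization of the line bundle: for a line bundle $L\to S$ one has $\Pp(L) \cong S$ and $\partial(\lc L \to S\rc) = \lc L^\vee \to S\rc$, so $\partial(\Pc) = \Pc$. Consequently the composite $\Mcc \xrightarrow{\partial}\Pc \to \coker\delta$ is already surjective when restricted to $\Pc$, hence surjective. (One should also note that $\partial$, being compatible with the $\Laz$-module structure, together with the $\Laz$-linearity of $\delta$, $\nu$, and multiplication by $h$, makes the whole sequence one of $\Laz$-modules, as asserted.)
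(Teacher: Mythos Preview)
Your proof is correct and follows essentially the same route as the paper: exactness on the left via \rref{cor:decomp_h}, the complex property via \dref{lemm:Oc(1)}{lemm:Oc(1):2}, and the key step at $\Mcc$ via the identity $\nu\circ\pi=\id+\partial$ from \rref{p:nu_pi_delta} combined with \rref{lemm:im_nu_delta}. The only deviation is in the surjectivity argument: the paper observes directly that $\partial(p_0^{n+1})=p_n$ (taking $E$ trivial of rank $n+1$ over $\Speck$), whereas you note that $\partial$ restricted to $\Pc$ is the dualization involution $\lc L\to S\rc\mapsto\lc L^\vee\to S\rc$; both immediately give $\partial(\Mcc)=\Pc$.
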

\begin{proof}
Exactness at $\Laz$ and $\inv$ has been proved in \rref{cor:decomp_h} (in view of \rref{prop:nu}), and it follows from \dref{lemm:Oc(1)}{lemm:Oc(1):2} that the sequence is a complex. Since $\partial(p_0^{n+1})=p_n$ for all $n\in \Nn$, the morphism $\partial$ is surjective, hence so is the last morphism in the sequence. Let now $e\in \Mcc$ be such that $\partial(e) \in \im \delta$. Under the inclusion $\Pc \subset \Mcc$, we have $\partial(e) = \nu(x) \in \Mcc$ for some $x \in \inv$ by \rref{lemm:im_nu_delta}. Thus by \rref{p:nu_pi_delta}, we have $\nu( \pi(e) - x) = e + \partial(e) - \nu(x) = e$, hence $e\in \im \nu$, proving exactness at $\Mcc$.
\end{proof}

\begin{remark}
By \rref{prop:M_pol}, the $\Laz$-modules $\Mcc$ and $\coker \delta$ do not depend on the field $k$, and one may see that \rref{th:es} provides an intrinsic description of $\inv$. This observation will be formalised in \rref{cor:indep_field} below.
\end{remark}

\subsection{Integrality}
\label{sect:integrality}
Replacing $x$ by $t$ in \rref{prop:gamma_determined}, we have a morphism $\J \colon \Mcc \to \Laz[[t]][t^{-1}]$.

\begin{para}
Using the elements $p_i$ of \rref{p:p_i} (where $p_i=0$ for $i<0$), we define a morphism of $\Laz$-modules
\[
A \colon \Laz[[t]][t^{-1}] \to \Pc \quad ; \quad t^j \mapsto p_{-1-j} \text{ for $j\in\Zz$}.
\]
\end{para}

\begin{lemma}
\label{lemm:alpha_gamma}
We have $A \circ \J = \partial$.
\end{lemma}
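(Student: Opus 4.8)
The plan is to prove $A\circ\J=\partial$ by verifying the identity directly on the additive generators $\lc E \to S \rc$ of $\Mcc$, where $E$ is a vector bundle over a smooth projective $k$-scheme $S$. Since $\partial$ is a morphism of abelian groups \rref{p:partial}, $\J$ is a morphism of $\Laz$-algebras \rref{prop:gamma_determined}, and $A$ is $\Laz$-linear, the composite $A\circ\J$ is a morphism of abelian groups; as $\Mcc$ is generated as an abelian group by the classes $\lc E \to S\rc$, checking equality on these suffices. Decomposing $S$ according to the rank of $E$ and using additivity of both sides, I may assume $E$ has constant rank $r$. The whole argument then reduces to a bookkeeping comparison of two expansions in $\Pc$, both indexed by the scalars $\lc \g_i(-E) \rc \in \Laz$ and $\lc \Pp^i \rc \in \Laz$; the bridge between the two is Quillen's formula \rref{prop:Quillen}.

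First I would expand the right-hand side. By \rref{p:partial} we have $\partial(\lc E \to S \rc) = \lc \Oc(1) \to \Pp(E) \rc$, and by \eqref{eq:class_P} this equals $v \sum_{m \in \Nn} \lc c_1(\Oc(1))^m \rc \ba_m$, where $\lc c_1(\Oc(1))^m \rc$ denotes the pushforward to $\Speck$. Factoring the structural morphism of $\Pp(E)$ through $p\colon \Pp(E) \to S$, applying Quillen's formula \rref{prop:Quillen}, and using the projection formula to pull the scalars $\lc \Pp^i \rc$ through the pushforward $\Hh(S) \to \Hh(\Speck)$ gives
\[
\partial(\lc E \to S \rc) = v \sum_{m \in \Nn} \sum_{i \in \Nn} \lc \Pp^i \rc \, \lc \g_{-1-m-i}(-E) \rc \, \ba_m .
\]

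Next I would expand the left-hand side. By \rref{prop:gamma_determined} (with $x$ replaced by $t$) we have $\J(\lc E \to S \rc) = \sum_{j \in \Zz} \lc \g_j(-E) \rc t^j$, so applying $A$ and using $A(t^j) = p_{-1-j}$ yields $\sum_{j} \lc \g_j(-E) \rc p_{-1-j}$. Substituting the expression \eqref{eq:pi_i} for $p_{-1-j}$, with the conventions $p_\ell = 0$ and $\lc \Pp^\ell \rc = 0$ for $\ell < 0$, gives
\[
A(\J(\lc E \to S \rc)) = v \sum_{j \in \Zz} \sum_{s \in \Nn} \lc \g_j(-E) \rc \, \lc \Pp^{-1-j-s} \rc \, \ba_s .
\]
Reindexing by $m = s$ and $i = -1-j-s$ (so that $j = -1-m-i$, and $i \geq 0$ is forced by $\lc \Pp^{i} \rc \neq 0$) transforms this into precisely the displayed expansion of $\partial(\lc E \to S \rc)$, which completes the proof.

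The one point requiring care—rather than a genuine obstacle—is to justify that the interchanges of summation and the reindexing are legitimate, i.e.\ that only finitely many terms contribute in each fixed degree. This follows because everything is homogeneous: $\lc \g_j(-E) \rc$ lies in $\Laz^{-r-j-\dim S}$ and hence vanishes for $j$ sufficiently negative, while $p_{-1-j}$ vanishes once $j \geq 0$, so the range of $j$ is finite. Once this is observed the computation is purely formal, and all the substance of the lemma is carried by Quillen's formula \rref{prop:Quillen}.
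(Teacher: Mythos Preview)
Your proof is correct and follows essentially the same approach as the paper: both verify the identity on generators $\lc E \to S \rc$, expand $A\circ\J$ using \eqref{eq:pi_i}, expand $\partial$ using \eqref{eq:class_P}, and match the two via Quillen's formula \rref{prop:Quillen} after a reindexing. Your additional remarks on constant rank and finiteness of the sums are welcome clarifications but not new ideas.
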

\begin{proof}
Let $E$ be a vector bundle over a smooth projective $k$-scheme $S$. Using Quillen's formula \rref{prop:Quillen} and in view of \eqref{eq:pi_i} and \eqref{eq:class_P}, we have in $\Pc$
\begin{align*}
A \circ \J(\lc E \to S \rc) &= \sum_{j \in \Zz} \lc \g_j(-E)\rc p_{-1-j}\\
&= \sum_{j,m \in \Zz} \lc\g_j(-E)\rc \lc \Pp^{-1-j-m} \rc v \ba_m= \sum_{i,m \in \Zz} \lc\g_{-1-i-m}(-E)\rc \lc \Pp^i \rc v\ba_m\\
&= \sum_{m \in \Nn} \lc c_1(\Oc_{\Pp(E)}(1))^m \rc v\ba_m = \lc \Oc(1) \to \Pp(E) \rc.\qedhere
\end{align*}
\end{proof}

\begin{para}
By \rref{p:delta_p}, we have $A(ht^n) = \delta(p_{-n})$ for any $n\in \Zz$, and therefore $A(h \Laz[[t]][t^{-1}]) \subset \im \delta$. It follows that $A$ descends to a morphism of $\Laz$-modules
\[
\alpha \colon \Rc[t^{-1}] \to \coker \delta.
\]
Since $\ker A = \Laz[[t]] \subset \Laz[[t]][t^{-1}]$, we deduce that
\begin{equation}
\label{eq:ker_alpha}
\Rc \subset \ker \alpha \subset \Rc[t^{-1}].
\end{equation}
\end{para}

\begin{theorem}
\label{th:integrality}
We have $\invh = \Rc \cap \Mcc$ inside $\Rc[t^{-1}]$ (recall \rref{p:omit_jj}).
\end{theorem}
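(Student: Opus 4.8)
The plan is to read off both inclusions from the fundamental exact sequence \rref{th:es}, after identifying its last morphism with the restriction of $\alpha$ to $\Mcc$.

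First I would record the easy inclusion $\invh \subseteq \Rc \cap \Mcc$. Indeed $\invh \subseteq \Rc$ is exactly \rref{prop:in_R}, while $\invh$ is by construction the image of $\nu \colon \inv \to \Mcc$ (see \rref{prop:nu}), hence is contained in $\Mcc$ under the identification of \rref{p:omit_jj}; being contained in both $\Rc$ and $\Mcc$, it lies in their intersection.

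The substance lies in the reverse inclusion, and the key step is to prove the identity $\alpha \circ \jj = q \circ \partial$, where $q \colon \Pc \to \coker\delta$ denotes the projection. Since $\jj$ factors as $\mathrm{pr} \circ \J$, with $\mathrm{pr} \colon \Laz[[t]][t^{-1}] \to \Rc[t^{-1}]$ the quotient morphism, and since $\alpha$ is by definition the descent of $q \circ A$ along $\mathrm{pr}$ (legitimate because $A(h\Laz[[t]][t^{-1}]) \subseteq \im\delta = \ker q$), I would compute
\[
\alpha \circ \jj = \alpha \circ \mathrm{pr} \circ \J = q \circ A \circ \J = q \circ \partial,
\]
the final equality being \rref{lemm:alpha_gamma}. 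The last morphism $\Mcc \to \coker\delta$ of \rref{th:es}, namely the composite $\Mcc \xrightarrow{\partial} \Pc \xrightarrow{q} \coker\delta$, therefore agrees with $\alpha$ restricted to $\Mcc \subseteq \Rc[t^{-1}]$.

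With this identity I would finish quickly. Exactness of \rref{th:es} at $\Mcc$ gives $\invh = \im\nu = \ker(\Mcc \to \coker\delta) = \Mcc \cap \ker\alpha$. As $\Rc \subseteq \ker\alpha$ by \eqref{eq:ker_alpha}, it follows that $\Rc \cap \Mcc \subseteq \Mcc \cap \ker\alpha = \invh$, which together with the easy inclusion yields $\invh = \Rc \cap \Mcc$. I expect the main obstacle to be purely organisational: keeping track of the identifications $\jj = \mathrm{pr}\circ\J$, of the descent defining $\alpha$, and of the identification of \rref{p:omit_jj}, so that the composite $q\circ\partial$ is correctly recognised as the restriction of $\alpha$ to $\Mcc$. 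Once these are in place, the statement follows from the exact sequence and \eqref{eq:ker_alpha} with no further computation.
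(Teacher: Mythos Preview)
Your proof is correct and follows essentially the same route as the paper's: both arguments establish the easy inclusion from \rref{prop:in_R} and \rref{prop:nu}, identify the last morphism of the exact sequence \rref{th:es} with the restriction $\alpha|_{\Mcc}$ via \rref{lemm:alpha_gamma} and the factorisation $\jj = \mathrm{pr}\circ\J$, and then conclude from exactness at $\Mcc$ together with $\Rc \subset \ker\alpha$ from \eqref{eq:ker_alpha}. Your presentation makes the descent defining $\alpha$ and the identification $\alpha\circ\jj = q\circ\partial$ slightly more explicit, but there is no substantive difference.
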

\begin{proof}
We have $\invh \subset \Rc$ by \rref{prop:in_R}, and $\invh \subset \Mcc$ by \rref{p:j_nu}. Conversely let $x \in \Mcc$. In view of the commutative diagram
\[ \xymatrix{
\Mcc \ar[rr]^{\Gamma} \ar[rrd]_{\jj}&&\Laz[[t]][t^{-1}] \ar[r]^A \ar[d] & \Pc \ar[d] \\ 
&&\Rc[t^{-1}] \ar[r]^\alpha & \coker \delta
}\]
it follows from \rref{lemm:alpha_gamma} that $\alpha(x) \in \coker \delta$ is the image of $\partial(x) \in \Pc$. If additionally $x \in \Rc$, that image vanishes by \eqref{eq:ker_alpha}, so that $x \in \invh$ by \rref{th:es}.
\end{proof}

\begin{corollary}
\label{cor:indep_field}
The graded $\Laz$-algebra $\inv$ does not depend on the field $k$ (of characteristic $\neq 2$).
\end{corollary}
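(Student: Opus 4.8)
The plan is to reduce the independence on $k$ to the pullback description of $\inv$ given in \rref{prop:inv_invh}, and then to observe that each of the objects entering that description is built solely from the Lazard ring $\Laz$ and its universal formal group law, neither of which depends on the base field. Recall from \rref{prop:inv_invh} that the graded $\Laz$-algebra $\inv$ is identified with the fibre product
\[
\inv \;\simeq\; \{(x,\lambda) \in \invh \times \Laz : \epsilon(x) = \lambda \in \Laz/2\},
\]
where $\epsilon \colon \Rc \to \Laz/2$ is the reduction morphism of \rref{p:epsilon}. Thus it suffices to show that the graded $\Laz$-subalgebra $\invh \subset \Rc$ and the morphism $\epsilon$ do not depend on $k$.

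The ring $\Rc = \Laz[[t]]/h$, where $ht = \fgl{2}(t)$, and its localisation $\Rc[t^{-1}] = \Laz[[t]][t^{-1}]/\fgl{2}(t)$ (see \rref{p:R_K}), are defined purely in terms of $\Laz$ and the formal multiplication $\fgl{2}$, hence are intrinsic, as is $\epsilon$ (reduction modulo $t$). It therefore remains to treat $\invh$, for which I would invoke the integrality theorem \rref{th:integrality}: inside $\Rc[t^{-1}]$ one has $\invh = \Rc \cap \Mcc$, where $\Mcc$ is regarded as an $\Laz$-subalgebra of $\Rc[t^{-1}]$ via the injective morphism $\jj$ (see \rref{th:J} and \rref{p:omit_jj}). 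Since $\Rc$ and $\Rc[t^{-1}]$ are already seen to be intrinsic, it is enough to check that the image $\jj(\Mcc) \subset \Rc[t^{-1}]$ is intrinsic.

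The key step, and the only delicate point, is precisely this last one: although $\Mcc$ and $\jj$ are defined geometrically through vector bundles over $k$-varieties, both are in fact given by field-independent data. By \rref{prop:M_pol} the graded $\Laz$-algebra $\Mcc$ is polynomial on the generators $v\ba_i$, so it is intrinsic as an abstract $\Laz$-algebra; and $\jj$ is a morphism of graded $\Laz$-algebras (\rref{prop:gamma_determined}) whose value on these generators is computed from the class $\g$, i.e.\ from universal power series in the coefficients of the formal group law, together with the $k$-independent classes $\lc \Pp^i \rc \in \Laz$ (one has for instance $\jj(v) = t^{-1}$ by \rref{p:j_v}). Concretely, each coefficient $\lc \g_i(-E)\rc \in \Laz$ of $\jj(\lc E \to S \rc)$ is a universal $\Laz$-linear combination of the coordinates $\lc c_\alpha(E|_{S_r})\rc$ of $\lc E \to S\rc$ in $\Mcc$, with combining coefficients depending only on $i$ and the formal group law. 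Hence $\jj(\Mcc)$, and with it $\invh = \Rc \cap \jj(\Mcc)$, is the same subalgebra of $\Rc[t^{-1}]$ for every field $k$ of characteristic $\neq 2$; combined with the fibre product above, this shows that $\inv$ is independent of $k$.
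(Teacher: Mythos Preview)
Your proof is correct and follows essentially the same approach as the paper: reduce to $\invh$ via \rref{prop:inv_invh}, invoke \rref{th:integrality} to write $\invh = \Rc \cap \Mcc$, and then argue that the image $\jj(\Mcc)\subset\Rc[t^{-1}]$ is field-independent. The only cosmetic difference is that the paper verifies this last point using the generators $p_n$, writing explicitly $p_n=\sum_{i=0}^n v_i(t)\lc\Pp^{n-i}\rc$ and citing \cite[Example~3.5]{Mer-Ori} for the field-independence of $\lc\Pp^m\rc\in\Laz$, whereas you argue via the generators $v\ba_i$ and the universal $\Laz$-coefficients $\lambda_\alpha^{i,r}$ from the proof of \rref{prop:gamma_determined}; your route in fact avoids the need to cite the field-independence of $\lc\Pp^m\rc$ altogether, since those universal coefficients depend only on the formal group law on $\Laz$.
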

\begin{proof}
Since $\fund{\Hh} = \Laz$ with its canonical formal group law (see \rref{p:K_fund}), the Laurent series $v_i(x) \in \Laz[[x]][x^{-1}]$ of \rref{eq:inverse_fgl}, as well as the $\Laz$-algebra $\Rc=\Laz[[t]]/h$, do not depend on $k$. In view of \rref{prop:M_pol}, the $\Laz$-subalgebra $\Mcc\subset \Rc[t^{-1}]$ is generated by
\[
p_n = \lc \g(-\Oc_{\Pp^n}(1))(t) \rc_{\mud} = \sum_{i=0}^n v_i(t) \lc \Pp^{n-i} \rc \quad \text{ for all $n\in \Nn$.}
\]
By \cite[Example~3.5]{Mer-Ori}, for each $m\in \Nn$, the class $\lc \Pp^m \rc \in \Laz$ does not depend on $k$. We deduce that the $\Laz$-subalgebra $\Mcc \subset \Rc[t^{-1}]$ is independent of $k$, and so is the $\Laz$-subalgebra $\invh \subset \Rc$ by \rref{th:integrality}. Thus the corollary follows from \rref{p:inv_invh}.
\end{proof}

\subsection{Stable generators}

\begin{para}
In view of \rref{p:omit_jj} and \rref{prop:nu}, the elements $x_n$ defined in \rref{def:x_n} may be regarded as elements of $\invh$. Thus we have $x_n = \lc \X_n \rc_{\mud} \in \invh$.
\end{para}

\begin{proposition}
\label{prop:M_v-1}
The morphism $\inv \to \Mcc/(v-1)$ induced by $\nu$ is surjective.
\end{proposition}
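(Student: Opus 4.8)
The plan is to prove surjectivity by showing that the image of the composite ring morphism
\[
f \colon \inv \xrightarrow{\nu} \Mcc \twoheadrightarrow \Mcc/(v-1)
\]
is all of $\Mcc/(v-1)$. Since $\nu$ is a morphism of graded $\Laz$-algebras (\rref{prop:nu}) and the quotient map is a ring morphism, $f$ is a ring homomorphism, so $\im f$ is a subring of $\Mcc/(v-1)$ containing the unit. It therefore suffices to exhibit a set of ring generators of $\Mcc/(v-1)$ lying inside $\im f$.

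First I would identify the target ring. Because $\Hh = \Kt$, the morphism $\Laz \to \fund{\Hh}$ is bijective (\rref{p:K_fund}), so \dref{prop:Mcc_pol}{prop:Mcc_pol:poly} applies: the ring $\Mcc[v^{-1}]$ is polynomial over $\Zz[v,v^{-1}]$ in the variables $x_n$ for $n \geq 2$. In $\Mcc/(v-1)$ the image of $v$ equals $1$, hence is invertible, so the quotient morphism $\Mcc \to \Mcc/(v-1)$ factors through the localisation $\Mcc \to \Mcc[v^{-1}]$; as localising at the unit is trivial, this yields a canonical identification $\Mcc/(v-1) = \Mcc[v^{-1}]/(v-1)$. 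Reducing the polynomial presentation modulo $v-1$, which collapses $\Zz[v,v^{-1}]$ to $\Zz$, then exhibits $\Mcc/(v-1)$ as the polynomial $\Zz$-algebra on the images $\bar{x}_n$ of $x_n$ for $n \geq 2$.

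It remains to observe that each generator $\bar{x}_n$ lies in $\im f$. By the definition of $\nu$ in \rref{prop:nu} together with the definition of $x_n$ in \rref{def:x_n}, we have $x_n = \nu(\lc \X_n \rc_{\mud}) \in \im\nu$, whence $\bar{x}_n = f(\lc \X_n \rc_{\mud}) \in \im f$. Since the $\bar{x}_n$ for $n \geq 2$ generate $\Mcc/(v-1)$ as a ring, and $\im f$ is a subring containing them together with $1$, we conclude $\im f = \Mcc/(v-1)$, i.e.\ $f$ is surjective.

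I do not anticipate a genuine obstacle: the argument is essentially a matter of reading off ring generators of $\Mcc/(v-1)$ from the polynomiality result \rref{prop:Mcc_pol} and noting that those generators are, by construction, in the image of $\nu$. The only mildly technical point is the identification $\Mcc/(v-1) = \Mcc[v^{-1}]/(v-1)$, which is immediate once one remarks that $v$ becomes a unit modulo $v-1$, so that passing to the localisation changes nothing after the quotient.
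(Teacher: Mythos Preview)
Your proof is correct and follows essentially the same approach as the paper's. The paper's one-line argument cites \rref{cor:gen_x_v} (which is itself an immediate corollary of \dref{prop:Mcc_pol}{prop:Mcc_pol:poly}) to conclude that the images of the $x_n$ generate $\Mcc/(v-1)$, and then observes that each $x_n$ lies in $\im\nu$; you unpack this slightly by making explicit the identification $\Mcc/(v-1) = \Mcc[v^{-1}]/(v-1)$ and by citing \rref{prop:Mcc_pol} directly, but the substance is the same.
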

\begin{proof}
Since each $x_n$ belongs to its image, this follows from \rref{cor:gen_x_v}.
\end{proof}

\begin{remark}
A vector bundle $E$ over a smooth projective $k$-scheme $S$ has a class $\lc E \to S \rc \in \Mcc/(v-1)$, which is determined by the elements $\lc c_\alpha(E) \rc \in \Laz$ for all partitions $\alpha$ (see \rref{eq:def_Mcc}). Proposition \rref{prop:M_v-1} asserts that each such class can be expressed as a difference of classes of normal bundles to fixed loci of involutions.
\end{remark}

\begin{lemma}
\label{lemm:gen_by_t_x}
The ring $\invh \subset \Mcc[v^{-1}]$ is contained in the subring generated by $v^{-1}$ and $x_n$ for $n\geq 1$.
\end{lemma}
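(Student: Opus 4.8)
The plan is to make the target subring completely explicit and then characterise it by a $2$-divisibility condition. Write $B\subset\Mcc[v^{-1}]$ for the subring generated by $v^{-1}=t$ and the $x_n$ for $n\geq 1$. Since $\Mcc[v^{-1}]$ is the polynomial ring $\Zz[v,v^{-1}][x_2,x_3,\dots]$ by \dref{prop:Mcc_pol}{prop:Mcc_pol:poly}, and $x_1=2v$ by \dref{ex:x_123}{ex:x_123:1}, the subring generated by $t$ and $x_1$ is $\Zz[v^{-1},2v]$, so that $B=\Zz[v^{-1},2v][x_2,x_3,\dots]$. Expanding any $\xi\in\Mcc[v^{-1}]$ as $\xi=\sum_{j\in\Zz}v^j\xi^{(j)}$ with $\xi^{(j)}\in\Zz[x_2,x_3,\dots]$ (almost all zero), one checks that $\xi\in B$ if and only if every coefficient of $\xi^{(j)}$ is divisible by $2^j$ for each $j>0$. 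Since $\invh\subset\Rc$ by \rref{prop:in_R}, it suffices to prove the inclusion $\Rc\cap\Mcc[v^{-1}]\subset B$.

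I would prove this by induction on the top $v$-degree $M=\max\{j:\xi^{(j)}\neq 0\}$ of an element $\xi\in\Rc\cap\Mcc[v^{-1}]$. When $M\leq 0$ we have $\xi\in\Zz[t][x_2,x_3,\dots]\subset B$. For $M\geq 1$, the element $t\xi$ again lies in $\Rc\cap\Mcc[v^{-1}]$ and has top $v$-degree $M-1$, so the induction hypothesis gives $t\xi\in B$; by the divisibility criterion above this means $2^{j-1}\mid\xi^{(j)}$ for all $j\geq 2$, say $\xi^{(j)}=2^{j-1}w^{(j)}$ with $w^{(j)}\in\Zz[x_2,x_3,\dots]$.

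The crux is to gain the one missing factor of $2$ using the relation $2=tx_1$ in $\Rc$ (valid because $x_1=2v=2t^{-1}$). Rewriting $2^{j-1}=t^{j-1}x_1^{j-1}$ inside $\Rc$ turns each high term into $v^j\xi^{(j)}=t^{-j}t^{j-1}x_1^{j-1}w^{(j)}=v\,x_1^{j-1}w^{(j)}$, whence $\xi=vC+D$ with $C=\xi^{(1)}+\sum_{j\geq 2}x_1^{j-1}w^{(j)}\in\Zz[x_1,x_2,\dots]$ and $D=\sum_{j\leq 0}t^{-j}\xi^{(j)}\in\Zz[t][x_2,\dots]\subset B$. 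Since $\xi$ and $D$ lie in $\Rc$, so does $vC$, and because $t$ is a nonzerodivisor (\dref{lemm:R}{lemm:R:nzd}) this forces $C=t(vC)\in t\Rc=\ker\epsilon$ (\rref{p:epsilon}). Applying $\epsilon$, and using $\epsilon(x_n)=\lc\X_n\rc$ (\rref{lemm:epsilon_epsilon}) together with the algebraic independence of the classes $\lc\X_n\rc$ in $\Laz/2$ (\dref{cor:gen_Laz}{cor:gen_Laz:1}), I conclude that $C$ has all coefficients even, $C=2C'$ with $C'\in\Zz[x_1,x_2,\dots]$. Then $\xi=vC+D=x_1C'+D\in B$, closing the induction.

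The main obstacle — and the reason the naive strategy of peeling off the top term $v^M\xi^{(M)}$ fails — is that this individual summand need not lie in $\Rc$, so membership in $\Rc$ cannot be tested term by term. The induction applied to $t\xi$ circumvents this: it supplies divisibility by $2^{j-1}$ essentially for free, and the single remaining power of $2$ is exactly what the global condition $vC\in\Rc$ produces via $\epsilon$. I expect the only routine-but-delicate points to be the verification of the divisibility description of $B$ and the careful bookkeeping of powers of $v$ and $t$ under the identification $v=t^{-1}$.
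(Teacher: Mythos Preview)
Your proof is correct and follows essentially the same approach as the paper. Both arguments expand an element in the Laurent polynomial ring $\Zz[v,v^{-1}][x_2,x_3,\dots]$, isolate the contribution of the highest power of $v$, and use the morphism $\epsilon\colon\Rc\to\Laz/2$ together with the algebraic independence of the classes $\lc\X_n\rc$ in $\Laz/2$ (\dref{cor:gen_Laz}{cor:gen_Laz:1}) to extract a factor of~$2$, thereby lowering that highest power. The paper organises this as a minimality argument (assume the lowest possible top power $m$ of $v$, derive a contradiction when $m>0$), while you organise it as an explicit induction on the top $v$-degree and work out the divisibility description of $B=\Zz[v^{-1},2v][x_2,\dots]$ beforehand; this extra bookkeeping is not needed but does no harm. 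Note also that your argument, like the paper's, actually establishes the slightly stronger inclusion $\Rc\cap\Mcc[v^{-1}]\subset B$, since the only property of $\invh$ used is that it sits inside $\Rc$.
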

\begin{proof}
Let $x \in \invh$. In view of \rref{cor:gen_x_v}, there are $m,n \in \Nn$, and for $i=-m,\ldots,n$ elements $d_i = P_i(x_1,\ldots)$ where $P_i \in \Zz[\bx]$ such that (recall that $t=v^{-1}$)
\begin{equation}
\label{eq:x}
x = d_{-m} t^{-m} +\cdots +d_0 + \cdots+ d_n t^n \in \Rc[t^{-1}].
\end{equation}
We may assume that $m$ is minimal among those appearing in such a formula. Assume for a contradiction that $m>0$. Since $x$ and each $d_i$ belong to $\Rc$ (by \rref{prop:in_R}), multiplying \eqref{eq:x} with $t^m$, we deduce that $d_{-m} \in t\Rc$. Applying the morphism of \rref{p:epsilon} we obtain $\epsilon(d_{-m}) = P_{-m}(\epsilon(x_1),\ldots)$ vanishes in $\Laz/2$. It follows from \dref{cor:gen_Laz}{cor:gen_Laz:1} (and \rref{lemm:epsilon_epsilon}) that the image of $P_{-m}$ vanishes in $\Fd[\bx]$. Thus there is $Q_{-m} \in \Zz[\bx]$ such that $P_{-m} = 2Q_{-m}$. This contradicts the minimality of $m$, because
\[
d_{-m} t^{-m} = t^{1-m} 2t^{-1} Q_{-m}(x_1,\ldots) = t^{1-m} x_1 Q_{-m}(x_1,\ldots).\qedhere
\]
\end{proof}

\begin{proposition}
\label{prop:bound}
We have $\Zz[x_1,\ldots] \subset \invh \subset \Zz[t,x_1,\ldots]/(tx_1-2)$.
\end{proposition}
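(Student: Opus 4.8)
The plan is to prove the two inclusions separately. The left-hand inclusion is immediate: $\invh$ is a ring containing each $x_n = \lc \X_n \rc_{\mud}$, and these elements are algebraically independent over $\Zz$ by \rref{lemm:alg_indep}, so the subring they generate is the honest polynomial ring $\Zz[x_1,\ldots]\subseteq \invh$. The substance of the statement lies in the right-hand inclusion, which I would obtain by identifying explicitly the subring of $\Rc[t^{-1}]$ in which \rref{lemm:gen_by_t_x} already confines $\invh$.

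That lemma places $\invh$ inside the subring $B \subset \Mcc[v^{-1}] = \Rc[t^{-1}]$ generated by $t = v^{-1}$ and the $x_n$ for $n \geq 1$. To describe $B$, I would invoke \dref{prop:Mcc_pol}{prop:Mcc_pol:poly}: the ring $\Mcc[v^{-1}]$ is polynomial, equal to $\Zz[v,v^{-1}][x_2,x_3,\ldots]$, so $x_2,x_3,\ldots$ are algebraically independent over $\Zz[v,v^{-1}] = \Zz[t,t^{-1}]$, hence \emph{a fortiori} over the subring $\Zz[t,2t^{-1}]$. Since $x_1 = 2v = 2t^{-1}$ by \dref{ex:x_123}{ex:x_123:1}, the ring $B$ is generated over $\Zz[t,2t^{-1}]$ by $x_2,x_3,\ldots$, and the independence just noted shows that $B = \Zz[t,2t^{-1}][x_2,x_3,\ldots]$ is polynomial over $\Zz[t,2t^{-1}]$ in these variables.

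It then remains to recognize the coefficient ring: the surjection $\Zz[T,X_1]/(TX_1-2)\to \Zz[t,2t^{-1}]$ sending $T\mapsto t$ and $X_1 \mapsto 2t^{-1}$ is an isomorphism. I would check this by verifying that $\{T^a : a\geq 0\}\cup\{X_1^b : b\geq 1\}$ is a $\Zz$-basis of the source (every monomial $T^aX_1^b$ reduces, via $TX_1 = 2$, to $2^{\min(a,b)}$ times a pure power of $T$ or of $X_1$) and that its image $\{t^a\}\cup\{2^bt^{-b}\}$ is a $\Zz$-basis of $\Zz[t,2t^{-1}]$ inside $\Zz[t,t^{-1}]$; a map carrying a basis to a basis is bijective. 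Adjoining the polynomial variables $x_2,x_3,\ldots$ then gives $B \cong \Zz[t,x_1,x_2,\ldots]/(tx_1-2)$, compatibly with $t$ and the $x_i$, and the containment $\invh \subseteq B$ supplies the desired inclusion.

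The only non-formal point, and hence the main obstacle, is purely bookkeeping: one must be sure that the single relation $tx_1 = 2$ accounts for \emph{all} relations between $t$ and $x_1$, i.e.\ that the coefficient ring is exactly $\Zz[t,2t^{-1}]$ and not a further quotient thereof. This is precisely what the basis computation in the previous paragraph settles; everything else is a direct assembly of \rref{lemm:gen_by_t_x}, \dref{prop:Mcc_pol}{prop:Mcc_pol:poly}, \rref{lemm:alg_indep} and \dref{ex:x_123}{ex:x_123:1}.
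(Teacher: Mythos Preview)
Your proof is correct and follows essentially the same route as the paper: both invoke \rref{lemm:alg_indep} for the first inclusion, and for the second use \rref{lemm:gen_by_t_x} together with \dref{prop:Mcc_pol}{prop:Mcc_pol:poly} and $x_1=2v$ to reduce to showing that the only relation among $t,x_1,x_2,\ldots$ in $\Mcc[v^{-1}]$ is $tx_1=2$. The only organizational difference is that the paper keeps $R=\Zz[t,x_2,\ldots]$ as the base and verifies $R[2v]\cap(vt-1)R[v]=2(vt-1)R[2v]$ by induction on the $v$-degree, whereas you factor out the free variables $x_2,x_3,\ldots$ first and check the relation over $\Zz$ via an explicit basis comparison for $\Zz[t,2t^{-1}]$; both arguments establish the same fact.
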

\begin{proof}
The first inclusion is \rref{lemm:alg_indep}. Let $R=\Zz[t,x_2,\ldots]$. Then $\Mcc[v^{-1}] = R[t^{-1}] = R[v]/(vt-1)$ by \dref{prop:Mcc_pol}{prop:Mcc_pol:poly}. In view of \rref{lemm:gen_by_t_x}, it will suffice to prove that the $R$-subalgebra of $R[v]/(vt-1)$ generated by $x_1=2v$ (see \dref{ex:x_123}{ex:x_123:1}) is isomorphic to $R[x_1]/(tx_1-2)$, in other words that, inside $R[v]$,
\[
R[2v] \cap (vt -1)R[v] = 2(vt-1)R[2v].
\]
Let $P(v) \in R[v]$ be a polynomial such that $(vt-1)P(v)$ belongs to $R[2v]$. We prove by induction on $n=\deg P$ that $(vt-1)P(v) \in 2(vt-1)R[2v]$, the case $P=0$ being clear. Write $P(v) = p v^n + Q(v)$ where $p \in R$ and $\deg Q < n$. Looking at the $v^{n+1}$-coefficient of $(vt-1)P(v)$, we see that $tp \in 2^{n+1} R$.  Since $t$ is nonzerodivisor in $R/2^{n+1} = (\Zz/2^{n+1})[t,x_2,\ldots]$, it follows that $p \in 2^{n+1}R$. Therefore $(vt -1)(P(v) - Q(v)) = (vt-1)pv^n \in 2(vt-1)R[2v]$. Now $(vt-1) Q(v) = (vt-1)P(v) - (vt-1)pv^n \in R[2v]$, and the statement follows by using the induction hypothesis on $Q$.
\end{proof}

\begin{theorem}
\label{th:poly}
Let $X$ be a smooth projective $k$-scheme of pure dimension $n$ with a $\mud$-action. Then there are unique polynomials $A_i \in \Zz[\bx]$ for $i \in \Nn$ such that
\begin{enumerate}[(i)]
\item \label{th:poly:sum} $A_i=0$ for $i$ large enough, and $\displaystyle{\lc X \rc_{\mud} = \sum_{i \in \Nn} A_i(x_1,\ldots)t^i \in \Rc}$,

\item \label{th:poly:x_1} $A_i \in \Zz[X_2,\ldots] \subset \Zz[\bx]$ for $i\geq 1$.
\setcounter{enumi_resume}{\value{enumi}}
\end{enumerate}
In addition,
\begin{enumerate}[(i)]
\setcounter{enumi}{\value{enumi_resume}}
\item \label{th:poly:hom} \label{th:poly:deg} for $i \in \Nn$, the polynomial $A_i$ is homogeneous of degree $n+i$,

\item  \label{th:poly:Phi} for $i \in \Nn$, we have $\fdeg A_i \leq \dim (X^{\mud})$,
\item \label{th:poly:unstable} for $i > \max\{0,3\dim(X^{\mud})-n\}$, we have $A_i=0$.
\end{enumerate}
\end{theorem}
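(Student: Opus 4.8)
The plan is to read everything off the second inclusion of \rref{prop:bound} together with the two gradings on $\Mcc[v^{-1}]$ from \S\ref{section:explicit}, working inside $\Rc[t^{-1}]$ under the identification of \rref{p:omit_jj} (so that $v=t^{-1}$ and, by \dref{ex:x_123}{ex:x_123:1}, $x_1=2t^{-1}$). First I treat existence of the $A_i$ in (i)--(ii). Since $\lc X\rc_{\mud}\in\invh$, the second inclusion of \rref{prop:bound} places it in the subring generated by $t,x_1,x_2,\ldots$, which by the proof of \rref{prop:bound} is $\Zz[t,x_1,\ldots]/(tx_1-2)$. Reducing a polynomial representative modulo the relation $tx_1=2$ removes every monomial containing both $t$ and $x_1$, leaving a representative of the shape $A_0(x_1,\ldots)+\sum_{i\geq 1}A_i(x_2,\ldots)t^i$ with $A_0\in\Zz[\bx]$ and $A_i\in\Zz[X_2,\ldots]$ for $i\geq 1$; this gives (i) and (ii).

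For uniqueness I use the presentation $\Mcc[v^{-1}]=\Zz[t,t^{-1}][x_2,\ldots]$ of \dref{prop:Mcc_pol}{prop:Mcc_pol:poly}. If $\sum_i C_i t^i=0$ in $\Rc$ with $C_0\in\Zz[\bx]$ and $C_i\in\Zz[X_2,\ldots]$ for $i\geq 1$, I substitute $x_1=2t^{-1}$ and compare coefficients of the powers of $t$, which form a $\Zz[x_2,\ldots]$-basis: the positive powers give $C_i=0$ for $i\geq 1$, while the nonpositive powers arise from expanding $C_0(2t^{-1},x_2,\ldots)$, whose coefficient of $t^{-j}$ is $2^j$ times a polynomial in $\Zz[X_2,\ldots]$, which must then vanish by torsion-freeness; hence $C_0=0$. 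Statement (iii) is now formal: $\Rc[t^{-1}]$ is graded with $t$ of degree $1$ and each $x_m$ of degree $-m$ (as $\X_m$ has pure dimension $m$ by \dref{prop:Xn}{prop:Xn:dim}), so replacing each $A_i$ by its degree-$(n+i)$ homogeneous component for the grading of \rref{p:default_grading} yields a representation still of the form (i)--(ii) whose value is the degree-$(-n)$ part of $\lc X\rc_{\mud}$, namely $\lc X\rc_{\mud}$ itself; uniqueness identifies it with the original one, so each $A_i$ is homogeneous of degree $n+i$.

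For (iv) I switch to the base-grading of \rref{p:base-grading}, for which $t$ has degree $0$, one has $\fdim(\lc X\rc_{\mud})\leq\dim(X^{\mud})$ by \rref{p:dim_phi}, and $\fdim(P(x_1,\ldots))=\fdeg P$ by \dref{prop:Mcc_pol}{prop:Mcc_pol:Phi}. Writing $\lc X\rc_{\mud}=A_0(x_1,\ldots)+\sum_{i\geq 1}A_i(x_2,\ldots)t^i$, the key observation is that the first summand involves only nonpositive powers of $t$ (because $x_1=2t^{-1}$) while the second involves only positive powers, so their top base-degree parts cannot cancel; moreover the distinct positive powers of $t$ in the second summand are $\Zz[x_2,\ldots]$-independent. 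Consequently $\fdeg A_0=\fdim(A_0(x_1,\ldots))$ and each $\fdeg A_i=\fdim(A_i(x_2,\ldots)t^i)$ are all bounded above by $\fdim(\lc X\rc_{\mud})\leq\dim(X^{\mud})$. This no-cancellation step is the one I expect to be the main obstacle, since it is exactly what upgrades the single inequality $\fdim(\lc X\rc_{\mud})\leq\dim(X^{\mud})$ into a bound on every individual $A_i$; everything else is bookkeeping with the two gradings.

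Finally (v) combines (iii) and (iv). For $i\geq 1$ we have $A_i\in\Zz[X_2,\ldots]$, so if $A_i\neq 0$ then \rref{cor:deg_fdeg_3} gives $n+i=\deg A_i\leq 3\fdeg A_i\leq 3\dim(X^{\mud})$, i.e.\ $i\leq 3\dim(X^{\mud})-n$. Hence whenever $i>\max\{0,3\dim(X^{\mud})-n\}$ we have simultaneously $i\geq 1$ (so the bound applies to $A_i$) and $i>3\dim(X^{\mud})-n$, forcing $A_i=0$.
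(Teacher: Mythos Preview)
Your proof is correct and follows essentially the same approach as the paper's: existence from \rref{lemm:gen_by_t_x}/\rref{prop:bound} plus the reduction $x_1t=2$, uniqueness and the grading statements from the direct sum decomposition $\Mcc[v^{-1}]=\Zz[v][x_2,\ldots]\oplus\bigoplus_{i\geq 1}v^{-i}\Zz[x_2,\ldots]$ of \dref{prop:Mcc_pol}{prop:Mcc_pol:poly} (which is exactly your ``no-cancellation'' observation), together with \dref{prop:Mcc_pol}{prop:Mcc_pol:Phi} and \rref{p:dim_phi} for (iv) and \rref{cor:deg_fdeg_3} for (v). The paper states the direct sum decomposition once and reads uniqueness, (iii), and (iv) off it in a single stroke, whereas you treat uniqueness separately and rediscover the decomposition for (iv); the content is the same.
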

\begin{proof}
The existence of the polynomials $A_i$ satisfying \eqref{th:poly:sum} follows from \rref{lemm:gen_by_t_x} (recall that $t=v^{-1}$). Since $x_1t=2$ (see \dref{ex:x_123}{ex:x_123:1}), we see that \eqref{th:poly:x_1} may be arranged.

Assume now given polynomials $A_i$ satisfying \eqref{th:poly:sum} and \eqref{th:poly:x_1}. By \dref{prop:Mcc_pol}{prop:Mcc_pol:poly} we have a decomposition, as $\Zz[x_2,\ldots]$-modules,
\[
\Mcc[v^{-1}] = \Zz[v][x_2,\ldots] \oplus \bigoplus_{i\in\Nn\smallsetminus\{0\}} v^{-i} \Zz[x_2,\ldots].
\]
The component of $\lc X \rc_{\mud} \in \Mcc[v^{-1}]$ in $\Zz[v][x_2,\ldots]$ is $A_0(x_1,x_2,\ldots)$, and for $i \geq 1$ its component in $v^{-i} \Zz[x_2,\ldots]$ is $A_i(0,x_2,\ldots)v^{-i}$. This observation implies the unicity, as well as \rref{th:poly:hom} since by \dref{prop:Mcc_pol}{prop:Mcc_pol:deg} the above decomposition is compatible with the gradings \rref{p:grading} and \rref{p:default_grading} (with $\deg v=1$). That decomposition is also compatible with gradings \rref{p:base-grading} and \rref{p:fdeg} (with $\deg v=0$), therefore \dref{prop:Mcc_pol}{prop:Mcc_pol:Phi} implies that $\fdim(\lc X \rc_{\mud}) = \max_{i \in \Nn} \fdeg A_i$, so that \eqref{th:poly:Phi} follows from \rref{p:dim_phi}.

Finally, by \eqref{th:poly:x_1}, \eqref{th:poly:Phi} and \rref{cor:deg_fdeg_3}, we have $\deg A_i \leq 3\fdeg A_i \leq 3\dim(X^{\mud})$ when $i \geq 1$. In view of \eqref{th:poly:deg}, this yields \eqref{th:poly:unstable}.
\end{proof}

\begin{example}
\label{ex:P1xP1}
Let us work out the decomposition of \rref{th:poly} for the scheme $S=\Pp^1 \times \Pp^1$ endowed with the $\mud$-action induced by the involution exchanging the factors (recall that $\carac k\neq 2$). The fixed locus is isomorphic to $\Pp^1$, with normal bundle $\Oc(2)$. Thus $\lc S \rc_{\mud} = \lc \Pp^1 \rc v + 2a_1v$ in $\Mcc$. In view of \rref{ex:x_123}, we deduce that
\[
\lc S \rc_{\mud} = -x_1^2 + 4x_2 - tx_3 \in \Rc.
\]
\end{example}

\subsection{Low-dimensional fixed loci}
In this section, we assume that $X$ is an equidimensional smooth projective $k$-scheme with a $\mud$-action (where $\carac k \neq 2$). We let $n=\dim X$ and $d =\dim (X^{\mud})$. 

\begin{theorem}
\label{th:small_dim}
If $n\geq 3d$, there exists $P \in \Zz[\bx]$ such that
\[
\lc X \rc_{\mud} = P(x_1,\ldots) \in \invh.
\]
In addition, the polynomial $P$ is divisible by $X_1^{n-3d}$. More precisely $P = X_1^{n-3d+1} Q + a X_1^{n-3d} X_3^d$, where $Q \in \Zz[\bx]$ and $a\in \Zz$.
\end{theorem}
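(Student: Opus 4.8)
The plan is to read the statement off the decomposition already produced in \rref{th:poly}. Applying that theorem to $X$ (of pure dimension $n$) gives unique polynomials $A_i \in \Zz[\bx]$ with $\lc X \rc_{\mud} = \sum_{i\in\Nn} A_i(x_1,\ldots) t^i \in \Rc$. Since we assume $n \geq 3d$, we have $\max\{0,3d-n\}=0$, so \dref{th:poly}{th:poly:unstable} forces $A_i = 0$ for every $i \geq 1$. Hence $\lc X \rc_{\mud} = A_0(x_1,\ldots) \in \invh$, and I would set $P = A_0$. Uniqueness of such a $P$ follows from the algebraic independence of the $x_n$ \rref{lemm:alg_indep}, so there is no freedom to exploit: everything must be extracted from $A_0$ itself. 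By \dref{th:poly}{th:poly:hom} the polynomial $P$ is homogeneous of degree $n$ for the grading of \rref{p:default_grading}, and by \dref{th:poly}{th:poly:Phi} it satisfies $\fdeg P \leq d$.

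The next step is a purely combinatorial argument on the monomials of $P$. Let $X_\alpha$, with $\alpha=(\alpha_1,\ldots,\alpha_m)$ a partition, be a monomial of $P$; then $|\alpha|=n$ and $\fdeg X_\alpha \leq d$. Write $s$ for the number of parts of $\alpha$ equal to $1$, so that $X_1^s$ is the exact power of $X_1$ dividing $X_\alpha$. The elementary inequality $\alpha_i \leq 3\lfloor \alpha_i/2\rfloor$, valid for every part $\alpha_i \geq 2$, combined with \rref{p:fdeg_X_alpha}, yields
\[
n - s = \sum_{\alpha_i \geq 2}\alpha_i \leq 3\sum_{\alpha_i \geq 2}\left\lfloor \frac{\alpha_i}{2}\right\rfloor = 3\,\fdeg X_\alpha \leq 3d,
\]
so $s \geq n-3d$. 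As this holds for every monomial of $P$, I conclude that $X_1^{n-3d}$ divides $P$ (note $n-3d\geq 0$).

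For the refined form $P = X_1^{n-3d+1}Q + a X_1^{n-3d}X_3^d$, I would analyse the equality case of the chain above. A monomial $X_\alpha$ of $P$ fails to be divisible by $X_1^{n-3d+1}$ precisely when $s = n-3d$, which forces equality throughout: $\fdeg X_\alpha = d$ and $\alpha_i = 3\lfloor \alpha_i/2\rfloor$ for every part $\alpha_i \geq 2$. The equation $\alpha_i = 3\lfloor \alpha_i/2\rfloor$ has $\alpha_i=3$ as its only solution with $\alpha_i \geq 2$, so $\alpha$ consists of $1$'s and $3$'s; the constraint $\sum_{\alpha_i\geq 2}\alpha_i = 3d$ then pins down exactly $d$ parts equal to $3$, whence $\alpha=(3^d,1^{n-3d})$ and $X_\alpha = X_1^{n-3d}X_3^d$. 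Thus every monomial of $P$ other than a multiple of $X_1^{n-3d}X_3^d$ is divisible by $X_1^{n-3d+1}$; collecting these into $X_1^{n-3d+1}Q$ and letting $a$ be the coefficient of $X_1^{n-3d}X_3^d$ in $P$ gives the desired expression.

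Essentially all the content sits in \rref{th:poly}, so the proof itself is short; the only genuine obstacle is the bookkeeping in the equality case, namely verifying that $\alpha_i = 3\lfloor \alpha_i/2\rfloor$ isolates $\alpha_i = 3$ and that the weight constraint forces exactly $d$ triples. One should also keep an eye on the degenerate case $d=0$, where $P$ reduces to an integer multiple of a power of $X_1$, consistent with \rref{prop:trivial_normal}; this is already covered by the same inequalities.
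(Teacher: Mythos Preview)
Your proof is correct and follows essentially the same route as the paper: both start from \rref{th:poly}, use \dref{th:poly}{th:poly:unstable} to reduce to $P=A_0$, and then extract the divisibility by $X_1^{n-3d}$ and the refined form from the constraints $\deg P = n$, $\fdeg P \leq d$. The only difference is cosmetic: the paper invokes the prepackaged inequalities \rref{cor:deg_fdeg_3} and \rref{prop:deg_fdeg} (the latter with $s=2$) to handle the combinatorics, whereas you unpack the same content directly via the termwise inequality $\alpha_i \leq 3\lfloor \alpha_i/2\rfloor$ for $\alpha_i\geq 2$ and analyse its equality case. Your equality-case argument (isolating $\alpha_i=3$ as the unique solution) is arguably more transparent than the paper's contradiction via \rref{prop:deg_fdeg}, but the two are doing the same thing.
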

\begin{proof}
Let us apply \rref{th:poly}. In view of \dref{th:poly}{th:poly:unstable}, we have $A_i=0$ for $i>0$, so we may set $P=A_0$. Write $P = (X_1)^m R$, where $R$ is not divisible by $X_1$. By \dref{th:poly}{th:poly:hom}, the polynomial $R$ is homogeneous of degree $n-m$ with respect to the grading \rref{p:default_grading}. Applying \rref{cor:deg_fdeg_3} to a monomial of $R$ not divisible by $X_1$, we deduce that $3\fdeg R \geq n-m$. Since $d \geq \fdeg P= \fdeg R$ by \dref{th:poly}{th:poly:Phi}, we obtain $3d \geq n-m$, hence $m \geq n-3d$. Thus we may write $P=X_1^{n-3d}(X_1Q + T)$, where $Q\in \Zz[\bx]$, and $T \in \Zz[X_2,\ldots]$. We have $d \geq \fdeg P \geq \fdeg T$ by \dref{th:poly}{th:poly:Phi}, and $3\fdeg T \geq \deg T = 3d$ by \rref{cor:deg_fdeg_3}. Therefore $\fdeg T=d$ and $\deg T = 3d$. Assume that $T$ is not a $\Zz$-multiple of $X_3^d$. Then $T$ contains as a monomial a nonzero multiple of $X_3^eU$, where $e<d$ and $U$ is a product of the variables $X_j$ for $j$ even or $j \geq 5$. Applying \eqref{prop:deg_fdeg} with $p=e,s=2$ and $i_1=\cdots=i_p=1$, we obtain
\[
3d = \deg T \leq e/2 + 5(\fdeg T) /2 < d/2 + 5d/2 = 3d,
\]
a contradiction.
\end{proof}

\begin{corollary}
\label{cor:x1^s}
Let $s= \max\{0, 3d-n\}$. Then there exists $P\in \Zz[\bx]$ such that $x_1^s\lc X \rc_{\mud} = P(x_1,\ldots)$ in $\invh$.
\end{corollary}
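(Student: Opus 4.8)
The plan is to read the result straight off the decomposition of Theorem \ref{th:poly}, using the relation $x_1 t = 2$ to clear denominators.

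First I would apply Theorem \ref{th:poly} to $X$, obtaining polynomials $A_i \in \Zz[\bx]$ with $\lc X \rc_{\mud} = \sum_{i \in \Nn} A_i(x_1,\ldots) t^i \in \Rc$. The decisive input is part \dref{th:poly}{th:poly:unstable}, which asserts that $A_i = 0$ whenever $i > \max\{0, 3\dim(X^{\mud}) - n\} = s$. Hence the sum is finite and ranges only over $0 \leq i \leq s$, so that
\[
\lc X \rc_{\mud} = \sum_{i=0}^{s} A_i(x_1,\ldots)\, t^i \in \Rc .
\]

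Next I would multiply this identity by $x_1^s$ and invoke the relation $x_1 t = 2$, which holds in $\Rc$ since $x_1 = 2v = 2t^{-1}$ under the identification of \rref{p:omit_jj} (see \dref{ex:x_123}{ex:x_123:1}). For each index $0 \leq i \leq s$ we have $x_1^s t^i = x_1^{s-i}(x_1 t)^i = 2^i x_1^{s-i}$, whence
\[
x_1^s \lc X \rc_{\mud} = \sum_{i=0}^{s} 2^i\, x_1^{s-i}\, A_i(x_1,\ldots) \in \invh .
\]
Setting $P = \sum_{i=0}^{s} 2^i X_1^{s-i} A_i \in \Zz[\bx]$ then yields $x_1^s \lc X \rc_{\mud} = P(x_1,\ldots)$, as required.

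There is no genuine obstacle at this stage: all the substantive work, in particular the degree estimates forcing $A_i = 0$ for $i > s$, has already been carried out in Theorem \ref{th:poly} (via \rref{prop:Mcc_pol} and the comparison of the two gradings in \rref{cor:deg_fdeg_3}). The only point requiring a little care is the bookkeeping of the exponent: multiplying by exactly $x_1^s$ converts every term $A_i(x_1,\ldots)t^i$ with $i \leq s$ into an honest polynomial in the $x_n$, which recovers the cleaner statement of Theorem \ref{th:small_dim} in the special case $s = 0$.
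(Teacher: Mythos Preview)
Your proof is correct. The paper takes a slightly different route: it replaces $X$ by the product $X \times (\X_1)^s$, whose dimension is $n+s$ and whose fixed locus still has dimension $d$ (since $(\X_1)^{\mud}$ is zero-dimensional), so that $n+s \geq 3d$ and Theorem~\ref{th:small_dim} applies directly to give $x_1^s \lc X \rc_{\mud} = \lc X \times (\X_1)^s \rc_{\mud} = P(x_1,\ldots)$. Your approach bypasses this geometric product and instead reads the result straight off the decomposition of Theorem~\ref{th:poly}, clearing the powers of $t$ via $x_1 t = 2$. Both arguments ultimately rest on the same estimate \dref{th:poly}{th:poly:unstable}; the paper's version is a one-liner but hides the algebra inside the proof of Theorem~\ref{th:small_dim}, while yours makes the mechanism explicit and avoids introducing the auxiliary variety.
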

\begin{proof}
We replace $X$ with $X \times (\X_1)^s$, and apply \rref{th:small_dim}.
\end{proof}

\begin{corollary}
\label{cor:n_3d_det}
Assume that $n \geq 3d$. 
\begin{enumerate}[(i)]
\item \label{cor:n_3d_det:1}
The class $\lc X \rc_{\mud} \in \invh/2$ is determined by $\lc X \rc \in \Laz/2$.

\item \label{cor:n_3d_det:2}
The element $\lc X \rc_{\mud} \in \inv$ is determined by $\lc X \rc \in \Laz$ and $\lc Z \rc \in \Laz$, where $Z$ is the quotient of the blow-up of $X^{\mud}$ in $X$ by its $\mud$-action (see \rref{p:blowup}).
\end{enumerate}
\end{corollary}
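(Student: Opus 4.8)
The plan is to treat the two assertions in turn, the common engine being Theorem~\rref{th:small_dim}: in the range $n\geq 3d$ it guarantees that $\lc X\rc_{\mud}$ equals a genuine polynomial $P(x_1,\ldots)$ in the $x_n$, with no contribution from $t$, so that $\lc X\rc_{\mud}$ lies in the subring $\Zz[x_1,\ldots]\subset\invh$. The whole argument exploits that the $x_n$ are algebraically independent (\rref{lemm:alg_indep}) and that, modulo $2$, they become polynomial generators of $\Laz/2$.

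For part (i) I would compose the inclusion $\invh\subset\Rc$ of \rref{prop:in_R} with the augmentation $\epsilon\colon\Rc\to\Laz/2$ of \rref{p:epsilon}. By \rref{lemm:epsilon_epsilon} this sends $x_n=\lc\X_n\rc_{\mud}$ to $\lc\X_n\rc$ and $\lc X\rc_{\mud}$ to $\lc X\rc\in\Laz/2$. Since by \dref{cor:gen_Laz}{cor:gen_Laz:1} the classes $\lc\X_n\rc$ are polynomial generators of the $\Fd$-algebra $\Laz/2$, the induced map $\Fd[\bx]\to\Laz/2$, $X_n\mapsto\lc\X_n\rc$, is an isomorphism. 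Hence the reduction $P\bmod 2$ is the unique polynomial mapping to $\lc X\rc$, so it is determined by $\lc X\rc\in\Laz/2$; and as the image of $P(x_1,\ldots)$ in $\invh/2$ depends only on $P\bmod 2$, this yields (i).

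For part (ii) I would first reduce, via the injection $\inv\hookrightarrow\invh\times\Laz$, $y\mapsto(\bar y,\varepsilon(y))$, of \rref{prop:inv_invh}, to showing that the image $\bar y:=\lc X\rc_{\mud}\in\invh$ is determined by $\lc X\rc$ and $\lc Z\rc$, the second coordinate $\varepsilon(y)=\lc X\rc$ being already known (\rref{p:varepsilon}). The key is the splitting $\inv=h\Laz\oplus\ker\beta$ coming from \rref{cor:decomp_h}: writing $s\colon\invh\to\ker\beta$ for the resulting $\Laz$-linear section of the projection, every $y\in\inv$ decomposes as $y=h\,\beta(y)+s(\bar y)$. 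Applying $\varepsilon$, using $\varepsilon(h)=2$ (see \rref{p:h_free}) and $\beta(y)=\lc Z\rc$ (\rref{prop:beta}), and setting $\phi=\varepsilon\circ s$, I obtain
\[
\phi(\bar y)=\varepsilon(y)-2\beta(y)=\lc X\rc-2\lc Z\rc .
\]
Thus $\phi(\bar y)$ is pinned down by the given data, and it remains to see that $\phi$ is injective on the polynomial subring $\Zz[x_1,\ldots]$ that contains $\bar y$.

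The main obstacle is precisely this injectivity: $\phi=\varepsilon\circ s$ cannot be injective on all of $\invh$, since only its reduction modulo $2$ is controlled (\rref{prop:inv_invh} shows $\phi$ lifts $\epsilon$) and lifts are ambiguous up to $2\invh$. The argument must therefore use the hypothesis $n\geq 3d$ through the polynomiality of $\bar y$. I would argue by $2$-adic descent: if $c\in\Zz[x_1,\ldots]$ satisfies $\phi(c)=0$ then $\epsilon(c)=0$, whence $c\in2\Zz[x_1,\ldots]$ by the isomorphism $\Fd[\bx]\simeq\Laz/2$ of part (i); writing $c=2c'$ and using that $\Laz$ is $2$-torsion free (\rref{p:K_fund}) gives $\phi(c')=0$, and iterating shows $c\in\bigcap_k 2^k\Zz[x_1,\ldots]=0$ by algebraic independence. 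The delicate point to check is that the successive quotients $c/2^k$ really stay inside $\Zz[x_1,\ldots]$—which they do because $\epsilon$ annihilates $c$ already at the level of polynomials—so that the descent terminates; granting injectivity, $\bar y$ is recovered from $\phi(\bar y)=\lc X\rc-2\lc Z\rc$, and together with $\varepsilon(y)=\lc X\rc$ this determines $\lc X\rc_{\mud}\in\inv$.
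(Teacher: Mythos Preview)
Your proof is correct and follows essentially the same route as the paper. Part~(i) is identical. For part~(ii), the paper works directly with the $\Laz$-linear map $2\beta-\varepsilon\colon\inv\to\Laz$, observes it kills $h$ and hence descends to $\invh$, and then runs the same $2$-adic descent on $\Zz[x_1,\ldots]$; your map $\phi=\varepsilon\circ s$ is precisely the negative of this descended map, so the two arguments coincide up to sign.
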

\begin{proof}
\eqref{cor:n_3d_det:1}: By \dref{cor:gen_Laz}{cor:gen_Laz:1}, the morphism $\invh/2 \to \Laz/2$ induced by $\epsilon$ is injective on the $\Fd$-subalgebra generated by $x_n$ for $n\in \Nn\smallsetminus\{0\}$, and \eqref{cor:n_3d_det:1} follows from \rref{th:small_dim}.

\eqref{cor:n_3d_det:2}: In view of \rref{prop:inv_invh}, it suffices to prove that $\lc X \rc_{\mud} \in \invh$ is determined by those classes. We use the morphism $\beta$ constructed in \rref{prop:beta}. The morphism of $\Laz$-modules $2\beta - \varepsilon \colon \inv \to \Laz$ vanishes on $h$, hence descends to $\invh \to \Laz$. In view of \rref{th:small_dim}, it will suffice to prove that its kernel contains no nonzero element of the form $P(x_1,\ldots)$ with $P\in \Zz[\bx]$. As $\Laz$ is $2$-torsion free, it suffices to prove that such $P$ must belong to $2\Zz[\bx]$. Since $0=\epsilon(P(x_1,\ldots)) =P(\lc \X_1 \rc,\ldots) \in \Laz/2$, this follows from \dref{cor:gen_Laz}{cor:gen_Laz:1}.
\end{proof}

\begin{corollary}
\label{cor:Gm}
Assume that $n \geq 3d$. Then $\lc X \rc_{\mud} \in \invh$ is the image of an element of $\Inv(\Gm)$. Moreover there are smooth projective $k$-schemes $X_1,X_2$ of pure dimension $n$ with a $\Gm$-action, such that $\lc X \rc = \lc X_1 \rc - \lc X_2 \rc \in \Laz/2$ and $ \lc X^{\mud} \rc = \lc (X_1)^{\Gm} \rc  - \lc (X_2)^{\Gm} \rc \in \Laz$.
\end{corollary}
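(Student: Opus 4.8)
The plan is to combine the polynomial expression for $\lc X \rc_{\mud}$ furnished by \rref{th:small_dim} with the compatible $\Gm$-actions on the generators $\X_n$ recorded in \rref{rem:X_n_indep_G}. First, since $n \geq 3d$, \rref{th:small_dim} provides $P \in \Zz[\bx]$ with $\lc X \rc_{\mud} = P(x_1,\ldots) \in \invh$; this $P$ is the polynomial $A_0$ of \rref{th:poly}, hence homogeneous of degree $n$ by \dref{th:poly}{th:poly:hom}. By \rref{rem:X_n_indep_G} the $\mud$-action on each $\X_n$ extends to a $\Gm$-action with the same fixed locus, so the class $\lc \X_n \rc_{\Gm} \in \Inv(\Gm)$ is defined. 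The change-of-groups morphism $\Inv(\Gm) \to \inv$ of \rref{p:change_of_groups} is a ring homomorphism sending $\lc \X_n \rc_{\Gm}$ to $\lc \X_n \rc_{\mud}$; composing with the (multiplicative) localisation $\inv \to \invh$, we obtain a ring morphism under which $w := P(\lc \X_1 \rc_{\Gm},\ldots) \in \Inv(\Gm)$ maps to $P(x_1,\ldots) = \lc X \rc_{\mud}$. This already establishes the first assertion.

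Next I would realise $w$ geometrically. As in \rref{p:Mcc_ring}, the projection formula and the transversality axiom identify the product in $\Inv(\Gm)$ with the external product of varieties carrying the diagonal $\Gm$-action. Writing each $\X_n = \X_n^+ - \X_n^-$ as a difference of genuine smooth projective $\Gm$-varieties of pure dimension $n$ (disjoint unions of copies of the schemes $\Pp(a,b)$ and $H(i,j)$ of \rref{def:X_n}), expanding $w$ and collecting terms of each sign exhibits $w = \lc X_1 \rc_{\Gm} - \lc X_2 \rc_{\Gm}$, where $X_1$ and $X_2$ are disjoint unions of products $\prod_j \X_{n_j}^{\pm}$ with diagonal action. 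Since $P$ is homogeneous of degree $n$, each such product has weight $\sum_j n_j = n$, hence pure dimension $n$; therefore $X_1$ and $X_2$ are of pure dimension $n$.

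It then remains to verify the two identities, both of which follow by applying natural morphisms to the equality $\lc X \rc_{\mud} = \lc X_1 \rc_{\mud} - \lc X_2 \rc_{\mud}$ holding in $\invh$ (since $w$ maps to $\lc X \rc_{\mud}$ and each $\lc X_i \rc_{\Gm}$ maps to $\lc X_i \rc_{\mud}$). Applying $\epsilon \colon \invh \subset \Rc \to \Laz/2$ and invoking \rref{lemm:epsilon_epsilon} gives $\lc X \rc = \lc X_1 \rc - \lc X_2 \rc \in \Laz/2$. Applying the fixed-locus morphism $\invh \to \Laz$ of \rref{p:fix} gives $\lc X^{\mud} \rc = \lc (X_1)^{\mud} \rc - \lc (X_2)^{\mud} \rc \in \Laz$. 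Finally, for each constituent product $V = \prod_j \X_{n_j}^{\pm}$ one has $V^{\Gm} = \prod_j (\X_{n_j}^{\pm})^{\Gm} = \prod_j (\X_{n_j}^{\pm})^{\mud} = V^{\mud}$, the middle equality being \rref{rem:X_n_indep_G} and the outer ones the compatibility of fixed loci with products under the diagonal action; hence $(X_i)^{\Gm} = (X_i)^{\mud}$, and the second identity acquires the stated form.

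I expect the main obstacle to be organisational rather than conceptual: one must check carefully that the change-of-groups map is multiplicative, that products in $\Inv(\Gm)$ correspond to diagonal actions, and—most delicately—that the $\Gm$- and $\mud$-fixed loci of the relevant product varieties coincide, so that the fixed-locus relation proved for $\mud$ can be transported to $\Gm$. It is also worth recording explicitly that the polynomial $P$ of \rref{th:small_dim} is homogeneous of degree $n$, as this is precisely what forces the purity of dimension of $X_1$ and $X_2$.
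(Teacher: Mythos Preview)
Your proposal is correct and follows essentially the same route as the paper's own proof, which is a one-line reference to \rref{rem:X_n_indep_G} and \rref{th:small_dim} together with the morphisms $\epsilon$ (via \rref{lemm:epsilon_epsilon}) and $\varphi$ of \rref{p:fix}. You have simply unpacked the details the paper leaves implicit---the homogeneity of $P$ ensuring pure dimension $n$, the geometric realisation of $w$ as a difference $\lc X_1\rc_{\Gm}-\lc X_2\rc_{\Gm}$, and the verification that $(X_i)^{\Gm}=(X_i)^{\mud}$ so that the fixed-locus identity transfers from $\mud$ to $\Gm$.
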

\begin{proof}
This follows from \rref{rem:X_n_indep_G} and \rref{th:small_dim}, using the morphisms $\invh \subset \Rc \xrightarrow{\epsilon} \Laz/2$ (in view of \rref{lemm:epsilon_epsilon}) and $\varphi \colon \invh \to \Laz$ of \rref{p:fix}.
\end{proof}

\section{Applications}
\label{sect:applications}
In this section we provide a series of applications, where the cobordism ring of involutions is used to prove statements not explicitly involving that ring. We recall that $\Hh=\Kt$ and that $\carac k \neq 2$. 

\subsection{Ambient variety}
\label{sect:5/2}
We will assume (until the end of the paper) that $X$ is an equidimensional smooth projective $k$-scheme with a $\mud$-action (where $\carac k \neq 2$). We let $n=\dim X$ and $d =\dim (X^{\mud})$ (we recall that $\dim \varnothing = -\infty$).

\begin{para}
Let us fix a family $\ell_i \in \Laz^{-i}/2$ for $i\in \Nn\smallsetminus\{0\}$, which generates $\Laz/2$ as a polynomial algebra over $\Fd$. Examples of such families are given in \rref{cor:gen_Laz}.
\end{para}

\begin{theorem}
\label{th:bdeg}
Write $\lc X \rc = P(\ell_1,\ldots) \in \Laz/2$, where $P \in \Fd[\bx]$. Then $\fdeg P \leq d$ (see \rref{p:fdeg}).
\end{theorem}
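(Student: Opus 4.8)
The plan is to reduce to the stable decomposition of \rref{th:poly} and then transport the resulting degree bound across a change of polynomial generators of $\Laz/2$. Since $X$ is equidimensional, it has pure dimension $n$, so \rref{th:poly} applies and produces polynomials $A_i \in \Zz[\bx]$ with $\lc X \rc_{\mud} = \sum_{i \in \Nn} A_i(x_1,\ldots) t^i \in \Rc$, where by \dref{th:poly}{th:poly:Phi} (taken with $i=0$) we have $\fdeg A_0 \leq d$. The guiding idea is that the nonequivariant class $\lc X \rc$ is recovered from this expression by setting $t=0$, which isolates the $A_0$ term and allows the bound on $\fdeg A_0$ to pass to $\fdeg P$.

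Concretely, I would apply the ring morphism $\epsilon \colon \Rc \to \Laz/2$ of \rref{p:epsilon}, which sends $t$ to $0$. Because $\epsilon$ is multiplicative and $\epsilon(t)=0$, every summand $A_i(x_1,\ldots)t^i$ with $i \geq 1$ vanishes, leaving $\epsilon(\lc X \rc_{\mud}) = \epsilon(A_0(x_1,\ldots)) = \overline{A_0}(\lc \X_1 \rc, \ldots) \in \Laz/2$, where $\overline{A_0} \in \Fd[\bx]$ is the reduction of $A_0$ modulo two and I have used $\epsilon(x_n) = \lc \X_n \rc$ (which follows from \rref{lemm:epsilon_epsilon}, since $x_n = \lc \X_n \rc_{\mud}$, noting that $\Laz/2$ has characteristic two so integer coefficients may be reduced). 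On the other hand \rref{lemm:epsilon_epsilon} gives directly $\epsilon(\lc X \rc_{\mud}) = \lc X \rc$. Combining the two computations, I obtain in $\Laz/2$ the identity $\lc X \rc = \overline{A_0}(\lc \X_1 \rc, \ldots) = P(\ell_1,\ldots)$, exhibiting the same element as a polynomial in two systems of generators: the $\ell_i$ by hypothesis of the theorem, and the $\lc \X_n \rc$ by \dref{cor:gen_Laz}{cor:gen_Laz:1} (these are homogeneous of the correct degree $-n$, since $\X_n$ has pure dimension $n$ by \dref{prop:Xn}{prop:Xn:dim}).

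I would then invoke the change-of-generators lemma \rref{lemm:change_of_gen}, applied with $S = \Laz/2$, $R = \Fd$, generators $\lc \X_n\rc$ and $\ell_n$, and the two representations above, to conclude $\fdeg P = \fdeg \overline{A_0}$. Since reducing coefficients modulo two can only delete monomials, one has $\fdeg \overline{A_0} \leq \fdeg A_0$, and therefore $\fdeg P = \fdeg \overline{A_0} \leq \fdeg A_0 \leq d$, which is the desired inequality.

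As for the main obstacle: there is no genuine analytic difficulty here, since all the substantial work is carried by \rref{th:poly} (the stable description of $\lc X \rc_{\mud}$, in particular the bound $\fdeg A_0 \leq d$) and by \rref{lemm:change_of_gen}. The one point demanding care — which I regard as the conceptual crux of the argument — is the passage from the equivariant class to the nonequivariant one via $\epsilon$: one must check that $\epsilon$ really does isolate $A_0$ (i.e.\ that the higher powers of $t$ contribute nothing) and that it intertwines the generators $x_n$ with the classes $\lc \X_n \rc$, so that the pair of generator systems fed into \rref{lemm:change_of_gen} is precisely the relevant one.
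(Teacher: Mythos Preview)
Your argument is correct and follows essentially the same route as the paper: apply \rref{th:poly} to obtain the $A_i$, use $\epsilon$ (which kills $t$) together with \rref{lemm:epsilon_epsilon} to identify $\lc X\rc$ with $\overline{A_0}(\lc\X_1\rc,\ldots)$, and then invoke \rref{lemm:change_of_gen} with the generating family of \dref{cor:gen_Laz}{cor:gen_Laz:1} to transfer the bound $\fdeg A_0\le d$ from \dref{th:poly}{th:poly:Phi} to $\fdeg P$. The only cosmetic difference is that you spell out explicitly why $\epsilon(x_n)=\lc\X_n\rc$ and why the higher-$t$ terms vanish, whereas the paper leaves these implicit.
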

\begin{proof}
Let us apply \rref{th:poly}. Let $\bar{A_0} \in \Fd[\bx]$ be the image of $A_0$. Then $d \geq \fdeg A_0 \geq \fdeg \bar{A_0}$ by \dref{th:poly}{th:poly:Phi}. Since $\lc X \rc = \bar{A_0}(\epsilon(x_1),\ldots) \in \Laz/2$ (recall from \rref{p:epsilon} that $\epsilon(t)=0$), it follows from \dref{cor:gen_Laz}{cor:gen_Laz:1} and \rref{lemm:change_of_gen} that $\fdeg \bar{A_0} = \fdeg P$.
\end{proof}

\begin{corollary}
\label{th:ideal_power}
Let $q \in \Nn$ and $s\in \Nn\smallsetminus\{0\}$. Let $J(s)$ be the ideal of $\Laz/2$ generated by the homogeneous elements of degrees $-2i-1$ for $i=0,\ldots,s-1$. If $\lc X \rc \in \Laz/2$ does not belong to $J(s)^{q+1}$, then
\[
n \leq \Big(2 + \frac{1}{s}\Big) d + q.
\]
\end{corollary}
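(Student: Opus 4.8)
The plan is to translate the ideal-membership hypothesis into a purely combinatorial statement about monomials in the chosen polynomial generators $\ell_i$ of $\Laz/2$, and then to feed the resulting monomial into the degree comparison of \rref{cor:deg_fdeg_2}, using \rref{th:bdeg} to control $\fdeg$.

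First I would identify the ideal $J(s)$ explicitly. Since each $\ell_i$ is homogeneous of degree $-i$ and $\Laz/2=\Fd[\ell_1,\ell_2,\ldots]$, every homogeneous element of degree $-(2i+1)$ is an $\Fd$-combination of monomials $\ell_\beta$ with $|\beta|=2i+1$; such a $\beta$ necessarily has an odd part, and that part is $\leq 2i+1\leq 2s-1$. Hence $J(s)=(\ell_1,\ell_3,\ldots,\ell_{2s-1})$, a monomial ideal generated by variables (this is independent of the chosen generators, as the right-hand side is manifestly contained in $J(s)$ and conversely absorbs every generating homogeneous component). Consequently $J(s)^{q+1}$ is the monomial ideal generated by the degree-$(q+1)$ products of these variables, and a monomial $\ell_\beta$ lies in $J(s)^{q+1}$ precisely when the number of parts of $\beta$ belonging to $\{1,3,\ldots,2s-1\}$, counted with multiplicity, is at least $q+1$.

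Next I would write $\lc X\rc=P(\ell_1,\ldots)$ with $P\in\Fd[\bx]$; since $\lc X\rc$ is homogeneous of degree $-n$ and the $\ell_\beta$ form an $\Fd$-basis of $\Laz/2$, I may replace $P$ by its degree-$n$ homogeneous component, which does not increase $\fdeg$, so $P$ is homogeneous of degree $n$ and still satisfies $\fdeg P\leq d$ by \rref{th:bdeg}. Because $J(s)^{q+1}$ is a monomial ideal, the hypothesis $\lc X\rc\notin J(s)^{q+1}$ forces some monomial $X_\beta$ of $P$ to satisfy $\ell_\beta\notin J(s)^{q+1}$, i.e.\ to have at most $q$ parts in $\{1,3,\ldots,2s-1\}$. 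Writing this monomial as $X_{2i_1+1}\cdots X_{2i_p+1}M$, where $2i_1+1,\ldots,2i_p+1$ are exactly its odd parts that are $\leq 2s-1$ (so $p\leq q$ and $i_k\in\Nn$) and $M$ collects the remaining variables $X_j$ with $j$ even or $j\geq 2s+1$, puts us in the situation of \rref{prop:deg_fdeg}. Applying \rref{cor:deg_fdeg_2} then yields $n=\deg P\leq p+(2+\tfrac1s)\fdeg P\leq q+(2+\tfrac1s)d$, which is the asserted inequality.

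The bookkeeping in the last paragraph is routine; the one point requiring genuine care — and what I regard as the crux — is the first step, namely recognising that the intrinsically defined ideal $J(s)$ collapses to the monomial ideal $(\ell_1,\ell_3,\ldots,\ell_{2s-1})$ for the given generators, and reading off the exact membership criterion for $J(s)^{q+1}$ in terms of counting odd low-degree factors. Once the ideal is described combinatorially, the estimate is an immediate consequence of the two-gradings machinery already developed.
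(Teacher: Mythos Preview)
Your proof is correct and follows the same route as the paper's: write $\lc X\rc=P(\ell_1,\ldots)$, locate a monomial of $P$ with at most $q$ odd parts $\leq 2s-1$ using the hypothesis, and apply \rref{cor:deg_fdeg_2} together with \rref{th:bdeg}. Your explicit identification $J(s)=(\ell_1,\ell_3,\ldots,\ell_{2s-1})$ and the resulting monomial membership criterion for $J(s)^{q+1}$ are exactly the content the paper leaves implicit in its one-line ``$P$ satisfies the conditions of \rref{cor:deg_fdeg_2} for some $p\leq q$''. One minor redundancy: since the $\ell_i$ are algebraically independent, $P$ is already uniquely determined and homogeneous of degree $n$, so the step of passing to its degree-$n$ component is unnecessary (though harmless).
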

\begin{proof}
Write $\lc X \rc = P(\ell_1,\ldots) \in \Laz/2$ with $P \in \Fd[\bx]$. Then $P$ satisfies the conditions of \eqref{cor:deg_fdeg_2}, for some $p \leq q$. The statement follows from \rref{th:bdeg}.
\end{proof}

\begin{remark}
\label{rem:indec}
Taking $s>d$ in \rref{th:ideal_power} and $q=1$, we recover \cite[(7.3.4)]{inv}, which asserts that $n\leq 2d+1$ when $\lc X \rc \in \Laz/2$ is indecomposable (see \rref{p:decomposable}).
\end{remark}

\begin{corollary}
If $n \geq 3d$, then $\lc X \rc \in \Laz/2$ is divisible by $\lc \Pp^1 \rc^{n-3d}$.
\end{corollary}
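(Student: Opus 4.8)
The plan is to transport the divisibility already established at the level of the cobordism ring of involutions down to $\Laz/2$ via the augmentation $\epsilon$. First I would invoke \rref{th:small_dim}: since $n \geq 3d$, there is a polynomial $P \in \Zz[\bx]$ with $\lc X \rc_{\mud} = P(x_1,\ldots) \in \invh$, and moreover $P = X_1^{n-3d+1}Q + aX_1^{n-3d}X_3^d$ for some $Q \in \Zz[\bx]$ and $a \in \Zz$. In particular $P$ is divisible by $X_1^{n-3d}$ in $\Zz[\bx]$, say $P = X_1^{n-3d}R$.

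Next I would apply the morphism of $\Laz$-algebras $\epsilon \colon \Rc \to \Laz/2$ of \rref{p:epsilon} to this identity. On the left, \rref{lemm:epsilon_epsilon} gives $\epsilon(\lc X \rc_{\mud}) = \lc X \rc \in \Laz/2$. On the right, since $\epsilon$ is a ring homomorphism, $\epsilon(P(x_1,\ldots)) = \bar P(\epsilon(x_1),\ldots)$, where $\bar P \in \Fd[\bx]$ denotes the reduction of $P$ modulo two. The one computation to record is $\epsilon(x_1) = \lc \Pp^1 \rc$: indeed $x_1 = \lc \X_1 \rc_{\mud}$ with $\X_1 = \Pp(0,0)$ having underlying scheme $\Pp^1$, so \rref{lemm:epsilon_epsilon} yields $\epsilon(x_1) = \lc \X_1 \rc = \lc \Pp^1 \rc$ (equivalently, $x_1 = 2t^{-1} \equiv \lc \Pp^1 \rc \bmod t\Rc$ by \rref{p:h_first} and \dref{ex:x_123}{ex:x_123:1}).

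Combining these, $\lc X \rc = \bar P(\lc \Pp^1 \rc,\epsilon(x_2),\ldots) \in \Laz/2$. Since $\bar P = X_1^{n-3d}\bar R$ is divisible by $X_1^{n-3d}$ in $\Fd[\bx]$, substituting $\lc \Pp^1 \rc$ for the first variable exhibits $\lc X \rc$ as a multiple of $\lc \Pp^1 \rc^{n-3d}$, which is the claim. I do not expect a genuine obstacle here, as all the substance is carried by \rref{th:small_dim}; the only points needing care are the multiplicativity of $\epsilon$ together with the identification $\epsilon(x_1) = \lc \Pp^1 \rc$, and the degenerate case $X^{\mud} = \varnothing$ (where $d = -\infty$), in which $\lc X \rc_{\mud} = 0$ forces $\lc X \rc = \epsilon(0) = 0$, divisible by every power of $\lc \Pp^1 \rc$.
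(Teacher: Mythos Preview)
Your proof is correct. It is precisely the alternative argument the paper mentions parenthetically (``Alternatively, this follows from \rref{th:small_dim}''). The paper's primary proof instead applies \rref{th:ideal_power} with $s=1$: the ideal $J(1)\subset\Laz/2$ is generated by $\lc\Pp^1\rc$, and the contrapositive of that corollary (taking $q=n-3d-1$) gives $\lc X\rc\in J(1)^{n-3d}=\lc\Pp^1\rc^{n-3d}\Laz/2$. Both routes rest on the same structural input (\rref{th:poly}); your approach is slightly more direct here since \rref{th:small_dim} already packages the $X_1^{n-3d}$-divisibility, while the paper's route via \rref{th:ideal_power} has the advantage of being a special case of a statement that also yields sharper bounds for larger $s$.
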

\begin{proof}
Since $J(1)$ is generated by $\lc \Pp^1 \rc$ (see \dref{cor:gen_Laz}{cor:gen_Laz:1}), we may take $s=1$ in \rref{th:ideal_power}. (Alternatively, this follows from \rref{th:small_dim}).
\end{proof}

We deduce an algebraic version of Boardman's theorem \cite[Theorem 1]{Boardman-BAMS}:
\begin{corollary}
\label{cor:Boardman}
If a Chern number of $X$ is odd, then $n \leq 5d/2$.
\end{corollary}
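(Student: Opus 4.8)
The plan is to obtain the inequality $n\leq 5d/2$ as the special case $s=2$, $q=0$ of \ref{th:ideal_power}: with these values $(2+\tfrac1s)d+q=\tfrac{5d}{2}$ and $J(2)^{q+1}=J(2)$, so the statement follows at once once we know that the hypothesis ``some Chern number of $X$ is odd'' guarantees $\lc X\rc\notin J(2)$. Here $J(2)$ is the ideal of $\Laz/2$ generated by the homogeneous elements of degrees $-1$ and $-3$.

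First I would rephrase the hypothesis. Reducing the inclusion $\Laz\subset\Zz[\bb]$ of \ref{p:c_alpha} modulo $2$ yields a homomorphism of graded rings $\rho\colon\Laz/2\to\Fd[\bb]$, and by \ref{p:class_CHt} one has $\rho(\lc X\rc)=\sum_\alpha c_\alpha(X)\,b_\alpha$. Thus ``some Chern number of $X$ is odd'' means precisely $\rho(\lc X\rc)\neq0$, i.e.\ $\lc X\rc\notin K$, where $K=\ker\rho$. Being the kernel of a ring homomorphism, $K$ is an ideal; so it suffices to establish the inclusion $J(2)\subset K$, for then $\lc X\rc\notin K$ forces $\lc X\rc\notin J(2)$ and \ref{th:ideal_power} applies.

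To check $J(2)\subset K$, since $K$ is an ideal it is enough to see that every homogeneous element of $\Laz/2$ of degree $-1$ or $-3$ lies in $K$; as $c_\alpha$ is additive and vanishes unless $|\alpha|$ equals the common degree, this amounts to the assertion that every smooth projective $k$-variety of dimension $1$ or $3$ has all its Chern numbers even. In dimension $1$ the only Chern number is $c_{(1)}=2g-2$. In dimension $3$ this is the classical fact that complex threefolds bound unorientedly: the families $\{c_\alpha\}$ and the ordinary products of Chern classes are related by a unimodular integral change of basis (monomial versus elementary symmetric functions), so ``all Chern numbers even'' coincides with the vanishing of all Stiefel--Whitney numbers, and for a threefold $c_3=\chi$ is even (the cup pairing on $H^3$ is alternating and nondegenerate), $c_1c_2\equiv24\,\chi(\Oc_X)\equiv0$ by Riemann--Roch, and $c_1^3\equiv0$ by Wu's formula (using $v_1=0$).

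The routine specialisation of \ref{th:ideal_power} and the reformulation of the hypothesis are immediate; the real content, which I expect to be the main obstacle, is the inclusion $J(2)\subset K$, and within it the degree $-3$ case, i.e.\ that the threefold $\X_3$ (equivalently, any threefold) has all Chern numbers even. I would isolate this as a lemma and prove it through the symmetric-function/Stiefel--Whitney identification together with the three vanishings above, or alternatively by appealing to the structure $\mathfrak N_*=\Fd[a_i:i\neq 2^k-1]$ and the identification of $K$ with the kernel of the reduction $\Laz/2\to\mathfrak N_*$.
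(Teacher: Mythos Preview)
Your approach is correct and essentially identical to the paper's: both apply \rref{th:ideal_power} with $s=2$, $q=0$ after verifying that $J(2)$ lies in the kernel of the reduction $\Laz/2 \to \Fd[\bb]$. The only difference is that the paper dispatches the degree $-1$ and $-3$ vanishing by citing \cite[(1.1.4)]{fpt}, whereas you sketch a direct (somewhat topological) argument; your alternative via the structure of $\mathfrak N_*$ is the cleaner route and is exactly what that citation encodes.
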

\begin{proof}
Reduction modulo $2$ induces a ring morphism $\Laz \subset \Zz[\bb] \to \Fd[\bb]$. Its kernel is generated as an abelian group by classes of smooth projective $k$-schemes all of whose Chern numbers are even, and contains all homogeneous elements of degrees $-1$ and $-3$ (see e.g.\ \cite[(1.1.4)]{fpt}). Thus we may take $s=2$ and $q=0$ in \rref{th:ideal_power}.
\end{proof}

\begin{definition}
We say that an element of $x$ of $\Laz$ is \emph{$\alpha$-primitive modulo $2$} if $c_\alpha(x) \not \in 2c_\alpha(\Laz)$ (using the notation of \rref{p:c_alpha}). This property depends only on the class of $x$ in $\Laz/2$, and we will thus use this terminology for elements of $\Laz/2$.
\end{definition}

\begin{remark}
Let $\alpha = (\alpha_1,\dots,\alpha_m)$ be a partition.
\begin{enumerate}[(i)]
\item Let $x\in \Laz$. If $c_\alpha(x)$ is odd, then $x$ is $\alpha$-primitive modulo $2$.

\item Let $x\in \Laz$, and assume that $\alpha \neq \varnothing$. If $c_\alpha(x)$ is not divisible by $4$ and each $\alpha_i+1$ is a power of two, then $x$ is $\alpha$-primitive modulo $2$ (see \cite[(7.3.3)]{inv}).

\item The element $\ell_\alpha$ is $\alpha$-primitive modulo $2$ when $\length(\alpha)=1$ (see \cite[(7.3.2.iii)]{inv}), but not in general. For instance $c_{(11)}(\ell_1^2)$ is divisible by $4$, while $c_{(11)}(\ell_2)$ is not.
\end{enumerate}
\end{remark}

\begin{lemma}
\label{lemm:decomp_primitive}
If $\ell_\beta$ is $\alpha$-primitive modulo $2$, then $\alpha \succ \beta$ (see \rref{p:def_succ}).
\end{lemma}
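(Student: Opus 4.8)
The plan is to compute $c_\alpha(\ell_\beta)$ explicitly and read the condition $\alpha \succ \beta$ directly off the resulting combinatorial sum. Write $\beta = (\beta_1,\ldots,\beta_r)$, so that $\ell_\beta = \ell_{\beta_1}\cdots \ell_{\beta_r}$. First I would iterate the product formula \eqref{eq:c_alpha_prod}, obtaining
\[
c_\alpha(\ell_\beta) = \sum_{\alpha = \alpha^1 \cup \cdots \cup \alpha^r} c_{\alpha^1}(\ell_{\beta_1}) \cdots c_{\alpha^r}(\ell_{\beta_r}) \in \Zz,
\]
where the sum runs over all ways of writing $\alpha$ as an (ordered) union of $r$ partitions.

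The key observation is then a homogeneity argument: each factor $c_{\alpha^i}(\ell_{\beta_i})$ extracts the $b_{\alpha^i}$-coefficient, hence vanishes unless the monomial $b_{\alpha^i}$ is homogeneous of the same degree as $\ell_{\beta_i}$, that is unless $|\alpha^i| = \beta_i$ (compare \rref{p:gen_L}). Consequently only those decompositions $\alpha = \alpha^1 \cup \cdots \cup \alpha^r$ with $|\alpha^i| = \beta_i$ for every $i$ contribute to the sum. But the existence of at least one such decomposition is precisely the meaning of $\alpha \succ \beta$ (see \rref{p:def_succ}). Therefore, if $\alpha \not\succ \beta$, the sum is empty and $c_\alpha(\ell_\beta) = 0$.

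To finish, I would note that $\alpha$-primitivity modulo $2$ of $\ell_\beta$ means $c_\alpha(\ell_\beta) \notin 2c_\alpha(\Laz)$; since $0$ lies in the subgroup $2c_\alpha(\Laz) \subset \Zz$ regardless of what $c_\alpha(\Laz)$ is, this forces in particular $c_\alpha(\ell_\beta) \neq 0$. Combined with the computation above, nonvanishing of $c_\alpha(\ell_\beta)$ yields a decomposition of the required type, i.e.\ $\alpha \succ \beta$, which is the claim. I do not anticipate a genuine obstacle here: the argument is a direct unwinding of \eqref{eq:c_alpha_prod} together with the grading. The only point requiring a little care is to match the ordered decompositions appearing in the iterated product formula with the ordered tuples $(\alpha^1,\ldots,\alpha^r)$ in the definition of $\succ$, and to observe that $\alpha$-primitivity is used only through the weak consequence $c_\alpha(\ell_\beta)\neq 0$ (the divisibility by $2$ plays no role beyond this).
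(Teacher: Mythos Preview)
Your argument is correct and follows essentially the same route as the paper: iterate the product formula \eqref{eq:c_alpha_prod}, use homogeneity to kill every term with $|\alpha^i| \neq \beta_i$, and deduce $\alpha \succ \beta$ from nonvanishing. The only cosmetic point is that the $\ell_i$ live in $\Laz/2$, so to make $c_\alpha(\ell_\beta) \in \Zz$ literally meaningful you should first choose lifts $\ell_i' \in \Laz^{-i}$ (as the paper does); the rest of your computation then applies verbatim to $\ell_\beta'$.
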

\begin{proof}
Let us write $\beta = (\beta_1,\dots,\beta_m)$. For each $i$ choose a lifting $\ell_i' \in \Laz^{-i}$ of $\ell_i$. Since by assumption $c_\alpha(\ell'_\beta)$ is nonzero, it follows from \rref{eq:c_alpha_prod} that there exist partitions  $\alpha^1,\dots, \alpha^m$ such that $\alpha = \alpha^1 \cup \cdots \cup \alpha^m$ and the integer $c_{\alpha^1}(\ell'_{\beta_1}) \cdots c_{\alpha^m}(\ell'_{\beta_m})$ is nonzero. When $i=1,\dots,m$, the integer $c_{\alpha^i}(\ell'_{\beta_i})$ vanishes unless $|\alpha^i| = \beta_i$, which implies the statement.
\end{proof}

\begin{corollary}[of \rref{th:bdeg}]
\label{cor:alpha_primitive}
Let $\alpha = (\alpha_1,\ldots,\alpha_m)$ be a partition. If $\lc X \rc$ is $\alpha$-primitive modulo $2$, then 
\[
\left\lfloor \frac{\alpha_1}{2} \right\rfloor + \cdots + \left\lfloor \frac{\alpha_m}{2} \right\rfloor \leq d.
\]
In other words  $n \leq 2d + c$, where $c$ is the number of indices $i$ such that $\alpha_i$ is odd.
\end{corollary}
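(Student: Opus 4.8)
The plan is to derive both assertions from the degree bound $\fdeg P \le d$ of \rref{th:bdeg}, together with the combinatorial \rref{lemm:decomp_primitive} relating $\alpha$-primitivity to the order $\succ$. First I would write $\lc X \rc = P(\ell_1,\ldots) \in \Laz/2$ with $P = \sum_\beta \mu_\beta X_\beta \in \Fd[\bx]$, the sum ranging over partitions $\beta$; since $\mu_\beta \in \Fd = \{0,1\}$ this reads $\lc X \rc = \sum_{\beta \,:\, \mu_\beta \neq 0} \ell_\beta$ in $\Laz/2$. The objective is to single out one partition $\beta$ with $X_\beta$ a monomial of $P$ for which $\ell_\beta$ is itself $\alpha$-primitive modulo $2$.

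This reduction to a single summand is the crux. I would use that the group morphism $c_\alpha \colon \Laz \to \Zz$ of \rref{p:c_alpha} carries $2\Laz$ into $2c_\alpha(\Laz)$, hence descends to an additive map $\Laz/2 \to \Zz/2c_\alpha(\Laz)$, under which an element is $\alpha$-primitive modulo $2$ exactly when its image is nonzero. Applying this map to $\lc X \rc = \sum_{\beta \,:\, \mu_\beta \neq 0} \ell_\beta$ and using additivity, the nonvanishing of the image of $\lc X \rc$ forces the image of some $\ell_\beta$ (with $\mu_\beta \neq 0$) to be nonzero, so that this $\ell_\beta$ is $\alpha$-primitive modulo $2$. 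The only delicate point is that $\alpha$-primitivity modulo $2$ depends solely on the class in $\Laz/2$ (as recorded in the definition), which legitimises the passage to the quotient.

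With such a $\beta$ fixed, \rref{lemm:decomp_primitive} gives $\alpha \succ \beta$, whence $\fdeg X_\alpha \le \fdeg X_\beta$ by \rref{p:bdeg_succ}. Since $X_\beta$ is a monomial of $P$ we have $\fdeg X_\beta \le \fdeg P$, and $\fdeg P \le d$ by \rref{th:bdeg}. Chaining these gives $\fdeg X_\alpha \le d$, which by \rref{p:fdeg_X_alpha} is exactly $\lfloor \alpha_1/2 \rfloor + \cdots + \lfloor \alpha_m/2 \rfloor \le d$, the first assertion.

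For the reformulation, $\alpha$-primitivity forces $c_\alpha(\lc X \rc) \neq 0$ (as $0 \in 2c_\alpha(\Laz)$); and since $\lc X \rc$ is homogeneous of degree $-n$ while $c_\alpha$, being the $b_\alpha$-coefficient, kills every homogeneous component of degree $\neq -|\alpha|$, this forces $|\alpha| = n$. The identity $|\alpha| - 2\fdeg X_\alpha = c$ from \rref{p:fdeg_X_alpha} together with $\fdeg X_\alpha \le d$ then yields $n - c \le 2d$, i.e.\ $n \le 2d + c$. I expect the single-summand reduction of the second paragraph to be the only real obstacle; the rest is a routine assembly of the cited facts.
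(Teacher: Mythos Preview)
Your proof is correct and follows essentially the same route as the paper's: write $\lc X \rc = P(\ell_1,\ldots)$, extract a monomial $X_\beta$ of $P$ with $\ell_\beta$ $\alpha$-primitive, apply \rref{lemm:decomp_primitive} to get $\alpha \succ \beta$, and chain $\fdeg X_\alpha \le \fdeg X_\beta \le \fdeg P \le d$. Your justification for the existence of such a $\beta$ (via the descended additive map $\Laz/2 \to \Zz/2c_\alpha(\Laz)$) and your derivation of $|\alpha|=n$ are more explicit than the paper's, which simply asserts both facts.
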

\begin{proof}
Write $\lc X \rc = P(\ell_1,\ldots) \in \Laz/2$, with $P \in \Fd[\bx]$. Then $P$ contains a nonzero multiple of a monomial $X_\beta$ such that $\ell_\beta$ is $\alpha$-primitive modulo two. We must have $\alpha \succ \beta$ by \rref{lemm:decomp_primitive}. By \rref{th:bdeg} and \rref{p:bdeg_succ}, we have
\[
d \geq \fdeg P \geq \fdeg X_{\beta} \geq \fdeg X_\alpha = \left\lfloor \frac{\alpha_1}{2} \right\rfloor + \cdots + \left\lfloor \frac{\alpha_m}{2} \right\rfloor.
\]
The last statement follows from the fact that $|\alpha|=n$.
\end{proof}

\subsection{Fixed locus}
\label{sect:smalldim}

\begin{theorem}
\label{th:fixed}
The element $\lc X^{\mud} \rc \in \Laz$ belongs to the subgroup generated by the elements $P(\lc (\X_1)^{\mud} \rc,\ldots)$, where $P \in \Zz[\bx]$ is homogeneous of degree at least $n$ and satisfies $\fdeg P \leq d$.
\end{theorem}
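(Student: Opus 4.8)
The plan is to reduce the statement to the decomposition provided by \rref{th:poly} and then transport it to $\Laz$ via the ``fixed-locus'' morphism of \rref{p:fix}.

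First I would apply \rref{th:poly} to write $\lc X \rc_{\mud} = \sum_{i\in\Nn} A_i(x_1,\ldots)t^i$ in $\Rc$, with $A_i \in \Zz[\bx]$ vanishing for $i$ large, each homogeneous of degree $n+i$ for the grading of \rref{p:default_grading} and satisfying $\fdeg A_i \leq d$ (parts \dref{th:poly}{th:poly:deg} and \dref{th:poly}{th:poly:Phi}). Observe that these are precisely the constraints imposed on the generators $P$ in the present statement, since $n+i\geq n$ for every $i\in\Nn$.

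Next I would apply the $\Laz$-algebra morphism $\varphi\colon \Mcc[v^{-1}]\to\Laz$ of \rref{p:Mcc_base} to this identity, using the identification $\invh \subset \Mcc \subset \Mcc[v^{-1}] = \Rc[t^{-1}]$ with $v=t^{-1}$ recorded in \rref{p:omit_jj}. As $\varphi(v)=1$ we have $\varphi(t)=1$, while by \rref{def:x_n} and \rref{p:fix} the morphism $\varphi$ sends $x_n$ to $\lc (\X_n)^{\mud}\rc$ and $\lc X \rc_{\mud}$ to $\lc X^{\mud}\rc$. Since $\varphi$ is a ring morphism and the $A_i$ have integer coefficients, the image of the right-hand side is $\sum_i A_i(\lc (\X_1)^{\mud}\rc,\ldots)$, so that
\[
\lc X^{\mud}\rc = \sum_{i\in\Nn} A_i(\lc (\X_1)^{\mud}\rc,\ldots) \in \Laz.
\]
As each $A_i$ is homogeneous of degree $\geq n$ with $\fdeg A_i \leq d$, this exhibits $\lc X^{\mud}\rc$ as a finite sum of terms of the required form, proving the theorem.

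I do not expect a genuine obstacle, since the substantive work is already carried by \rref{th:poly}; the remaining steps are bookkeeping. The only point needing care is the compatibility of the two avatars of $\lc X \rc_{\mud}$: the element of $\invh\subset\Rc$ carrying the $A_i$-decomposition, and its image $\nu(\lc X \rc_{\mud}) = \lc N_X\to X^{\mud}\rc \in \Mcc$ on which $\varphi$ is computed. These agree under the identification $v=t^{-1}$, which is what legitimizes the substitution $\varphi(t)=1$.
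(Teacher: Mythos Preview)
Your proposal is correct and follows essentially the same approach as the paper: apply \rref{th:poly} to obtain the decomposition $\lc X \rc_{\mud} = \sum_i A_i(x_1,\ldots)t^i$, then push it through $\varphi\colon\Mcc[v^{-1}]\to\Laz$ using \rref{p:fix} and $\varphi(x_n)=\lc(\X_n)^{\mud}\rc$, and read off the constraints on the $A_i$ from \dref{th:poly}{th:poly:deg} and \dref{th:poly}{th:poly:Phi}. Your extra remark on the compatibility of the two incarnations of $\lc X\rc_{\mud}$ is a helpful clarification but not an additional ingredient.
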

\begin{proof}
Applying the morphism $\varphi \colon \Mcc[v^{-1}] \to \Laz$ of \rref{p:Mcc_base} to the equation of \dref{th:poly}{th:poly:sum} yields, in view of \rref{p:fix},
\[
\lc X^{\mud} \rc = \varphi(\lc X \rc_{\mud}) = \sum_{i=0}^m A_i(\varphi(x_1),\ldots) =  \sum_{i=0}^m A_i(\lc(\X_1)^{\mud} \rc,\ldots)\in \Laz.
\]
The theorem thus follows from \dref{th:poly}{th:poly:deg} and \dref{th:poly}{th:poly:Phi}.
\end{proof}

\begin{corollary}
\label{cor:fixed}
Let $q \in \Nn$ and $s \in \Nn\smallsetminus\{0\}$. Let $L(s)$ be the ideal of $\Laz$ generated by $\lc (\X_{2i+1})^{\mud} \rc$ for $i=0,\ldots,s-1$. If $\lc X^{\mud} \rc$ does not belong to $L(s)^{q+1}$, then
\[
n \leq \Big(2 +\frac{1}{s}\Big)d + q.
\]
\end{corollary}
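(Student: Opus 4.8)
The plan is to deduce this directly from \rref{th:fixed} by a monomial-counting argument, exactly parallel to the proof of \rref{th:ideal_power} (which is the analogous statement for $\lc X \rc \in \Laz/2$ in place of $\lc X^{\mud}\rc \in \Laz$). First I would apply \rref{th:fixed} to write $\lc X^{\mud} \rc$ as a $\Zz$-linear combination of elements $P(\lc (\X_1)^{\mud} \rc,\ldots)$, where each $P \in \Zz[\bx]$ is homogeneous of degree at least $n$ and satisfies $\fdeg P \leq d$. Since $L(s)^{q+1}$ is an ideal of $\Laz$, hence an additive subgroup, the assumption $\lc X^{\mud}\rc \notin L(s)^{q+1}$ forces at least one of these summands to lie outside $L(s)^{q+1}$; I fix such a polynomial $P$.

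Next I would record the elementary observation underlying the definition of $L(s)$: the ideal $L(s)^{q+1}$ is generated by the $(q{+}1)$-fold products of the elements $\lc (\X_{2i+1})^{\mud} \rc$ for $0 \leq i \leq s-1$. Consequently, the evaluation of any monomial $X_\beta$ in which at least $q+1$ of the variables belong to $\{X_1, X_3, \ldots, X_{2s-1}\}$ is a product containing at least $q+1$ of these generators times an element of $\Laz$, and so lies in $L(s)^{q+1}$. Since $P(\lc(\X_1)^{\mud}\rc,\ldots) \notin L(s)^{q+1}$ and this element is the sum of the evaluations of the monomials of $P$, at least one monomial $X_\beta$ of $P$ must evaluate outside $L(s)^{q+1}$; by the previous sentence, at most $q$ of the variables occurring in $X_\beta$ lie in $\{X_1, X_3, \ldots, X_{2s-1}\}$.

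I would then put $X_\beta$ into the shape required by \rref{prop:deg_fdeg}: writing $X_\beta = X_{2i_1+1}\cdots X_{2i_p+1}M$, where the displayed factors are precisely the odd-indexed variables of index at most $2s-1$ (so each $i_\ell \leq s-1$) and $M$ collects the remaining variables (those of even index, or of odd index $\geq 2s+1$), the factor $M$ indeed has the form demanded by \rref{prop:deg_fdeg}, and $p \leq q$ by the count above. Applying \rref{cor:deg_fdeg_2} to $P$ (which is homogeneous for the grading of \rref{p:default_grading} and contains $X_\beta$ as a monomial) gives $\deg P \leq p + (2 + 1/s)\fdeg P \leq q + (2 + 1/s)d$, using $p \leq q$ and $\fdeg P \leq d$. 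Since $\deg P \geq n$ by \rref{th:fixed}, the desired inequality $n \leq (2 + 1/s)d + q$ follows.

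The argument is essentially routine once \rref{th:fixed} is available; the only point requiring care is the translation between membership in $L(s)^{q+1}$ and the number of low odd-indexed variables appearing in a monomial. The mild subtlety is that one must pass from a statement about the single element $\lc X^{\mud}\rc$ to a statement about one monomial of one polynomial $P$, which is exactly where the additive-subgroup property of the ideal $L(s)^{q+1}$ is invoked twice: first to isolate $P$, then to isolate the monomial $X_\beta$.
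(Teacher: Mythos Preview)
Your proof is correct and follows essentially the same route as the paper: apply \rref{th:fixed} to isolate a homogeneous polynomial $P$ with $\fdeg P \leq d$ whose evaluation lies outside $L(s)^{q+1}$, extract a monomial with at most $q$ low odd-indexed variables, and conclude via \rref{cor:deg_fdeg_2}. The paper's proof is simply a terser version of what you wrote.
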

\begin{proof}
By \rref{th:fixed}, there exists a polynomial $P \in \Zz[\bx]$ homogeneous of degree at least $n$ such that $\fdeg P \leq d$ and $P(\lc (\X_1)^{\mud} \rc,\ldots) \not \in L(s)^{q+1}$. Then $P$ must satisfy the conditions of \rref{cor:deg_fdeg_2} for some $p \leq q$, whence the result.
\end{proof}

\begin{corollary}
\label{prop:small_dim_fix}
If $n \geq 3d$, then $\lc X^{\mud} \rc \in 2^{n-3d}\Laz$.
\end{corollary}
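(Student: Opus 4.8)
The plan is to derive this directly from Theorem \rref{th:small_dim} by applying the ``fixed locus'' morphism. Since $n \geq 3d$, that theorem furnishes a polynomial $P \in \Zz[\bx]$, divisible by $X_1^{n-3d}$, such that $\lc X \rc_{\mud} = P(x_1,\ldots) \in \invh$. First I would write $P = X_1^{n-3d}R$ with $R \in \Zz[\bx]$, so that $\lc X \rc_{\mud} = x_1^{n-3d}R(x_1,\ldots)$ in $\invh$.

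Next I would apply the morphism of graded $\Laz$-algebras $\varphi \colon \invh \to \Laz$ of \rref{p:fix} (the composite of $\nu$ with the morphism $\varphi$ of \rref{p:Mcc_base}), which sends $\lc X \rc_{\mud}$ to $\lc X^{\mud}\rc$ and each generator $x_n = \lc \X_n\rc_{\mud}$ to $\lc (\X_n)^{\mud}\rc$. The only input needed is its value on $x_1$: since $\X_1 = \Pp(0,0)$ has exactly two fixed points (equivalently $x_1 = 2v$ by \dref{ex:x_123}{ex:x_123:1} together with $\varphi(v)=1$), we have $\varphi(x_1) = \lc (\X_1)^{\mud}\rc = 2$.

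Applying the ring morphism $\varphi$ to the displayed expression for $\lc X \rc_{\mud}$ then yields
\[
\lc X^{\mud}\rc = \varphi(x_1)^{n-3d}\, R(\varphi(x_1),\varphi(x_2),\ldots) = 2^{n-3d}\, R(\varphi(x_1),\ldots) \in 2^{n-3d}\Laz,
\]
since $R(\varphi(x_1),\ldots) \in \Laz$. This is the assertion.

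I expect essentially no obstacle here: all the genuine difficulty is already concentrated in Theorem \rref{th:small_dim} (resting in turn on \rref{prop:bound} and \rref{th:poly}), and the present statement is a formal consequence, the factor $2^{n-3d}$ arising purely from the divisibility by $X_1^{n-3d}$ and the identity $\varphi(x_1)=2$. I would also note an equally short alternative through \rref{cor:fixed}: taking $s=1$ there, the ideal $L(1)$ is generated by $\lc (\X_1)^{\mud}\rc = 2$, whence $L(1)^{q+1} = 2^{q+1}\Laz$, and the contrapositive of \rref{cor:fixed} applied with $q = n-3d-1$ (together with the trivial case $n=3d$) gives exactly $\lc X^{\mud}\rc \in 2^{n-3d}\Laz$.
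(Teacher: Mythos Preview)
Your proposal is correct. Your primary route via \rref{th:small_dim} and your alternative via \rref{cor:fixed} with $s=1$ are in fact both present in the paper, just with roles reversed: the paper's one-line proof takes $s=1$ in \rref{cor:fixed} (your alternative), while the very next Remark observes that the result also follows from \rref{th:small_dim} (your main argument), and even records the sharper containment $\lc X^{\mud} \rc \in 2^{n-3d}(2\Laz + 3^d\lc\Pp^1\rc^d \Zz)$ that your approach makes available once one uses the more precise form $P = X_1^{n-3d+1}Q + aX_1^{n-3d}X_3^d$ together with $\varphi(x_3) = 3\lc \Pp^1 \rc$.
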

\begin{proof}
Since $\lc (\X_1)^{\mud} \rc=\varphi(x_1) = 2$ by \dref{ex:x_123}{ex:x_123:1}, we may take $s=1$ in \rref{cor:fixed}.
\end{proof}

\begin{remark}
Corollary \rref{prop:small_dim_fix} also follows from \rref{th:small_dim}, which more precisely yields $\lc X^{\mud} \rc \in 2^{n-3d}(2\Laz + 3^d\lc\Pp^1\rc^d \Zz) \subset \Laz$ when $n \geq 3d$.
\end{remark}

\begin{corollary}
\label{cor:n-5d}
Let $q \in \Nn$, and assume that a Chern number of $X^{\mud}$ is not divisible by $2^{q+1}$. Then $n\leq q+ 5d/2$.
\end{corollary}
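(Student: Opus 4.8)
The plan is to apply \rref{cor:fixed} with $s=2$, for which the conclusion reads $n \le (2+\tfrac12)d + q = q + 5d/2$. For $s=2$ the ideal $L(2)\subset\Laz$ is generated by $\lc(\X_1)^{\mud}\rc$ and $\lc(\X_3)^{\mud}\rc$; since $\lc(\X_1)^{\mud}\rc=2$ by \dref{ex:x_123}{ex:x_123:1}, writing $y=\lc(\X_3)^{\mud}\rc$ we have $L(2)=2\Laz+y\Laz$. It therefore suffices to verify the contrapositive of the hypothesis of \rref{cor:fixed}: if $\lc X^{\mud}\rc\in L(2)^{q+1}$, then every Chern number of $X^{\mud}$ is divisible by $2^{q+1}$.

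The two facts about $y$ that I would exploit are that it is homogeneous of degree $-1$ in $\Laz$ (because $(\X_3)^{\mud}$ has pure dimension $1$ by \dref{prop:Xn}{prop:Xn:dim_fixed}, the lower components predicted there having dimension $d-2=-1$), and that its unique Chern number $c_{(1)}(y)$ is even (indeed $\indic_1=2$ by \rref{p:omega}, so $c_{(1)}(y)$ is even by \dref{prop:Xn}{prop:Xn:theta_n}; explicitly $c_{(1)}(y)=-6$ by \dref{lemm:H}{lemm:H:4}). Because $y\in\Laz^{-1}$, the factor $c_{\alpha''}(y)$ in the comultiplication formula \eqref{eq:c_alpha_prod} vanishes unless $\alpha''=(1)$, so for any $w\in\Laz$ and any partition $\alpha$ one has $c_\alpha(yw)=c_{(1)}(y)\sum_{\alpha=\alpha'\cup(1)}c_{\alpha'}(w)$, an even integer. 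I would then prove by induction on $m\ge 0$ (with $L(2)^0=\Laz$) that $c_\alpha(z)\in 2^m\Zz$ for every partition $\alpha$ whenever $z\in L(2)^m$. Writing a generator of $L(2)^m=L(2)\cdot L(2)^{m-1}$ as $(2\lambda+\mu y)w'$ with $\lambda,\mu\in\Laz$ and $w'\in L(2)^{m-1}$, the summand $2\lambda w'$ contributes $2\,c_\alpha(\lambda w')\in 2^m\Zz$ since $\lambda w'\in L(2)^{m-1}$, while $\mu yw'=y(\mu w')$ contributes $c_{(1)}(y)\sum_{\alpha=\alpha'\cup(1)}c_{\alpha'}(\mu w')\in 2\cdot 2^{m-1}\Zz$, using the inductive hypothesis for $\mu w'\in L(2)^{m-1}$ together with the evenness of $c_{(1)}(y)$.

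Taking $m=q+1$ and using \rref{p:c_alpha} to identify $c_\alpha(X^{\mud})$ with $c_\alpha(\lc X^{\mud}\rc)$, this shows that $\lc X^{\mud}\rc\in L(2)^{q+1}$ forces all Chern numbers of $X^{\mud}$ into $2^{q+1}\Zz$; contrapositively, the assumption that some Chern number of $X^{\mud}$ is not divisible by $2^{q+1}$ gives $\lc X^{\mud}\rc\notin L(2)^{q+1}$, and \rref{cor:fixed} with $s=2$ then yields $n\le q+5d/2$. The only genuine work is the inductive divisibility statement of the middle paragraph; its heart is the elementary observation that multiplication by the one-dimensional class $y$, whose degree is even, injects a factor of $2$ into every Chern number, so that the $(q+1)$-st power of $L(2)$ produces divisibility by $2^{q+1}$. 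Everything else is a direct application of the already established Corollary \rref{cor:fixed}.
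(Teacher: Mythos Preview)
Your proof is correct and follows the same strategy as the paper: apply \rref{cor:fixed} with $s=2$ and verify that every Chern number of an element of $L(2)^{q+1}$ lies in $2^{q+1}\Zz$. The only difference is in how that divisibility is established. You run an induction on $m$ using the comultiplication formula \eqref{eq:c_alpha_prod}, exploiting that $y=\lc(\X_3)^{\mud}\rc$ is homogeneous of degree $-1$ with $c_{(1)}(y)$ even. The paper instead observes directly inside the ambient ring $\Zz[\bb]\supset\Laz$ that both generators of $L(2)$ already lie in $2\Zz[\bb]$: indeed $\varphi(x_1)=2$ and $\varphi(x_3)=3\lc\Pp^1\rc=-6b_1$ (using $\lc\Pp^1\rc=-2b_1$). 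Hence $L(2)\subset 2\Zz[\bb]$, so $L(2)^{q+1}\subset 2^{q+1}\Zz[\bb]$, and every $b_\alpha$-coefficient (i.e.\ every Chern number) of an element of $L(2)^{q+1}$ is divisible by $2^{q+1}$. This bypasses both the induction and the comultiplication formula, but the content is the same as yours.
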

\begin{proof}
We take $s=2$ in \rref{cor:fixed}. Since $\lc \Pp^1 \rc=-2b_1 \in \Laz \subset \Zz[\bb]$, the elements $\varphi(x_1)=2$ and $\varphi(x_3)=3\lc \Pp^1 \rc$ (see \rref{ex:x_123}) belong to $2\Zz[\bb]$. Thus $L(2)^{q+1} \subset 2^{q+1}\Zz[\bb]$.
\end{proof}

\begin{remark}
\label{rem:isolated}
Assume that $k$ is algebraically closed, and that the number of isolated fixed points of $X$ is not divisible by $2^{q+1}$. Then \rref{cor:n-5d} implies that $X^{\mud}$ has a component of dimension at least $2(n-q)/5$.
\end{remark}

\subsection{Fixed-dimensional components}
For $i=0,\dots,d$ let us denote by $F_i$ the union of the $i$-dimensional components of $X^{\mud}$.

\begin{corollary}
\label{cor:fixed_component}
Let $s \in \Nn \smallsetminus\{0\}$. Let $I(s)$ be the ideal of $\Laz$ generated by $2$ and the homogeneous elements of degrees $-i$ for $i =1,\dots,s-1$. Let $q \in \Nn$ and $j \in \{0,\dots,d\}$. If $\lc F_{d-j} \rc \not \in I(s)^{q+1}$, then 
\[
n \leq \Big(2+\frac{1}{s}\Big)d + q + \Big\lfloor \frac{j}{2} \Big \rfloor\Big(1 - \frac{2}{s} \Big).
\]
\end{corollary}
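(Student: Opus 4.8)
The plan is to reduce the statement to a bookkeeping problem about the monomials appearing in the polynomial expansion of $\lc X \rc_{\mud}$, and then to run a degree optimisation refining the one behind \rref{cor:fixed}. First I would apply \rref{th:poly} to write $\lc X \rc_{\mud} = \sum_{i} A_i(x_1,\ldots)t^i$ with $A_i = \sum_\beta \mu_{i,\beta} X_\beta$ homogeneous of degree $n+i$ (so every monomial $X_\beta$ of $A_i$ has $\deg X_\beta = n+i \geq n$) and $\fdeg A_i \leq d$. Composing $\nu$ with the morphism $\varphi\colon\Mcc\to\Laz$ (see \rref{p:Mcc_base}, \rref{p:fix}), which is graded for the base-grading by \rref{p:dim_bdim_dim}, sends this to $\lc X^{\mud}\rc = \sum_{i,\beta}\mu_{i,\beta}\prod_l \lc (\X_{\beta_l})^{\mud}\rc$. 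Since $\lc F_{d-j}\rc$ is the homogeneous component of $\lc X^{\mud}\rc$ of degree $-(d-j)$, it is the degree $-(d-j)$ part of that double sum. By \dref{prop:Xn}{prop:Xn:dim_fixed} each fixed locus $(\X_m)^{\mud}$ has components in only two dimensions, so each factor $\lc (\X_m)^{\mud}\rc$ splits into a \emph{top} piece (degree $-\lfloor m/2\rfloor$) and a \emph{bottom} piece (degree $-\lfloor m/2\rfloor+2$ for $m$ odd, $-\lfloor m/2\rfloor+1$ for $m$ even). Because $\lc F_{d-j}\rc \notin I(s)^{q+1}$, at least one monomial $X_\beta$ must contribute a degree $-(d-j)$ term, obtained by selecting a top or bottom piece in each factor, whose value escapes $I(s)^{q+1}$.

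Next I would translate the two available constraints into combinatorics of $X_\beta$. Using $\deg X_\beta = 2\fdeg X_\beta + (\#\text{ odd factors})$ and recording for each factor the excess of its degree over twice its chosen dimension, one gets $\deg X_\beta = 2(d-j) + E$, where $E$ sums a per-factor excess equal to $1$ for a top odd factor, $0$ for a top even factor, $5$ for a bottom odd factor and $2$ for a bottom even factor. The constraint $\fdeg X_\beta \leq d$ becomes a \emph{drop bound} $2a + b \leq j$, where $a$ and $b$ count the factors realised through their bottom (odd, resp.\ even) piece. The hypothesis $\lc F_{d-j}\rc \notin I(s)^{q+1}$ gives the \emph{multiplicity bound}: every factor $\lc (\X_m)^{\mud}\rc$ with $\lfloor m/2\rfloor \leq s-1$ lies in $I(s)$ once its chosen piece is not a dimension-zero odd point class (a homogeneous element of negative degree $\geq-(s-1)$, or the even constant $2$ for $\X_1$, lies in $I(s)$), so the number of such factors is at most $q$ (an even coefficient $\mu_{i,\beta}$ only strengthens this). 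The one subtlety is that a chosen piece of dimension $0$ is a point class, hence escapes $I(s)$ precisely when its class is odd; inspecting \rref{p:pij} and \rref{p:fixed_Hij}, dimension-zero pieces occur only for $m\in\{1,2,5\}$, and the potentially odd ones are exactly the bottom pieces of $\X_2$ and $\X_5$, which I would therefore exclude from the multiplicity count and absorb into $a,b$ instead.

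Finally I would maximise $E = (\#\text{ top odd}) + 5a + 2b$ under these constraints. High top odd factors have half-index $\geq s$, so there are at most $(d-j)/s$ of them, while the low ones are bounded by the multiplicity bound, giving top-odd excess $\leq q + (d-j)/s$; and $5a+2b$ is maximised subject to $2a+b\leq j$ at $5\lfloor j/2\rfloor + 2(j \bmod 2)$, realised using only the unconstrained bottom pieces of $\X_5$ and $\X_2$. Hence
\[
n \leq \deg X_\beta = 2(d-j) + E \leq \Big(2+\frac1s\Big)(d-j) + q + 5\Big\lfloor\frac j2\Big\rfloor + 2(j\bmod 2),
\]
which, after rewriting $(2+\tfrac1s)(d-j)=(2+\tfrac1s)d-(2+\tfrac1s)j$ and using $2 \leq 2 + s^{-1}$, is bounded above by the asserted $(2+\tfrac1s)d + q + \lfloor j/2\rfloor(1-\tfrac2s)$. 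I expect the main obstacle to be the second step rather than the optimisation: correctly capturing $\lc F_{d-j}\rc$ through the degree-dropping caused by the two-dimensional fixed loci of the generators, and in particular isolating the low-dimensional odd point classes of $\X_2$ and $\X_5$ that fall outside $I(s)$, is the delicate point. For $s\geq 2$ the correction $\lfloor j/2\rfloor(1-2/s)$ is nonnegative and one could alternatively deduce the bound from the cruder estimate $n\leq(2+\tfrac1s)d+q$, so the genuinely new information lies in the case $s=1$, where $I(1)=(2)$ and no exceptional factors arise.
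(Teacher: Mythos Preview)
Your argument is correct and follows the same strategy as the paper: reduce via \rref{th:fixed} (your use of \rref{th:poly} amounts to the same thing after applying $\varphi$) to monomials $X_\beta$ in the generators $\lc(\X_m)^{\mud}\rc$, exploit the two-piece structure of each factor coming from \dref{prop:Xn}{prop:Xn:dim_fixed}, count which chosen pieces land in $I(s)$, and optimise degrees. The organisation differs. The paper singles out only the low \emph{odd} factors $y_{i_m}=\lc(\X_{2i_m+1})^{\mud}\rc$, absorbs all even and high factors into a single element $z\in\Laz$, shows that among the $y$'s only the $\X_5$ bottom piece can escape $I(s)$, and then invokes \rref{prop:deg_fdeg} directly. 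You instead track a per-factor ``excess'' $E$ and treat even and odd factors symmetrically, which is why you are led to also name the $\X_2$ bottom piece as exceptional. Both routes give the stated bound; yours is more explicitly combinatorial, the paper's leans on the prepared inequality \rref{prop:deg_fdeg}.

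One point needs correction. Your closing remark that for $s\geq 2$ the corollary ``could alternatively be deduced from the cruder estimate $n\leq(2+\tfrac1s)d+q$'' is false: that cruder estimate does not follow from the hypothesis. Take $X=\X_5\times\X_5$, so $n=10$ and $d=4$; the dimension-zero part of $X^{\mud}$ is a single rational point, hence $\lc F_0\rc=1\notin I(3)$. With $s=3$, $q=0$, $j=4$ the corollary gives exactly $n\leq 10$, while $(2+\tfrac13)\cdot4=\tfrac{28}{3}<10$. The correction $\lfloor j/2\rfloor(1-\tfrac2s)$ is therefore genuinely needed for $s\geq 3$, not only for $s=1$. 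Your own optimisation already accounts for this through the $5a$ contribution of the $\X_5$ bottom pieces, so the main argument stands; simply drop the remark.
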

\begin{proof}
For $e\in \Zz$, we will denote by $\pi_e \colon \Laz \to \Laz^e$ the projection to the homogeneous component of degree $e$. For $i\in \Nn$, let us write $y_i = \lc (\X_{2i+1})^\mud \rc \in \Laz$. Let $x=y_{i_1} \cdots y_{i_p}z$, where $i_1,\dots,i_p \in \{1,\dots,s-1\}$ and  $z\in \Laz$ are such that $\dim(x) \leq d$. By \dref{cor:gen_Laz}{cor:gen_Laz:0} we have $\dim(y_i) = i$ for all $i \in \Nn\smallsetminus\{0\}$, so that $\dim(z) \leq d -i_1-\cdots-i_p$ (as $\Laz$ is an integral domain). Moreover $\pi_e(y_i) =0$ if $e \not \in \{-i,2-i\}$ by \dref{prop:Xn}{prop:Xn:dim_fixed}. It follows that $\pi_{j-d}(x)$ is a sum of products $\pi_{e_1}(y_{i_1})\cdots\pi_{e_p}(y_{i_p})\pi_e(z)$ where $e_m \neq -i_m$ for at most $\lfloor j/2 \rfloor$ values of $m$, in which case $e_m =2-i_m$ (and $i_m\geq 2$). In particular $\pi_{j-d}(x) \in I(s)^{p-r}$, where $r=\min(c,\lfloor j/2 \rfloor)$ and $c$ is the number of indices $m \in \{1,\dots,p\}$ such that $i_m=2$. Thus the assumption of the corollary implies that $\lc X^{\mud} \rc$ does not belong to the subgroup generated by above elements $x$ satisfying $p-r\geq q+1$.

Now by \rref{th:fixed}, there exists a polynomial $P \in \Zz[\bx]$, each of whose nonzero monomial has degree at least $n$, such that $\fdeg P \leq d$ and $\lc X^{\mud} \rc= P(\lc (\X_1)^{\mud} \rc,\ldots)$. Then at least one nonzero monomial of $P$ must satisfy the conditions of \rref{prop:deg_fdeg} for some $p \leq q+r$, where $r\leq \lfloor j/2\rfloor$ and $i_m=2$ for at least $r$ values of $m$. We obtain
\[
n \leq (2+s^{-1})d+r(s-2)s^{-1}+p-r \leq (2+s^{-1})d+\lfloor j/2 \rfloor(s-2)s^{-1} +q.\qedhere
\]
\end{proof}

\begin{remark}
\begin{enumerate}[(i)]
\item Taking $s=1$ in \rref{cor:fixed_component}, we obtain that if $\lc F_{d-j} \rc \not \in 2^{q+1} \Laz$, then $n \leq 3d+q - \lfloor j/2 \rfloor$. This refines \rref{prop:small_dim_fix}.

\item Taking $s=d+1$ and $q=1$ in \rref{cor:fixed_component}, we obtain that if $\lc F_{d-j} \rc$ is indecomposable in $\Laz/2$ (see \rref{p:decomposable}), then $n \leq 2d+1+j/2$.
\end{enumerate}
\end{remark}

\begin{corollary}
\label{cor:c_alpha_component}
Let $q \in \Nn$ and $j \in \{0,\dots,d\}$. If $c_\alpha(F_{d-j}) \not \in 2^{q+1}c_\alpha(\Laz)$, then
\[
n \leq 2d + q + \length(\alpha) +j/2.
\]
\end{corollary}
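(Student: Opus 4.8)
The plan is to run the same machinery as in the proof of \rref{cor:alpha_primitive}, but to keep track of the exact powers of two dividing the relevant Chern numbers, rather than merely of their parities. \textbf{Reduction to degree $-|\alpha|$.} Since $c_\alpha$ is the $b_\alpha$-coefficient map (see \rref{p:c_alpha}), it vanishes on $\Laz^{-e}$ for every $e\neq |\alpha|$. As $F_{d-j}$ has pure dimension $d-j$, the hypothesis $c_\alpha(F_{d-j})\notin 2^{q+1}c_\alpha(\Laz)$ already forces $|\alpha|=d-j$ (otherwise $c_\alpha(F_{d-j})=0$), and writing $\lc X^\mud\rc=\sum_i\lc F_i\rc$ the same vanishing gives $c_\alpha(F_{d-j})=c_\alpha(X^\mud)$.

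\textbf{Reduction to one monomial and one splitting.} Next I apply \rref{th:fixed}. Because $2^{q+1}c_\alpha(\Laz)$ is a subgroup of $\Zz$, expanding $\lc X^\mud\rc$ as a $\Zz$-combination of classes $P(\lc(\X_1)^\mud\rc,\ldots)$ with $P$ homogeneous of degree $\geq n$ and $\fdeg P\leq d$, and then expanding each $P$ into monomials $X_{\beta_1}\cdots X_{\beta_r}$, I may select one such monomial, with $\sum_l\beta_l\geq n$ and $\sum_l\lfloor\beta_l/2\rfloor=\fdeg\leq d$ (using \rref{p:fdeg_X_alpha} and $\varphi(x_n)=\lc(\X_n)^\mud\rc$, see \rref{p:fix}), whose image $\prod_l\lc(\X_{\beta_l})^\mud\rc$ satisfies $c_\alpha\big(\prod_l\lc(\X_{\beta_l})^\mud\rc\big)\notin 2^{q+1}c_\alpha(\Laz)$. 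Expanding this by the iterated comultiplication formula \rref{eq:c_alpha_prod} and using the subgroup property once more, I obtain a splitting $\alpha=\alpha^1\cup\cdots\cup\alpha^r$ with
\[
\prod_{l=1}^r c_{\alpha^l}\big((\X_{\beta_l})^\mud\big)\notin 2^{q+1}c_\alpha(\Laz);
\]
in particular every factor is nonzero.

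\textbf{Dimension and parity count.} For each $l$ the nonvanishing of $c_{\alpha^l}((\X_{\beta_l})^\mud)$ forces $|\alpha^l|$ to be the dimension of a component of $(\X_{\beta_l})^\mud$, which by \dref{prop:Xn}{prop:Xn:dim_fixed} is $\lfloor\beta_l/2\rfloor$, or $\lfloor\beta_l/2\rfloor-1$ (for $\beta_l$ even), or $\lfloor\beta_l/2\rfloor-2$ (for $\beta_l$ odd). Setting $s_l=\lfloor\beta_l/2\rfloor-|\alpha^l|\in\{0,1,2\}$, a direct computation gives $\beta_l-2|\alpha^l|=2s_l+\epsilon_l$, where $\epsilon_l=1$ if $\beta_l$ is odd and $\epsilon_l=0$ otherwise. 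Summing over $l$, and using $\sum_l|\alpha^l|=|\alpha|=d-j$ together with $\sum_l s_l=\fdeg-(d-j)\leq j$, I obtain
\[
n\leq\sum_l\beta_l=2(d-j)+2\sum_l s_l+\#\{l:\beta_l\text{ odd}\}\leq 2d+\#\{l:\beta_l\text{ odd}\}.
\]
It remains to bound $\#\{l:\beta_l\text{ odd}\}$ by $q+\length(\alpha)+j/2$. The odd factors whose chosen component is of lower dimension have $s_l=2$, so there are at most $\tfrac12\sum_l s_l\leq j/2$ of them. The remaining odd factors are top-dimensional, i.e.\ $\beta_l=2|\alpha^l|+1$; among these, the ones with $\alpha^l\neq\varnothing$ each consume at least one part of $\alpha$, hence number at most $\length(\alpha)$, while the ones with $\alpha^l=\varnothing$ are exactly the copies of $\X_1$, each contributing the even factor $c_\varnothing((\X_1)^\mud)=2$ (see \dref{ex:x_123}{ex:x_123:1}). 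Since $c_{\alpha^l}((\X_{\beta_l})^\mud)\in c_{\alpha^l}(\Laz)$, every factor already carries at least $v_2(c_{\alpha^l}(\Laz))$ powers of two, and because the parts of the $\alpha^l$ are precisely the parts of $\alpha$ these intrinsic powers sum to $v_2(c_\alpha(\Laz))$; comparing with the displayed non-divisibility then shows that the ``surplus'' powers of two, which all come from the $\X_1$-factors, number at most $q$, so there are at most $q$ copies of $\X_1$. Adding the three counts gives the desired bound.

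\textbf{Main obstacle.} The delicate step is the final $2$-adic accounting: one must establish that the non-$\X_1$ factors contribute \emph{exactly} the intrinsic $2$-divisibility $v_2(c_\alpha(\Laz))$ of the target subgroup, so that only genuine copies of $\X_1$ are charged against $q$. This depends on the explicit Chern-number data of \rref{prop:Xn} and \rref{lemm:H} — in particular on the fact that the top-dimensional component of $(\X_{2w+1})^\mud$ has its degree-$w$ Chern number odd or congruent to $2$ modulo $4$ precisely according to whether $w+1$ is a power of two — combined with the corresponding divisibility properties of $c_\alpha$ on $\Laz$ (as in \cite[\S7.3]{inv}). Matching these two sources of $2$'s so as to produce the clean coefficients $q$, $\length(\alpha)$, $j/2$ is where the real work lies; everything else is the bookkeeping above.
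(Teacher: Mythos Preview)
Your reduction to a single monomial and a single splitting of $\alpha$ is sound, and the dimension bookkeeping leading to $n \leq 2d + \#\{l:\beta_l\text{ odd}\}$ is correct. But the $2$-adic step you flag as the ``main obstacle'' is a genuine gap, and your sketch does not close it. The claim that the intrinsic powers of two ``sum to $v_2(c_\alpha(\Laz))$'' is false: for $\alpha=(1,3)$ split as $(1)\cup(3)$ one has $v_2(c_{(1)}(\Laz))+v_2(c_{(3)}(\Laz))=1+1=2$, while $c_{(1,3)}(\Laz)=2\Zz$ (since $c_{(1,3)}(y_1y_3)=4$ and $c_{(1,3)}(\lc\Pp^4\rc)=30$), so $v_2(c_{(1,3)}(\Laz))=1$. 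What your argument actually requires is the inclusion $\prod_l c_{\alpha^l}(\Laz)\subset c_\alpha(\Laz)$, so that $p$ copies of $\X_1$ force the product into $2^p c_\alpha(\Laz)$ and hence $p\leq q$. You have not established this inclusion, and the data you invoke from \rref{prop:Xn} and \rref{lemm:H} concern only $c_{(w)}$ on the top component of $(\X_{2w+1})^{\mud}$, not the general $c_{\alpha^l}$ appearing in your splitting. Note also that when $m_\alpha$ has odd prime factors, a purely $2$-adic comparison is not even the right condition; the full divisibility $m_\alpha\mid\prod_l m_{\alpha^l}$ is what is needed.

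The paper's argument bypasses all of this. By \eqref{eq:c_alpha_prod}, the functional $c_\alpha$ annihilates any product of more than $\length(\alpha)$ homogeneous elements of nonzero degree; hence, for the ideal $I(s)$ of \rref{cor:fixed_component} (generated by $2$ and the homogeneous elements of degrees $-1,\dots,-(s-1)$), any generator of $I(s)^{q+1+\length(\alpha)}$ either involves more than $\length(\alpha)$ nonzero-degree factors (so $c_\alpha$ vanishes) or at least $q+1$ factors of $2$, giving $c_\alpha(I(s)^{q+1+\length(\alpha)})\subset 2^{q+1}c_\alpha(\Laz)$. The hypothesis then yields $\lc F_{d-j}\rc\notin I(s)^{q+1+\length(\alpha)}$, and one applies \rref{cor:fixed_component} with $q$ replaced by $q+\length(\alpha)$ and any $s>d$. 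This is a two-line reduction, with no need to analyse individual Chern numbers of the $(\X_m)^{\mud}$.
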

\begin{proof}
It follows from \rref{eq:c_alpha_prod} that $c_\alpha(z_1\cdots z_p) =0$ when $p > \length(\alpha)$ and each $z_i \in \Laz$ is homogeneous of nonzero degree. Thus, in the notation of \rref{cor:fixed_component}, for any $s$ we have $c_\alpha(I(s)^{q +1 + \length(\alpha)}) \in 2^{q+1}c_\alpha(\Laz)$. Therefore the assumption implies that  $\lc F_{d-j} \rc \not \in I(s)^{q +1 + \length(\alpha)}$, and the statement follows by taking $s>d$ in \rref{cor:fixed_component}.
\end{proof}

\begin{remark}
Corollary \rref{cor:c_alpha_component} implies that if $c_\alpha(X^{\mud})$ is not divisible by $2^{q+1}$, then $n \leq 5d/2 + q + \length(\alpha) - |\alpha|/2$. This improves the bound of \rref{cor:n-5d} for partitions $\alpha$ whose average index is greater than $2$.
\end{remark}

\begin{proposition}
\label{prop:top_indec}
Assume that $n=2d+1$ with $d>0$. Then $\lc X \rc$ is indecomposable in $\Laz/2$ (see \rref{p:decomposable}) if and only if $\lc F_d \rc$ is so.
\end{proposition}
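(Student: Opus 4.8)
The plan is to reduce both indecomposability conditions to a single numerical condition on the polynomials $A_i$ supplied by \rref{th:poly}. Applying that theorem, I would write $\lc X \rc_{\mud} = \sum_{i\in\Nn} A_i(x_1,\ldots)t^i \in \Rc$, where each $A_i$ is homogeneous of degree $n+i=2d+1+i$ for the grading of \rref{p:default_grading}, $A_i \in \Zz[X_2,\ldots]$ for $i\geq 1$, and $\fdeg A_i \leq d$. Throughout, indecomposability of a homogeneous element of $\Laz/2$ is detected modulo decomposables (\rref{p:decomposable}), using that each graded piece of the module of indecomposables is one-dimensional and that the length-one monomials in a system of polynomial generators are exactly the indecomposable ones. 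The whole argument then turns on degree bookkeeping singling out the monomial $X_{2d+1}$ in $A_0$.

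For the ambient class, I would apply $\epsilon$ of \rref{p:epsilon}. Since $\epsilon(t)=0$ and $\epsilon(x_m)=\lc \X_m \rc\in\Laz/2$ by \rref{lemm:epsilon_epsilon}, this gives $\lc X \rc = A_0(\lc \X_1 \rc,\ldots)\in \Laz/2$, homogeneous of degree $-(2d+1)$. As the $\lc \X_m \rc$ are polynomial generators of $\Laz/2$ by \dref{cor:gen_Laz}{cor:gen_Laz:1}, every monomial of length $\geq 2$ is decomposable, while the only length-one monomial of degree $2d+1$ is $X_{2d+1}$; hence $\lc X \rc$ is indecomposable if and only if the coefficient of $X_{2d+1}$ in $A_0$ is odd.

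For the fixed locus, I would apply the morphism $\varphi$ of \rref{p:Mcc_base}, using \rref{p:fix}, $\varphi(t)=1$, and $\varphi(x_m)=\lc (\X_m)^{\mud} \rc=:y_m$, to obtain $\lc X^{\mud}\rc=\sum_i A_i(y_1,\ldots)$; then $\lc F_d\rc$ is its degree $-d$ component. Writing $z_m$ for the top-dimensional part of $y_m$ (homogeneous of degree $-\fdeg X_m$ by \dref{prop:Xn}{prop:Xn:dim_fixed}, since $\dim(\X_m)^{\mud}=\lfloor m/2\rfloor$), and noting $\dim y_\beta=\fdeg X_\beta\leq \fdeg A_i\leq d$ for each monomial $X_\beta$ of $A_i$ (as $\Laz$ is a domain), only monomials with $\fdeg X_\beta=d$ contribute, with degree $-d$ part $z_\beta=\prod_j z_{\beta_j}$. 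Modulo $2$, monomials of length $\geq 2$ are decomposable and any monomial involving $X_1$ vanishes because $z_1=\lc (\X_1)^{\mud}\rc=2$; so only the length-one monomials $X_{2d}$ and $X_{2d+1}$ can survive modulo decomposables. But $X_{2d}$, of degree $2d$, cannot appear in any $A_i$ (all of degree $\geq 2d+1$), while $X_{2d+1}$ appears only in $A_0$. Thus the class of $\lc F_d\rc$ modulo decomposables equals $c\cdot [z_{2d+1}]$, where $c$ is the coefficient of $X_{2d+1}$ in $A_0$.

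The hard part is to confirm that $z_{2d+1}$ is itself a polynomial generator of $\Laz/2$, so that $[z_{2d+1}]\neq 0$; this is where the hypothesis $d>0$ enters, guaranteeing $z_{2d+1}$ lies in positive codegree. I would compute $c_{(d)}(z_{2d+1})=c_{(d)}((\X_{2d+1})^{\mud})$ — the $(d-2)$-dimensional part of the fixed locus contributing nothing, as $c_{(d)}$ vanishes on $\Laz^{-j}$ for $j\neq d$ by \rref{p:gen_L} — which by \dref{prop:Xn}{prop:Xn:theta_n} equals $\indic_d-2(d+1)s_{2d+1}$, and then verify the generator criterion of \rref{p:gen_L} case by case: when $d+1$ is not a power of two this integer is odd, and when $d+1$ is a power of two it is $\equiv 2\bmod 4$. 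Hence $[z_{2d+1}]\neq 0$, so $\lc F_d\rc$ is indecomposable if and only if $c$ is odd — the same condition obtained for $\lc X\rc$. This yields the equivalence and completes the proof.
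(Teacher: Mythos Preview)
Your proof is correct and follows essentially the same approach as the paper's: both apply \rref{th:poly}, reduce indecomposability of $\lc X\rc$ in $\Laz/2$ to the parity of the coefficient of $X_{2d+1}$ in $A_0$ via \dref{cor:gen_Laz}{cor:gen_Laz:1}, and then analyze the degree $-d$ component of $\lc X^{\mud}\rc=\sum_i A_i(\varphi(x_1),\ldots)$, showing that monomials of length $\geq 2$ (including those involving $X_1$, since $\varphi(x_1)=2$) contribute decomposably, while for $i>0$ no length-one monomial can occur because $\deg X_\alpha>2d+1$ together with $\fdeg X_\alpha\leq d$ forces $\length(\alpha)>1$. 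The only expository difference is that you spell out the verification that the degree $-d$ part $z_{2d+1}$ of $\lc(\X_{2d+1})^{\mud}\rc$ is indecomposable in $\Laz/2$ using \dref{prop:Xn}{prop:Xn:theta_n} and the criterion of \rref{p:gen_L}, whereas the paper simply cites \rref{cor:gen_Laz} (whose part \eqref{cor:gen_Laz:2} already encodes this computation).
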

\begin{proof}
Let us apply \rref{th:poly}. We will use the morphism $\varphi \colon \Mcc[v^{-1}] \to \Laz$ of \rref{p:Mcc_base}. Using the fact that $\varphi(x_1)=2$, we see that the component of degree $-\fdeg(X_\alpha)$ of $\varphi(x_\alpha)$ is decomposable in $\Laz/2$ when $\length(\alpha) >1$. Since both $\lc \X_n \rc$ and the component of degree $-d$ of $\varphi(x_n)$ are indecomposable in $\Laz/2$ by \rref{cor:gen_Laz}, we deduce that $\lc X \rc =  A_0(\lc \X_1 \rc,\ldots)$ is indecomposable in $\Laz/2$ if and only if the component of degree $-d$ of $A_0(\varphi(x_1),\ldots)$ is indecomposable in $\Laz/2$ (these conditions are equivalent to the fact that $A_0$ contains as a monomial an odd multiple of $X_n$). Now for $i>0$, assume that $A_i$ contains a nonzero multiple of $X_\alpha$ as a monomial. Since $\fdeg(X_\alpha) \leq d$ by \dref{th:poly}{th:poly:Phi}, we must have $\length(\alpha)>1$ (because $\deg(X_\alpha) > 2d+1$), and thus the component degree $-d$ of $\varphi(x_\alpha)$ is decomposable in $\Laz/2$ by the observation above. Since $\lc X^\mud \rc = A_0(\varphi(x_1),\ldots) + \cdots + A_m(\varphi(x_1),\ldots)$ by \rref{p:fix}, the statement follows.
\end{proof}

\subsection{The Euler number}

\begin{para}
The \emph{Euler number} of a smooth projective $k$-scheme $S$ of pure dimension $m$ is the integer $\chi(S) = \deg c_m(\Tan_S)$. This integer depends only on $\lc S \rc \in \Laz$. Indeed, the ring morphism $\Laz \subset \Zz[\bb] \to \Zz$ given by $b_i \mapsto (-1)^i$ sends $\lc S \rc$ to $\chi(S)$ (see \cite[(6.1.6)]{inv}), and will be denoted again by $\chi \colon \Laz \to \Zz$.
\end{para}

\begin{remark}
It is well-known that the integer $\chi(S)$ coincides with the alternate sum of the $\ell$-adic Betti numbers of $S$, for $\ell \neq \carac k$.
\end{remark}

\begin{para}
\label{p:Euler_odd}
The integer $\chi(S)$ is even when $S$ has pure odd dimension, see e.g.\ \cite[(6.1.6), (6.1.7)]{inv}. This also follows from \dref{cor:gen_Laz}{cor:gen_Laz:1} and \rref{lemm:chi_X} below.
\end{para}

\begin{proposition}[{\cite[(6.2.5)]{inv}}]
\label{prop:Euler_odd}
If $\chi(X)$ is odd, then $n \leq 2 d$. If $n$ is odd and $\chi(X)$ is not divisible by $4$, then $n\leq 2 d +1$.
\end{proposition}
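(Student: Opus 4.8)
The plan is to feed the decomposition of \rref{th:poly} into the Euler number morphism $\chi\colon \Laz \to \Zz$ and read off the two bounds by counting odd parts of monomials. Write $\lc X \rc_{\mud} = \sum_{i} A_i(x_1,\ldots)t^i$ as in \dref{th:poly}{th:poly:sum}, with $A_0$ homogeneous of degree $n$ and $\fdeg A_i \leq d$ for all $i$. Since the individual terms involving $t$ need not be geometric, I would first lift this identity into $\Hh_{\mud}(\Speck)$: the localisation map $\Hh_{\mud}(\Speck) \to \Hh_{\mud}(\Speck)[t^{-1}]$ has image $\Rc$ with kernel $h\Laz$ (compare the proof of \rref{lemm:epsilon_epsilon} and the sequence of \rref{cor:decomp_h}), so I may write
\[
\lc X \rc_{\mud} = \sum_{i} A_i(\lc \X_1 \rc_{\mud},\ldots)\,t^i + h w \in \Hh_{\mud}(\Speck)
\]
for some $w$, which by the injectivity of multiplication by $h$ \rref{cor:decomp_h} and homogeneity lies in $\Laz^{-n}$. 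Applying the ring morphism $\varepsilon$ of \rref{p:varepsilon} (which satisfies $\varepsilon(t)=0$, $\varepsilon(h)=2$ and $\varepsilon(\lc \X_m \rc_{\mud}) = \lc \X_m \rc$) annihilates every term with $i\geq 1$, giving $\lc X \rc = A_0(\lc \X_1 \rc,\ldots) + 2w$ in $\Laz$, and hence $\chi(X) = \chi\big(A_0(\lc \X_1 \rc,\ldots)\big) + 2\chi(w)$.

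The combinatorial input is the parity of the Euler numbers of the generators: for $m$ even, $\X_m = \Pp(d,d-1)$ has underlying scheme $\Pp^m$, so $\chi(\X_m)=m+1$ is odd, whereas for $m$ odd the (virtual) variety $\X_m$ is odd-dimensional, so $\chi(\X_m)$ is even by \rref{p:Euler_odd}. Thus for a monomial $X_\beta$ of $A_0$ its contribution to $\chi\big(A_0(\lc \X_1 \rc,\ldots)\big)$ is a multiple of $\prod_j \chi(\X_{\beta_j})$, which is divisible by $2^{c}$, where $c$ is the number of odd parts of $\beta$. By \rref{p:fdeg_X_alpha} we have $c = |\beta| - 2\fdeg X_\beta = n - 2\fdeg X_\beta \geq n - 2d$, since $\fdeg X_\beta \leq \fdeg A_0 \leq d$. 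The first assertion then falls out: if $\chi(X)$ is odd, then $\chi\big(A_0(\lc \X_1 \rc,\ldots)\big)$ is odd (the term $2\chi(w)$ being even), so some monomial $X_\beta$ of $A_0$ must have $\prod_j\chi(\X_{\beta_j})$ odd, forcing $c=0$, i.e.\ $n = 2\fdeg X_\beta \leq 2d$.

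For the second assertion, working modulo $4$ is what makes the passage through $\Hh_{\mud}(\Speck)$ (rather than merely through $\invh \subset \Rc$) essential, and this is the main obstacle: the class in $\invh$ determines $\lc X \rc$ only modulo $2$, so the integral correction term $hw$ cannot be discarded. This is precisely where odd-dimensionality enters. Since $n$ is odd, $w \in \Laz^{-n}$ is a $\Zz$-combination of classes of pure $n$-dimensional varieties, so $\chi(w)$ is even by \rref{p:Euler_odd}; hence $2\chi(w) \equiv 0 \pmod 4$ and $\chi(X) \equiv \chi\big(A_0(\lc \X_1 \rc,\ldots)\big) \pmod 4$. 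If moreover $4 \nmid \chi(X)$, then some monomial $X_\beta$ of $A_0$ has $4 \nmid \prod_j\chi(\X_{\beta_j})$, which by the divisibility $2^{c}\mid \prod_j\chi(\X_{\beta_j})$ forces $c \leq 1$, whence $n = 2\fdeg X_\beta + c \leq 2d+1$. The delicate point throughout is to keep the bookkeeping modulo $4$ honest: one must track $hw$ integrally and exploit that, for $n$ odd, its $\varepsilon$-image $2w$ contributes a multiple of $4$, rather than collapsing everything to $\Laz/2$ as the $\invh$-structure alone would.
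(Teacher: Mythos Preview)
Your argument is correct, and its core coincides with the paper's: the decomposition of \rref{th:poly}, the parity $\chi(\X_m)\equiv m+1\pmod 2$, and the count $c=|\beta|-2\fdeg X_\beta\geq n-2d$. The paper, however, does not redo this combinatorics here; it simply invokes \rref{th:ideal_power} with $s>d$ and $q=0$ (resp.\ $q=1$), noting via \rref{p:Euler_odd} that $\chi$ is even on every homogeneous element of odd negative degree in $\Laz$: this forces $\lc X\rc\notin J(s)$ when $\chi(X)$ is odd, and (using the same parity for the $2\Laz^{-n}$ correction when $n$ is odd) forces $\lc X\rc\notin J(s)^2$ when $4\nmid\chi(X)$.

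Two remarks on your write-up. First, your description of the localisation map is not literally correct: the map $\Hh_{\mud}(\Speck)\to\Hh_{\mud}(\Speck)[t^{-1}]$ has image $\Hh(\Speck)[[t]]/h$ and kernel $h\cdot\Hh(\Speck)$, not $\Rc$ and $h\Laz$. What salvages your conclusion $w\in\Laz^{-n}$ is that both $\lc X\rc_{\mud}$ and $\sum_i A_i(\lc\X_1\rc_{\mud},\ldots)\,t^i$ lie in the image of $\Laz[[t]]/\fgl{2}(t)$ (by \rref{prop:in_Hf}), and on that subring the kernel of the passage to $\Rc$ is indeed $h\Laz$. Second, the detour through $\Hh_{\mud}(\Speck)$ is not actually essential for the mod~$4$ statement, contrary to what you assert: applying $\epsilon$ to the identity in $\Rc$ already gives $\lc X\rc=A_0(\lc\X_1\rc,\ldots)$ in $\Laz/2$ (this is exactly the step in the proof of \rref{th:bdeg}, via \rref{lemm:epsilon_epsilon}), so one may simply write $\lc X\rc-A_0(\lc\X_1\rc,\ldots)=2w'$ in $\Laz$ with $w'\in\Laz^{-n}$, and for $n$ odd \rref{p:Euler_odd} makes $2\chi(w')\equiv 0\pmod 4$. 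This is precisely the lifting that the paper's route through \rref{th:ideal_power} performs implicitly.
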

\begin{proof}
In view of \rref{p:Euler_odd} we may take $s>d$ and $q=0$ (resp.\ $q=1$) in \rref{th:ideal_power}. 
\end{proof}

\begin{lemma}
\label{lemm:chi_X}
For any $j$, we have $\chi(\lc \X_j \rc) = \chi(\lc (\X_j)^{\mud} \rc)$. Moreover this integer coincides with $j+1$ modulo $2$.
\end{lemma}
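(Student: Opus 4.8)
The plan is to exploit that the Euler number defines a ring morphism $\chi\colon \Laz \to \Zz$ (namely the composite $\Laz \subset \Zz[\bb] \to \Zz$, $b_i \mapsto (-1)^i$), so it is both additive and multiplicative on classes of smooth projective $k$-schemes. Since $\lc \X_{2d+1} \rc = \sum_i e_i \lc H(i,d+1-i)\rc$ and likewise $\lc (\X_{2d+1})^{\mud}\rc = \sum_i e_i \lc H(i,d+1-i)^{\mud}\rc$ (the operation $(-)^{\mud}$ extending to $\V_{\mud}$), it suffices to establish $\chi(\lc Y\rc) = \chi(\lc Y^{\mud}\rc)$ for each building block $Y \in \{\Pp(d,d-1),\, H(i,d+1-i)\}$, together with $\X_1$. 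The only numerical inputs are $\chi(\Pp^m) = m+1$, the multiplicativity $\chi(S\times T) = \chi(S)\chi(T)$, and the multiplicativity of $\chi$ along projective bundles; for the latter, $H_{m,n}$ is by \rref{p:Milnor_hyp} the $\Pp^{n-1}$-bundle $\Pp_{\Pp^m}(U_m \oplus (n-m))$ over $\Pp^m$, whence $\chi(H_{m,n}) = (m+1)\,n$.

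For $\X_1 = \Pp^1$ one has $\chi = 2$, while $(\X_1)^{\mud}$ consists of two points, again of Euler number $2$. For $\X_{2d} = \Pp(d,d-1)$, whose underlying scheme is $\Pp^{2d}$, one has $\chi = 2d+1$, while $(\X_{2d})^{\mud} = \Pp^d \sqcup \Pp^{d-1}$ by \rref{p:pij} has Euler number $(d+1)+d = 2d+1$. For the odd case I would compute, for $H = H(i,d+1-i)$, that its underlying scheme $H_{2i,2(d+1-i)}$ (see \rref{p:underlying_Hij}) has $\chi = (2i+1)\cdot 2(d+1-i)$, and that by \rref{p:fixed_Hij} the fixed locus $H^{\mud} = (\Pp^i\times\Pp^{d-i}) \sqcup H_{i,d+1-i} \sqcup (\Pp^{i-1}\times\Pp^{d+1-i}) \sqcup H_{i-1,d-i}$ has Euler number
\[
(i+1)(d+1-i) + (i+1)(d+1-i) + i(d+2-i) + i(d-i) = 2(2i+1)(d+1-i),
\]
so the two agree. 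Summing against the $e_i$ then gives $\chi(\lc\X_{2d+1}\rc) = \chi(\lc(\X_{2d+1})^{\mud}\rc)$.

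It remains to settle the parity. In the even case $\chi = 2d+1 = j+1$ is odd, as required; in the odd case $\X_j$ is a $\Zz$-combination of odd-dimensional smooth projective schemes, so its Euler number is even by \rref{p:Euler_odd}, matching $j+1 = 2d+2$. This yields $\chi \equiv j+1 \pmod 2$ in all cases.

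The main obstacle is really only the bookkeeping in the Milnor-hypersurface case: one must enumerate the four fixed components correctly and verify the cancellation $i(d+2-i) + i(d-i) = 2i(d+1-i)$, after which the total telescopes to $2(2i+1)(d+1-i)$, matching the Euler number of the ambient space. Conceptually, the equality $\chi(\lc\X_j\rc) = \chi(\lc(\X_j)^{\mud}\rc)$ is an instance of the identity $\chi(X) = \chi(X^{\Gm})$ for a $\Gm$-variety, applicable here because by \rref{rem:X_n_indep_G} the $\mud$-action on $\X_j$ extends to a $\Gm$-action with the same fixed locus; however, proving $\chi(X) = \chi(X^{\Gm})$ in the algebraic category relies on the Bialynicki--Birula decomposition, which lies outside the elementary toolkit developed here, so I would argue instead by the explicit computation above.
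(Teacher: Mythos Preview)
Your proof is correct and follows essentially the same approach as the paper: both use the projective-bundle formula $\chi(\Pp(E)) = r\,\chi(S)$ to deduce $\chi(\Pp^m)=m+1$ and $\chi(H_{m,n})=(m+1)n$, and then verify the equality $\chi(\lc \X_j\rc)=\chi(\lc(\X_j)^{\mud}\rc)$ by direct computation from the explicit descriptions of the fixed loci in \rref{p:pij} and \rref{p:fixed_Hij}. Your closing remark on the Bia\l{}ynicki--Birula interpretation is a nice conceptual aside, but, as you note, is not needed for the argument.
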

\begin{proof}
When $E$ is a rank $r$ vector bundle over a smooth projective $k$-scheme $S$, then $\chi(\Pp(E)) = r \chi(S)$ (see \cite[(6.1.9)]{inv}). This implies that $\chi(\Pp^i) = i+1$ and $\chi(H_{i,j}) = (i+1)j$. The lemma then follows from the description of the fixed loci given in \rref{p:pij} and \rref{p:fixed_Hij}, and an easy computation.
\end{proof}

The cases $q=0,1$ of the next statement were obtained in \cite[(6.2.5)]{inv}.
\begin{proposition}
\label{prop:Euler_fix}
Let $q\in \Nn$, and assume that $\chi(X^{\mud})$ is not divisible by $2^{q+1}$. Then $n \leq 2d + q$.
\end{proposition}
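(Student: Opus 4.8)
The plan is to prove the sharper divisibility statement that $\chi(X^{\mud})$ is divisible by $2^{n-2d}$; the proposition then follows immediately, since the hypothesis that $2^{q+1}$ does not divide $\chi(X^{\mud})$ forces $q \geq n-2d$. One may assume $n > 2d$, as the inequality $n \leq 2d+q$ is otherwise trivial (and then $X^{\mud}\neq\varnothing$, so $d \in \Nn$).

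The starting point is the description of $\lc X^{\mud}\rc$ furnished by \rref{th:fixed}: this class lies in the subgroup of $\Laz$ generated by the elements $P(\lc (\X_1)^{\mud}\rc,\ldots)$, where $P \in \Zz[\bx]$ is homogeneous of degree at least $n$ and satisfies $\fdeg P \leq d$. I would then apply the ring morphism $\chi \colon \Laz \to \Zz$ and introduce the integers $y_j = \chi(\lc (\X_j)^{\mud}\rc)$, so that $\chi(X^{\mud})$ becomes an integral combination of monomials $y_\alpha = y_{\alpha_1}\cdots y_{\alpha_m}$ indexed by the partitions $\alpha$ occurring in the various $P$. The decisive arithmetic input is \rref{lemm:chi_X}, which yields $y_j \equiv j+1 \pmod 2$; in particular $y_j$ is even whenever $j$ is odd.

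The heart of the argument is a $2$-adic estimate on each such monomial. Since $y_{\alpha_i}$ is even for every odd part $\alpha_i$, the integer $y_\alpha$ is divisible by $2$ raised to the number of odd parts of $\alpha$, which by \rref{p:fdeg_X_alpha} equals $|\alpha| - 2\fdeg X_\alpha$. For a monomial $X_\alpha$ appearing in one of the polynomials $P$ above, homogeneity gives $|\alpha| \geq n$, while $\fdeg X_\alpha \leq \fdeg P \leq d$; hence this exponent is at least $n-2d$. Consequently each $P(y_1,y_2,\ldots)$, being an $\Zz$-linear combination of such monomials, is divisible by $2^{n-2d}$, and therefore so is $\chi(X^{\mud})$, which completes the reduction.

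I do not anticipate a genuine obstacle: the difficulty has been front-loaded into \rref{th:fixed} (which supplies simultaneous control of $\deg$ and $\fdeg$ of the representing polynomials) and into the parity computation of \rref{lemm:chi_X}. The only point demanding care is the interaction of the two gradings through the identity "number of odd parts of $\alpha$ equals $|\alpha| - 2\fdeg X_\alpha$", which is exactly what converts the degree/$\fdeg$ gap $n-2d$ into the asserted power of $2$ dividing the Euler number $\chi(X^{\mud})$.
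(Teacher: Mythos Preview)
Your proof is correct and follows essentially the same strategy as the paper. The paper's argument invokes \rref{cor:fixed} with $s>d$, noting via \rref{lemm:chi_X} that $\chi(L(s)) \subset 2\Zz$ and hence $\chi(L(s)^{q+1}) \subset 2^{q+1}\Zz$; you instead unfold that corollary, going straight from \rref{th:fixed} to the $2$-adic estimate on each monomial $y_\alpha$ using \rref{p:fdeg_X_alpha}. The two arguments are the same computation at heart: the inequality $p \geq n-2d$ for the number of odd parts is exactly what \rref{cor:deg_fdeg_2} yields when $s>d$, and your direct count simply reproves that special case. Your version has the minor advantage of making the divisibility $2^{n-2d} \mid \chi(X^{\mud})$ explicit, while the paper's version is shorter by leaning on the already established \rref{cor:fixed}.
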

\begin{proof}
It follows from \rref{lemm:chi_X} that $\chi((\X_n)^{\mud})$ is even when $n$ is odd, and the statement follows from \rref{cor:fixed} applied with $s>d$.
\end{proof}

\begin{remark}
\label{rem:Euler_tau}
Let $\tau \colon \Laz \to \Zz$ be a ring morphism, and write $\tau(S) = \tau(\lc S \rc)$ for every smooth projective $k$-scheme $S$. If $\tau$ coincides with $\chi$ modulo $2$, then $\chi$ may be replaced by $\tau$ in \rref{prop:Euler_odd} and \rref{prop:Euler_fix}. An example of such $\tau$ arises from the signature, see \cite[Example~5.10]{ELW}.
\end{remark}

\subsection{The genus \texorpdfstring{$\psi$}{psi}}
\begin{para}
Consider the ring morphism  $\ninv \colon \Laz \to \Zz[\bb] \to \Zz$ given by $b_i \mapsto 0$ if $i$ is odd, and $b_i \mapsto (-1)^{i/2}$ if $i$ is even. When $S$ is a smooth projective $k$-scheme we will write $\ninv(S) = \ninv(\lc S \rc) \in \Zz$. If $S$ has pure dimension $m$, then
\begin{equation}
\label{eq:psi_odd}
\ninv(S) =
\begin{cases}
\deg c_{(2,\ldots,2)}(\Tan_S)  & \text{ if $m$ is even},\\
0 & \text{ if $m$ is odd,}
\end{cases}
\end{equation}
where $(2,\ldots,2)$ denotes the partition of length $m/2$ where each entry is $2$.
\end{para}

\begin{remark}
Observe that $\psi$ does not coincide with $\chi$ modulo $2$. For instance $\psi(\Pp^4)=10$ and $\chi(\Pp^4) = 5$.
\end{remark}

\begin{proposition}
\label{prop:psi_odd}
If $\psi(X)$ is odd, then $n \leq 2d$. 
\end{proposition}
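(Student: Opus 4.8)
The plan is to deduce this from Theorem~\rref{th:bdeg} together with the elementary fact that $\ninv$ annihilates homogeneous elements of odd degree. First I would record the latter: the ring morphism $\ninv\colon \Laz \to \Zz$ sends a monomial $b_\alpha$ to $0$ unless every part of $\alpha$ is even, and a partition of an odd integer necessarily has an odd part; hence $\ninv$ vanishes on $\Laz^{-i}$ for every odd $i$ (this also matches \eqref{eq:psi_odd}). In particular, if $\ell_i \in \Laz^{-i}/2$ denotes the fixed polynomial generating family of \S\ref{sect:5/2} and $\tilde\ell_i \in \Laz^{-i}$ is any lift, then $\ninv(\tilde\ell_i)=0$ for odd $i$, so the induced morphism $\overline{\ninv}\colon \Laz/2 \to \Fd$ satisfies $\overline{\ninv}(\ell_i)=0$ for all odd $i$.

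Next I would write $\lc X \rc = P(\ell_1,\ldots)\in \Laz/2$ with $P \in \Fd[\bx]$, homogeneous of degree $n$ for the grading of \rref{p:default_grading}. Theorem~\rref{th:bdeg} gives $\fdeg P \leq d$. Since $\ninv$ is a ring morphism carrying $2\Laz$ into $2\Zz$, the parity of $\ninv(X)$ is determined by $\lc X \rc \in \Laz/2$ and is given by the evaluation $P(\overline{\ninv}(\ell_1),\overline{\ninv}(\ell_2),\ldots) \in \Fd$. By the previous step the arguments attached to odd indices vanish, so in this evaluation every monomial of $P$ involving some variable $X_i$ with $i$ odd contributes zero.

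Consequently, if $\ninv(X)$ is odd then $P$ must contain, with odd coefficient, a monomial $X_\alpha$ all of whose indices are even; that is, a partition $\alpha$ with only even parts. Such an $\alpha$ has even weight $|\alpha| = n$ (so $n$ is forced to be even), and by \rref{p:fdeg_X_alpha} the number of odd parts being zero gives $\fdeg X_\alpha = |\alpha|/2 = n/2$. Hence $\fdeg P \geq n/2$, and combining with $\fdeg P \leq d$ yields $n/2 \leq d$, i.e.\ $n \leq 2d$.

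The argument is short once Theorem~\rref{th:bdeg} is in place, and I do not expect a genuine obstacle. The only points needing care are the bookkeeping that $\ninv$ kills the odd-degree generators---so that oddness of $\ninv(X)$ can be witnessed only by an all-even monomial of $P$---and the remark that such a monomial realises the extremal case $\fdeg X_\alpha = |\alpha|/2$ of \rref{p:fdeg_X_alpha}, which is exactly what converts the bound $\fdeg P \leq d$ into $n \leq 2d$.
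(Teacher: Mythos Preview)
Your proof is correct and is essentially the paper's approach unpacked: the paper invokes Corollary~\rref{th:ideal_power} with $s>d$ and $q=0$, whose proof in turn rests on Theorem~\rref{th:bdeg} and the observation that $\psi$ kills odd-degree generators, which is exactly what you carry out directly.
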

\begin{proof}
We may take $s >d$ and $q=0$ in \rref{th:ideal_power}. 
\end{proof}

\begin{lemma}
\label{lemm:psi_X}
The integer $\psi((\X_{2d+1})^{\mud})$ is even if $d \in \{0,1,2,3\}$.
\end{lemma}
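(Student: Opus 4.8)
The plan is to argue by cases according to the parity of $d$, and to compute $\ninv$ directly in the two even cases. Recall that $\ninv\colon\Laz\to\Zz$ is a ring morphism, so $\ninv(S\times T)=\ninv(S)\ninv(T)$, and that by \eqref{eq:psi_odd} the genus $\ninv$ vanishes on every smooth projective $k$-scheme of pure odd dimension.

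When $d$ is odd, that is $d\in\{1,3\}$, the result is immediate: since $2d+1$ is odd, \dref{prop:Xn}{prop:Xn:dim_fixed} shows that every component of $(\X_{2d+1})^{\mud}$ has dimension $d$ or $d-2$, both odd. Hence $\ninv$ vanishes on each component by \eqref{eq:psi_odd}, and since $\ninv$ is additive on $\Laz$ we get $\ninv((\X_{2d+1})^{\mud})=0$, which is even.

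The two even cases $d\in\{0,2\}$ require an explicit evaluation. For $d=0$ we have $\X_1=\Pp(0,0)$, whose fixed locus is $\Speck\sqcup\Speck$ by \rref{p:pij}, so $\ninv((\X_1)^{\mud})=2\ninv(\Speck)=2$. For $d=2$ we have $\X_5=H(1,2)$ (see \rref{def:X_n} and the remark following it), and \rref{p:fixed_Hij} with $i=1$, $j=2$ gives
\[
(\X_5)^{\mud} = (\Pp^1 \times \Pp^1) \sqcup H_{1,2} \sqcup \Pp^2 \sqcup H_{0,1},
\]
where $H_{0,1}\simeq\Speck$. I would evaluate $\ninv$ termwise: $\ninv(\Pp^1\times\Pp^1)=\ninv(\Pp^1)^2=0$ by multiplicativity since $\ninv(\Pp^1)=0$ (odd dimension); $\ninv(\Speck)=1$; and for the two $2$-dimensional pieces I relate $\ninv$ to the Chern number $c_{(2)}$ through \eqref{eq:psi_odd} together with the additivity of the class $c_{(2)}$, which yields $\ninv(S)=\deg c_{(2)}(\Tan_S)=-c_{(2)}(S)$ for a surface $S$. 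Then \rref{lemm:add_proj} gives $c_{(2)}(\Pp^2)=-3$, so $\ninv(\Pp^2)=3$, and \rref{lemm:add_Milnor} (the case $m=1$, $n\geq 2$) gives $c_{(2)}(H_{1,2})=0$, so $\ninv(H_{1,2})=0$. Summing, $\ninv((\X_5)^{\mud})=0+0+3+1=4$, which is even.

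The statement is thus elementary, and the only genuinely computational point—the main (minor) obstacle—is the $d=2$ case, where one must keep the sign conventions straight: the genus $\ninv$ in \eqref{eq:psi_odd} is defined through $+\Tan_S$, whereas the Chern numbers $c_{(2)}$ of \rref{lemm:add_proj} and \rref{lemm:add_Milnor} are defined through $-\Tan_S$, the two being reconciled by the additivity of $c_{(2)}$. Everything else is a formal consequence of the dimension-parity vanishing in \eqref{eq:psi_odd} and the multiplicativity of $\ninv$.
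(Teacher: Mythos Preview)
Your proof is correct and follows essentially the same approach as the paper: case analysis on $d$, using the odd-dimension vanishing of $\ninv$ for $d\in\{1,3\}$ and a direct computation of the fixed locus for $d\in\{0,2\}$ via \rref{p:fixed_Hij}, \rref{lemm:add_proj}, and \rref{lemm:add_Milnor}. You are slightly more explicit than the paper in computing the actual value $\ninv((\X_5)^{\mud})=4$ rather than only its parity, and your remark on the sign convention linking $\ninv(S)=-c_{(2)}(S)$ for surfaces is correct.
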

\begin{proof}
This is clear if $d=0$, since $(\X_1)^{\mud} = \Speck \sqcup \Speck$. If $d=1,3$, then $(\X_{2d+1})^{\mud}$ has vanishing even-dimensional components by \dref{lemm:H}{lemm:H:2}, so that the statement follows from \eqref{eq:psi_odd}. Finally, we have $\X_5 = H(1,2)$ and by \rref{p:fixed_Hij}
\[
\psi((\X_5)^{\mud}) = \psi(\Pp^2) + \psi(\Pp^1 \times \Pp^1) + \psi(H_{1,2}) + \psi(\Speck).
\]
This integer is even by \rref{eq:psi_odd}, \rref{lemm:add_proj} and \rref{lemm:add_Milnor}.
\end{proof}

\begin{proposition}
\label{prop:psi_fix}
Assume that $\psi(X^{\mud})$ is not divisible by $2^{q+1}$. Then $n \leq 9d/4 + q$.
\end{proposition}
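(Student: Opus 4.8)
The plan is to follow the pattern of \rref{prop:Euler_fix}, applying the ring morphism $\ninv \colon \Laz \to \Zz$ to the structural description of $\lc X^{\mud} \rc$ furnished by \rref{th:fixed}, and then invoking \rref{cor:fixed}. The coefficient $9/4 = 2 + 1/4$ signals that the relevant value of the parameter is $s = 4$, so the entire argument reduces to checking that $\ninv$ sends the generators of the ideal $L(4)$ into $2\Zz$.

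Concretely, in the notation of \rref{cor:fixed} with $s = 4$, the ideal $L(4) \subset \Laz$ is generated by the classes $\lc (\X_{2i+1})^{\mud} \rc$ for $i = 0,1,2,3$. By \rref{lemm:psi_X}, each integer $\ninv((\X_{2i+1})^{\mud}) = \ninv(\lc (\X_{2i+1})^{\mud}\rc)$ is even for $i \in \{0,1,2,3\}$. Since $\ninv$ is a ring morphism, it follows that $\ninv(L(4)) \subset 2\Zz$, and hence that $\ninv(L(4)^{q+1}) \subset 2^{q+1}\Zz$ (each generator of $L(4)^{q+1}$ is an $\Laz$-multiple of a product of $q+1$ elements of $L(4)$, so its image under $\ninv$ is an integer multiple of a product of $q+1$ even integers).

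Therefore, if $\ninv(X^{\mud}) = \ninv(\lc X^{\mud} \rc)$ is not divisible by $2^{q+1}$, then $\lc X^{\mud} \rc$ cannot lie in $L(4)^{q+1}$. Applying \rref{cor:fixed} with $s = 4$ then yields $n \leq (2 + 1/4)\, d + q = 9d/4 + q$, as required.

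The only genuine input is the parity statement \rref{lemm:psi_X}; everything else is a formal consequence of the multiplicativity of $\ninv$ together with the already-established \rref{cor:fixed}. The essential point — and the reason the constant is $9/4$ rather than something smaller — is precisely that \rref{lemm:psi_X} controls the parity of $\ninv((\X_{2i+1})^{\mud})$ only for $i \leq 3$. Sharpening the bound by this method would require understanding the parity of $\ninv((\X_9)^{\mud})$ and beyond, and that is exactly where the main obstacle would lie.
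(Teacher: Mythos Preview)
Your proof is correct and follows essentially the same approach as the paper's: apply \rref{cor:fixed} with $s=4$, using \rref{lemm:psi_X} to verify that $\ninv(L(4)^{q+1}) \subset 2^{q+1}\Zz$. Your write-up simply spells out the details of why the multiplicativity of $\ninv$ gives this inclusion, and your closing remark about $\ninv((\X_9)^{\mud})$ is on point (the paper notes elsewhere that this integer is in fact odd, confirming that the bound is sharp).
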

\begin{proof}
Apply \rref{cor:fixed} with $s=4$, and note that  $\psi(L(4)^{q+1})\subset 2^{q+1}\Zz$ by \rref{lemm:psi_X}.
\end{proof}

\begin{remark}
As in \rref{rem:Euler_tau}, we may replace $\psi$ in \rref{prop:psi_odd} and \rref{prop:psi_fix} by any ring morphism $\tau \colon \Laz \to \Zz$ coinciding with $\psi$ modulo $2$.
\end{remark}

\subsection{Additional observations}
\begin{para}
If $\lc X \rc \in 2\Laz$, then the polynomial $A_0$ of \rref{th:poly} must be divisible by two. This yields improvements of \rref{th:fixed}, \rref{cor:fixed}, \rref{prop:small_dim_fix}, \rref{cor:n-5d}, \rref{rem:isolated}, \rref{cor:fixed_component}, \rref{cor:c_alpha_component}, \rref{prop:Euler_fix}, \rref{prop:psi_fix}, where $n$ may be replaced by $n+1$ under this additional assumption. (In the proof of \rref{th:fixed}, since $2=tx_1$, after modifying $A_1$ we may assume that $A_0=0$.)
\end{para}

\begin{para}
\label{p:effective}
When $n=2d+1$ is odd and $d\geq 3$, the element $\X_n \in \V_{\mud}$ might not be the class of a smooth projective $k$-scheme with a $\mud$-action (it is a difference of such classes). It is however possible to approximate $\X_n$ by such classes. Indeed let $m\in \Nn$. For each $n\in \Nn\smallsetminus\{0\}$, in the notation of \rref{def:X_n}, choose $e'_i \in \Nn$ such that $e_i'=e_i \mod 2^m$ and consider the smooth projective $k$-scheme with a $\mud$-action $\X_n' = \bigsqcup_i H(i,d+1-i)^{\sqcup e_i'}$. Then $\lc \X_n'\rc_{\mud} = \lc \X_n \rc_{\mud} \mod 2^m$, and $\dim (\X_n')^{\mud} = d$.
\end{para}

\begin{para}
The sharpness of the statements provided above can be illustrated as follows. Let us use the notation of \rref{p:effective}.
\begin{enumerate}[(i)]

\item Take $m=1$ and $X = (\X_1')^q\times(\X_{2s+1}')^r$. Then $d=sr$, and $n=q+(2s+1)r$ takes the maximum value allowed by \rref{th:ideal_power}. Taking $m=q+1$, the same example shows the sharpness of \rref{cor:fixed}.

\item The example $X=(\X_1)^q\times(\X_3)^d$, where $n=3d+q$, shows the sharpness of \rref{prop:small_dim_fix}.

\item Let $X=(\X_1)^q\times(\X_5)^r$. Then $d=2r$ and $n=q+5r$. The number of isolated fixed points of $X$ is $2^q$ (all of them are rational over $k$), proving that \rref{rem:isolated} (and thus also \rref{cor:n-5d}) is sharp.

\item Let $X = (\X_2)^d$. Then $\chi(X)$ and $\psi(X)$ are odd, and $n=2d$. Letting $X = \X_1\times (\X_2)^d$ instead, the integer $\chi(X)$ is not divisible by $4$, and $n=2d+1$. This shows the sharpness of \rref{prop:Euler_odd} and \rref{prop:psi_odd}.

\item Let $X = (\X_1)^q\times \X_{2d}$. Then $\chi(X^{\mud}) =2^q \mod 2^{q+1}$ and $n=2d+q$, showing the sharpness of \rref{prop:Euler_fix}.

\item One may see that the integer $\psi((\X_9)^{\mud})$ is odd (for every choice of $e_1,e_2$ in \rref{def:X_n}). Taking $m=q+1$ and $X = (\X_1)^q \times (\X_9')^r$, we have $n=q+9r$ and $d=4r$, showing the sharpness of \rref{prop:psi_fix}.

\item Let $Y$ be a smooth projective $k$-scheme of dimension $d>0$ such that $\lc Y\rc$ is indecomposable in $\Laz/2$. Considering the scheme $X = Y \times Y$ with the $\mud$-action exchanging the factors, we see that \rref{prop:top_indec} does not hold when $n=2d>0$.
\end{enumerate}
\end{para}

\subsection{Curves as fixed loci}
\label{sect:fixed_curves}

In this section, we describe the cobordism classes of involutions whose fixed locus has dimension one, and conversely we determine which cobordism classes of vector bundles over one-dimensional varieties arise as normal bundles to fixed loci of involutions.\\

Let $\Vi_n(d)$ be the subgroup of $\invh$ generated by classes $\lc X \rc_{\mud}$, where $X$ is a smooth projective $k$-scheme of pure dimension $n$ with a $\mud$-action such that $\dim(X^{\mud}) \leq d$. We have seen that $\Vi_n(0)$ is the free abelian group generated by $x_1^n$ (see \rref{prop:trivial_normal}, or \rref{th:small_dim}). We now describe $\Vi_n(1)$:

\begin{proposition}
\label{prop:basis_1}
The abelian group $\Vi_n(1)$ is free, with a basis given by
\begin{center}
\begin{tabular}{ |c|c| } 
 \hline
 $n$ & Basis  \\ 
 \hhline{|=|=|}
 $0$ & $1$  \\ 
 \hline
 $1$ & $\lc \Pp^1 \rc, x_1$ \\
 \hline
 $2$ & $\lc \Pp^1 \times \Pp^1 \rc_{\mud},x_2,x_1^2$\\
 \hline
 $\geq 3$ & $x_1^{n-3}x_3,x_1^{n-2}x_2,x_1^n$\\
 \hline
\end{tabular}
\end{center}
(The $\mud$-action on $\Pp^1 \times \Pp^1$ is induced by the exchange of factors.)
\end{proposition}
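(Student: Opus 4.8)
The plan is to extract $\Vi_n(1)$ directly from the normal form of \rref{th:poly}. For a smooth projective $X$ of pure dimension $n$ with $d := \dim(X^{\mud}) \le 1$, that theorem gives $\lc X \rc_{\mud} = \sum_{i \ge 0} A_i(x_1,\dots)t^i$ in $\Rc$, with each $A_i$ homogeneous of degree $n+i$, lying in $\Zz[X_2,\dots]$ for $i \ge 1$, satisfying $\fdeg A_i \le d \le 1$, and vanishing for $i > \max\{0, 3-n\}$. The first step is combinatorial: since $\fdeg X_j = \lfloor j/2\rfloor$, the monomials $X_\alpha$ with $\fdeg X_\alpha \le 1$ are precisely $X_1^k$, $X_1^k X_2$ and $X_1^k X_3$. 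Intersecting this list with the degree and support constraints yields, for each $n$, an explicit finite set spanning the subgroup of $\Rc[t^{-1}]$ that contains $\Vi_n(1)$: for $n \ge 3$ only $A_0$ survives and the set is $\{x_1^n,\, x_1^{n-2}x_2,\, x_1^{n-3}x_3\}$; for $n=2$ it is $\{x_1^2,\, x_2,\, x_3 t\}$; for $n=1$ it is $\{x_1,\, x_2 t,\, x_3 t^2\}$; and for $n=0$ the bound $d \le 0$ forces just $\{1\}$.

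Next I would compare these spanning sets with $\invh = \Rc \cap \Mcc$ (\rref{th:integrality}). For $n \ge 2$ every listed element already lies in $\invh$: using the expansions of \rref{ex:x_123} one checks, for instance, that $x_3 t = 3\lc \Pp^1 \rc v + 2 v a_1 \in \Mcc$, so $\Vi_n(1)$ is contained in the $\Zz$-span of the list. For $n \le 1$, by contrast, the individual monomials $x_2 t = \lc \Pp^1 \rc + a_1 + v$ and $x_3 t^2 = 3\lc \Pp^1 \rc + 2 a_1$ each contain a bare $a_1$, which is excluded from $\Mcc = \Laz[v, v a_1, v a_2, \dots]$. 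Since $\lc X \rc_{\mud}$ genuinely lies in $\invh \subseteq \Mcc$, the $a_1$-coefficient of $c_0 x_1 + c_1 (x_2 t) + c_2 (x_3 t^2)$, namely $c_1 + 2c_2$, must vanish; imposing $c_1 = -2c_2$ collapses the element to $c_2 \lc \Pp^1 \rc + (c_0 - c_2) x_1$. This is exactly the point where the non-monomial vector $\lc \Pp^1 \rc = x_1 - 2 x_2 t + x_3 t^2$ appears, giving $\Vi_1(1) \subseteq \Zz\lc \Pp^1 \rc + \Zz x_1$. For $n=2$ the genuine involution class $\lc \Pp^1 \times \Pp^1 \rc_{\mud} = -x_1^2 + 4 x_2 - x_3 t$ of \rref{ex:P1xP1} replaces $x_3 t$ by a unimodular change of basis, producing the stated geometric basis.

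For the reverse inclusion I would exhibit each basis vector as a class of an actual involution (or $\Zz$-combination of such) of the correct dimensions, using multiplicativity of $\lc - \rc_{\mud}$: the products $\X_1^{\times n}$, $\X_1^{\times (n-2)} \times \X_2$ and $\X_1^{\times (n-3)} \times \X_3$ realise $x_1^n$, $x_1^{n-2} x_2$ and $x_1^{n-3} x_3$, all of ambient dimension $n$ and fixed-locus dimension $\le 1$ by \rref{prop:Xn}; the trivial action on $\Pp^1$ realises $\lc \Pp^1 \rc$, the point realises $1$, and $\lc \Pp^1 \times \Pp^1 \rc_{\mud}$ is geometric by construction. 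This yields $\Vi_n(1)$ equal to the $\Zz$-span of the claimed basis. Freeness is then a determinant computation in the free $\Zz$-module $\Mcc[v^{-1}] = \Zz[v,v^{-1}][x_2,x_3,\dots]$ of \rref{prop:Mcc_pol}: writing each vector in the monomial coordinates (for $n=2$ the coordinate matrix on $(v^2, x_2, v^{-1} x_3)$ has determinant $\pm 4$; for $n=1$ the vectors $(2,-2,1)$ and $(2,0,0)$ in $(v, v^{-1} x_2, v^{-2} x_3)$ are independent; for $n \ge 3$ the three vectors are monomials in distinct directions) establishes $\Zz$-linear independence.

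The main obstacle is the low-dimensional range $n \le 1$. There the spanning set produced by \rref{th:poly} is strictly larger than $\invh$, so one cannot simply read off a basis of monomials in the $x_i$; the delicate part is to compute the intersection with $\Mcc$ honestly by tracking the bare $a_1$-coefficient through the expansions of \rref{ex:x_123}, and to recognise that the resulting drop in rank is precisely accounted for by the non-monomial class $\lc \Pp^1 \rc$ (and, for $n=2$, that $\lc \Pp^1 \times \Pp^1 \rc_{\mud}$ provides the clean geometric basis). The cases $n \ge 3$, by comparison, are routine.
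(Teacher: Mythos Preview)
Your argument is correct and follows the same overall strategy as the paper: use \rref{th:poly} to bound $\Vi_n(1)$ from above, realise the basis vectors geometrically to bound from below, and check linear independence algebraically. The cases $n=0$, $n=2$, and $n\geq 3$ are handled essentially identically to the paper (the paper invokes \rref{th:small_dim} for $n\geq 3$, but your direct enumeration of monomials with $\fdeg\leq 1$ amounts to the same thing).

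The one genuine difference is the case $n=1$. The paper does not use \rref{th:poly} here at all: it observes that any curve with $\mud$-action decomposes as $T\sqcup N$ where $\mud$ acts trivially on $T$ and $\dim(N^{\mud})<1$, so that $\Vi_1(1)=\Laz^{-1}+\Vi_1(0)$ is generated by $\lc\Pp^1\rc$ and $x_1$ almost by inspection. Your route---writing $\lc X\rc_{\mud}=c_0x_1+c_1x_2t+c_2x_3t^2$ via \rref{th:poly}, then using the constraint $\invh\subset\Mcc$ to force $c_1+2c_2=0$ by tracking the bare $a_1$-coefficient---is more computational but perfectly valid, and has the virtue of being uniform with the other cases. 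For linear independence the paper applies $c_\varnothing,c_{(1)}$ (for $n=1$) or multiplies by $x_1$ and invokes \rref{lemm:alg_indep} (for $n=2$), whereas you compute determinants in the monomial basis of $\Mcc[v^{-1}]$; again both work, and yours is arguably more hands-on.
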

\begin{proof}
$n=0$: This is clear.

$n=1$: Linear independence is obtained by applying the morphism $\invh \to \Laz$ of \rref{p:fix}, and composing with $c_{\varnothing},c_{(1)} \colon \Laz \to \Zz$. Any smooth projective $k$-scheme of pure dimension $n$ with a $\mud$-action decomposes as $T \sqcup N$, where $\mud$ acts trivially on $T$ and $\dim(N^{\mud}) <n$. It follows that $\Vi_1(1) = \Laz^{-1} + \Vi_1(0)$, an abelian group generated by $\lc\Pp^1\rc$ and $x_1$.

$n=2$: By \rref{th:poly}, any $x \in \Vi_2(1)$ can be written in $\Rc$ as $x = a x_1^2 +bx_2 +ctx_3$ with $a,b,c \in \Zz$. Since $\lc \Pp^1 \times \Pp^1 \rc_{\mud} = -x_1^2+4x_2-tx_3$ (see \rref{ex:P1xP1}), it follows that $x_1^2,x_2,\lc \Pp^1 \times \Pp^1 \rc_{\mud}$ generate $\Vi_2(1)$. This family is linearly independent, because it becomes so after multiplication with $x_1$, by \rref{lemm:alg_indep} (recall that $x_1t=2$).

$n\geq 3$: This follows from \rref{th:small_dim} and \rref{lemm:alg_indep}.
\end{proof}

\begin{para}
Let now $E$ be a vector bundle over a smooth projective $k$-scheme $S$, and assume that the scheme $E$ has pure dimension $n$. If $\dim S =0$, then $\lc E \to S \rc \in \nu(\inv) \subset \Mcc$ if and only if $\#S(\overline{k})$ is divisible by $2^n$ (see \rref{prop:trivial_normal} or \rref{th:small_dim}), where $\overline{k}$ is an algebraic closure of $k$. Assume that $\dim S=1$. Then $S = S_0 \sqcup S_1$, where $S_i$ has pure dimension $i$. The class $\lc E \to S\rc$ is determined by the degree of $E|_{S_1}$, the arithmetic genus of $S_1$, and the cardinality of $S_0(\overline{k})$. More precisely, we have $\lc E \to S \rc = a\ba_1v^{n-1} + b \lc \Pp^1 \rc v^{n-1} + c v^n$ in $\Mcc$, where
\[
a=\deg c_1(E|_{S_1})\quad ; \quad b = \frac{\deg c_1(\Tan_{S_1})}{2} \quad ; \quad c = \deg [S_0].
\]
It follows from \rref{prop:basis_1} (in view of \rref{ex:x_123} and \rref{ex:P1xP1}) that $\lc E \to S \rc \in \nu(\inv)$ if and only if
\begin{center}
\begin{tabular}{ |c|c| } 
 \hline
 $n$ & Condition \\
 \hhline{|=|=|}
 $0$ &  \text{(no condition)} \\ 
 \hline
 $1$ & $c \in 2\Zz$ \\
 \hline
 $2$ & $a+2b+c \in 4\Zz$\\
 \hline
 $\geq 3$ & $a \in 2^{n-2}\Zz \; \text{ and } \; b \in 2^{n-3}\Zz \; \text{ and } \; 3a - 2b -c \in 2^n\Zz$\\
 \hline
\end{tabular}
\end{center}
\end{para}

\def\cprime{$'$}


\begin{thebibliography}{Hau20b}

\bibitem[Ada74]{Adams-stable}
J.~Frank Adams.
\newblock {\em Stable homotopy and generalised homology}.
\newblock University of Chicago Press, Chicago, Ill.-London, 1974.
\newblock Chicago Lectures in Mathematics.

\bibitem[Boa67]{Boardman-BAMS}
J.~Michael Boardman.
\newblock On manifolds with involution.
\newblock {\em Bull. Amer. Math. Soc.}, 73:136--138, 1967.

\bibitem[Boa72]{Boardman-revisited}
J.~Michael Boardman.
\newblock Cobordism of involutions revisited.
\newblock In {\em Proceedings of the {S}econd {C}onference on {C}ompact
  {T}ransformation {G}roups ({U}niv. {M}assachusetts, {A}mherst, {M}ass.,
  1971), {P}art {I}}, pages 131--151. Lecture Notes in Math., Vol. 298, 1972.

\bibitem[BtD78]{Bix-tomDieck}
Michael Bix and Tammo tom Dieck.
\newblock Characteristic numbers of {$G$}-manifolds and multiplicative
  induction.
\newblock {\em Trans. Amer. Math. Soc.}, 235:331--343, 1978.

\bibitem[CF64]{CF-book-1st}
Pierre~E. Conner and Edwin~E. Floyd.
\newblock {\em Differentiable periodic maps}.
\newblock Ergebnisse der Mathematik und ihrer Grenzgebiete. Springer-Verlag,
  1964.

\bibitem[DG11]{SGA3-1}
Michel Demazure and Alexandre Grothendieck.
\newblock {\em Sch\'emas en groupes ({SGA} 3, {T}ome {I}), {P}ropri\'et\'es
  g\'en\'erales des sch\'emas en groupes}.
\newblock Documents Math\'ematiques (Paris), 7. Soci\'et\'e Math\'ematique de
  France, Paris, 2011.
\newblock S{\'e}minaire de G{\'e}om{\'e}trie Alg{\'e}brique du Bois Marie
  1962--64, revised and annotated edition of the 1970 French original.

\bibitem[ELW15]{ELW}
H\'el\`ene Esnault, Marc Levine, and Olivier Wittenberg.
\newblock Index of varieties over {H}enselian fields and {E}uler characteristic
  of coherent sheaves.
\newblock {\em J. Algebraic Geom.}, 24(4):693--718, 2015.

\bibitem[Ful98]{Ful-In-98}
William Fulton.
\newblock {\em Intersection theory}, volume~2 of {\em Ergebnisse der Mathematik
  und ihrer Grenzgebiete. 3. Folge.}
\newblock Springer-Verlag, second edition, 1998.

\bibitem[Gro67]{ega-4-4}
Alexander Grothendieck.
\newblock \'{E}l\'ements de g\'eom\'etrie alg\'ebrique. {IV}. \'{E}tude locale
  des sch\'emas et des morphismes de sch\'emas {IV}.
\newblock {\em Inst. Hautes \'Etudes Sci. Publ. Math.}, (32):5--361, 1967.

\bibitem[Hau19]{fpt}
Olivier Haution.
\newblock Fixed point theorems involving numerical invariants.
\newblock {\em Compos. Math.}, 155(2):260--288, 2019.

\bibitem[Hau20a]{isol}
Olivier Haution.
\newblock Diagonalisable $p$-groups acting on projective varieties cannot fix
  exactly one point.
\newblock {\em {J}. {Algebraic} {G}eom.}, 29:373--402, 2020.

\bibitem[Hau20b]{inv}
Olivier Haution.
\newblock Involutions and {C}hern numbers of varieties.
\newblock {\em Comment. Math. Helv.}, 95(4):811--843, 2020.

\bibitem[KS78]{KS}
Czes Kosniowski and Robert~E. Stong.
\newblock Involutions and characteristic numbers.
\newblock {\em Topology}, 17(4):309--330, 1978.

\bibitem[LM07]{LM-Al-07}
Marc Levine and Fabien Morel.
\newblock {\em Algebraic cobordism}.
\newblock Springer Monographs in Mathematics. Springer, Berlin, 2007.

\bibitem[LP09]{LP-Alg-09}
Marc Levine and Rahul Pandharipande.
\newblock Algebraic cobordism revisited.
\newblock {\em Invent. Math.}, 176(1):63--130, 2009.

\bibitem[Mer02]{Mer-Ori}
Alexander~S. Merkurjev.
\newblock Algebraic oriented cohomology theories.
\newblock In {\em Algebraic number theory and algebraic geometry}, volume 300
  of {\em Contemp. Math.}, pages 171--193. Amer. Math. Soc., Providence, RI,
  2002.

\bibitem[MFK94]{GIT}
David Mumford, John Fogarty, and Francis Kirwan.
\newblock {\em Geometric invariant theory}, volume~34 of {\em Ergebnisse der
  Mathematik und ihrer Grenzgebiete (2)}.
\newblock Springer-Verlag, Berlin, third edition, 1994.

\bibitem[Mil]{Milne-AG}
J.~S. Milne.
\newblock {\em Algebraic groups}, volume 170 of {\em Cambridge Studies in
  Advanced Mathematics}.

\bibitem[PS]{Panin-Smirnov}
Ivan Panin and Alexander Smirnov.
\newblock {Push-forwards in oriented cohomology theories of algebraic
  varieties}.
\newblock {Preprint, 2000, \url{http://www.math.uiuc.edu/K-theory/0459/}}.

\bibitem[Qui69]{Quillen-FGL}
Daniel Quillen.
\newblock On the formal group laws of unoriented and complex cobordism theory.
\newblock {\em Bull. Amer. Math. Soc.}, 75:1293--1298, 1969.

\bibitem[Qui73]{Qui-72}
Daniel Quillen.
\newblock Higher algebraic {$K$}-theory. {I}.
\newblock In {\em Algebraic {$K$}-theory, {I}: {H}igher {$K$}-theories ({P}roc.
  {C}onf., {B}attelle {M}emorial {I}nst., {S}eattle, {W}ash., 1972)}, pages
  85--147. Lecture Notes in Math., Vol. 341. Springer, Berlin, 1973.

\bibitem[Sum75]{Sumihiro}
Hideyasu Sumihiro.
\newblock Equivariant completion. {II}.
\newblock {\em J. Math. Kyoto Univ.}, 15(3):573--605, 1975.

\bibitem[tD74]{tomDieck-Characteristic-II}
Tammo tom Dieck.
\newblock Characteristic numbers of {$G$}-manifolds. {II}.
\newblock {\em J. Pure Appl. Algebra}, 4:31--39, 1974.

\bibitem[Tho87]{Thomason-group}
R.~W. Thomason.
\newblock Algebraic {$K$}-theory of group scheme actions.
\newblock In {\em Algebraic topology and algebraic {$K$}-theory ({P}rinceton,
  {N}.{J}., 1983)}, volume 113 of {\em Ann. of Math. Stud.}, pages 539--563.
  Princeton Univ. Press, Princeton, NJ, 1987.

\bibitem[Vis07]{Vi-Sym}
Alexander Vishik.
\newblock Symmetric operations in algebraic cobordism.
\newblock {\em Adv. Math.}, 213(2):489--552, 2007.

\bibitem[Voe98]{Voe-A1}
Vladimir Voevodsky.
\newblock {$\mathbf{A}^1$}-homotopy theory.
\newblock In {\em Proceedings of the {I}nternational {C}ongress of
  {M}athematicians, {V}ol. {I} ({B}erlin, 1998)}, number Extra Vol. I, pages
  579--604, 1998.

\end{thebibliography}
\end{document}